\documentclass[11pt,a4paper]{article}
\usepackage{caption}

\usepackage{epsf,epsfig,amsfonts,amsgen,amsmath,amstext,amsbsy,amsopn,amsthm,mathrsfs}
\usepackage{amsmath}
\usepackage[noadjust]{cite}
\usepackage{amsfonts,amsthm,amssymb,bm}
\usepackage{amsfonts}
\usepackage{graphics}
\usepackage{latexsym,bm}
\usepackage{amsfonts,amsthm,amssymb,bbding}
\usepackage{indentfirst}
\usepackage{graphicx}
\usepackage{color}

\newcommand{\Pset}{\mathcal{L}}
\usepackage{float}
\usepackage{tikz,enumerate}
\usetikzlibrary{patterns.meta, calc}

\usepackage{epsf,epsfig,amsfonts,amsgen,amsmath,amstext,amsbsy,amsopn,amsthm}

\usepackage{ebezier,eepic}
\usepackage{color}
\usepackage{multirow}
\usepackage{mathrsfs}
\usepackage{tikz}
\usepackage{enumerate}
\usepackage{cite}
\usepackage{amsthm}
\usepackage{thm-restate}
\usepackage{geometry}
\usepackage{comment}
\usepackage{setspace}
\usepackage{xcolor}
\usepackage{caption}
\usepackage{subcaption}
\usepackage{enumitem}
\usepackage{authblk}

\oddsidemargin 0pt
\evensidemargin 0pt
\marginparwidth 40pt
\topmargin 0pt
\headsep 20pt
\tolerance=1000
\textheight 8.8in
\textwidth 6.6in

\usepackage{geometry}
\geometry{a4paper, left=2.5cm, right=2.5cm, top=2cm, bottom=2cm}

\newtheorem{dfn}{Definition}[section]
\newtheorem{thm}[dfn]{Theorem}
\newtheorem{obs}[dfn]{Observation}
\newtheorem{lem}[dfn]{Lemma}
\newtheorem{prop}[dfn]{Proposition}

\newtheorem{corollary}[dfn]{Corollary}
\newtheorem{conjecture}[dfn]{Conjecture}

\newcommand{\1}{{\uppercase\expandafter{\romannumeral1}}}
\newcommand{\2}{{\uppercase\expandafter{\romannumeral2}}}
\newcommand{\3}{{\uppercase\expandafter{\romannumeral3}}}
\newcommand{\4}{{\uppercase\expandafter{\romannumeral4}}}
\setlist[itemize]{itemsep=6pt, parsep=0pt}
\let\OLDthebibliography\thebibliography
\renewcommand\thebibliography[1]{
  \OLDthebibliography{#1}
  \setlength{\parskip}{0pt}
  \setlength{\itemsep}{7pt plus 0.3ex}
}

\def\AP{\mathrm{AP}}
\usepackage[colorlinks=true,anchorcolor=blue,filecolor=blue,linkcolor=red,urlcolor=blue,citecolor=blue,hypertexnames=false]{hyperref}
\usepackage{cleveref}

\begin{document}
\title{Dean's conjecture and cycles modulo $k$}

\author{
Yufan Luo$^{1}$
~~~~ Jie Ma$^{1,2}$
~~~~ Ziyuan Zhao$^{1}$
}

\date{}

\maketitle

\footnotetext[1]{School of Mathematical Sciences, University of Science and Technology of China, Hefei 230026, China.}
\footnotetext[2]{Yau Mathematical Sciences Center, Tsinghua University, Beijing 100084, China.}

\begin{abstract}
Dean conjectured three decades ago that every graph with minimum degree at least $k\ge 3$ contains a cycle whose length is divisible by $k$.
While the conjecture has been verified for $k\in \{3,4\}$, it remains open for $k\ge 5$.
A weaker version, also proposed by Dean, asserting that every $k$-connected graph contains a cycle of length divisible by $k$, was resolved by Gao, Huo, Liu, and Ma~\cite{gao2022unified} using the notion of admissible cycles.

In this paper, we resolve Dean’s conjecture for all $k\ge 6$.
In fact, we prove a stronger result by showing that every graph with minimum degree at least $k$ contains cycles of length $r \pmod k$ for every even integer $r$, unless every end-block belongs to a specific family of exceptional graphs, which fail only to contain cycles of length $2 \pmod k$.
We also establish a strengthened result on the existence of admissible cycles.
Our proof introduces two sparse graph families, called \emph{trigonal graphs} and \emph{tetragonal graphs}, which provide a flexible framework for studying path and cycle lengths and may be of independent interest.
\end{abstract}

\section{Introduction}\label{sec: intro}
\noindent The study of cycle lengths in graphs is a central and classical theme in graph theory; see \cite{bondy1996basic,verstraete2016extremal} for comprehensive treatments.
A particularly interesting problem in this area concerns the existence of cycles whose lengths are divisible by a given integer~$k$; see, for example, \cite{alon1989cycles,dean1988graphs,thomassen1988presence,thomassen1992even}.
The present work is motivated by the following beautiful conjecture of Dean (see Conjecture~7.4 in \cite{bondy1996basic}), which has remained open for three decades.

\vskip -1mm

\begin{conjecture}[Dean's conjecture]\label{conj:Dean}
    For every integer $k \geq 3$, every graph with minimum degree at least $k$ contains a cycle of length divisible by $k$.
\end{conjecture}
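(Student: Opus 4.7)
The plan is to reduce to the 2-connected setting and then construct many paths of controlled lengths modulo $k$ between a pair of fixed vertices, eventually gluing one such path to a second $u$-$v$ path to produce a cycle whose length is $0\pmod k$. First I would pass to a single end-block $B$ of $G$, since every vertex of $B$ other than possibly the cut-vertex attaching $B$ to the rest of $G$ still has degree at least $k$ inside $B$, so Dean's conjecture for $G$ follows from the statement for $B$. Inside $B$ the admissible-cycle framework of Gao, Huo, Liu, and Ma~\cite{gao2022unified} already delivers a cycle of length divisible by $k$ whenever $B$ is $k$-connected; the genuinely new task is to bridge the gap between ``$k$-connected'' and ``minimum degree $\ge k$''.

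The heart of the argument is the systematic construction of a pair of vertices $u,v\in B$ together with a family of $u$-$v$ paths that collectively cover many residues modulo $k$. Here I would exploit the two auxiliary structures advertised in the abstract, \emph{trigonal} and \emph{tetragonal} graphs: sparse rooted subgraphs engineered so that, between two designated endpoints, the set of realizable path lengths hits a prescribed arithmetic progression modulo $k$. The crucial step is to prove that any 2-connected graph with $\delta\ge k$ either contains a large enough trigonal or tetragonal substructure rooted at some pair $(u,v)$, or else falls into a short list of exceptional extremal graphs. The minimum degree hypothesis would enter precisely in the extension lemmas that grow such a substructure: at each step a vertex with at least $k$ neighbours gives enough room either to add a new ``ear'' of a prescribed parity or to finish the extraction.

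Once the structure and an external $u$-$v$ path of some fixed length are in hand, I would close each path in the family into a cycle and read off its length modulo $k$. For every even residue $r$ this should yield a cycle of length $r\pmod k$, with the only possible failure at $r\equiv 2\pmod k$ corresponding to the exceptional family. Since $0$ is even and $0\not\equiv 2\pmod k$ for every $k\ge 3$, the argument produces a cycle of length divisible by $k$ in every case, which is Dean's conjecture. The exceptional end-blocks themselves need to be analysed by hand and verified to still contain a cycle of length $0\pmod k$, so that the block-decomposition reduction closes up.

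The main obstacle I expect is the construction and extraction of large trigonal or tetragonal substructures inside a graph that may be quite irregular and only barely satisfies $\delta\ge k$. One must ensure that the substructure is large enough to cover \emph{every} even residue modulo $k$, while simultaneously pinpointing the tight extremal configurations where the extension arguments break down. Classifying these extremal configurations, proving that none of them obstructs the target residue $0\pmod k$, and stitching the local analysis back into the global block decomposition is, in my view, where the technical weight of the proof will lie.
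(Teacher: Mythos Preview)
Your high-level plan matches the paper's architecture, but the central mechanism is inverted. You aim to extract a trigonal or tetragonal substructure ``large enough'' that its internal paths already cover all residues; the paper instead takes a \emph{maximum} such subgraph $T$, which may be as small as a single triangle or a $C_4$. The point of maximality is not size but that it caps $\deg_T(v)$ for every $v\in V(G-T)$, so components of $G-T$ retain high minimum degree. One then obtains roughly $k-\deg_T(u)$ admissible paths between a chosen pair $(u,w)$ inside a component $M$ of $G-T$ (via Lemma~\ref{lem:admis path}) and concatenates them with the admissible paths that $T$ itself supplies between the attachment vertices (Propositions~\ref{prop: T-graph} and~\ref{prop:Q-graph}); it is the \emph{sum} of the two contributions that reaches $k$. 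Your ``extension lemmas that grow such a substructure'' and your expectation that the main obstacle is making the substructure large are not how the minimum-degree hypothesis actually enters.

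Two further structural gaps. First, the paper does not work directly in an arbitrary 2-connected block but further reduces to \emph{$k$-weak} graphs (Definition~\ref{weak}), essentially 3-connected with $\delta_2\ge k$ or one degree-2 vertex away from that; this reduction (Section~\ref{subsec:reduction}) disposes of 2-cuts by applying Lemma~\ref{lem:admis path} on each side and is a prerequisite for the core-subgraph arguments. Second, the choice between trigonal and tetragonal is dictated by whether the host graph is bipartite---trigonal in the non-bipartite case, tetragonal in the bipartite case---and the two are handled by separate analyses (Sections~5 and~6), not by trying both extractions and seeing which succeeds. Finally, the paper's arguments require $k\ge 6$ throughout; your assertion that the scheme yields Dean's conjecture for every $k\ge 3$ is unsupported, and indeed $k=5$ is explicitly deferred to future work.
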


\vskip -1mm

The minimum degree condition in Conjecture~\ref{conj:Dean} is best possible, as complete bipartite graphs $K_{k-1,n}$ for odd $k$ and $n\geq k-1$ show that it cannot be weakened to $k-1$.
The conjecture has been verified for $k=3$ and $k=4$ by Chen and Saito \cite{chen1994graphs} and by Dean, Lesniak, and Saito \cite{dean1993cycles}, respectively, but remains open for all $k \ge 5$.
A weaker version, also proposed by Dean \cite{dean1988graphs}, asserting that every $k$-connected graph contains a cycle of length divisible by $k$, was resolved by Gao, Huo, Liu, and Ma \cite{gao2022unified} via a unified approach to related cycle problems.

In this paper, we prove Conjecture~\ref{conj:Dean} for all $k \geq 6$. 
To state our main result, let $\mathcal{H}_k$ denote the family of all graphs $H_{k,n;t}$, where $2 \le t \le k < n$, obtained from the complete bipartite graph $K_{k,n}$ by deleting $k-t$ edges incident to a single vertex in the part of size~$n$.

\vskip -1mm

\begin{thm}[Main Theorem]\label{main 1}
	For every integer $k\ge 6$, let $G$ be a graph with minimum degree at least $k$.
    Then exactly one of the following holds:
    \begin{itemize}
        \item[(1)] $G$ contains a cycle of length $r \pmod k$ for every even integer $r$;
        \item[(2)] $k$ is odd, every end-block of $G$ is isomorphic to a graph in $\{K_{k+1}, K_{k,k}\}\cup \mathcal{H}_{k}$, and every non-end-block contains no cycle of length $2 \pmod k$;
        \item[(3)] $k$ is even, every end-block of $G$ is isomorphic to $K_{k+1}$, and every non-end-block contains no cycle of length $2 \pmod k$.
    \end{itemize}
\end{thm}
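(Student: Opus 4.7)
The plan is to prove Theorem~\ref{main 1} via block decomposition, reducing the problem to a structural classification of $2$-connected graphs with a prescribed minimum-degree condition. Since every cycle of $G$ lies in a single block, conclusion~(1) holds for $G$ as soon as some block of $G$ contains cycles of every even residue modulo $k$. I would therefore first establish the following block-level statement: if $B$ is a $2$-connected graph in which every vertex except possibly one has degree at least $k$ in $B$, then either $B$ contains a cycle of length $r \pmod k$ for every even integer $r$, or $B$ is isomorphic to $K_{k+1}$, or (when $k$ is odd) to $K_{k,k}$ or to a member of $\mathcal{H}_k$. Applied to end-blocks this directly yields the classification in (2) and (3); for non-end-blocks, if any such block contained a cycle of length $\equiv 2 \pmod k$, it could be combined with the cycles of all other even residues supplied by any end-block (which, being exceptional, misses only residue~$2$) to place $G$ in case~(1). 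Thus in cases (2) and (3) the non-end-blocks must also miss residue $2 \pmod k$, exactly as stated.

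For the block-level statement, the natural starting point is the admissible-cycle theorem of Gao--Huo--Liu--Ma~\cite{gao2022unified}, which produces in $B$ a long cycle $C$ together with many attached ears and chords. I would then feed this configuration into the trigonal and tetragonal graph framework developed earlier in the paper. Morally, each such substructure packages together several internally disjoint paths between two prescribed vertices whose lengths vary in a controlled arithmetic pattern, so that rerouting along them realises many residues modulo~$k$. The key technical lemma would therefore read: if $B$ admits a trigonal or tetragonal substructure of sufficient ``rank'' in $\mathbb{Z}/k\mathbb{Z}$, then $B$ contains cycles of every even residue modulo $k$. I expect its proof to parallel---and strengthen---the admissible-cycle argument, exploiting the extra flexibility afforded by the new substructures. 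This lemma, together with the strengthened admissibility result mentioned in the abstract, should handle all but an extremal family of graphs.

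The main obstacle will be the extremal step: classifying all $2$-connected graphs $B$ meeting the degree condition that nevertheless fail to admit such a substructure, and showing they are exactly the listed exceptional graphs. I expect the argument to split on the parity of $k$. When $k$ is even, a parity argument should rapidly force $B = K_{k+1}$, because any near-bipartite configuration admits an even ear whose length can be tuned to produce residue $2 \pmod k$. When $k$ is odd, bipartite configurations are genuinely obstructed, which leaves room for $K_{k,k}$ and the near-bipartite family $\mathcal{H}_k$ in addition to $K_{k+1}$; here the substantive work is a rigidity statement ruling out any other graph that ``almost'' behaves like one of these three. Verifying that the listed exceptions do miss residue $2 \pmod k$ is a direct cycle-length calculation, so the real content lies in the converse. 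I anticipate the borderline case $k=6$ to require additional attention, since several small substructures become degenerate and could either force unintended cycles of residue~$2$ or mimic the exceptional graphs closely enough to require a dedicated argument.
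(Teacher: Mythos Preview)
Your outer reduction---passing to end-blocks and then arguing that a non-end-block with a cycle of residue $2$ would, together with an exceptional end-block, force case~(1)---matches the paper's Section~4.1 and is correct. The gaps are inside the block-level statement.

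The paper does \emph{not} apply the trigonal/tetragonal machinery directly to an arbitrary $2$-connected graph $B$ with $\delta_2(B)\ge k$. That machinery (Sections~5--6) is developed only for \emph{$k$-weak} graphs, which are essentially $3$-connected (Definition~\ref{weak}); the arguments repeatedly exploit that every $2$-cut is either absent or isolates a single specified vertex $\theta$. The missing bridge is the reduction in Section~4.1: given a $2$-connected $B$ that is not $k$-weak, one finds a $2$-cut $S$ with two components of order $\ge 2$, applies the Chiba--Yamashita lemma (Lemma~\ref{lem:admis path}) on each side to get $2k-3$ admissible cycles, and then runs a separate parity/bipartite argument when those cycles all have odd length. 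Your plan has no mechanism for this step---``starting from the Gao--Huo--Liu--Ma admissible-cycle theorem and feeding into trigonal/tetragonal'' does not supply one, since that theorem requires $\delta\ge k+1$, and in any case the core subgraphs here are found directly as \emph{maximal} trigonal or tetragonal subgraphs, not extracted from an admissible-cycle configuration.

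Your proposed case split is also off. The paper does not split on the parity of $k$; it splits on whether $G$ (the $k$-weak graph) is bipartite or not, using trigonal cores in the non-bipartite case (Section~5) and tetragonal cores in the bipartite case (Section~6). The parity of $k$ enters only trivially at the very end: when $k$ is even, $K_{k,k}$ and the graphs in $\mathcal{H}_k$ already contain a cycle of length $k+2\equiv 2\pmod k$, so they are not exceptional---no ``near-bipartite ear'' argument is needed. Your intuition that the bipartite case carries the extra exceptional family is correct, but the organizing principle is bipartiteness of the graph, not parity of $k$.
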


Observe that every graph in $\{K_{k+1}, K_{k,k}\} \cup \mathcal{H}_k$ contains cycles of all even lengths modulo~$k$, with the sole exception of length $2 \pmod k$.
Consequently, we obtain the following immediate corollary, 
which resolves Conjecture~\ref{conj:Dean} affirmatively for all $k \ge 6$.\footnote{The case $k=5$ requires separate arguments and we will address this special case in forthcoming work.}

\begin{corollary}
    Let $k \ge 6$ be an integer. Then for every even integer $r \not\equiv 2 \pmod k$, every graph with minimum degree at least $k$ contains a cycle of length $r \pmod k$.
\end{corollary}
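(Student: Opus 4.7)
The plan is to deduce the corollary directly from Theorem~\ref{main 1}, using the observation immediately preceding its statement about the cycle spectra of the exceptional family $\{K_{k+1},K_{k,k}\}\cup\mathcal H_k$. First I would apply the Main Theorem to $G$. If Case~(1) occurs, the conclusion is immediate, since $G$ already contains cycles of length $r\pmod k$ for \emph{every} even~$r$, and in particular for the one we care about.

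In Cases~(2) and~(3), every end-block of $G$ is isomorphic to some member of $\{K_{k+1},K_{k,k}\}\cup\mathcal H_k$, with $K_{k,k}$ and $\mathcal H_k$ arising only when $k$ is odd. Since any end-block is a subgraph of $G$, it suffices to exhibit, inside a single such end-block, a cycle of length $r\pmod k$. I would then verify, for each of the three types of exceptional graph, that the set of cycle-length residues modulo~$k$ is exactly $\mathbb Z_k\setminus\{2\}$. For $K_{k+1}$ the cycle lengths $\{3,4,\ldots,k+1\}$ reduce to the residues $\{0,1,3,4,\ldots,k-1\}$, missing only~$2$; this handles Case~(3) (where $k$ is even) together with the $K_{k+1}$-subcase of Case~(2). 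For $K_{k,k}$ with $k$ odd, the cycle lengths $\{4,6,\ldots,2k\}$ reduce, under the bijection $j\mapsto 2j\pmod k$ (which is a bijection because $\gcd(2,k)=1$), to all residues in $\mathbb Z_k$ except the one corresponding to the absent index $j=1$, namely~$2$. Finally, any $H_{k,n;t}\in\mathcal H_k$ contains $K_{k,n-1}\supseteq K_{k,k}$ as a subgraph because $n-1\ge k$, and so inherits the same cycle spectrum. In every case the hypothesis $r\not\equiv 2\pmod k$ is precisely what guarantees that the desired residue is present.

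The expected main obstacle is essentially nil, since Theorem~\ref{main 1} does all the heavy lifting; the only genuine check is the short residue-arithmetic verification that the missing residue in $K_{k,k}$ (for odd $k$) is indeed~$2$ and not some other value. Once one writes down the bijection $j\mapsto 2j$ on $\mathbb Z_k$ and observes that the absent index is $j=1$, the rest of the corollary is a bookkeeping exercise.
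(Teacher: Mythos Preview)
Your proposal is correct and follows precisely the approach the paper indicates: apply Theorem~\ref{main 1}, then use the observation (stated just before the corollary) that every graph in $\{K_{k+1},K_{k,k}\}\cup\mathcal H_k$ realizes all even residues modulo~$k$ except~$2$. The paper treats this as an immediate consequence and does not spell out the residue verification for the exceptional graphs; your explicit checks for $K_{k+1}$, $K_{k,k}$ (via the bijection $j\mapsto 2j$ on $\mathbb Z_k$ for odd~$k$), and $H_{k,n;t}\supseteq K_{k,k}$ are exactly the details one would fill in.
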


The general study of cycle lengths modulo~$k$ dates back to the 1970s, initiated by the work of Burr and Erd\H{o}s \cite{Erdos1976}.
For integers $\ell$ and $k$ with even integers in the residue class $\ell \pmod k$, let $c_{\ell,k}$ denote the smallest constant $c$ such that every $n$-vertex graph with at least $cn$ edges contains a cycle of length $\ell \pmod k$.
Erd\H{o}s \cite{Erdos1976} conjectured that $c_{\ell,k}$ exists for all $\ell$ when $k$ is odd,
and this was later confirmed by Bollob\'as \cite{bollobas1977cycles}.
Thomassen \cite{thomassen1983graph} further improved the bound to $c_{\ell, k} \le 4k(k+1)$ for all integers $k$ and even $\ell$.
In a subsequent work \cite{thomassen1988presence}, Thomassen provided a polynomial-time algorithm for finding a cycle of length divisible by $k$.
Resolving a conjecture of Thomassen \cite{thomassen1983graph}, Gao, Huo, Liu, and Ma \cite{gao2022unified} showed that for any $k \ge 3$, every graph with minimum degree at least $k+1$ contains cycles of all even lengths modulo~$k$ (the case of even $k$ was previously proved in \cite{liu2018cycle}).
From an extremal perspective, Sudakov and Verstra\"ete \cite{sudakov2017extremal} established a striking relation that for all $3\leq \ell<k$, the constant $c_{\ell,k}$ is upper bounded by the $k$-vertex Tur\'an number of $C_\ell$. 
To date, exact values of $c_{\ell, k}$ are known for very few pairs $(\ell, k)$; 
we refer to \cite{chen1994graphs,bai2025graphs, Dean1991Cycles,GYORI20267, gao2024two}. 
Diwan \cite{diwan2024cycles} extended this study to weighted graphs.

An effective approach to obtaining cycles with prescribed residues is to find a collection of {\it admissible} cycles, where the lengths of the cycles (or paths) form an arithmetic progression with common difference~$1$ or~$2$.
This notion was first introduced in \cite{gao2022unified} and has since been applied in a number of works on cycle length problems; see, for example,
\cite{lin2025strengthening,gao2021strengthening,gao2024two,chiba2023minimum,li2025cycles}.
A central result in this line of research, conjectured by Liu and Ma \cite{liu2018cycle} and proved by Gao et al. \cite{gao2022unified}, asserts that every graph with minimum degree at least $k+1$ contains $k$ admissible cycles.
We generalize this result by relaxing the minimum degree condition for all integers $k \ge 7$.

\begin{thm}\label{thm:exist admis}
    Let $k\ge 7$ and let $G$ be a graph with minimum degree at least $k$.
    Then  $G$ contains $k$ admissible cycles, unless every end-block of $G$ is isomorphic to a graph in $\{K_{k+1}, K_{k,k}\}\cup \mathcal{H}_{k}$.
\end{thm}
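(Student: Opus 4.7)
The plan is to reduce the problem to analyzing a single end-block and then to extract an arithmetic progression of cycle lengths from the trigonal or tetragonal substructure used in the proof of Theorem~\ref{main 1}. Since every cycle lies inside some block, it suffices to exhibit $k$ admissible cycles inside an end-block $B$ of $G$ that is not isomorphic to a graph in $\{K_{k+1}, K_{k,k}\} \cup \mathcal{H}_{k}$ (otherwise, the conclusion of the theorem is vacuous). Note that $B$ is 2-connected and that every non-cut-vertex of $B$ has its full $G$-degree, so at most one vertex of $B$, namely the cut-vertex, can have degree below $k$ in $B$.

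My proposal is then to run the machinery developed for Theorem~\ref{main 1} on $B$. That theorem produces cycles of every even residue modulo $k$, and it does so by identifying a trigonal or tetragonal substructure inside $B$. The strength of these sparse frameworks is that they automatically supply cycles whose lengths form a long arithmetic progression with common difference~$1$ (trigonal) or~$2$ (tetragonal), which is exactly the definition of admissibility. The task is therefore to argue that, whenever $B$ falls outside the exceptional family, the relevant trigonal/tetragonal substructure supports an AP of length at least $k$, giving $k$ admissible cycles directly.

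To handle the possibly irregular cut-vertex of $B$, I would either attach a gadget (such as a disjoint copy of $K_{k+1}$) at the cut-vertex so as to raise its degree without creating new cycles through it, or else adapt the framework so that its key steps only invoke the degree condition at non-cut-vertices. This kind of local surgery is standard in block-decomposition arguments and should not affect the substructural conclusion.

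The main obstacle is the boundary analysis: when the trigonal/tetragonal substructure yields only $k-1$ admissible cycles, one must show that $B$ is forced to be isomorphic to $K_{k+1}$, $K_{k,k}$, or some $H_{k,n;t}$. This is tight, since each exceptional graph does contain exactly $k-1$ admissible cycles, so the structural characterization must pin down these boundary cases precisely. The slightly stronger hypothesis $k \geq 7$ (compared to the $k \geq 6$ of Theorem~\ref{main 1}) presumably reflects a counting estimate that becomes sharp for small $k$, since producing $k$ cycles in an explicit AP demands a bit more room than producing cycles of every even residue.
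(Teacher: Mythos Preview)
Your proposal is correct and follows essentially the same route as the paper: reduce to an end-block $B$ (which is 2-connected with $\delta_2(B)\ge k$), then rerun the trigonal/tetragonal machinery from Sections~5--6, checking that for $k\ge 7$ each branch of that argument actually produces $k$ cycles whose lengths form a single $1$- or $2$-AP rather than merely covering all even residues; the paper records this as Corollary~\ref{cor: non-bipartite admis} and Theorem~\ref{thm:main bipartite admis}, and then inserts an explicit 2-cut reduction via Lemma~\ref{lem:admis path} to pass from 2-connected to $k$-weak before invoking those results. One caveat: your $K_{k+1}$-gadget does not work as written, since gluing at a single vertex destroys 2-connectivity and the machinery could then return cycles living entirely in the gadget; your second alternative---relaxing the degree hypothesis at the one possible low-degree vertex---is exactly what the paper does via the $\delta_2$/$k$-weak framework.
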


Our proofs employ several novel techniques, distinct from those in \cite{liu2018cycle,gao2022unified}, as discussed below.

\vskip 2mm

{\noindent \bf Proof Overview and New Tools.}
Our proof builds upon the approach of Gao et al.~\cite{gao2022unified}, which refine earlier results of Fan~\cite{fan2002distribution} and Liu–Ma~\cite{liu2018cycle}.
Specifically, the idea is to find a collection of $k$ admissible paths between any two given vertices $x,y$ in a suitable 2-connected graph $G$.
The key step is to identify a specified subgraph $H$ (called the {\it core} subgraph) such that, for some positive integer $t$, the following typically hold:
\begin{itemize}
\item[(a)] the core subgraph $H$ satisfies a robust spreading property, namely, $H$ contains $t$ admissible paths between many pairs of vertices; and
\item[(b)] the graph obtained from $G$ by contracting or deleting $V(H)$ satisfies suitable minimum degree conditions and yields $k+1-t$ admissible paths between prescribed pairs of vertices.
\end{itemize}
By concatenating these paths appropriately, one obtains the desired $k$ admissible paths in $G$.

In all previous works, such as Gao et al.~\cite{gao2022unified}, the core subgraphs were typically dense structures:
either complete graphs, complete bipartite graphs, or graphs closely resembling them.
The main technical contribution of this work is to go beyond this paradigm by introducing two new families of core graphs that can be very sparse (with average degree at most four; they can even be outer-planar) while still satisfying properties (a) and (b).
We call these families \textit{trigonal graphs} and \textit{tetragonal graphs}, which handle the non-bipartite and bipartite cases, respectively; a detailed discussion is provided in Section~3.
We would like to point out that the high-connectivity condition was essential in the proof of the weaker version of Dean's conjecture in \cite{gao2022unified}.
Owing to their sparsity, these core subgraphs can be found in graphs with lower connectivity, providing the crucial ingredient for our proof.
We believe that these two families are of independent interest and may also find applications in related problems.

\vskip 2mm

{\noindent \bf A Related Result.}
During a conference in Xi'an in June 2025, we learned that Bai, Grzesik, Li, and Prorok~\cite{bai2025cycle} independently obtained results related to our Theorems~\ref{main 1} and~\ref{thm:exist admis}.
More precisely, among other results, they~\cite{bai2025cycle} proved corresponding versions of Theorems~\ref{main 1} and~\ref{thm:exist admis} under the additional assumption that the graph is 2-connected, for every integer $k \ge 4$.
Their proofs are mainly based on the approach of Gao et al.~\cite{gao2022unified}.
Beyond the difference in proofs, we would like to emphasize that, in order to derive Conjecture~\ref{conj:Dean}, it is crucial to establish results of the type given in our main theorem that remove any connectivity assumption.

\vskip 2mm

{\noindent \bf Paper Organization.}
The remainder of this paper is organized as follows.  
In Section~2, we present the necessary definitions and preliminary results.  
Section~3 introduces two key families of graphs, namely \textit{trigonal graphs} and \textit{tetragonal graphs}, and establishes properties of paths within them.  
In Section~4, we introduce \textit{$k$-weak} graphs, reduce the proof of Theorem~\ref{main 1} to this class (see Theorem~\ref{thm:weak graph}), and establish several structural lemmas.  
The proof of Theorem~\ref{thm:weak graph} is then split between Sections~5 and~6, according to whether the host graph is non-bipartite or bipartite.  
At the end of Section~6, we also prove Theorem~\ref{thm:exist admis}.

\section{Preliminaries}
\subsection{Notations}
\noindent All graphs in this paper are finite, undirected, and simple. We use standard graph theory notation and terminology; see \cite{diestel2024graph}. 
The set of neighbors of a vertex $v$ in $G$ is denoted by $N_{G}(v)$, and the degree of $v$ is denoted by $\deg_{G}(v)=|N_{G}(v)|$. 
For $A,B\subseteq V(G)$ and $v\in V(G)$, we denote by $N_{A}(v)=N_G(v)\cap A$, $N_A(B)=\bigcup_{b\in B} N_A(b)$ and $\deg_{A}(v)=|N_{A}(v)|$, where we omit the subscript $G$, as the host graph $G$ will be clear in the context.
For a subgraph $H\subseteq G$, we simplify $N_{V(H)}(v)$ to $N_H(v)$ and $\deg_{V(H)}(v)$ to $\deg_H(v)$.
For a positive integer $k$, we write $\delta_k(G)$ for the {\it $k$-th minimum degree} of $G$, and abbreviate the minimum degree $\delta_1(G)$ to $\delta(G)$.
Recall the collection of graphs $\mathcal{H}_{k}=\{H_{k,n;t}:2\leq t\leq k<n\}$, we have $\delta(H_{k,n;t})=t$, $\delta_2(H_{k,n;t})=k$, and $H_{k,n;k}\simeq K_{k,n}$.
For $U\subseteq V(G)$, $G[U]$ denotes the subgraph of $G$ induced by $U$, and let $G-U:=G[V(G)\setminus U]$.
If $U=\{u\}$, we write $G-u$ for $G-\{u\}$.
We say that a graph $G'$ is obtained from $G$ by contracting $U$ into a vertex $u$ if $V(G')=(V(G)\setminus U)\cup \{u\}$ and $E(G')=E(G-U)\cup \{uv:v\in N_{G}(U)\}$.

We say that a vertex $v\in V(G)$ is a {\it cut-vertex} of $G$ if $G-v$ contains more components than $G$. 
A {\it block} in $G$ is a maximal connected subgraph that has no cut-vertex of its own (i.e., it is a maximal 2-connected subgraph, a bridge, or an isolated vertex).
An {\it end-block} of $G$ is a block containing at most one cut-vertex of $G$.
Note that if a connected graph $G$ of order at least three is not 2-connected, then $G$ contains at least two end-blocks.

For two positive integers $k\leq \ell$, we define $[k,\ell]=\{k,k+1,\dots, \ell\}$ and $[k]=[1,k]$.
For two integer sets $X$ and $Y$, we denote their set addition by $X+Y:=\{x+y:x\in X,y\in Y\}$.
We also write $k+X:=\{k+x:x\in X\}$.
Given two vertex subsets $A,B\subseteq V(G)$, a path $P=x_1\cdots x_t$ in $G$ is called an \textit{$(A,B)$-path} if $V(P)\cap A=\{x_1\}$ and $V(P)\cap B=\{x_t\}$.
If an $(A,B)$-path consists of a single edge, we call it an $(A,B)$-edge; we write $E(A,B)$ for the set of all $(A,B)$-edges.
We abbreviate $(\{a\},B)$-paths to $(a,B)$-paths and $(V(H),B)$-paths to $(H,B)$-paths if $H$ is a subgraph of $G$. 
For a subgraph $H\subseteq G$ and distinct vertices $u,v\in V(G)$, we write $\mathcal{P}_{u,v}^{H}$ for the set of all $(u,v)$-paths whose internal vertices belong to $H$, and $\Pset^H_{u,v}$ for the set of lengths of these paths. 
For vertices $x,y\in V(H)$, let $\mathrm{dist}_H(x,y)$ denote the length of a shortest $(x,y)$-path in $H$.

For distinct vertices $u,v\in V(G)$, let $G+uv$ (resp. $G-uv$) denote the graph with vertex set $V(G)$ and edge set $E(G)\cup \{uv\}$ (resp. $E(G)\setminus \{uv\}$). 
We refer to the triple $(G,u,v)$ as a {\it rooted graph} to implicitly fix two distinct vertices $u,v\in V(G)$.
The {\it minimum degree} of a rooted graph $(G,u,v)$, denoted by $\delta(G,u,v)$, is the minimum degree in $G$ of vertices in $V(G)\setminus\{u,v\}$. 
The {\it second minimum degree}, $\delta_2(G,u,v)$, is defined analogously.
We say that the rooted graph $(G,u,v)$ is 2-connected if $G+uv$ is 2-connected. 
For a connected graph $M$ and a block $B$ of $M$, we write $\mathrm{Cut}(B)$ for the set of cut-vertices of $M$ contained in $B$; we typically omit the notation for $M$ when it is clear from the context.
For brevity, we write a \textit{$k$-$\AP$} for an arithmetic progression with common difference~$k$.

We will frequently use the notions of consecutive and admissible sets for integers, paths, and cycles, which we now define formally.
\begin{dfn}
A set of integers is called \textit{consecutive} (resp. \textit{admissible}) if its elements form a $1$-$\AP$ (resp. $1$-$\AP$ or $2$-$\AP$).
A family of paths or cycles in a graph $G$ is called \textit{consecutive} (resp. \textit{admissible}) if the set of their lengths is consecutive (resp. admissible).
\end{dfn}

Throughout, we use the following basic fact to estimate the size of an admissible or consecutive cycle family obtained by concatenating paths from two path families.
\begin{obs}\label{obs:addition}
Let $X$ and $Y$ be admissible integer sets of size $s$ and $t$, respectively. Then:
\begin{itemize}
\item[(1)] $X + Y$ is admissible and has size at least $s + t - 1$.
\item[(2)] If either $X$ or $Y$ is consecutive, then so is $X + Y$.
\item[(3)] If $X$ is a 2-$\AP$, $Y$ is consecutive, and $t \geq 2$, then $X + Y$ is of size at least $2s + t - 2$.
\end{itemize}
\end{obs}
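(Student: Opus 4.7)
The plan is to normalize both $X$ and $Y$ by translation (which preserves cardinality and the properties of being consecutive or a $2$-$\AP$), then carry out a short case analysis on the common differences. After translation one may write $X = \{d_1 i : 0 \le i \le s-1\}$ and $Y = \{d_2 j : 0 \le j \le t-1\}$ with $d_1, d_2 \in \{1,2\}$, so that
\[
    X + Y = \{\, d_1 i + d_2 j : 0 \le i \le s-1,\ 0 \le j \le t-1 \,\}.
\]
All three conclusions are invariant under translation, so it suffices to analyze this normalized sumset in the four possible $(d_1,d_2)$ subcases.

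For (1), I split on $(d_1, d_2)$. When $d_1 = d_2 = d$, the sumset equals $d \cdot \{0, 1, \ldots, s+t-2\}$, a $d$-$\AP$ of size exactly $s + t - 1$. In the mixed case $\{d_1, d_2\} = \{1, 2\}$ with $s, t \ge 2$, the pair $\{0, 1\}$ on the $1$-$\AP$ side, together with the full $2$-$\AP$ on the other side, already realizes every integer in $[0, \max(X+Y)]$; hence $X + Y$ is consecutive and of size at least $s + t - 1$. Degenerate singleton cases reduce to $X+Y$ being a translate of the non-trivial side, and the claim is then immediate.

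Part (2) follows from the same observation used in the mixed subcase of (1): once the consecutive side has size at least two, the embedded pair $\{0,1\}$ forces $X + Y$ itself to be consecutive. For (3), with $X$ a $2$-$\AP$ of size $s$ and $Y$ consecutive of size $t \ge 2$, the sumset lies in $[0,\, 2(s-1) + (t-1)]$, an interval containing $2s + t - 2$ integers. I would verify that every such integer $m$ is hit by choosing $i = \min(\lfloor m/2 \rfloor,\, s-1)$ and $j = m - 2i$: for $m \le 2(s-1)$ this gives $j \in \{0, 1\} \subseteq \{0, \ldots, t-1\}$, while for $m > 2(s-1)$ we obtain $i = s - 1$ and $j = m - 2(s-1) \in [1, t-1]$. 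Thus $X + Y$ is consecutive of size exactly $2s + t - 2$.

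The proof is essentially bookkeeping and contains no real obstacle. The one point that actually uses a hypothesis is the availability of both parities on the consecutive side (via $s \ge 2$ in the mixed case of (1) and in (2), or $t \ge 2$ in (3)); this is precisely what lets us cover every integer in the range of $X + Y$. The main care needed is in organizing the case split cleanly and in checking the boundary behaviour at $m = 0$ and $m = 2s+t-3$ in (3), along with the trivial degenerate cases where one of the sets is a singleton.
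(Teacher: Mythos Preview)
The paper states this observation without proof, so there is no argument to compare against; your normalize-and-case-split approach is correct and is the natural way to see it.

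One small wrinkle: in the mixed subcase of (1) you assert that the pair $\{0,1\}$ on the $1$-$\AP$ side together with the full $2$-$\AP$ already realizes every integer in $[0,\max(X+Y)]$. That is not quite true when the $1$-$\AP$ side has more than two terms: for instance with $X=\{0,2,4\}$ and $Y=\{0,1,2,3\}$ one has $\{0,1\}+X=[0,5]$, which misses $6,7\in X+Y$. The full interval is recovered only once you also use the larger elements of $Y$, exactly via the choice $i=\min(\lfloor m/2\rfloor,\,s-1)$, $j=m-2i$ that you spell out carefully in (3). Simply invoking that computation for the mixed case of (1) and for (2) fixes the exposition. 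Your remark that (2) tacitly needs the consecutive side to have size at least two is well taken; the singleton case is a harmless degeneracy that never arises in the paper's applications.
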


\subsection{Some useful lemmas}
\noindent We use the following result by Chiba and Yamashita~\cite{chiba2023minimum}, which provides a sufficient condition for the existence of $k$ admissible paths in a 2-connected rooted graph.

\begin{lem}\textup{(\cite{chiba2023minimum})}\label{lem:admis path}
	Let $k$ be a positive integer. If $(G,x,y)$ is a $2$-connected rooted graph with $|G|\geq 4$ and $\delta_{2}(G,x,y)\ge k+1$, then there exist $k$ admissible $(x,y)$-paths in $G$.
\end{lem}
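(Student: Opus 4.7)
The plan is to prove this by induction on $|V(G)|$, combined with a case analysis on the structure near the roots $x, y$. The starting observation is that $(G,x,y)$ being $2$-connected (i.e., $G+xy$ is $2$-connected) guarantees at least two internally disjoint $(x,y)$-paths in $G$, providing a starting family of two paths, while the degree condition $\delta_2(G,x,y) \geq k+1$ supplies the ``room'' needed to expand this family to $k$ admissible ones.

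For the base cases (small $|V(G)|$), the degree condition forces $G$ to be essentially complete on $V(G)\setminus\{x,y\}$; in this regime, $k$ admissible $(x,y)$-paths of consecutive lengths $2, 3, \ldots, k+1$ can be exhibited directly by ordering the internal vertices differently. For the inductive step, I would take a longest $(x,y)$-path $P = v_0 v_1 \cdots v_m$ (with $v_0=x$, $v_m=y$) and focus on a pivot vertex $u \in V(P)\setminus\{x,y\}$ with $\deg_G(u) \geq k+1$; such a vertex exists because at most one internal vertex can fall below this threshold. If $u$ has neighbors outside $V(P)$, one constructs detour paths whose lengths differ from $|P|$ by controlled small amounts and iterates on the appropriate side via a smaller rooted subgraph. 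Otherwise, all $k+1$ neighbors of $u$ lie on $P$, and chord-based rerouting---combined with off-path ears supplied by $2$-connectivity---yields alternate $(x,y)$-paths spanning an admissible range of lengths.

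The main obstacle is ensuring that the $k$ paths produced form an \emph{admissible} family (a $1$-$\AP$ or $2$-$\AP$ of lengths), rather than merely $k$ distinct lengths with arbitrary gaps. Here the second-minimum-degree condition is essential: apart from the at most one low-degree vertex, every internal vertex has at least $k+1$ neighbors, so a pigeonhole argument on the positions of chord-endpoints along $P$ forces gaps of size at most $2$ between consecutive path-length values. The technical delicacies are (i) handling the possibly exceptional low-degree vertex without losing admissibility, (ii) maintaining both $2$-connectivity of the rooted graph and the second-minimum-degree bound under the inductive deletion or contraction, and (iii) fusing length-sets coming from off-path detours with those from on-path chords into a single admissible progression (for which the basic set-addition facts of Observation~\ref{obs:addition} suffice).
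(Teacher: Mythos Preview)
The paper does not prove this lemma: it is quoted verbatim as a result of Chiba and Yamashita~\cite{chiba2023minimum} and used as a black box throughout. So there is no ``paper's own proof'' to compare against.

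As for your sketch itself: the overall shape (induction on $|V(G)|$, take a longest $(x,y)$-path, pivot on a high-degree internal vertex, reroute through chords or off-path ears) is the standard machinery behind results of this type, and it is indeed how Chiba--Yamashita and the earlier Gao--Huo--Liu--Ma arguments proceed. But what you have written is a plan, not a proof, and the points you flag as ``technical delicacies'' are exactly where the real work lies. In particular, item~(iii) --- fusing the length-sets from off-path detours with those from on-path chords into a single $1$-AP or $2$-AP --- is not a matter of applying Observation~\ref{obs:addition}: that observation handles sums of two admissible sets, whereas here you need the \emph{union} of two families of path lengths to be admissible, which requires controlling their overlap and parity simultaneously. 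Similarly, item~(ii) is nontrivial because deleting a vertex can destroy the $2$-connectivity of the rooted graph and can also drop $\delta_2$ below $k+1$; the actual proofs in the literature handle this by carefully choosing which vertex to delete (or by passing to a suitable block) rather than by a generic inductive deletion. Your outline is on the right track, but as written it does not constitute a proof.
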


Using this lemma, Chiba and Yamashita proved the following theorem on admissible cycles.

\begin{thm}\textup{(\cite{chiba2023minimum})}\label{Thm:32}
    For any integer $k\ge 2$, every graph $G$ on at least three vertices, having at most two vertices of degree less than $k+1$, contains $k$ admissible cycles.
\end{thm}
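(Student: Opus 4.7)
The plan is to reduce to the $2$-connected case and then apply Lemma~\ref{lem:admis path}, closing the resulting admissible paths into admissible cycles with a single edge. Note first that the hypothesis forces $|V(G)|\ge k+2$, since on at most $k+1$ vertices every vertex would have degree at most $k$, contradicting $|L|\le 2$ where $L:=\{v:\deg_G(v)<k+1\}$.

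First I would handle the case where $G$ itself is $2$-connected. The idea is to choose adjacent vertices $x,y$ that between them cover as many vertices of $L$ as possible: if $L=\emptyset$, pick any edge; if $L=\{z\}$, take $x=z$ and $y$ any neighbor of $z$; if $L=\{z_1,z_2\}$ with $z_1z_2\in E(G)$, take $\{x,y\}=L$; and if $L=\{z_1,z_2\}$ with $z_1z_2\notin E(G)$, take $x=z_1$ and $y\in N_G(z_1)\setminus\{z_2\}$, which is nonempty by $2$-connectivity. In every case at most one vertex of $V(G)\setminus\{x,y\}$ is low-degree, so $\delta_2(G-xy,x,y)\ge k+1$. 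Since $(G-xy)+xy=G$ is $2$-connected and $|V(G)|\ge 4$, Lemma~\ref{lem:admis path} applied to $(G-xy,x,y)$ yields $k$ admissible $(x,y)$-paths in $G-xy$, each of length at least $2$; adjoining the edge $xy$ to each produces $k$ admissible cycles in $G$.

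For the general case the goal is to locate a block $B$ of $G$ with $|V(B)|\ge 4$ having at most two vertices of $B$-degree less than $k+1$; the argument above then applies to $B$. The key observation is that any non-cut vertex $u$ of a block satisfies $\deg_B(u)=\deg_G(u)$, and the non-cut-vertex sets of distinct blocks are pairwise disjoint, so the total number of non-cut low-degree vertices across all blocks of $G$ is at most $|L|\le 2$. A short case analysis on the block-cut tree, according as $G$ has three or more end-blocks, exactly two end-blocks (so the block-cut tree is a path), or is $2$-connected, then produces the required $B$: for an end-block one adds at most one low-degree cut vertex to the non-cut contribution, while for an internal block of a path-shaped block-cut tree the only possible low-degree vertices are its two cut vertices. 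A $2$-connected block with at most two low-degree vertices automatically has $|V(B)|\ge 4$, since in $K_3$ all three vertices have degree $2<k+1$.

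I expect the main obstacle to be this block-selection step, particularly when both end-blocks are pendant edges and hence absorb both vertices of $L$. In that subcase every non-cut vertex outside these two pendant edges must be high-degree, so one has to exhibit an internal block with $|V(B)|\ge 4$. Ruling out the possibility that every internal block is also a pendant edge (which would force $G$ to be a short path with strictly more than two low-degree vertices) is exactly where the lower bound $|V(G)|\ge k+2$ established at the outset is used.
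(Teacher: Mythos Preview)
The paper does not give its own proof of this theorem; it is quoted from \cite{chiba2023minimum} with only the remark that it follows from Lemma~\ref{lem:admis path}. Your proposal carries out exactly that derivation, and the overall strategy---reduce to a $2$-connected block with at most two low-degree vertices, pick an edge $xy$ absorbing as much of $L$ as possible, apply Lemma~\ref{lem:admis path} to $(G-xy,x,y)$, and close the resulting paths with $xy$---is correct and is presumably what Chiba and Yamashita do.

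Two minor remarks on the write-up. First, the block-selection argument is cleaner than you suggest: once you have a block $B$ with at most two vertices of $B$-degree below $k+1$, then as soon as $|V(B)|\ge 3$ there is at least one vertex of $B$-degree $\ge k+1$, forcing $|V(B)|\ge k+2$; so the only obstruction is $|V(B)|=2$. Hence you just need \emph{some} candidate block that is not a single edge, and ruling out the all-$K_2$ configuration (which makes $G$ a path with $|L|=|V(G)|\ge 3$) requires only the hypothesis $|V(G)|\ge 3$, not the stronger bound $|V(G)|\ge k+2$ you invoke at the end. Second, your phrase ``every internal block is also a pendant edge'' should read ``is also a single edge''; internal blocks in a path-shaped block-cut tree have both endpoints as cut-vertices, so they are never pendant. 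Neither point affects correctness.
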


We then present a technical lemma regarding the distance between neighbors of vertices on a cycle. This result operates independently in general graphs but is essential for linking paths to the core subgraphs in the subsequent sections.

\begin{lem}\label{lem:short dist}
	Let $C$ be a cycle of length $s\ge 3$ in graph $G$, and let $u_1, u_2\in V(G-C)$ be two distinct vertices such that $\deg_C(u_1) > 0$ and $\deg_C(u_2) > 0$. 
	If neither $N_{C}(u_1)$ nor $N_{C}(u_2)$ contains two consecutive vertices of $C$, then the following statements hold.
	\begin{itemize}
		\item[(1)] If $N_C(u_1)\cap N_C(u_2)=\emptyset$, then there exist $v_1\in N_C(u_1)$ and $v_2\in N_C(u_2)$ such that $1\leq \mathrm{dist}_{C}(v_1,v_2)\leq \max\{1,\lfloor s/2\rfloor+2-\deg_C(u_1)-\deg_C(u_2)\}$;
		\item[(2)] If $\deg_{C}(u_{1})\ge 2$, then there exist $v_1\in N_C(u_1)$ and $v_2\in N_C(u_2)$ such that $1\leq\mathrm{dist}_{C}(v_1,v_2)\leq \max\{3,s/\deg_C(u_1),\lfloor s/2\rfloor+3-\deg_C(u_1)-\deg_C(u_2)\}$.
	\end{itemize}
\end{lem}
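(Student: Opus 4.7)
My plan is to prove both parts by contradiction through an arc-counting argument on $C$. Label every vertex of $N_C(u_1) \cup N_C(u_2)$ on $C$ as $1$, $2$, or $b$ according as it lies in $N_C(u_1) \setminus N_C(u_2)$, $N_C(u_2) \setminus N_C(u_1)$, or the intersection, and set $p_1, p_2, p_b$ to be the respective counts, so that $d_i := \deg_C(u_i) = p_i + p_b$. The arcs between consecutive labelled vertices partition the cycle. The no-two-consecutive hypothesis on each $N_C(u_i)$ forces arcs whose endpoints share the same non-$b$ label to have length $\ge 2$, while the negation of the conclusion forces arcs whose endpoints form a valid $(v_1,v_2)$-pair to have length $\ge D+1$ for the threshold $D$ in question.

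For part~(1), $p_b = 0$ and I set $D := \max\{1, \lfloor s/2\rfloor + 2 - d_1 - d_2\}$, assuming every $(v_1,v_2) \in N_C(u_1) \times N_C(u_2)$ has $\mathrm{dist}_C(v_1,v_2) > D$. Then $(1,1)$- and $(2,2)$-arcs have length $\ge 2$, while $(1,2)$-arcs have length $\ge D+1$. Counting $1$-endpoints yields $\alpha = d_1 - \gamma/2$ where $\gamma$ is the number of $(1,2)$-arcs, and symmetrically $\beta = d_2 - \gamma/2$; cyclic alternation forces $\gamma \ge 2$. Summing arc lengths gives
\[
s \;\ge\; 2(d_1 - \gamma/2) + 2(d_2 - \gamma/2) + (D+1)\gamma \;=\; 2(d_1+d_2) + (D-1)\gamma \;\ge\; 2(d_1+d_2) + 2(D-1),
\]
forcing $D \le \lfloor s/2\rfloor + 1 - d_1 - d_2$; both branches of the maximum defining $D$ are incompatible with this upper bound.

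For part~(2), if $p_b = 0$ then part~(1) gives a bound stronger than the one claimed, so assume $p_b \ge 1$ and set $D_2 := \max\{3, s/d_1, \lfloor s/2\rfloor + 3 - d_1 - d_2\}$. Under the negated conclusion, every arc with at least one $b$-endpoint or of type $(1,2)$ has length $\ge D_2 + 1$. I would split into three sub-cases:
\begin{itemize}
\item $p_1, p_2 \ge 1$: the $(1,1)$- and $(2,2)$-blocks are bounded by crossing arcs, so $\alpha \le p_1 - 1$ and $\beta \le p_2 - 1$, producing at least $p_b + 2$ crossing arcs; this yields $s \ge 2(d_1+d_2) + 2D_2 - 2 + (D_2-3)p_b$, and the $\lfloor s/2\rfloor$-term in $D_2$ delivers the contradiction.
\item Exactly one of $p_1, p_2$ vanishes (say $p_2 = 0$): the unique $1$-block is bounded, giving $s \ge 2d_1 + (D_2 - 1)(d_2 + 1)$; the $s/d_1$ and $\lfloor s/2\rfloor + 3 - d_1 - d_2$ terms of $D_2$ together cover every regime of $(d_1, d_2, s)$.
\item $p_1 = p_2 = 0$: every arc is a $(b,b)$-arc of length $\ge D_2 + 1$, so $s \ge (D_2+1) d_1$, contradicting $D_2 \ge s/d_1$.
\end{itemize}

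The main obstacle is the middle sub-case: there two different branches of the three-way maximum defining $D_2$ must be invoked depending on which dominates for the given $(d_1, d_2, s)$, and this interplay is precisely what motivates the form of the bound in~(2). The constant $3$ handles the small-$s$ boundary. Once the arc accounting is set up, each sub-case reduces to short but careful arithmetic.
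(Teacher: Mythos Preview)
Your approach is correct and is fundamentally the same arc-counting argument as the paper's: partition $C$ by the labelled vertices, lower-bound each arc according to whether its endpoints share a colour or form a valid $(v_1,v_2)$-pair, and sum. The paper works directly with $\ell := \min \mathrm{dist}_C(v_1,v_2)$ rather than by contradiction, and for part~(2) it avoids your three-way split by introducing the single parameter $p$ (the number of arcs with at least one $b$-endpoint), proving $p \ge r$ with equality iff $N_C(u_1)=N_C(u_2)$; this collapses your sub-cases~1 and~2 into one computation $s \ge p\ell + 2(t-p)$ and yields the $\lfloor s/2\rfloor$-bound in one stroke, with the $s/d_1$-term arising only in the equality case (your sub-case~3). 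One small point to watch in your write-up: your ``say $p_2=0$'' in sub-case~2 is not symmetric, since $D_2$ privileges $d_1$ via the $s/d_1$ term; the $p_1=0$ case goes through but needs the $s/d_1$ branch rather than the $\lfloor s/2\rfloor$ branch, so you should treat it separately.
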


\begin{proof}
    Let $\ell:=\min\{\mathrm{dist}_{C}(v_1,v_2) : v_1\in N_C(u_1), v_2\in N_C(u_2), v_1\neq v_2\}$.
	Since $\mathrm{dist}_{C}(v_1,v_2)\leq s/2$ for any pair $v_1,v_2$, it suffices to prove the result assuming $\deg_C(u_1)+\deg_C(u_2)\geq 3$ and $\ell\geq 2$.
	Color the vertices in $N_C(u_1)$ red and those in $N_C(u_2)$ blue; vertices in $N_C(u_1) \cap N_C(u_2)$ receive both colors.
	These colored vertices divide $C$ into $t:=|N_C(u_1)\cup N_C(u_2)|$ subpaths $P_1,\dots, P_t$.
    Note that the endpoints of each path are colored, while their internal vertices are uncolored.
    Since neither $N_{C}(u_1)$ nor $N_{C}(u_2)$ contains consecutive vertices, $e(P_i) \geq 2$ if the endpoints share a color; otherwise, $e(P_i) \geq \ell$ by the definition of $\ell$.
	
	Firstly, consider (1). Suppose $N_C(u_1)\cap N_C(u_2)=\emptyset$; then $t=\deg_C(u_1)+\deg_C(u_2)\geq 2$.
	Since there are at least two $P_i$ whose endpoints have different colors, it follows that $s=|C|=\sum_{i\in [t]}e(P_i)\geq (t-2)\cdot 2+2\cdot \ell$ (using $\ell\geq 2$).
	Hence $\ell\leq s/2+2-t$, as desired.

    It remains to consider (2). 
    We may assume that $N_C(u_1) \cap N_C(u_2) \neq \emptyset$ and $\ell \ge 4$. 
    Let $r := |N_C(u_1) \cap N_C(u_2)| \ge 1$, i.e., there are $r$ vertices colored both red and blue. 
    Then $t + r = \deg_C(u_1) + \deg_C(u_2)$. 
    Let $p$ be the number of subpaths $P_i$ that have at least one endpoint in $N_C(u_1) \cap N_C(u_2)$.
    Note that each of such paths is of length at least $\ell$. 
    Since each of the $r$ common neighbors is an endpoint of exactly two such subpaths, and each of such path $P_i$ contains at most two endpoints in $N_C(u_1) \cap N_C(u_2)$, it follows that $p \ge r$. Equality holds if and only if $N_C(u_1) = N_C(u_2)$.

    If $N_C(u_1) = N_C(u_2)$, then $\deg_C(u_1)=\deg_C(u_2)=r$ and $s=|C|\geq r\cdot\ell$, implying $\ell\leq s/r=s/\deg_C(u_1)$.
	Otherwise, we have $p\geq r+1$.
    Since $C$ is split into subpaths $P_1,\dots, P_t$, where $t\geq \deg_C(u_1)\geq 2$.
    Note that at least $p$ of such paths have lengths no less than $\ell$. Hence, 
	\begin{align*}
		s&=|C|\geq p\cdot\ell+(t-p)\cdot 2\\
        &= (\ell-2)p+2(\deg_C(u_1)+\deg_C(u_2)-r)\\
		&\geq (\ell-2)(r+1)+2(\deg_C(u_1)+\deg_C(u_2)-r)\\
		&\ge (\ell-4)r+(\ell-2)+2(\deg_C(u_1)+\deg_C(u_2))\\
		&\geq(\ell-4)+(\ell-2)+2(\deg_C(u_1)+\deg_C(u_2)).
	\end{align*}
	Therefore, $\ell\leq s/2+3-\deg_C(u_1)-\deg_C(u_2)$, completing the proof of (2).
\end{proof}

\section{The core subgraphs: trigonal and tetragonal graphs}\label{sec:core graphs}
\noindent In this section, we introduce two graph families: \textit{trigonal graphs} and \textit{tetragonal graphs}. These structures provide the framework for constructing the core subgraphs in our proof.
Each graph in these families is equipped with a specific Hamiltonian cycle, referred to as its {\it boundary cycle}. 
The defining characteristic of these graphs is the rich structure of path lengths between their vertices. 
Specifically, we establish that the set of path lengths between any pair of vertices forms a long arithmetic progression with common difference 1 or 2 (i.e., an admissible set). 
This flexibility is the cornerstone of our main proof, enabling us to construct cycles of desired residues modulo $k$ when combined with paths from the remainder of the graph.

\subsection{Trigonal graphs}
\begin{dfn}[Trigonal graph]\label{trigonal}
    A trigonal graph $T$ is a non-bipartite outer-planar graph equipped with a Hamiltonian cycle $\partial T$, defined as the final graph $T_{n}$ (with $\partial T = \partial T_{n}$) of a finite sequence of trigonal graphs $T_{3},T_4,\ldots,T_{n}$ that satisfies the following properties:
	
	\begin{itemize}
		\item $T_{3}\simeq K_{3}$ and $\partial T_{3}=T_{3}$.
		
		\item For every $3\leq i \leq n-1$, $T_{i+1}$ is obtained from $T_{i}$ by adding a new vertex $x_i$ and a path $P_{i}:=a_{i}x_{i}b_{i}$ where $a_{i}b_{i}\in E(\partial T_{i})$; $\partial T_{i+1}$ is obtained from $\partial T_{i}$ by adding the path $P_{i}$ and deleting the edge $a_{i}b_{i}$.
	\end{itemize}
\end{dfn}

\begin{figure}
    \centering
\tikzset{every picture/.style={line width=0.75pt}} 

\begin{tikzpicture}[x=0.75pt,y=0.75pt,yscale=-1,xscale=1,scale=0.78]

\draw [line width=0.75]    (91.42,48.65) -- (57.14,103.27) ;
\draw [line width=0.75]    (91.42,48.65) -- (123.48,103.34) ;
\draw [line width=0.75]    (57.14,103.27) -- (123.48,103.34) ;
\draw  [fill={rgb, 255:red, 0; green, 0; blue, 0 }  ,fill opacity=1 ][line width=0.75]  (88.8,48.68) .. controls (88.79,47.23) and (89.95,46.05) .. (91.39,46.04) .. controls (92.84,46.03) and (94.02,47.19) .. (94.03,48.63) .. controls (94.04,50.07) and (92.88,51.25) .. (91.44,51.27) .. controls (90,51.28) and (88.82,50.12) .. (88.8,48.68) -- cycle ;
\draw  [fill={rgb, 255:red, 0; green, 0; blue, 0 }  ,fill opacity=1 ][line width=0.75]  (54.53,103.3) .. controls (54.51,101.85) and (55.67,100.67) .. (57.12,100.66) .. controls (58.56,100.65) and (59.74,101.81) .. (59.76,103.25) .. controls (59.77,104.69) and (58.61,105.88) .. (57.16,105.89) .. controls (55.72,105.9) and (54.54,104.74) .. (54.53,103.3) -- cycle ;
\draw  [fill={rgb, 255:red, 0; green, 0; blue, 0 }  ,fill opacity=1 ][line width=0.75]  (120.86,103.36) .. controls (120.85,101.92) and (122.01,100.74) .. (123.45,100.72) .. controls (124.9,100.71) and (126.08,101.87) .. (126.09,103.32) .. controls (126.1,104.76) and (124.94,105.94) .. (123.5,105.95) .. controls (122.06,105.97) and (120.88,104.81) .. (120.86,103.36) -- cycle ;
\draw [line width=0.75]    (141.35,85.5) -- (172.12,85.74)(141.32,88.5) -- (172.09,88.74) ;
\draw [shift={(180.11,87.31)}, rotate = 180.45] [color={rgb, 255:red, 0; green, 0; blue, 0 }  ][line width=0.75]    (10.93,-3.29) .. controls (6.95,-1.4) and (3.31,-0.3) .. (0,0) .. controls (3.31,0.3) and (6.95,1.4) .. (10.93,3.29)   ;
\draw [line width=0.75]    (226.08,48.32) -- (191.81,102.94) ;
\draw [color={rgb, 255:red, 74; green, 144; blue, 226 }  ,draw opacity=1 ][line width=0.75]    (226.08,48.32) -- (258.14,103.01) ;
\draw [line width=0.75]    (191.81,102.94) -- (258.14,103.01) ;
\draw  [fill={rgb, 255:red, 0; green, 0; blue, 0 }  ,fill opacity=1 ][line width=0.75]  (223.47,48.34) .. controls (223.46,46.9) and (224.62,45.72) .. (226.06,45.71) .. controls (227.5,45.69) and (228.69,46.85) .. (228.7,48.3) .. controls (228.71,49.74) and (227.55,50.92) .. (226.11,50.93) .. controls (224.66,50.95) and (223.48,49.79) .. (223.47,48.34) -- cycle ;
\draw  [fill={rgb, 255:red, 0; green, 0; blue, 0 }  ,fill opacity=1 ][line width=0.75]  (189.19,102.96) .. controls (189.18,101.52) and (190.34,100.34) .. (191.78,100.33) .. controls (193.23,100.31) and (194.41,101.47) .. (194.42,102.92) .. controls (194.43,104.36) and (193.27,105.54) .. (191.83,105.56) .. controls (190.39,105.57) and (189.21,104.41) .. (189.19,102.96) -- cycle ;
\draw  [fill={rgb, 255:red, 0; green, 0; blue, 0 }  ,fill opacity=1 ][line width=0.75]  (255.53,103.03) .. controls (255.52,101.59) and (256.68,100.4) .. (258.12,100.39) .. controls (259.56,100.38) and (260.75,101.54) .. (260.76,102.98) .. controls (260.77,104.43) and (259.61,105.61) .. (258.17,105.62) .. controls (256.72,105.63) and (255.54,104.47) .. (255.53,103.03) -- cycle ;
\draw [line width=0.75]    (226.08,48.32) -- (287.62,48.2) ;
\draw [line width=0.75]    (287.62,48.2) -- (258.14,103.01) ;
\draw  [fill={rgb, 255:red, 0; green, 0; blue, 0 }  ,fill opacity=1 ][line width=0.75]  (285.01,48.22) .. controls (285,46.78) and (286.16,45.6) .. (287.6,45.58) .. controls (289.04,45.57) and (290.23,46.73) .. (290.24,48.17) .. controls (290.25,49.62) and (289.09,50.8) .. (287.65,50.81) .. controls (286.2,50.82) and (285.02,49.66) .. (285.01,48.22) -- cycle ;
\draw [line width=0.75]    (296.35,86.17) -- (327.12,86.41)(296.32,89.17) -- (327.09,89.41) ;
\draw [shift={(335.11,87.97)}, rotate = 180.45] [color={rgb, 255:red, 0; green, 0; blue, 0 }  ][line width=0.75]    (10.93,-3.29) .. controls (6.95,-1.4) and (3.31,-0.3) .. (0,0) .. controls (3.31,0.3) and (6.95,1.4) .. (10.93,3.29)   ;
\draw [line width=0.75]    (383.08,48.59) -- (348.81,103.21) ;
\draw [color={rgb, 255:red, 74; green, 144; blue, 226 }  ,draw opacity=1 ][line width=0.75]    (383.08,48.59) -- (415.14,103.27) ;
\draw [color={rgb, 255:red, 74; green, 144; blue, 226 }  ,draw opacity=1 ][line width=0.75]    (348.81,103.21) -- (415.14,103.27) ;
\draw  [fill={rgb, 255:red, 0; green, 0; blue, 0 }  ,fill opacity=1 ][line width=0.75]  (380.47,48.61) .. controls (380.46,47.17) and (381.62,45.99) .. (383.06,45.97) .. controls (384.5,45.96) and (385.69,47.12) .. (385.7,48.56) .. controls (385.71,50.01) and (384.55,51.19) .. (383.11,51.2) .. controls (381.66,51.21) and (380.48,50.05) .. (380.47,48.61) -- cycle ;
\draw  [fill={rgb, 255:red, 0; green, 0; blue, 0 }  ,fill opacity=1 ][line width=0.75]  (346.19,103.23) .. controls (346.18,101.79) and (347.34,100.61) .. (348.78,100.59) .. controls (350.23,100.58) and (351.41,101.74) .. (351.42,103.18) .. controls (351.43,104.63) and (350.27,105.81) .. (348.83,105.82) .. controls (347.39,105.84) and (346.21,104.68) .. (346.19,103.23) -- cycle ;
\draw  [fill={rgb, 255:red, 0; green, 0; blue, 0 }  ,fill opacity=1 ][line width=0.75]  (412.53,103.3) .. controls (412.52,101.85) and (413.68,100.67) .. (415.12,100.66) .. controls (416.56,100.64) and (417.75,101.8) .. (417.76,103.25) .. controls (417.77,104.69) and (416.61,105.87) .. (415.17,105.89) .. controls (413.72,105.9) and (412.54,104.74) .. (412.53,103.3) -- cycle ;
\draw [line width=0.75]    (383.08,48.59) -- (444.62,48.46) ;
\draw [line width=0.75]    (444.62,48.46) -- (415.14,103.27) ;
\draw  [fill={rgb, 255:red, 0; green, 0; blue, 0 }  ,fill opacity=1 ][line width=0.75]  (442.01,48.49) .. controls (442,47.04) and (443.16,45.86) .. (444.6,45.85) .. controls (446.04,45.84) and (447.23,47) .. (447.24,48.44) .. controls (447.25,49.88) and (446.09,51.06) .. (444.65,51.08) .. controls (443.2,51.09) and (442.02,49.93) .. (442.01,48.49) -- cycle ;
\draw [line width=0.75]    (348.81,103.21) -- (383.13,155.5) ;
\draw [line width=0.75]    (415.14,103.27) -- (383.13,155.5) ;
\draw  [fill={rgb, 255:red, 0; green, 0; blue, 0 }  ,fill opacity=1 ] (380.52,155.52) .. controls (380.5,154.08) and (381.66,152.9) .. (383.11,152.89) .. controls (384.55,152.87) and (385.73,154.03) .. (385.74,155.48) .. controls (385.76,156.92) and (384.6,158.1) .. (383.15,158.11) .. controls (381.71,158.13) and (380.53,156.97) .. (380.52,155.52) -- cycle ;
\draw [line width=0.75]    (447.35,89.5) -- (478.12,89.74)(447.32,92.5) -- (478.09,92.74) ;
\draw [shift={(486.11,91.31)}, rotate = 180.45] [color={rgb, 255:red, 0; green, 0; blue, 0 }  ][line width=0.75]    (10.93,-3.29) .. controls (6.95,-1.4) and (3.31,-0.3) .. (0,0) .. controls (3.31,0.3) and (6.95,1.4) .. (10.93,3.29)   ;
\draw [color={rgb, 255:red, 74; green, 144; blue, 226 }  ,draw opacity=1 ][line width=0.75]    (564.95,48.99) -- (530.67,103.61) ;
\draw [color={rgb, 255:red, 74; green, 144; blue, 226 }  ,draw opacity=1 ][line width=0.75]    (564.95,48.99) -- (597.01,103.67) ;
\draw [color={rgb, 255:red, 74; green, 144; blue, 226 }  ,draw opacity=1 ][line width=0.75]    (530.67,103.61) -- (597.01,103.67) ;
\draw  [fill={rgb, 255:red, 0; green, 0; blue, 0 }  ,fill opacity=1 ][line width=0.75]  (562.34,49.01) .. controls (562.32,47.57) and (563.48,46.39) .. (564.93,46.37) .. controls (566.37,46.36) and (567.55,47.52) .. (567.56,48.96) .. controls (567.58,50.41) and (566.42,51.59) .. (564.97,51.6) .. controls (563.53,51.61) and (562.35,50.45) .. (562.34,49.01) -- cycle ;
\draw  [fill={rgb, 255:red, 0; green, 0; blue, 0 }  ,fill opacity=1 ][line width=0.75]  (528.06,103.63) .. controls (528.05,102.19) and (529.21,101.01) .. (530.65,100.99) .. controls (532.09,100.98) and (533.28,102.14) .. (533.29,103.58) .. controls (533.3,105.03) and (532.14,106.21) .. (530.7,106.22) .. controls (529.25,106.24) and (528.07,105.08) .. (528.06,103.63) -- cycle ;
\draw  [fill={rgb, 255:red, 0; green, 0; blue, 0 }  ,fill opacity=1 ][line width=0.75]  (594.4,103.7) .. controls (594.38,102.25) and (595.54,101.07) .. (596.99,101.06) .. controls (598.43,101.04) and (599.61,102.2) .. (599.63,103.65) .. controls (599.64,105.09) and (598.48,106.27) .. (597.03,106.29) .. controls (595.59,106.3) and (594.41,105.14) .. (594.4,103.7) -- cycle ;
\draw [line width=0.75]    (564.95,48.99) -- (626.49,48.86) ;
\draw [line width=0.75]    (626.49,48.86) -- (597.01,103.67) ;
\draw  [fill={rgb, 255:red, 0; green, 0; blue, 0 }  ,fill opacity=1 ] (623.88,48.89) .. controls (623.86,47.44) and (625.02,46.26) .. (626.47,46.25) .. controls (627.91,46.24) and (629.09,47.4) .. (629.11,48.84) .. controls (629.12,50.28) and (627.96,51.46) .. (626.51,51.48) .. controls (625.07,51.49) and (623.89,50.33) .. (623.88,48.89) -- cycle ;
\draw [line width=0.75]    (530.67,103.61) -- (565,155.9) ;
\draw [line width=0.75]    (597.01,103.67) -- (565,155.9) ;
\draw  [fill={rgb, 255:red, 0; green, 0; blue, 0 }  ,fill opacity=1 ] (562.38,155.92) .. controls (562.37,154.48) and (563.53,153.3) .. (564.97,153.29) .. controls (566.42,153.27) and (567.6,154.43) .. (567.61,155.88) .. controls (567.62,157.32) and (566.46,158.5) .. (565.02,158.51) .. controls (563.58,158.53) and (562.4,157.37) .. (562.38,155.92) -- cycle ;
\draw [line width=0.75]    (564.95,48.99) -- (501.97,49.9) ;
\draw [line width=0.75]    (501.97,49.9) -- (530.67,103.61) ;
\draw  [fill={rgb, 255:red, 0; green, 0; blue, 0 }  ,fill opacity=1 ][line width=0.75]  (499.36,49.92) .. controls (499.34,48.48) and (500.5,47.3) .. (501.95,47.29) .. controls (503.39,47.27) and (504.57,48.43) .. (504.59,49.88) .. controls (504.6,51.32) and (503.44,52.5) .. (502,52.51) .. controls (500.55,52.53) and (499.37,51.37) .. (499.36,49.92) -- cycle ;

\draw (80,168.5) node [anchor=north west][inner sep=0.75pt]   [align=left] {$T_3$};
\draw (222,168.5) node [anchor=north west][inner sep=0.75pt]   [align=left] {$T_4$};
\draw (375.67,168.5) node [anchor=north west][inner sep=0.75pt]   [align=left] {$T_5$};
\draw (554.2,168.5) node [anchor=north west][inner sep=0.75pt]   [align=left] {$T_6$};
\draw (215.55,35.33) node [anchor=north west][inner sep=0.75pt]  [font=\scriptsize] [align=left] {$a_3$};
\draw (262.76,105.98) node [anchor=north west][inner sep=0.75pt]  [font=\scriptsize] [align=left] {$b_3$};
\draw (290.81,36.47) node [anchor=north west][inner sep=0.75pt]  [font=\scriptsize] [align=left] {$x_3$};
\draw (334.55,105.93) node [anchor=north west][inner sep=0.75pt]  [font=\scriptsize] [align=left] {$a_4$};
\draw (419.88,102.93) node [anchor=north west][inner sep=0.75pt]  [font=\scriptsize] [align=left] {$b_4$};
\draw (390.21,153.27) node [anchor=north west][inner sep=0.75pt]  [font=\scriptsize] [align=left] {$x_4$};
\draw (562.41,34.33) node [anchor=north west][inner sep=0.75pt]  [font=\scriptsize] [align=left] {$a_5$};
\draw (518.08,104.33) node [anchor=north west][inner sep=0.75pt]  [font=\scriptsize] [align=left] {$b_5$};
\draw (491.08,35.33) node [anchor=north west][inner sep=0.75pt]  [font=\scriptsize] [align=left] {$x_5$};
\end{tikzpicture}
    \caption{Trigonal graphs}
    \label{fig:trigonal}
\end{figure}
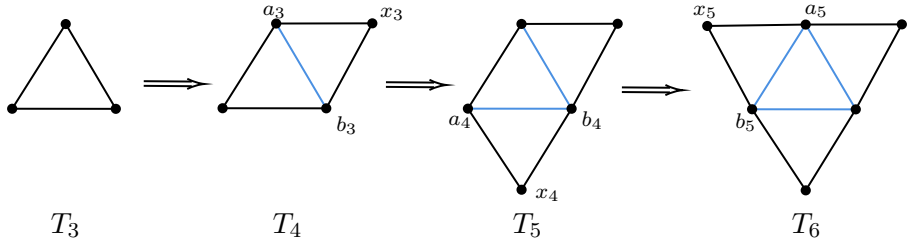

By definition, a trigonal graph is exactly an outer-planar graph in which every inner face is a triangle.
We refer to Figures~\ref{fig:trigonal} and \ref{fig:small T-graph} for examples of trigonal graphs. The black lines represent the boundary cycle. In each step, $T_{i+1}$ is obtained from $T_{i}$ by replacing the boundary edge $a_ib_i$ of $\partial T_{i}$ with the path $a_ix_ib_i$. 
The following proposition ensures the existence of consecutive path lengths between two vertices at a given distance on $\partial T$. 
\begin{prop}\label{prop: T-graph}
	Let $T$ be a trigonal graph with $|T|=t$, and let $u,v$ be two distinct vertices of $V(T)$ with $\mathrm{dist}_{\partial T}(u,v)=d$, then $[d,t-d]\subseteq \Pset_{u,v}^{T}$. In particular, if $uv\in E(\partial T)$, then $[1,t-1]\subseteq \Pset_{u,v}^{T}$.
\end{prop}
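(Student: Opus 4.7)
The plan is to induct on $t=|T|$. The base case $t=3$ is immediate: $T\simeq K_3$ gives $d=1$ and $\Pset_{u,v}^T=\{1,2\}=[d,t-d]$. For the inductive step I use the well-known structural fact that a trigonal graph on $t\ge 4$ vertices---equivalently, a triangulation of the polygon $\partial T$---has at least two \emph{ears}: degree-$2$ vertices $x$ whose two neighbors $a,b$ are adjacent in $T$ and lie consecutively with $x$ on $\partial T$. Removing any ear produces another trigonal graph $T-x$ on $t-1$ vertices, with $\partial(T-x)$ obtained from $\partial T$ by replacing the sub-arc $a-x-b$ by the chord $ab\in E(T)$. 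The argument then splits according to whether such an ear can be chosen outside $\{u,v\}$.

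\textbf{Main case: some ear $x\notin\{u,v\}$ exists.} Since $T-x\subseteq T$, the inductive hypothesis applied to the pair $(u,v)$ in $T-x$ yields $[d',(t-1)-d']\subseteq\Pset_{u,v}^T$, where $d'=\mathrm{dist}_{\partial(T-x)}(u,v)$. If the two arcs of $\partial T$ between $u$ and $v$ have lengths $\ell_1$ (through $x$) and $\ell_2=t-\ell_1$, then $d=\min(\ell_1,\ell_2)$ and $d'=\min(\ell_1-1,\ell_2)$. A short subcase check---using the $x$-arc itself as one additional $(u,v)$-path of length $\ell_1$ when needed---shows: if $\ell_1\le\ell_2$ then $d'=d-1$ and induction alone already covers $[d-1,t-d]\supseteq[d,t-d]$; if $\ell_1>\ell_2$ then $d'=d$, $(t-1)-d'=t-1-d$, and the extra length $\ell_1=t-d$ plugs the single missing endpoint.

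\textbf{Exceptional case: every ear lies in $\{u,v\}$.} The guarantee of at least two ears then forces both $u$ and $v$ to be ears. Picking $x=u$ with neighbors $a,b$, every $(u,v)$-path in $T$ must begin with edge $ua$ or $ub$, whence
\[
\Pset_{u,v}^T=\bigl(1+\Pset_{a,v}^{T-u}\bigr)\cup\bigl(1+\Pset_{b,v}^{T-u}\bigr).
\]
Parametrizing the position of $v$ on $\partial T$ as $\partial T=u\,a\,y_1\cdots y_k\,v\,z_1\cdots z_m\,b$ with $k\le m$, I compute $\mathrm{dist}_{\partial(T-u)}(a,v)$ and $\mathrm{dist}_{\partial(T-u)}(b,v)$, apply the inductive hypothesis to both pairs in $T-u$, and check that the union of the two shifted intervals covers $[d,t-d]$. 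The degenerate subcase $v\in\{a,b\}$ (so $d=1$) is handled separately: the edge $uv$ realizes length $1$, and the $(u,v)$-paths that begin with the edge from $u$ to its other neighbor (which is adjacent to $v$ on $\partial(T-u)$) realize all of $[2,t-1]$ by induction.

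\textbf{Main obstacle.} The argument is conceptually clean; the only real nuisance is the distance bookkeeping in the exceptional case, namely verifying that the two shifted inductive intervals from $(a,v)$ and $(b,v)$ jointly exhaust $[d,t-d]$ without leaving a gap. This reduces to a short case analysis on the parameters $(k,m)$, i.e.\ on the position of $v$ relative to $u$'s two neighbors on $\partial T$, rather than to any conceptual difficulty.
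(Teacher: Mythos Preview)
Your proof is correct and follows essentially the same inductive approach as the paper (remove a degree-$2$ vertex, apply the hypothesis to the smaller trigonal graph). In the exceptional case the paper is more economical: since both $u,v$ being ears forces $d\ge 2$ and $v\notin\{a,b\}$ (as $a,b$ have degree $\ge 3$), it uses only the neighbor $a$ of $u$ on the short arc---induction on $(a,v)$ in $T-u$ gives $[d-1,t-d]$, and shifting by $1$ already yields $[d,t-d+1]\supseteq[d,t-d]$, so your union with the $b$-side and the degenerate subcase $v\in\{a,b\}$ are both unnecessary.
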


\begin{proof}
We proceed by induction on $t$. 
The base case $t=3$ is trivial.
Assume that the proposition holds for every trigonal graph of order at most $k$ for some $k \ge 3$. 
Suppose that $|T| = t = k + 1\geq 4$. 
It is clear that $\{d, k + 1 - d\} \subseteq \Pset^T_{u,v}$.

If $\deg_T(u) \ge 3$ or $\deg_T(v) \ge 3$, then there exists a vertex $w \in V(T) \setminus \{u, v\}$ such that $\deg_T(w) = 2$. Let $T' := T - w$. Then $T'$ is a trigonal graph of order $k$ and $\mathrm{dist}_{\partial T'}(u, v) \le d$. By the induction hypothesis, 
\[
\Pset^T_{u,v} \supseteq \{d, k + 1 - d\} \cup \Pset^{T'}_{u,v} \supseteq \{d, k + 1 - d\} \cup [d, k - d] = [d, k + 1 - d].
\]

Otherwise, we must have $\deg_T(u) = \deg_T(v) = 2$. 
By the recursive construction of trigonal graphs, every edge on $\partial T$ belongs to a triangle in $T$.
Hence, $u$ and $v$ cannot be adjacent on $\partial T$, as otherwise $T \simeq K_3$, contradicting that $t = k + 1 \ge 4$.
Thus, $d=\mathrm{dist}_{\partial T}(u, v) \ge 2$. 
Let $u' \in V(T)$ be a vertex satisfying $\mathrm{dist}_{\partial T}(u, u') = 1$ and $\mathrm{dist}_{\partial T}(v, u') = d - 1$. Note that $T' = T - \{u\}$ is a trigonal graph on $k$ vertices. The induction hypothesis yields $[d - 1, k - (d-1)] \subseteq \Pset^{T'}_{v, u'}$. Hence,
\[
\Pset^T_{u,v} \supseteq (\Pset^{T'}_{v, u'} + 1) \supseteq [d, k + 1 - d].
\]
This completes the induction proof.
\end{proof}
In particular, applying Proposition~\ref{prop: T-graph} to $uv\in E(\partial T)$ in a trigonal graph $T$ of order $t$, it follows that $T$ has cycles (containing the edge $uv$) of all lengths in $[3,t]$.

The following simple observation yields an improved bound for the cases $t \le 6$ in Proposition~\ref{prop: T-graph}. Its proof follows directly from Figure~\ref{fig:small T-graph}, which we omit.

\begin{obs}\label{obs: small T-graph}
    Let $T$ be a trigonal graph with $|T|=t\le 6$, and let $u,v$ be two distinct vertices of $V(T)$ with $\mathrm{dist}_{\partial T}(u,v)=d$. Then either $[d,t-d+1]\subseteq \Pset_{u,v}^{T}$ or $[d-1,t-d]\subseteq \Pset_{u,v}^{T}$ (i.e., $\mathcal{P}_{u,v}^{T}$ contains $t-2d+2$ consecutive paths), unless $T$ is of Case III with $\{u,v\}=\{w_{3},w_{5}\}$.
\end{obs}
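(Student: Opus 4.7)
The plan is an exhaustive case analysis over trigonal graphs of order at most six, since the claim only concerns small~$T$.

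First, I would enumerate all trigonal graphs $T$ with $|T|=t\le 6$ up to isomorphism, together with their boundary cycles $\partial T$. Starting from $T_{3}\simeq K_{3}$, a direct check shows that $T_{4}$ and $T_{5}$ are each unique up to isomorphism (at each step the boundary edge to be subdivided is unique up to the automorphisms of the previous graph). The automorphism group of $T_{5}$ has exactly three orbits on $E(\partial T_{5})$, yielding three non-isomorphic trigonal graphs on six vertices; these are the Cases~I, II, III labeled in Figure~\ref{fig:small T-graph}, with boundary vertices $w_{1},\dots,w_{6}$.

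Next, for each such $T$ and each pair of distinct vertices $\{u,v\}\subseteq V(T)$ (up to the automorphism group of $T$), I would compute $\Pset_{u,v}^{T}$ directly. By Proposition~\ref{prop: T-graph} one already has $[d,t-d]\subseteq \Pset_{u,v}^{T}$, giving $t-2d+1$ consecutive values; to obtain $t-2d+2$ consecutive values it suffices to exhibit one additional $(u,v)$-path of length either $d-1$ (extending the interval to $[d-1,t-d]$) or $t-d+1$ (extending it to $[d,t-d+1]$). Since each of these graphs has only a handful of interior chords, such a path is found by a short \emph{chord-detour}: either shortcut one step of a long boundary traversal using a chord, or splice a one-edge detour into a short boundary arc. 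In every non-exceptional case at least one of these detours is available, and the verification reduces to spotting the chord on the picture.

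The final step is to confirm that Case~III with $\{u,v\}=\{w_{3},w_{5}\}$ is a genuine exception, by writing out $\Pset_{u,v}^{T}$ explicitly in this instance and checking that no $(u,v)$-path of length $d-1=1$ or $t-d+1=5$ exists. This negative check is the main (and essentially only) obstacle: one must rule out \emph{every} such path rather than just failing to find one, which requires noting that $w_{3}w_{5}\notin E(T)$ in this case and that no Hamiltonian $(w_{3},w_{5})$-path exists here, both of which can be read off from the drawing. Everything else in the proof is a mechanical enumeration.
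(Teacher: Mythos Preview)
Your proposal is correct and follows exactly the approach the paper takes: the paper simply writes ``Its proof follows directly from Figure~\ref{fig:small T-graph}, which we omit,'' i.e., a direct finite verification from the enumeration of trigonal graphs on at most six vertices. Your description of how to organize that verification (use Proposition~\ref{prop: T-graph} to secure $[d,t-d]$ and then exhibit one extra path of length $d-1$ or $t-d+1$ via a chord detour, plus the explicit negative check for $\{w_{3},w_{5}\}$ in Case~III) is a faithful and somewhat more detailed execution of what the paper leaves to the reader.
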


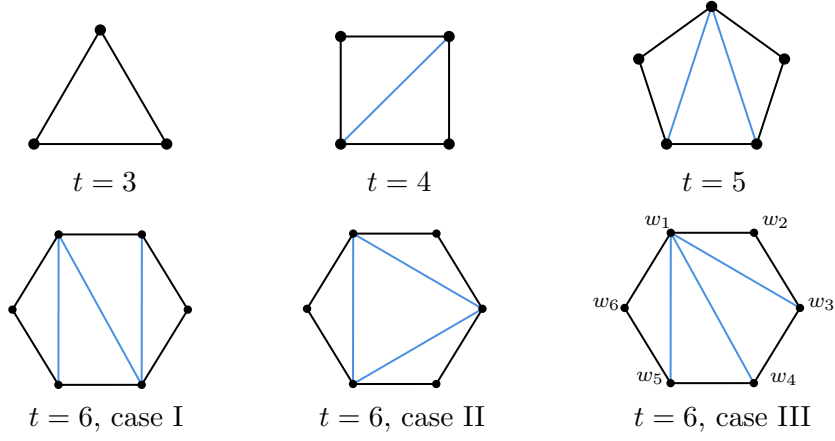
\begin{figure}
    \centering
\tikzset{every picture/.style={line width=0.75pt}}      
\begin{tikzpicture}[x=0.75pt,y=0.75pt,yscale=-1,xscale=1, scale=0.85]

\def\radius{45}     
\def\baseY{125}    

\def\cXA{126} 
\def\cXB{299} 
\def\cXC{485} 

\coordinate (c1) at (\cXA, {\baseY - \radius*0.5});

\coordinate (t3_v1) at ($(c1) + (270:\radius)$); 
\coordinate (t3_v2) at ($(c1) + (30:\radius)$);  
\coordinate (t3_v3) at ($(c1) + (150:\radius)$); 

\draw (t3_v1) -- (t3_v2) -- (t3_v3) -- cycle;
\foreach \p in {t3_v1, t3_v2, t3_v3} \fill [black] (\p) circle (2.5pt);

\coordinate (c2) at (\cXB, {\baseY - \radius*0.707});

\coordinate (t4_v1) at ($(c2) + (225:\radius)$);
\coordinate (t4_v2) at ($(c2) + (315:\radius)$);
\coordinate (t4_v3) at ($(c2) + (45:\radius)$);  
\coordinate (t4_v4) at ($(c2) + (135:\radius)$); 

\draw (t4_v1) -- (t4_v2) -- (t4_v3) -- (t4_v4) -- cycle;

\draw [color={rgb, 255:red, 74; green, 144; blue, 226 }, draw opacity=1] (t4_v2) -- (t4_v4);
\foreach \p in {t4_v1, t4_v2, t4_v3, t4_v4} \fill [black] (\p) circle (2.5pt);

\coordinate (c3) at (\cXC, {\baseY - \radius*0.809});

\coordinate (t5_v1) at ($(c3) + (270:\radius)$); 
\coordinate (t5_v2) at ($(c3) + (342:\radius)$); 
\coordinate (t5_v3) at ($(c3) + (54:\radius)$);  
\coordinate (t5_v4) at ($(c3) + (126:\radius)$); 
\coordinate (t5_v5) at ($(c3) + (198:\radius)$); 

\draw (t5_v1) -- (t5_v2) -- (t5_v3) -- (t5_v4) -- (t5_v5) -- cycle;

\draw [color={rgb, 255:red, 74; green, 144; blue, 226 }, draw opacity=1] (t5_v1) -- (t5_v3);
\draw [color={rgb, 255:red, 74; green, 144; blue, 226 }, draw opacity=1] (t5_v1) -- (t5_v4);
\foreach \p in {t5_v1, t5_v2, t5_v3, t5_v4, t5_v5} \fill [black] (\p) circle (2.5pt);

\draw    (101.5,178.5) -- (150.39,178.5) ;
\draw    (74.39,223) -- (101.5,178.5) ;
\draw    (150.39,178.5) -- (177.39,223) ;
\draw    (150.29,267.49) -- (177.39,223) ;
\draw    (74.39,223) -- (101.39,267.5) ;
\draw    (101.39,267.5) -- (150.29,267.49) ;
\draw    (274.5,178) -- (323.39,178) ;
\draw    (247.39,222.5) -- (274.5,178) ;
\draw    (323.39,178) -- (350.39,222.5) ;
\draw    (323.29,266.99) -- (350.39,222.5) ;
\draw    (247.39,222.5) -- (274.39,267) ;
\draw    (274.39,267) -- (323.29,266.99) ;
\draw    (461,177.5) -- (509.89,177.5) ;
\draw    (433.89,222) -- (461,177.5) ;
\draw    (509.89,177.5) -- (536.89,222) ;
\draw    (509.79,266.49) -- (536.89,222) ;
\draw    (433.89,222) -- (460.89,266.5) ;
\draw    (460.89,266.5) -- (509.79,266.49) ;
\draw [color={rgb, 255:red, 74; green, 144; blue, 226 }  ,draw opacity=1 ]   (101.5,178.5) -- (101.39,267.5) ;
\draw [color={rgb, 255:red, 74; green, 144; blue, 226 }  ,draw opacity=1 ]   (101.5,178.5) -- (150.29,267.49) ;
\draw [color={rgb, 255:red, 74; green, 144; blue, 226 }  ,draw opacity=1 ]   (150.39,178.5) -- (150.29,267.49) ;
\draw [color={rgb, 255:red, 74; green, 144; blue, 226 }  ,draw opacity=1 ]   (274.5,178) -- (274.39,267) ;
\draw [color={rgb, 255:red, 74; green, 144; blue, 226 }  ,draw opacity=1 ]   (274.5,178) -- (350.39,222.5) ;
\draw [color={rgb, 255:red, 74; green, 144; blue, 226 }  ,draw opacity=1 ]   (350.39,222.5) -- (274.39,267) ;
\draw [color={rgb, 255:red, 74; green, 144; blue, 226 }  ,draw opacity=1 ]   (461,177.5) -- (460.89,266.5) ;
\draw [color={rgb, 255:red, 74; green, 144; blue, 226 }  ,draw opacity=1 ]   (461,177.5) -- (509.79,266.49) ;
\draw [color={rgb, 255:red, 74; green, 144; blue, 226 }  ,draw opacity=1 ]   (461,177.5) -- (536.89,222) ;

\draw  [fill={rgb, 255:red, 0; green, 0; blue, 0 }  ,fill opacity=1 ] (99.64,178.5) .. controls (99.64,177.47) and (100.47,176.64) .. (101.5,176.64) .. controls (102.53,176.64) and (103.36,177.47) .. (103.36,178.5) .. controls (103.36,179.53) and (102.53,180.36) .. (101.5,180.36) .. controls (100.47,180.36) and (99.64,179.53) .. (99.64,178.5) -- cycle ;
\draw  [fill={rgb, 255:red, 0; green, 0; blue, 0 }  ,fill opacity=1 ] (148.53,178.5) .. controls (148.53,177.47) and (149.36,176.63) .. (150.39,176.63) .. controls (151.42,176.63) and (152.26,177.47) .. (152.26,178.5) .. controls (152.26,179.52) and (151.42,180.36) .. (150.39,180.36) .. controls (149.36,180.36) and (148.53,179.52) .. (148.53,178.5) -- cycle ;
\draw  [fill={rgb, 255:red, 0; green, 0; blue, 0 }  ,fill opacity=1 ] (72.53,223) .. controls (72.53,221.97) and (73.36,221.13) .. (74.39,221.13) .. controls (75.42,221.13) and (76.26,221.97) .. (76.26,223) .. controls (76.26,224.02) and (75.42,224.86) .. (74.39,224.86) .. controls (73.36,224.86) and (72.53,224.02) .. (72.53,223) -- cycle ;
\draw  [fill={rgb, 255:red, 0; green, 0; blue, 0 }  ,fill opacity=1 ] (99.53,267.5) .. controls (99.53,266.47) and (100.36,265.63) .. (101.39,265.63) .. controls (102.42,265.63) and (103.26,266.47) .. (103.26,267.5) .. controls (103.26,268.52) and (102.42,269.36) .. (101.39,269.36) .. controls (100.36,269.36) and (99.53,268.52) .. (99.53,267.5) -- cycle ;
\draw  [fill={rgb, 255:red, 0; green, 0; blue, 0 }  ,fill opacity=1 ] (148.42,267.49) .. controls (148.42,266.46) and (149.26,265.63) .. (150.29,265.63) .. controls (151.32,265.63) and (152.15,266.46) .. (152.15,267.49) .. controls (152.15,268.52) and (151.32,269.35) .. (150.29,269.35) .. controls (149.26,269.35) and (148.42,268.52) .. (148.42,267.49) -- cycle ;
\draw  [fill={rgb, 255:red, 0; green, 0; blue, 0 }  ,fill opacity=1 ] (175.53,223) .. controls (175.53,221.97) and (176.36,221.13) .. (177.39,221.13) .. controls (178.42,221.13) and (179.26,221.97) .. (179.26,223) .. controls (179.26,224.02) and (178.42,224.86) .. (177.39,224.86) .. controls (176.36,224.86) and (175.53,224.02) .. (175.53,223) -- cycle ;
\draw  [fill={rgb, 255:red, 0; green, 0; blue, 0 }  ,fill opacity=1 ] (272.64,178) .. controls (272.64,176.97) and (273.47,176.14) .. (274.5,176.14) .. controls (275.53,176.14) and (276.36,176.97) .. (276.36,178) .. controls (276.36,179.03) and (275.53,179.86) .. (274.5,179.86) .. controls (273.47,179.86) and (272.64,179.03) .. (272.64,178) -- cycle ;
\draw  [fill={rgb, 255:red, 0; green, 0; blue, 0 }  ,fill opacity=1 ] (245.53,222.5) .. controls (245.53,221.47) and (246.36,220.63) .. (247.39,220.63) .. controls (248.42,220.63) and (249.26,221.47) .. (249.26,222.5) .. controls (249.26,223.52) and (248.42,224.36) .. (247.39,224.36) .. controls (246.36,224.36) and (245.53,223.52) .. (245.53,222.5) -- cycle ;
\draw  [fill={rgb, 255:red, 0; green, 0; blue, 0 }  ,fill opacity=1 ] (272.53,267) .. controls (272.53,265.97) and (273.36,265.13) .. (274.39,265.13) .. controls (275.42,265.13) and (276.26,265.97) .. (276.26,267) .. controls (276.26,268.02) and (275.42,268.86) .. (274.39,268.86) .. controls (273.36,268.86) and (272.53,268.02) .. (272.53,267) -- cycle ;
\draw  [fill={rgb, 255:red, 0; green, 0; blue, 0 }  ,fill opacity=1 ] (348.53,222.5) .. controls (348.53,221.47) and (349.36,220.63) .. (350.39,220.63) .. controls (351.42,220.63) and (352.26,221.47) .. (352.26,222.5) .. controls (352.26,223.52) and (351.42,224.36) .. (350.39,224.36) .. controls (349.36,224.36) and (348.53,223.52) .. (348.53,222.5) -- cycle ;
\draw  [fill={rgb, 255:red, 0; green, 0; blue, 0 }  ,fill opacity=1 ] (321.53,178) .. controls (321.53,176.97) and (322.36,176.13) .. (323.39,176.13) .. controls (324.42,176.13) and (325.26,176.97) .. (325.26,178) .. controls (325.26,179.02) and (324.42,179.86) .. (323.39,179.86) .. controls (322.36,179.86) and (321.53,179.02) .. (321.53,178) -- cycle ;
\draw  [fill={rgb, 255:red, 0; green, 0; blue, 0 }  ,fill opacity=1 ] (321.42,266.99) .. controls (321.42,265.96) and (322.26,265.13) .. (323.29,265.13) .. controls (324.32,265.13) and (325.15,265.96) .. (325.15,266.99) .. controls (325.15,268.02) and (324.32,268.85) .. (323.29,268.85) .. controls (322.26,268.85) and (321.42,268.02) .. (321.42,266.99) -- cycle ;
\draw  [fill={rgb, 255:red, 0; green, 0; blue, 0 }  ,fill opacity=1 ] (459.14,177.5) .. controls (459.14,176.47) and (459.97,175.64) .. (461,175.64) .. controls (462.03,175.64) and (462.86,176.47) .. (462.86,177.5) .. controls (462.86,178.53) and (462.03,179.36) .. (461,179.36) .. controls (459.97,179.36) and (459.14,178.53) .. (459.14,177.5) -- cycle ;
\draw  [fill={rgb, 255:red, 0; green, 0; blue, 0 }  ,fill opacity=1 ] (432.03,222) .. controls (432.03,220.97) and (432.86,220.13) .. (433.89,220.13) .. controls (434.92,220.13) and (435.76,220.97) .. (435.76,222) .. controls (435.76,223.02) and (434.92,223.86) .. (433.89,223.86) .. controls (432.86,223.86) and (432.03,223.02) .. (432.03,222) -- cycle ;
\draw  [fill={rgb, 255:red, 0; green, 0; blue, 0 }  ,fill opacity=1 ] (459.03,266.5) .. controls (459.03,265.47) and (459.86,264.63) .. (460.89,264.63) .. controls (461.92,264.63) and (462.76,265.47) .. (462.76,266.5) .. controls (462.76,267.52) and (461.92,268.36) .. (460.89,268.36) .. controls (459.86,268.36) and (459.03,267.52) .. (459.03,266.5) -- cycle ;
\draw  [fill={rgb, 255:red, 0; green, 0; blue, 0 }  ,fill opacity=1 ] (508.03,177.5) .. controls (508.03,176.47) and (508.86,175.63) .. (509.89,175.63) .. controls (510.92,175.63) and (511.76,176.47) .. (511.76,177.5) .. controls (511.76,178.52) and (510.92,179.36) .. (509.89,179.36) .. controls (508.86,179.36) and (508.03,178.52) .. (508.03,177.5) -- cycle ;
\draw  [fill={rgb, 255:red, 0; green, 0; blue, 0 }  ,fill opacity=1 ] (535.03,222) .. controls (535.03,220.97) and (535.86,220.13) .. (536.89,220.13) .. controls (537.92,220.13) and (538.76,220.97) .. (538.76,222) .. controls (538.76,223.02) and (537.92,223.86) .. (536.89,223.86) .. controls (535.86,223.86) and (535.03,223.02) .. (535.03,222) -- cycle ;
\draw  [fill={rgb, 255:red, 0; green, 0; blue, 0 }  ,fill opacity=1 ] (507.92,266.49) .. controls (507.92,265.46) and (508.76,264.63) .. (509.79,264.63) .. controls (510.82,264.63) and (511.65,265.46) .. (511.65,266.49) .. controls (511.65,267.52) and (510.82,268.35) .. (509.79,268.35) .. controls (508.76,268.35) and (507.92,267.52) .. (507.92,266.49) -- cycle ;

\draw (108,140) node [anchor=north west][inner sep=0.75pt]   [align=left] {$t=3$};
\draw (281,140) node [anchor=north west][inner sep=0.75pt]   [align=left] {$t=4$};
\draw (466,140) node [anchor=north west][inner sep=0.75pt]   [align=left] {$t=5$};
\draw (82.33,278.5) node [anchor=north west][inner sep=0.75pt]   [align=left] {$t=6$, case I};
\draw (252.83,278.5) node [anchor=north west][inner sep=0.75pt]   [align=left] {$t=6$, case II};
\draw (437.5,278.5) node [anchor=north west][inner sep=0.75pt]   [align=left] {$t=6$, case III};
\draw (443.83,165.59) node [anchor=north west][inner sep=0.75pt]   [align=left] {{\scriptsize $w_1$}};
\draw (512.83,165.76) node [anchor=north west][inner sep=0.75pt]   [align=left] {{\scriptsize $w_2$}};
\draw (540,214.09) node [anchor=north west][inner sep=0.75pt]   [align=left] {{\scriptsize $w_3$}};
\draw (517.67,258.59) node [anchor=north west][inner sep=0.75pt]   [align=left] {{\scriptsize $w_4$}};
\draw (439.17,258.26) node [anchor=north west][inner sep=0.75pt]   [align=left] {{\scriptsize $w_5$}};
\draw (413.5,214.09) node [anchor=north west][inner sep=0.75pt]   [align=left] {{\scriptsize $w_6$}};
\end{tikzpicture}
    \caption{All trigonal graphs on at most six vertices}
    \label{fig:small T-graph}
\end{figure}

\subsection{Tetragonal graphs}
\begin{dfn}[Tetragonal graph]\label{dfn:Q graph}
	A tetragonal graph $T$ is a bipartite outer-planar graph equipped with a Hamiltonian cycle $\partial T$, defined as the final graph $T_{n}$ (with $\partial T = \partial T_{n}$) of a finite sequence of tetragonal graphs $T_{2},T_3,\cdots,T_{n}$ satisfying the following properties:
	\begin{itemize}
		\item $T_{2}\simeq C_{4}$, $\partial T_{2}=T_{2}$.
		
		\item For every $2\leq i \leq n-1$, $T_{i+1}$ is obtained from $T_{i}$ by adding two new vertices $x_i,y_i$ and
        a path $P_{i}:=a_{i}x_{i}y_{i}b_{i}$ with $a_{i}b_{i}\in E(\partial T_{i})$; $\partial T_{i+1}$ is obtained from $\partial T_{i}$ by adding the path $P_{i}$ and deleting the edge $a_{i}b_{i}$.
	\end{itemize}
\end{dfn}

By definition, a tetragonal graph is exactly an outer-planar graph in which every inner face is a 4-cycle.
See Figure~\ref{fig:tetragonal} for a sequence of tetragonal graphs on $4,6,8,10$, and $12$ vertices.
The black lines represent their boundary cycles, and the vertices $a_i,b_i,x_i,y_i$ are marked to indicate that $T_{i+1}$ is obtained from $T_{i}$ by replacing the edge $a_ib_i$ on $\partial T_{i}$ with the path $a_ix_iy_ib_i$. 

The following proposition ensures the existence of admissible paths between two vertices at a given distance on the boundary $\partial T$.
\begin{prop}\label{prop:Q-graph}
    Let $T$ be a tetragonal graph with $|T|=2m$, and let $u, v$ be two distinct vertices of $T$ with $\mathrm{dist}_{\partial T}(u,v) = d$. Then $\{d,d+2,\dots,2m-d\}\subseteq \Pset_{u,v}^{T}$.
\end{prop}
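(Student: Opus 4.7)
I proceed by induction on $m$, in parallel with the trigonal case (Proposition~\ref{prop: T-graph}), but removing a \emph{pair} of degree-$2$ vertices at each step rather than a single vertex. The base case $m = 2$ is immediate: $T = C_4$, and $\Pset^{T}_{u,v}$ equals $\{1, 3\}$ when $d = 1$ and $\{2\}$ when $d = 2$, matching the target. For the inductive step $m \ge 3$, I identify a \emph{leaf inner face} $F$ of $T$: an inner $4$-face whose boundary consists of three consecutive edges $ax, xy, yb$ on $\partial T$ and a single chord $ab$. Such an $F$ exists because the dual tree on the inner faces has at least two leaves, and a short inspection of the cyclic arrangement of faces around $x$ shows that the two inner vertices $x, y$ of any such leaf face must have degree exactly $2$ in $T$. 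Hence $T^{*} := T - \{x, y\}$ is itself a tetragonal graph on $2(m - 1)$ vertices, with $\partial T^{*}$ obtained from $\partial T$ by replacing the length-$3$ subpath $axyb$ with the edge $ab$.

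With this reduction in hand, I split on whether $\{u, v\} \cap \{x, y\}$ is empty. In the empty case, a short check gives $\mathrm{dist}_{\partial T^{*}}(u, v) \in \{d - 2, d\}$, and the inductive hypothesis on $T^{*}$, combined with the length-$(2m - d)$ arc of $\partial T$, yields the target. If $v = x$ (the case $v = y$ is symmetric), then every $u$-$x$ path in $T$ ends with edge $ax$ or $yx$, and in the latter case necessarily visits $b$ immediately before $y$ (since $y$'s only other neighbor is $b$), so
\[
\Pset^{T}_{u, x} \supseteq (\Pset^{T^{*}}_{u, a} + 1) \cup (\Pset^{T^{*}}_{u, b} + 2).
\]
A direct distance computation gives $\mathrm{dist}_{\partial T^{*}}(u, a) = d - 1$ whenever $u \notin \{a, b\}$, so induction on $T^{*}$ already yields $(\Pset^{T^{*}}_{u, a} + 1) \supseteq \{d, d + 2, \ldots, 2m - d\}$, which is precisely the target. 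The remaining boundary cases $u \in \{a, b\}$ and $\{u, v\} = \{x, y\}$ (where $d \in \{1, 2\}$) follow by combining one of the direct edges $ax$, $bx$, or $xy$ with the admissible $a$-$b$ path lengths $\{1, 3, \ldots, 2m - 3\} \subseteq \Pset^{T^{*}}_{a, b}$ provided by induction (using that $ab \in E(\partial T^{*})$).

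The main obstacle I anticipate is the routine but delicate distance bookkeeping between $\partial T^{*}$ and $\partial T$ in the various subcases, in particular checking that $\mathrm{dist}_{\partial T^{*}}(u, a) = d - 1$ holds uniformly across the two possible orientations of the short arc from $u$ to $x$ (through $a$ or through $y$), including the extremal case $d = m$. However, once this clean identity is verified, the $a$-side contribution alone suffices in the generic regime, reducing the overall casework to a small number of boundary situations that can be handled by direct inspection, so I expect no serious difficulty beyond the bookkeeping.
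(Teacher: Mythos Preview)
Your proposal is correct and follows essentially the same inductive scheme as the paper: remove the two degree-$2$ vertices $x,y$ of a ``last'' $4$-face, then split on $|\{u,v\}\cap\{x,y\}|$ and combine the inductive path-length set in $T^*=T-\{x,y\}$ with the one or two short edges through $x,y$. The only cosmetic difference is that you locate $x,y$ via a leaf of the inner-face dual tree, whereas the paper simply takes the pair added at the final step of the recursive construction in Definition~\ref{dfn:Q graph}; since that definition already hands you such a pair, the dual-tree argument is unnecessary (though not wrong).
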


\begin{proof}
    We proceed by induction on $m$. The base case $m=2$ (where $T\simeq C_4$) is trivial.
	Assume that the proposition holds for every tetragonal graph with fewer than $2m$ vertices.
	Let $a,x,y,b$ be four consecutive vertices on $\partial T$ such that $T$ is obtained from a smaller tetragonal graph $T'$ by adding the new path $axyb$.
    Then $V(T')=V(T)\setminus \{x,y\}$ and $|T'|=2m-2$.

    If $\{x,y\}=\{u,v\}$, then $1\in \Pset_{u,v}^{T}$.
	Applying the induction hypothesis to $T'$ yields $\Pset^{T'}_{a,b}\supseteq\{1,3,\dots,2m-3\}$.
	Consequently, $\Pset_{u,v}^{T}\supseteq\{1\}\cup (2+\Pset^{T'}_{a,b})\supseteq\{1,3,\dots,2m-1\}$.

    If $|\{x,y\}\cap \{u,v\}|=1$, we may assume without loss of generality that $u=x$ and $v\notin \{x,y\}$.
	Then $\mathrm{dist}_{\partial T'}(a,v)=d-1$ and $\mathrm{dist}_{\partial T'}(b,v)\in \{d-2,d\}$.
	Since $v \notin \{x,y\}$, we must have $v \neq a$ or $v \neq b$.
	If $v\neq a$, the induction hypothesis on $T'$ implies $\Pset^{T'}_{a,v}\supseteq\{d-1,d+1,\dots,2m-d-1\}$.
	Thus, $\Pset^{T}_{u,v}\supseteq 1+\Pset^{T'}_{a,v}\supseteq\{d,d+2,\dots,2m-d\}$.
	If $v\neq b$, the induction hypothesis implies $\Pset^{T'}_{b,v}\supseteq\{d,d+2,\dots,2m-d-2\}$.
	\mbox{Thus, $\Pset^{T}_{u,v}\supseteq \{d\}\cup (2+\Pset^{T'}_{b,v})\supseteq\{d,d+2,\dots,2m-d\}$, as desired.}

    Finally, if $\{x,y\}\cap \{u,v\}=\emptyset$, then $\mathrm{dist}_{\partial T'}(u,v)\in \{d-2,d\}$.
	The induction hypothesis implies $\Pset^{T'}_{u,v}\supseteq\{d,d+2,\dots,2m-d-2\}$.
	Hence, $\Pset^{T}_{u,v}\supseteq \{2m-d\}\cup \Pset^{T'}_{u,v}\supseteq\{d,d+2,\dots,2m-d\}$.
	In all cases, we have $\Pset^{T}_{u,v}\supseteq\{d,d+2,\dots,2m-d\}$, and the result follows by induction.
\end{proof}

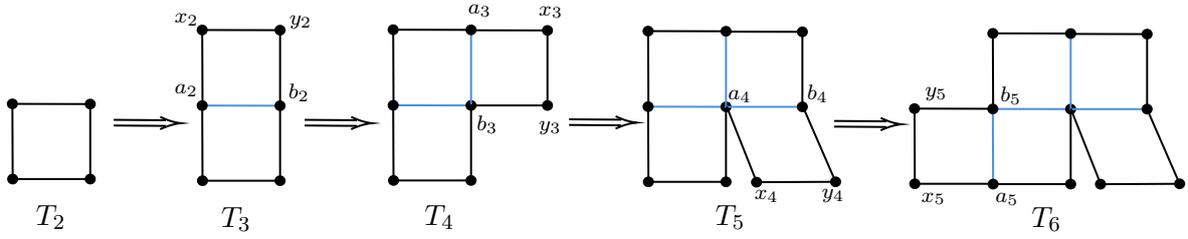
\begin{figure}
    \centering
\tikzset{every picture/.style={line width=0.75pt}} 

\tikzset{every picture/.style={line width=0.75pt}} 

\begin{tikzpicture}[x=0.75pt,y=0.75pt,yscale=-1,xscale=1, scale=0.95]

\draw [color={rgb, 255:red, 0; green, 0; blue, 0 }  ,draw opacity=1 ][line width=0.75]    (13.1,58.04) -- (13.27,97.85) ;
\draw [color={rgb, 255:red, 0; green, 0; blue, 0 }  ,draw opacity=1 ][line width=0.75]    (13.1,58.04) -- (54.02,58.14) ;
\draw [color={rgb, 255:red, 0; green, 0; blue, 0 }  ,draw opacity=1 ][line width=0.75]    (54.02,58.14) -- (54.02,97.85) ;
\draw [color={rgb, 255:red, 0; green, 0; blue, 0 }  ,draw opacity=1 ][line width=0.75]    (13.27,97.85) -- (54.02,97.85) ;
\draw  [color={rgb, 255:red, 0; green, 0; blue, 0 }  ,draw opacity=1 ][fill={rgb, 255:red, 0; green, 0; blue, 0 }  ,fill opacity=1 ][line width=0.75]  (10.72,58.06) .. controls (10.71,56.79) and (11.77,55.75) .. (13.08,55.73) .. controls (14.4,55.72) and (15.48,56.75) .. (15.49,58.02) .. controls (15.5,59.3) and (14.44,60.34) .. (13.13,60.35) .. controls (11.81,60.36) and (10.73,59.34) .. (10.72,58.06) -- cycle ;
\draw  [color={rgb, 255:red, 0; green, 0; blue, 0 }  ,draw opacity=1 ][fill={rgb, 255:red, 0; green, 0; blue, 0 }  ,fill opacity=1 ][line width=0.75]  (51.63,58.16) .. controls (51.62,56.88) and (52.68,55.84) .. (53.99,55.83) .. controls (55.31,55.82) and (56.39,56.84) .. (56.4,58.12) .. controls (56.41,59.39) and (55.36,60.43) .. (54.04,60.44) .. controls (52.72,60.46) and (51.64,59.43) .. (51.63,58.16) -- cycle ;
\draw  [color={rgb, 255:red, 0; green, 0; blue, 0 }  ,draw opacity=1 ][fill={rgb, 255:red, 0; green, 0; blue, 0 }  ,fill opacity=1 ][line width=0.75]  (10.88,97.87) .. controls (10.87,96.6) and (11.93,95.56) .. (13.25,95.54) .. controls (14.56,95.53) and (15.64,96.56) .. (15.65,97.83) .. controls (15.67,99.1) and (14.61,100.15) .. (13.29,100.16) .. controls (11.97,100.17) and (10.89,99.15) .. (10.88,97.87) -- cycle ;
\draw  [color={rgb, 255:red, 0; green, 0; blue, 0 }  ,draw opacity=1 ][fill={rgb, 255:red, 0; green, 0; blue, 0 }  ,fill opacity=1 ][line width=0.75]  (51.63,97.87) .. controls (51.62,96.6) and (52.68,95.56) .. (53.99,95.54) .. controls (55.31,95.53) and (56.39,96.56) .. (56.4,97.83) .. controls (56.41,99.1) and (55.36,100.15) .. (54.04,100.16) .. controls (52.72,100.17) and (51.64,99.15) .. (51.63,97.87) -- cycle ;
\draw [color={rgb, 255:red, 0; green, 0; blue, 0 }  ,draw opacity=1 ][line width=0.75]    (66.33,66.49) -- (93.7,66.7)(66.31,69.49) -- (93.68,69.7) ;
\draw [shift={(101.69,68.27)}, rotate = 180.44] [color={rgb, 255:red, 0; green, 0; blue, 0 }  ,draw opacity=1 ][line width=0.75]    (10.93,-3.29) .. controls (6.95,-1.4) and (3.31,-0.3) .. (0,0) .. controls (3.31,0.3) and (6.95,1.4) .. (10.93,3.29)   ;
\draw [color={rgb, 255:red, 0; green, 0; blue, 0 }  ,draw opacity=1 ][line width=0.75]    (112.85,58.92) -- (113.01,98.73) ;
\draw [color={rgb, 255:red, 74; green, 144; blue, 226 }  ,draw opacity=1 ][line width=0.75]    (112.85,58.92) -- (153.76,59.02) ;
\draw [color={rgb, 255:red, 0; green, 0; blue, 0 }  ,draw opacity=1 ][line width=0.75]    (153.76,59.02) -- (153.76,98.73) ;
\draw [color={rgb, 255:red, 0; green, 0; blue, 0 }  ,draw opacity=1 ][line width=0.75]    (113.01,98.73) -- (153.76,98.73) ;
\draw  [color={rgb, 255:red, 0; green, 0; blue, 0 }  ,draw opacity=1 ][fill={rgb, 255:red, 0; green, 0; blue, 0 }  ,fill opacity=1 ][line width=0.75]  (110.46,58.95) .. controls (110.45,57.67) and (111.51,56.63) .. (112.82,56.62) .. controls (114.14,56.61) and (115.22,57.63) .. (115.23,58.9) .. controls (115.24,60.18) and (114.19,61.22) .. (112.87,61.23) .. controls (111.55,61.24) and (110.47,60.22) .. (110.46,58.95) -- cycle ;
\draw  [color={rgb, 255:red, 0; green, 0; blue, 0 }  ,draw opacity=1 ][fill={rgb, 255:red, 0; green, 0; blue, 0 }  ,fill opacity=1 ][line width=0.75]  (151.37,59.04) .. controls (151.36,57.77) and (152.42,56.72) .. (153.74,56.71) .. controls (155.05,56.7) and (156.13,57.72) .. (156.14,59) .. controls (156.16,60.27) and (155.1,61.31) .. (153.78,61.33) .. controls (152.46,61.34) and (151.39,60.31) .. (151.37,59.04) -- cycle ;
\draw  [color={rgb, 255:red, 0; green, 0; blue, 0 }  ,draw opacity=1 ][fill={rgb, 255:red, 0; green, 0; blue, 0 }  ,fill opacity=1 ][line width=0.75]  (110.63,98.75) .. controls (110.61,97.48) and (111.67,96.44) .. (112.99,96.43) .. controls (114.31,96.41) and (115.38,97.44) .. (115.4,98.71) .. controls (115.41,99.99) and (114.35,101.03) .. (113.03,101.04) .. controls (111.71,101.05) and (110.64,100.03) .. (110.63,98.75) -- cycle ;
\draw  [color={rgb, 255:red, 0; green, 0; blue, 0 }  ,draw opacity=1 ][fill={rgb, 255:red, 0; green, 0; blue, 0 }  ,fill opacity=1 ][line width=0.75]  (151.37,98.75) .. controls (151.36,97.48) and (152.42,96.44) .. (153.74,96.43) .. controls (155.05,96.41) and (156.13,97.44) .. (156.14,98.71) .. controls (156.16,99.99) and (155.1,101.03) .. (153.78,101.04) .. controls (152.46,101.05) and (151.39,100.03) .. (151.37,98.75) -- cycle ;
\draw [color={rgb, 255:red, 0; green, 0; blue, 0 }  ,draw opacity=1 ][line width=0.75]    (112.85,18.92) -- (113.01,58.72) ;
\draw [color={rgb, 255:red, 0; green, 0; blue, 0 }  ,draw opacity=1 ][line width=0.75]    (112.85,18.92) -- (153.76,19.01) ;
\draw [color={rgb, 255:red, 0; green, 0; blue, 0 }  ,draw opacity=1 ][line width=0.75]    (153.76,19.01) -- (153.76,58.72) ;
\draw  [color={rgb, 255:red, 0; green, 0; blue, 0 }  ,draw opacity=1 ][fill={rgb, 255:red, 0; green, 0; blue, 0 }  ,fill opacity=1 ][line width=0.75]  (110.46,18.94) .. controls (110.45,17.66) and (111.51,16.62) .. (112.82,16.61) .. controls (114.14,16.6) and (115.22,17.62) .. (115.23,18.9) .. controls (115.24,20.17) and (114.19,21.21) .. (112.87,21.22) .. controls (111.55,21.23) and (110.47,20.21) .. (110.46,18.94) -- cycle ;
\draw  [color={rgb, 255:red, 0; green, 0; blue, 0 }  ,draw opacity=1 ][fill={rgb, 255:red, 0; green, 0; blue, 0 }  ,fill opacity=1 ][line width=0.75]  (151.37,19.03) .. controls (151.36,17.76) and (152.42,16.71) .. (153.74,16.7) .. controls (155.05,16.69) and (156.13,17.72) .. (156.14,18.99) .. controls (156.16,20.26) and (155.1,21.31) .. (153.78,21.32) .. controls (152.46,21.33) and (151.39,20.31) .. (151.37,19.03) -- cycle ;
\draw [color={rgb, 255:red, 0; green, 0; blue, 0 }  ,draw opacity=1 ][line width=0.75]    (166.68,66.49) -- (194.05,66.7)(166.66,69.49) -- (194.03,69.7) ;
\draw [shift={(202.04,68.27)}, rotate = 180.44] [color={rgb, 255:red, 0; green, 0; blue, 0 }  ,draw opacity=1 ][line width=0.75]    (10.93,-3.29) .. controls (6.95,-1.4) and (3.31,-0.3) .. (0,0) .. controls (3.31,0.3) and (6.95,1.4) .. (10.93,3.29)   ;
\draw [color={rgb, 255:red, 0; green, 0; blue, 0 }  ,draw opacity=1 ][line width=0.75]    (213.2,58.78) -- (213.36,98.59) ;
\draw [color={rgb, 255:red, 74; green, 144; blue, 226 }  ,draw opacity=1 ][line width=0.75]    (213.2,58.78) -- (254.11,58.87) ;
\draw [color={rgb, 255:red, 0; green, 0; blue, 0 }  ,draw opacity=1 ][line width=0.75]    (254.11,58.87) -- (254.11,98.59) ;
\draw [color={rgb, 255:red, 0; green, 0; blue, 0 }  ,draw opacity=1 ][line width=0.75]    (213.36,98.59) -- (254.11,98.59) ;
\draw  [color={rgb, 255:red, 0; green, 0; blue, 0 }  ,draw opacity=1 ][fill={rgb, 255:red, 0; green, 0; blue, 0 }  ,fill opacity=1 ][line width=0.75]  (210.81,58.8) .. controls (210.8,57.52) and (211.86,56.48) .. (213.18,56.47) .. controls (214.49,56.46) and (215.57,57.48) .. (215.58,58.76) .. controls (215.59,60.03) and (214.54,61.07) .. (213.22,61.08) .. controls (211.9,61.1) and (210.82,60.07) .. (210.81,58.8) -- cycle ;
\draw  [color={rgb, 255:red, 0; green, 0; blue, 0 }  ,draw opacity=1 ][fill={rgb, 255:red, 0; green, 0; blue, 0 }  ,fill opacity=1 ][line width=0.75]  (251.72,58.89) .. controls (251.71,57.62) and (252.77,56.58) .. (254.09,56.56) .. controls (255.4,56.55) and (256.48,57.58) .. (256.49,58.85) .. controls (256.51,60.13) and (255.45,61.17) .. (254.13,61.18) .. controls (252.81,61.19) and (251.74,60.17) .. (251.72,58.89) -- cycle ;
\draw  [color={rgb, 255:red, 0; green, 0; blue, 0 }  ,draw opacity=1 ][fill={rgb, 255:red, 0; green, 0; blue, 0 }  ,fill opacity=1 ][line width=0.75]  (210.98,98.61) .. controls (210.96,97.33) and (212.02,96.29) .. (213.34,96.28) .. controls (214.66,96.27) and (215.73,97.29) .. (215.75,98.57) .. controls (215.76,99.84) and (214.7,100.88) .. (213.38,100.89) .. controls (212.07,100.91) and (210.99,99.88) .. (210.98,98.61) -- cycle ;
\draw  [color={rgb, 255:red, 0; green, 0; blue, 0 }  ,draw opacity=1 ][fill={rgb, 255:red, 0; green, 0; blue, 0 }  ,fill opacity=1 ][line width=0.75]  (251.72,98.61) .. controls (251.71,97.33) and (252.77,96.29) .. (254.09,96.28) .. controls (255.4,96.27) and (256.48,97.29) .. (256.49,98.57) .. controls (256.51,99.84) and (255.45,100.88) .. (254.13,100.89) .. controls (252.81,100.91) and (251.74,99.88) .. (251.72,98.61) -- cycle ;
\draw [color={rgb, 255:red, 0; green, 0; blue, 0 }  ,draw opacity=1 ][line width=0.75]    (213.2,18.77) -- (213.36,58.58) ;
\draw [color={rgb, 255:red, 0; green, 0; blue, 0 }  ,draw opacity=1 ][line width=0.75]    (213.2,18.77) -- (254.11,18.86) ;
\draw [color={rgb, 255:red, 74; green, 144; blue, 226 }  ,draw opacity=1 ][line width=0.75]    (254.11,18.86) -- (254.11,58.58) ;
\draw  [color={rgb, 255:red, 0; green, 0; blue, 0 }  ,draw opacity=1 ][fill={rgb, 255:red, 0; green, 0; blue, 0 }  ,fill opacity=1 ][line width=0.75]  (210.81,18.79) .. controls (210.8,17.52) and (211.86,16.47) .. (213.18,16.46) .. controls (214.49,16.45) and (215.57,17.47) .. (215.58,18.75) .. controls (215.59,20.02) and (214.54,21.06) .. (213.22,21.08) .. controls (211.9,21.09) and (210.82,20.06) .. (210.81,18.79) -- cycle ;
\draw  [color={rgb, 255:red, 0; green, 0; blue, 0 }  ,draw opacity=1 ][fill={rgb, 255:red, 0; green, 0; blue, 0 }  ,fill opacity=1 ][line width=0.75]  (251.72,18.88) .. controls (251.71,17.61) and (252.77,16.57) .. (254.09,16.56) .. controls (255.4,16.54) and (256.48,17.57) .. (256.49,18.84) .. controls (256.51,20.12) and (255.45,21.16) .. (254.13,21.17) .. controls (252.81,21.18) and (251.74,20.16) .. (251.72,18.88) -- cycle ;
\draw [color={rgb, 255:red, 0; green, 0; blue, 0 }  ,draw opacity=1 ][line width=0.75]    (253.34,19.06) -- (294.25,19.16) ;
\draw [color={rgb, 255:red, 0; green, 0; blue, 0 }  ,draw opacity=1 ][line width=0.75]    (294.25,19.16) -- (294.25,58.87) ;
\draw [color={rgb, 255:red, 0; green, 0; blue, 0 }  ,draw opacity=1 ][line width=0.75]    (253.5,58.87) -- (294.25,58.87) ;
\draw  [color={rgb, 255:red, 0; green, 0; blue, 0 }  ,draw opacity=1 ][fill={rgb, 255:red, 0; green, 0; blue, 0 }  ,fill opacity=1 ][line width=0.75]  (291.86,19.18) .. controls (291.85,17.9) and (292.91,16.86) .. (294.23,16.85) .. controls (295.55,16.84) and (296.62,17.86) .. (296.63,19.14) .. controls (296.65,20.41) and (295.59,21.45) .. (294.27,21.46) .. controls (292.95,21.48) and (291.88,20.45) .. (291.86,19.18) -- cycle ;
\draw  [color={rgb, 255:red, 0; green, 0; blue, 0 }  ,draw opacity=1 ][fill={rgb, 255:red, 0; green, 0; blue, 0 }  ,fill opacity=1 ][line width=0.75]  (291.86,58.89) .. controls (291.85,57.62) and (292.91,56.58) .. (294.23,56.56) .. controls (295.55,56.55) and (296.62,57.58) .. (296.63,58.85) .. controls (296.65,60.13) and (295.59,61.17) .. (294.27,61.18) .. controls (292.95,61.19) and (291.88,60.17) .. (291.86,58.89) -- cycle ;
\draw [color={rgb, 255:red, 0; green, 0; blue, 0 }  ,draw opacity=1 ][line width=0.75]    (305.65,66.2) -- (333.02,66.41)(305.63,69.2) -- (333,69.41) ;
\draw [shift={(341.01,67.97)}, rotate = 180.44] [color={rgb, 255:red, 0; green, 0; blue, 0 }  ,draw opacity=1 ][line width=0.75]    (10.93,-3.29) .. controls (6.95,-1.4) and (3.31,-0.3) .. (0,0) .. controls (3.31,0.3) and (6.95,1.4) .. (10.93,3.29)   ;
\draw [color={rgb, 255:red, 0; green, 0; blue, 0 }  ,draw opacity=1 ][line width=0.75]    (347.15,59.51) -- (347.31,99.32) ;
\draw [color={rgb, 255:red, 74; green, 144; blue, 226 }  ,draw opacity=1 ][line width=0.75]    (347.15,59.51) -- (388.06,59.61) ;
\draw [color={rgb, 255:red, 0; green, 0; blue, 0 }  ,draw opacity=1 ][line width=0.75]    (388.06,59.61) -- (388.06,99.32) ;
\draw [color={rgb, 255:red, 0; green, 0; blue, 0 }  ,draw opacity=1 ][line width=0.75]    (347.31,99.32) -- (388.06,99.32) ;
\draw  [color={rgb, 255:red, 0; green, 0; blue, 0 }  ,draw opacity=1 ][fill={rgb, 255:red, 0; green, 0; blue, 0 }  ,fill opacity=1 ][line width=0.75]  (344.76,59.53) .. controls (344.75,58.26) and (345.81,57.22) .. (347.13,57.21) .. controls (348.45,57.19) and (349.52,58.22) .. (349.53,59.49) .. controls (349.55,60.77) and (348.49,61.81) .. (347.17,61.82) .. controls (345.85,61.83) and (344.78,60.81) .. (344.76,59.53) -- cycle ;
\draw  [color={rgb, 255:red, 0; green, 0; blue, 0 }  ,draw opacity=1 ][fill={rgb, 255:red, 0; green, 0; blue, 0 }  ,fill opacity=1 ][line width=0.75]  (385.68,59.63) .. controls (385.67,58.35) and (386.72,57.31) .. (388.04,57.3) .. controls (389.36,57.29) and (390.44,58.31) .. (390.45,59.59) .. controls (390.46,60.86) and (389.4,61.9) .. (388.08,61.91) .. controls (386.77,61.93) and (385.69,60.9) .. (385.68,59.63) -- cycle ;
\draw  [color={rgb, 255:red, 0; green, 0; blue, 0 }  ,draw opacity=1 ][fill={rgb, 255:red, 0; green, 0; blue, 0 }  ,fill opacity=1 ][line width=0.75]  (344.93,99.34) .. controls (344.92,98.07) and (345.97,97.03) .. (347.29,97.01) .. controls (348.61,97) and (349.69,98.03) .. (349.7,99.3) .. controls (349.71,100.58) and (348.65,101.62) .. (347.34,101.63) .. controls (346.02,101.64) and (344.94,100.62) .. (344.93,99.34) -- cycle ;
\draw  [color={rgb, 255:red, 0; green, 0; blue, 0 }  ,draw opacity=1 ][fill={rgb, 255:red, 0; green, 0; blue, 0 }  ,fill opacity=1 ][line width=0.75]  (385.68,99.34) .. controls (385.67,98.07) and (386.72,97.03) .. (388.04,97.01) .. controls (389.36,97) and (390.44,98.03) .. (390.45,99.3) .. controls (390.46,100.58) and (389.4,101.62) .. (388.08,101.63) .. controls (386.77,101.64) and (385.69,100.62) .. (385.68,99.34) -- cycle ;
\draw [color={rgb, 255:red, 0; green, 0; blue, 0 }  ,draw opacity=1 ][line width=0.75]    (347.15,19.5) -- (347.31,59.31) ;
\draw [color={rgb, 255:red, 0; green, 0; blue, 0 }  ,draw opacity=1 ][line width=0.75]    (347.15,19.5) -- (388.06,19.6) ;
\draw [color={rgb, 255:red, 74; green, 144; blue, 226 }  ,draw opacity=1 ][line width=0.75]    (388.06,19.6) -- (388.06,59.31) ;
\draw  [color={rgb, 255:red, 0; green, 0; blue, 0 }  ,draw opacity=1 ][fill={rgb, 255:red, 0; green, 0; blue, 0 }  ,fill opacity=1 ][line width=0.75]  (344.76,19.53) .. controls (344.75,18.25) and (345.81,17.21) .. (347.13,17.2) .. controls (348.45,17.19) and (349.52,18.21) .. (349.53,19.48) .. controls (349.55,20.76) and (348.49,21.8) .. (347.17,21.81) .. controls (345.85,21.82) and (344.78,20.8) .. (344.76,19.53) -- cycle ;
\draw  [color={rgb, 255:red, 0; green, 0; blue, 0 }  ,draw opacity=1 ][fill={rgb, 255:red, 0; green, 0; blue, 0 }  ,fill opacity=1 ][line width=0.75]  (385.68,19.62) .. controls (385.67,18.35) and (386.72,17.3) .. (388.04,17.29) .. controls (389.36,17.28) and (390.44,18.3) .. (390.45,19.58) .. controls (390.46,20.85) and (389.4,21.89) .. (388.08,21.91) .. controls (386.77,21.92) and (385.69,20.89) .. (385.68,19.62) -- cycle ;
\draw [color={rgb, 255:red, 0; green, 0; blue, 0 }  ,draw opacity=1 ][line width=0.75]    (387.29,19.8) -- (428.2,19.89) ;
\draw [color={rgb, 255:red, 0; green, 0; blue, 0 }  ,draw opacity=1 ][line width=0.75]    (428.2,19.89) -- (428.2,59.61) ;
\draw [color={rgb, 255:red, 74; green, 144; blue, 226 }  ,draw opacity=1 ][line width=0.75]    (387.45,59.61) -- (428.2,59.61) ;
\draw  [color={rgb, 255:red, 0; green, 0; blue, 0 }  ,draw opacity=1 ][fill={rgb, 255:red, 0; green, 0; blue, 0 }  ,fill opacity=1 ][line width=0.75]  (425.82,19.91) .. controls (425.81,18.64) and (426.86,17.6) .. (428.18,17.59) .. controls (429.5,17.57) and (430.58,18.6) .. (430.59,19.87) .. controls (430.6,21.15) and (429.54,22.19) .. (428.22,22.2) .. controls (426.91,22.21) and (425.83,21.19) .. (425.82,19.91) -- cycle ;
\draw  [color={rgb, 255:red, 0; green, 0; blue, 0 }  ,draw opacity=1 ][fill={rgb, 255:red, 0; green, 0; blue, 0 }  ,fill opacity=1 ][line width=0.75]  (425.82,59.63) .. controls (425.81,58.35) and (426.86,57.31) .. (428.18,57.3) .. controls (429.5,57.29) and (430.58,58.31) .. (430.59,59.59) .. controls (430.6,60.86) and (429.54,61.9) .. (428.22,61.91) .. controls (426.91,61.93) and (425.83,60.9) .. (425.82,59.63) -- cycle ;
\draw [color={rgb, 255:red, 0; green, 0; blue, 0 }  ,draw opacity=1 ][line width=0.75]    (444.93,67.08) -- (472.3,67.29)(444.9,70.08) -- (472.28,70.29) ;
\draw [shift={(480.29,68.85)}, rotate = 180.44] [color={rgb, 255:red, 0; green, 0; blue, 0 }  ,draw opacity=1 ][line width=0.75]    (10.93,-3.29) .. controls (6.95,-1.4) and (3.31,-0.3) .. (0,0) .. controls (3.31,0.3) and (6.95,1.4) .. (10.93,3.29)   ;
\draw [color={rgb, 255:red, 74; green, 144; blue, 226 }  ,draw opacity=1 ][line width=0.75]    (528.27,60.72) -- (528.43,100.53) ;
\draw [color={rgb, 255:red, 74; green, 144; blue, 226 }  ,draw opacity=1 ][line width=0.75]    (528.27,60.72) -- (569.18,60.81) ;
\draw [color={rgb, 255:red, 0; green, 0; blue, 0 }  ,draw opacity=1 ][line width=0.75]    (569.18,60.81) -- (569.18,100.53) ;
\draw [color={rgb, 255:red, 0; green, 0; blue, 0 }  ,draw opacity=1 ][line width=0.75]    (528.43,100.53) -- (569.18,100.53) ;
\draw  [color={rgb, 255:red, 0; green, 0; blue, 0 }  ,draw opacity=1 ][fill={rgb, 255:red, 0; green, 0; blue, 0 }  ,fill opacity=1 ][line width=0.75]  (525.88,60.74) .. controls (525.87,59.47) and (526.93,58.42) .. (528.24,58.41) .. controls (529.56,58.4) and (530.64,59.42) .. (530.65,60.7) .. controls (530.66,61.97) and (529.61,63.01) .. (528.29,63.03) .. controls (526.97,63.04) and (525.89,62.01) .. (525.88,60.74) -- cycle ;
\draw  [color={rgb, 255:red, 0; green, 0; blue, 0 }  ,draw opacity=1 ][fill={rgb, 255:red, 0; green, 0; blue, 0 }  ,fill opacity=1 ][line width=0.75]  (566.79,60.83) .. controls (566.78,59.56) and (567.84,58.52) .. (569.16,58.51) .. controls (570.47,58.49) and (571.55,59.52) .. (571.56,60.79) .. controls (571.58,62.07) and (570.52,63.11) .. (569.2,63.12) .. controls (567.88,63.13) and (566.81,62.11) .. (566.79,60.83) -- cycle ;
\draw  [color={rgb, 255:red, 0; green, 0; blue, 0 }  ,draw opacity=1 ][fill={rgb, 255:red, 0; green, 0; blue, 0 }  ,fill opacity=1 ][line width=0.75]  (526.05,100.55) .. controls (526.03,99.27) and (527.09,98.23) .. (528.41,98.22) .. controls (529.73,98.21) and (530.8,99.23) .. (530.82,100.51) .. controls (530.83,101.78) and (529.77,102.82) .. (528.45,102.83) .. controls (527.13,102.85) and (526.06,101.82) .. (526.05,100.55) -- cycle ;
\draw  [color={rgb, 255:red, 0; green, 0; blue, 0 }  ,draw opacity=1 ][fill={rgb, 255:red, 0; green, 0; blue, 0 }  ,fill opacity=1 ][line width=0.75]  (566.79,100.55) .. controls (566.78,99.27) and (567.84,98.23) .. (569.16,98.22) .. controls (570.47,98.21) and (571.55,99.23) .. (571.56,100.51) .. controls (571.58,101.78) and (570.52,102.82) .. (569.2,102.83) .. controls (567.88,102.85) and (566.81,101.82) .. (566.79,100.55) -- cycle ;
\draw [color={rgb, 255:red, 0; green, 0; blue, 0 }  ,draw opacity=1 ][line width=0.75]    (528.27,20.71) -- (528.43,60.52) ;
\draw [color={rgb, 255:red, 0; green, 0; blue, 0 }  ,draw opacity=1 ][line width=0.75]    (528.27,20.71) -- (569.18,20.8) ;
\draw [color={rgb, 255:red, 74; green, 144; blue, 226 }  ,draw opacity=1 ][line width=0.75]    (569.18,20.8) -- (569.18,60.52) ;
\draw  [color={rgb, 255:red, 0; green, 0; blue, 0 }  ,draw opacity=1 ][fill={rgb, 255:red, 0; green, 0; blue, 0 }  ,fill opacity=1 ][line width=0.75]  (525.88,20.73) .. controls (525.87,19.46) and (526.93,18.41) .. (528.24,18.4) .. controls (529.56,18.39) and (530.64,19.42) .. (530.65,20.69) .. controls (530.66,21.96) and (529.61,23.01) .. (528.29,23.02) .. controls (526.97,23.03) and (525.89,22.01) .. (525.88,20.73) -- cycle ;
\draw  [color={rgb, 255:red, 0; green, 0; blue, 0 }  ,draw opacity=1 ][fill={rgb, 255:red, 0; green, 0; blue, 0 }  ,fill opacity=1 ][line width=0.75]  (566.79,20.83) .. controls (566.78,19.55) and (567.84,18.51) .. (569.16,18.5) .. controls (570.47,18.49) and (571.55,19.51) .. (571.56,20.78) .. controls (571.58,22.06) and (570.52,23.1) .. (569.2,23.11) .. controls (567.88,23.12) and (566.81,22.1) .. (566.79,20.83) -- cycle ;
\draw [color={rgb, 255:red, 0; green, 0; blue, 0 }  ,draw opacity=1 ][line width=0.75]    (568.41,21) -- (609.32,21.1) ;
\draw [color={rgb, 255:red, 0; green, 0; blue, 0 }  ,draw opacity=1 ][line width=0.75]    (609.32,21.1) -- (609.32,60.81) ;
\draw [color={rgb, 255:red, 74; green, 144; blue, 226 }  ,draw opacity=1 ][line width=0.75]    (568.57,60.81) -- (609.32,60.81) ;
\draw  [color={rgb, 255:red, 0; green, 0; blue, 0 }  ,draw opacity=1 ][fill={rgb, 255:red, 0; green, 0; blue, 0 }  ,fill opacity=1 ][line width=0.75]  (606.93,21.12) .. controls (606.92,19.85) and (607.98,18.8) .. (609.3,18.79) .. controls (610.61,18.78) and (611.69,19.8) .. (611.7,21.08) .. controls (611.72,22.35) and (610.66,23.39) .. (609.34,23.41) .. controls (608.02,23.42) and (606.95,22.39) .. (606.93,21.12) -- cycle ;
\draw  [color={rgb, 255:red, 0; green, 0; blue, 0 }  ,draw opacity=1 ][fill={rgb, 255:red, 0; green, 0; blue, 0 }  ,fill opacity=1 ][line width=0.75]  (606.93,60.83) .. controls (606.92,59.56) and (607.98,58.52) .. (609.3,58.51) .. controls (610.61,58.49) and (611.69,59.52) .. (611.7,60.79) .. controls (611.72,62.07) and (610.66,63.11) .. (609.34,63.12) .. controls (608.02,63.13) and (606.95,62.11) .. (606.93,60.83) -- cycle ;
\draw [color={rgb, 255:red, 0; green, 0; blue, 0 }  ,draw opacity=1 ][line width=0.75]    (569.18,60.52) -- (584.85,100.27) ;
\draw [color={rgb, 255:red, 0; green, 0; blue, 0 }  ,draw opacity=1 ][line width=0.75]    (584.85,100.56) -- (626.42,100.36) ;
\draw [color={rgb, 255:red, 0; green, 0; blue, 0 }  ,draw opacity=1 ][line width=0.75]    (609.32,60.81) -- (626.42,100.36) ;
\draw  [color={rgb, 255:red, 0; green, 0; blue, 0 }  ,draw opacity=1 ][fill={rgb, 255:red, 0; green, 0; blue, 0 }  ,fill opacity=1 ][line width=0.75]  (624.03,100.39) .. controls (624.02,99.11) and (625.08,98.07) .. (626.39,98.06) .. controls (627.71,98.05) and (628.79,99.07) .. (628.8,100.34) .. controls (628.81,101.62) and (627.75,102.66) .. (626.44,102.67) .. controls (625.12,102.68) and (624.04,101.66) .. (624.03,100.39) -- cycle ;
\draw  [color={rgb, 255:red, 0; green, 0; blue, 0 }  ,draw opacity=1 ][fill={rgb, 255:red, 0; green, 0; blue, 0 }  ,fill opacity=1 ][line width=0.75]  (582.46,100.58) .. controls (582.45,99.31) and (583.51,98.26) .. (584.82,98.25) .. controls (586.14,98.24) and (587.22,99.27) .. (587.23,100.54) .. controls (587.24,101.81) and (586.18,102.86) .. (584.87,102.87) .. controls (583.55,102.88) and (582.47,101.86) .. (582.46,100.58) -- cycle ;
\draw [color={rgb, 255:red, 0; green, 0; blue, 0 }  ,draw opacity=1 ][line width=0.75]    (388.43,59.46) -- (404.1,99.21) ;
\draw [color={rgb, 255:red, 0; green, 0; blue, 0 }  ,draw opacity=1 ][line width=0.75]    (404.1,99.5) -- (445.67,99.31) ;
\draw [color={rgb, 255:red, 0; green, 0; blue, 0 }  ,draw opacity=1 ][line width=0.75]    (428.57,59.75) -- (445.67,99.31) ;
\draw  [color={rgb, 255:red, 0; green, 0; blue, 0 }  ,draw opacity=1 ][fill={rgb, 255:red, 0; green, 0; blue, 0 }  ,fill opacity=1 ][line width=0.75]  (443.28,99.33) .. controls (443.27,98.05) and (444.33,97.01) .. (445.65,97) .. controls (446.96,96.99) and (448.04,98.01) .. (448.05,99.28) .. controls (448.07,100.56) and (447.01,101.6) .. (445.69,101.61) .. controls (444.37,101.62) and (443.3,100.6) .. (443.28,99.33) -- cycle ;
\draw  [color={rgb, 255:red, 0; green, 0; blue, 0 }  ,draw opacity=1 ][fill={rgb, 255:red, 0; green, 0; blue, 0 }  ,fill opacity=1 ][line width=0.75]  (401.71,99.52) .. controls (401.7,98.25) and (402.76,97.21) .. (404.08,97.19) .. controls (405.39,97.18) and (406.47,98.21) .. (406.48,99.48) .. controls (406.49,100.76) and (405.44,101.8) .. (404.12,101.81) .. controls (402.8,101.82) and (401.72,100.8) .. (401.71,99.52) -- cycle ;
\draw [color={rgb, 255:red, 0; green, 0; blue, 0 }  ,draw opacity=1 ][line width=0.75]    (487.1,59.54) -- (487.27,99.35) ;
\draw [color={rgb, 255:red, 0; green, 0; blue, 0 }  ,draw opacity=1 ][line width=0.75]    (487.1,60.54) -- (528.02,60.64) ;
\draw [color={rgb, 255:red, 0; green, 0; blue, 0 }  ,draw opacity=1 ][line width=0.75]    (487.27,100.35) -- (528.02,100.35) ;
\draw  [color={rgb, 255:red, 0; green, 0; blue, 0 }  ,draw opacity=1 ][fill={rgb, 255:red, 0; green, 0; blue, 0 }  ,fill opacity=1 ][line width=0.75]  (484.72,60.56) .. controls (484.71,59.29) and (485.77,58.25) .. (487.08,58.23) .. controls (488.4,58.22) and (489.48,59.25) .. (489.49,60.52) .. controls (489.5,61.8) and (488.44,62.84) .. (487.13,62.85) .. controls (485.81,62.86) and (484.73,61.84) .. (484.72,60.56) -- cycle ;
\draw  [color={rgb, 255:red, 0; green, 0; blue, 0 }  ,draw opacity=1 ][fill={rgb, 255:red, 0; green, 0; blue, 0 }  ,fill opacity=1 ][line width=0.75]  (484.88,100.37) .. controls (484.87,99.1) and (485.93,98.06) .. (487.25,98.04) .. controls (488.56,98.03) and (489.64,99.06) .. (489.65,100.33) .. controls (489.67,101.6) and (488.61,102.65) .. (487.29,102.66) .. controls (485.97,102.67) and (484.89,101.65) .. (484.88,100.37) -- cycle ;

\draw (23.89,110) node [anchor=north west][inner sep=0.75pt]   [align=left] {$T_2$};
\draw (121.35,110.89) node [anchor=north west][inner sep=0.75pt]   [align=left] {$T_3$};
\draw (228.09,110) node [anchor=north west][inner sep=0.75pt]   [align=left] {$T_4$};
\draw (380.9,110) node [anchor=north west][inner sep=0.75pt]   [align=left] {$T_5$};
\draw (547.05,110.95) node [anchor=north west][inner sep=0.75pt]   [align=left] {$T_6$};
\draw (96.87,45.94) node [anchor=north west][inner sep=0.75pt]  [font=\scriptsize] [align=left] {$a_2$};
\draw (156.74,45.94) node [anchor=north west][inner sep=0.75pt]  [font=\scriptsize] [align=left] {$b_2$};
\draw (96.87,8.42) node [anchor=north west][inner sep=0.75pt]  [font=\scriptsize] [align=left] {$x_2$};
\draw (157.15,9.75) node [anchor=north west][inner sep=0.75pt]  [font=\scriptsize] [align=left] {$y_2$};
\draw (250.97,4.01) node [anchor=north west][inner sep=0.75pt]  [font=\scriptsize] [align=left] {$a_3$};
\draw (255.52,62.84) node [anchor=north west][inner sep=0.75pt]  [font=\scriptsize] [align=left] {$b_3$};
\draw (288.25,4.6) node [anchor=north west][inner sep=0.75pt]  [font=\scriptsize] [align=left] {$x_3$};
\draw (288.25,65.84) node [anchor=north west][inner sep=0.75pt]  [font=\scriptsize] [align=left] {$y_3$};
\draw (387.68,49.02) node [anchor=north west][inner sep=0.75pt]  [font=\scriptsize] [align=left] {$a_4$};
\draw (429.12,46.02) node [anchor=north west][inner sep=0.75pt]  [font=\scriptsize] [align=left] {$b_4$};
\draw (528.05,103.55) node [anchor=north west][inner sep=0.75pt]  [font=\scriptsize] [align=left] {$a_5$};
\draw (489.27,103.35) node [anchor=north west][inner sep=0.75pt]  [font=\scriptsize] [align=left] {$x_5$};
\draw (491.08,47.57) node [anchor=north west][inner sep=0.75pt]  [font=\scriptsize] [align=left] {$y_5$};
\draw (530.78,47.57) node [anchor=north west][inner sep=0.75pt]  [font=\scriptsize] [align=left] {$b_5$};
\draw (401.56,102.27) node [anchor=north west][inner sep=0.75pt]  [font=\scriptsize] [align=left] {$x_4$};
\draw (437,101.68) node [anchor=north west][inner sep=0.75pt]  [font=\scriptsize] [align=left] {$y_4$};

\end{tikzpicture}
    \caption{Tetragonal graphs}
    \label{fig:tetragonal}
\end{figure}

By applying Proposition~\ref{prop:Q-graph} to $uv\in E(\partial T)$ in a tetragonal graph $T$ of order $2m$, we see that $T$ contains cycles (each includes $uv$) of all lengths in $\{4,6,\dots,2m\}$.

Before defining the specific core subgraph in bipartite graphs, we motivate the need for a more robust structure than a simple tetragonal graph. 
The analysis of the bipartite case is inherently more challenging in our approach because tetragonal graphs are significantly less efficient at generating admissible paths compared to trigonal graphs. 
To illustrate this disparity, let $T_1$ be a maximum trigonal subgraph of a non-bipartite graph $G_1$, and let $T_2$ be a maximum tetragonal subgraph of a bipartite graph $G_2$, assuming $|T_1| = |T_2| = 2m$ and that both $G_1$ and $G_2$ have sufficiently large minimum degrees.
Then the maximality of $|T_1|$ and the bipartiteness of $G_2$ implies that $\delta(G_i-T_i)\geq \delta(G_i)-m$.
However, a critical difference arises in the number of generated path lengths. Comparing Proposition~\ref{prop: T-graph} and Proposition~\ref{prop:Q-graph}, for two vertices at distance $d$ on the boundary, $T_1$ provides a set of path lengths $[d, 2m-d]$ with cardinality $2m-2d+1$, whereas $T_2$ yields only $\{d, d+2, \dots, 2m-d\}$ with cardinality $m-d+1$. This substantial reduction in available paths necessitates an extension of the tetragonal subgraph to include vertices with high degrees, thereby forming a stronger core.

We then introduce a specific class of tetragonal subgraphs for the subsequent proofs.
\begin{dfn}\label{dfn:best Q}
	We say that $T$ is an {\bf optimal} tetragonal subgraph of a bipartite graph $G$ if the following conditions hold.
	\begin{itemize}
		\item[(1)] $T$ is a tetragonal subgraph of $G$ with maximum order;
		\item[(2)] Subject to condition (1), the number of edges in $G[V(T)]$ is maximized. 
	\end{itemize}
\end{dfn}

The following lemma summarizes the key properties of optimal tetragonal subgraphs. In Section~6 (bipartite case), we employ the subgraph induced by $V(T)\cup R$ as the core subgraph.
\begin{lem}\label{lem:operation}
	Let $T$ be an optimal tetragonal subgraph of a bipartite graph $G$.
    If $|T| = 2m \ge 6$, then the following hold:
\begin{itemize}
    \item[(1)] For every $v \in V(G - T)$, $\deg_T(v) \leq m$. The equality holds only if $G[V(T)] \simeq K_{m,m}$.
    \item[(2)] $R := \{v \in V(G - T) : \deg_T(v) \ge m - 1\}$ is an independent set.
    \item[(3)] If $m\geq 4$, then $R$ is contained in one of the two partite sets of $G$.
    \item[(4)] For every $v \in V(G - T - R)$, exactly one of the following holds:
    \begin{itemize}
		\item[(4.1)] $\deg_R(v)=0$ and $\deg_T(v)\leq m-2$;
		\item[(4.2)] $\deg_R(v)=1$ and $\deg_T(v)=0$.
	\end{itemize}
\end{itemize}
\end{lem}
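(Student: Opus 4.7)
The overall strategy will be to exploit the optimality of $T$: any modification producing a tetragonal subgraph of $G$ of strictly larger order contradicts order-maximality, while any modification producing one of the same order with more edges in $G[V(T)]$ contradicts edge-maximality. Two basic operations will recur. A \emph{swap} replaces some $a \in V(T)$ by $v \in V(G) \setminus V(T)$ and is valid whenever $v$ is adjacent in $G$ to every vertex of $N_T(a)$, in which case $T$ with $a$ relabelled $v$ remains a tetragonal subgraph of $G$ of the same order. A \emph{tetragonal construction step} inserts a path $a x y b$ in place of a boundary edge $ab \in E(\partial T)$, where $x, y \notin V(T)$ and $ax, xy, yb \in E(G)$; this enlarges $T$ by two vertices. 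Since $\partial T$ is a Hamiltonian cycle of even length $2m$ in a bipartite graph, the bipartition $V(G) = X_G \cup Y_G$ forces $|X_T| = |Y_T| = m$, where $X_T := V(T) \cap X_G$ and $Y_T := V(T) \cap Y_G$.

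For (1), bipartiteness immediately gives $\deg_T(v) \le m$. If equality holds (say $v \in X_G$, so $v$ is adjacent to all of $Y_T$) but $G[V(T)] \not\simeq K_{m,m}$, I would pick $a \in X_T$ with $\deg_{G[V(T)]}(a) < m$; the swap of $a$ with $v$ yields a tetragonal $T'$ with $e(G[V(T')]) = e(G[V(T)]) + m - \deg_{G[V(T)]}(a) > e(G[V(T)])$, contradicting edge-maximality. For (2), suppose $u, w \in R$ with $uw \in E(G)$; then $u, w$ lie in opposite classes, say $u \in X_G$, $w \in Y_G$. Each has at most one non-neighbor in the opposite side of $V(T)$, and each such non-neighbor lies on exactly two boundary edges, so at most $4$ of the $2m \ge 6$ edges of $\partial T$ fail. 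A good edge $ab \in E(\partial T)$ with $a \in X_T$, $b \in Y_T$, $wa, ub \in E(G)$ therefore exists, and the construction step replacing $ab$ by the path $a\,w\,u\,b$ enlarges $T$ to order $2m+2$, a contradiction.

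Part (3) is the most delicate. Assuming $u \in R \cap X_G$ and $w \in R \cap Y_G$ (so $uw \notin E(G)$ by (2)), I aim to build a tetragonal subgraph $T' \subseteq G$ on $V(T) \cup \{u, w\}$ of order $2m + 2$. First, I would construct a bipartite Hamiltonian cycle $C \subseteq G$ on $V(T) \cup \{u, w\}$ with $u$ flanked by two of its $\ge m - 1$ neighbors in $Y_T$ and $w$ flanked by two of its neighbors in $X_T$. Second, I would quadrangulate the interior of $C$ with $m - 1$ chord edges of $G$. The hypothesis $m \ge 4$ provides enough slack in the cycle to route around the (at most two) missing adjacencies of $u, w$ in $V(T)$ and to select chords consistent with the possibly missing edges of $G[V(T)]$. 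The resulting $T'$ would contradict order-maximality. Carrying out this Hamiltonian-plus-chord construction cleanly, with case analysis depending on whether $G[V(T)] \simeq K_{m,m}$, will be the main technical obstacle of the lemma.

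For (4), let $v \in V(G - T - R)$; since $v \notin R$, $\deg_T(v) \le m - 2$, which handles (4.1) when $\deg_R(v) = 0$. Assuming $\deg_R(v) \ge 1$, I would pick $r \in N_R(v)$; by (3), take $r \in X_G$, so $v \in Y_G$. To show $\deg_T(v) = 0$: if some $a \in N_T(v) \cap X_T$ existed, one of the two boundary edges at $a$, say $ab$ with $b \in Y_T$, would have $rb \in E(G)$ (since $r$ has at most one non-neighbor in $Y_T$), and the step replacing $ab$ by $a\,v\,r\,b$ would contradict order-maximality. To show $\deg_R(v) = 1$: if distinct $r_1, r_2 \in N_R(v)$ existed (both in $X_G$ by (3), non-adjacent by (2)), I would first swap some $a \in X_T$ with $r_1$ to form an intermediate tetragonal $T_1$ of order $2m$---a suitable $a$ exists generically, with the degenerate case where $r_1$'s unique non-neighbor $b^* \in Y_T$ satisfies $N_T(b^*) = X_T$ resolved by a symmetric argument involving $r_2$ (as both $r_i$ cannot simultaneously suffer this, by an easy edge count in $T$). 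Then $v$ has the neighbor $r_1 \in V(T_1)$ and $\deg_{T_1}(r_2) \ge m - 1$, so the first-step argument applied to $(T_1, v, r_2)$ yields a tetragonal of order $2m + 2$, again contradicting order-maximality.
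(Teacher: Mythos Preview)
Your treatment of (1), (2), and the first half of (4) (that $\deg_T(v)=0$ once $\deg_R(v)\ge 1$) is correct and matches the paper. For (3), your outline---build a bipartite Hamiltonian cycle on $V(T)\cup\{u,w\}$ and quadrangulate---is exactly the paper's strategy, which it executes via explicit face-by-face constructions split into two cases; you are right that this is where the real work lies.

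For the bound $\deg_R(v)\le 1$ in (4) you take a genuinely different route from the paper: swap $r_1$ into $X_T$ to form $T_1$, then run the construction step on $(T_1,v,r_2)$. This is clean and works whenever some $a\in X_T$ has $b^*\notin N_T(a)$, where $b^*$ is $r_1$'s unique non-neighbour in $Y_T$. But your degenerate-case argument has a real gap. Your edge count (the $Y_T$-side carries total $T$-degree $3m-2$, with minimum degree $2$) correctly shows that at most one vertex of $Y_T$ can have $\deg_T=m$; this rules out $r_1,r_2$ having \emph{distinct} full-degree non-neighbours, but it does not rule out both $r_i$ sharing the \emph{same} non-neighbour $b^*$ with $\deg_T(b^*)=m$. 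In that situation $T$ is a fan of $m-1$ four-cycles all through $b^*$, every $a\in X_T$ satisfies $b^*\in N_T(a)$, and no direct swap of either $r_i$ is possible. The case can be repaired---for instance, peel off a leaf face at one boundary neighbour $a_0$ of $b^*$ (removing $a_0$ and its other boundary neighbour $c_1$), then reattach $c_1,r_1$ via a new face on the edge $a_1c_2$ to obtain a tetragonal $T_1$ of order $2m$ with $r_1\in V(T_1)$, after which your construction step on $(T_1,v,r_2)$ goes through---but this is more than an ``easy edge count''. The paper avoids swaps altogether: it locates a pair of adjacent boundary vertices $(v_i,v_{i+1})$ such that $T-\{v_i,v_{i+1}\}$ is still tetragonal and the adjacency pattern between $\{r_1,r_2\}$ and $\{v_{i-2},v_i,v_{i+2}\}$ is rich enough to attach $r_1,r_2,v$ as two new faces, yielding a tetragonal of order $2m+2$ in one shot.
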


\begin{proof}
    For (1), since $G$ is bipartite, $\deg_T(v) \le m$ trivially holds. Suppose $\deg_T(v) = m$ for some $v \in V(G-T)$. Let $(A, B)$ be the partite sets of $G$ with $v\in B$.
    Then $v$ is adjacent to every vertex in $V(T)\cap A$.
    If $G[T] \not\simeq K_{m,m}$, then there exists $u \in B$ with $\deg_T(u) < m$. The set $(V(T) \setminus \{u\}) \cup \{v\}$ spans a tetragonal graph that induces strictly more edges than $T$, contradicting the maximality of $e(G[V(T)])$. Hence $G[T] \simeq K_{m,m}$, which proves (1).

    For (2), suppose to the contrary that there are two adjacent vertices $u_1, u_2 \in R$. By Lemma~\ref{lem:short dist}~(1), there exist $v_1, v_2 \in V(T)$ such that $u_1v_1, u_2v_2 \in E(G)$ and $v_1v_2 \in E(\partial T)$. It is routine to verify that $V(T)\cup \{u_1,u_2\}$ spans a larger tetragonal graph, whose boundary cycle is obtained from $\partial T$ by replacing the edge $v_1v_2$ with the path $v_1u_1u_2v_2$, a contradiction. Hence $R$ is an independent set, proving (2).

    For (3), let $(A, B)$ be the partite sets of $G$. Suppose to the contrary that both $R\cap A$ and $R\cap B$ are non-empty; then there exist $u_1\in R\cap A$ and $u_2\in R\cap B$. We claim that $V(T)\cup \{u_1,u_2\}$ would span a larger tetragonal graph.
    Let $\partial T=v_{0}\dots v_{2m-1}v_{0}$, where indices are taken modulo $2m$.
    Since $\deg_{T}(u_1), \deg_{T}(u_{2}) \ge m-1$, we may select $v_i\in V(T)\cap B$ and $v_j\in V(T)\cap A$ such that $u_1$ is adjacent to every vertex in $(V(T)\cap B)\setminus\{v_i\}$, and $u_2$ is adjacent to every vertex in $(V(T)\cap A)\setminus\{v_j\}$.

    If $v_{i}v_{j}\in E(\partial T)$, we may assume that $j=i-1$.
    Note that the edge $v_{i}v_{j}$ is contained in a unique 4-cycle $F=v_{i-1}v_iv_qv_p$ in $T$.
    Since $G$ is bipartite, $i$ and $p$ have the same parity, while $q$ has the opposite parity.
    See Figure~\ref{fig:lem3a} for an illustration, where $\partial T$ is the outer cycle on $2m=22$ vertices.
    We label the 4-cycles $u_{1}v_{s-1}v_{s}v_{s+1}$ (for $s=i-3,i-5,\dots,q+2$), $u_2v_qv_pv_{p+1}$, $u_{2}v_{t-1}v_{t}v_{t+1}$ (for $t=q-1,q-3,\dots,i+2$), and the 4-cycle $F$ by $1,2,\dots,m$.
    Recall the iterative process in Definition~\ref{dfn:Q graph}.
    As visualized in Figure~\ref{fig:lem3a}, let the 4-cycle labeled 1 be the initial tetragonal graph.
    By sequentially adding the 4-cycles labeled $2,3,\dots,m$, we obtain a tetragonal graph with vertex set $V(T)\cup \{u_1,u_2\}$, contradicting the maximality of $m$.

    Now consider the case where $j-i\notin\{-1,1\}$. It follows that $v_{i-1},v_{i+1}\in N_{T}(u_{2})$. 
    We have $|j-i|\geq 2$ since $i$ and $j$ have different parities.
	Since $m\ge 4$, $v_{i+3}$ is distinct from $v_{i-3}$. 
    We may assume without loss of generality that $j\neq i-3$, so $u_2v_{i-3}\in E(G)$.
    See Figure~\ref{fig:lem3b} for an example with $|T|=22$. The 4-cycles $u_{2}v_{i-1}v_iv_{i+1}$, $u_{2}v_{i-3}v_{i-2}v_{i-1}$, and $u_{1}v_{t-1}v_{t}v_{t+1}$ (for $t=i-3,i-5,\dots,i+3$) are labeled $1,2,\dots, m$.
    It is straightforward to verify that, starting from the 4-cycle labeled 1 and sequentially adding the cycles labeled $2,3,\dots,m$, we obtain a tetragonal graph with vertex set $V(T)\cup \{u_1,u_2\}$, contradicting the maximality of $m$.
    This completes the proof of (3).

    \begin{figure}[htbp]
    \centering

    \tikzset{
        lbl/.style={circle, fill=white, inner sep=1.5pt, font=\bfseries\small, opacity=1, text opacity=1, anchor=center},
        txt/.style={circle, fill=white, inner sep=0.5pt, font=\scriptsize, opacity=0.85, text opacity=1, anchor=center}
    }
    \begin{subfigure}[t]{0.35\textwidth}
        \centering
        \begin{tikzpicture}[scale=0.9, baseline=0] 
            \useasboundingbox (-3.05,-3.3) rectangle (3.05,3.3);
            \def\R{2.8} \def\N{22} \def\startAng{100}

            \foreach \k in {-4,...,25} {
                \coordinate (V\k) at ({\R*cos(\startAng - \k*360/\N)}, {\R*sin(\startAng - \k*360/\N)});
            }
            \coordinate (U2) at ({\R*0.2*cos(\startAng - 3.5*360/\N)}, {\R*0.2*sin(\startAng - 3.5*360/\N)});
            \coordinate (U1) at ({\R*0.5*cos(\startAng - 12*360/\N)}, {\R*0.5*sin(\startAng - 12*360/\N)});

            \path[pattern={Lines[angle=90, distance=10pt]}, pattern color=red!60] 
                (U2) -- (V7) -- (V14) -- (V15) -- cycle;
            \draw[thin, red!80!black] (U2) -- (V15);
            \node[lbl, text=red!80!black] at ($ 0.35*(U2) + 0.35*(V7) + 0.15*(V14) + 0.15*(V15) $) {7};

            \foreach \k/\num [count=\c] in {5/8, 3/9, 1/10} {
                \pgfmathsetmacro{\nextK}{int(\k+2)}
                \ifodd\c \def\ang{45} \else \def\ang{-45} \fi
                \path[pattern={Lines[angle=\ang, distance=10pt]}, pattern color=red!60] 
                    (U2) -- (V\k) arc [start angle=\startAng-\k*360/\N, end angle=\startAng-\nextK*360/\N, radius=\R] -- cycle;
                \draw[thin, red!80!black] (U2) -- (V\k); \draw[thin, red!80!black] (U2) -- (V\nextK);
                \pgfmathsetmacro{\midAng}{\startAng - (\k+1)*360/\N}
                \node[lbl, text=red!80!black] at ($ (U2)!0.6!({\R*cos(\midAng)}, {\R*sin(\midAng)}) $) {\num};
            }

            \foreach \k/\num [count=\c] in {18/1, 16/2, 14/3, 12/4, 10/5, 8/6} {
                \pgfmathsetmacro{\nextK}{int(\k+2)}
                \ifodd\c \def\ang{0} \else \def\ang{90} \fi
                \path[pattern={Lines[angle=\ang, distance=10pt]}, pattern color=blue!60] 
                    (U1) -- (V\k) arc [start angle=\startAng-\k*360/\N, end angle=\startAng-\nextK*360/\N, radius=\R] -- cycle;
                \draw[thin, blue!80!black] (U1) -- (V\k); \draw[thin, blue!80!black] (U1) -- (V\nextK);
                \pgfmathsetmacro{\midAng}{\startAng - (\k+1)*360/\N}
                \node[lbl, text=blue!80!black] at ($ (U1)!0.65!({\R*cos(\midAng)}, {\R*sin(\midAng)}) $) {\num};
            }

            \draw[thick] (0,0) circle (\R);
            \draw[dashed, very thick, black!80] (V0) -- (V7) -- (V14) -- (V-1); 
            \node[lbl, text=black] at ($ (V0)!0.3!(V12) $) {11};

            \foreach \k in {0,...,21} \fill[black] (V\k) circle (1.2pt);
            \fill[black] (U1) circle (1.2pt) node[txt, label={[label distance=0.1pt]70:$u_1$}] {};
            \fill[black] (U2) circle (1.2pt) node[txt, label={[label distance=0.1pt]180:$u_2$}] {};

            \node[inner sep=0pt] at (V-1) [txt, label={[label distance=0.1pt]90:$v_{i-1}$}] {};
            \node[inner sep=0pt] at (V0)  [txt, label={[label distance=0.1pt]90:$v_i$}] {};
            \node[inner sep=0pt] at (V7)  [txt, label={[label distance=0.1pt]-10:$v_q$}] {};
            \node[inner sep=0pt] at (V14) [txt, label={[label distance=0.1pt]230:$v_p$}] {};
            \node[inner sep=0pt] at (V15) [txt, label={[label distance=0.1pt]200:$v_{p+1}$}] {};

        \end{tikzpicture}
        \caption{$j=i-1$}
        \label{fig:lem3a}
    \end{subfigure}
    \hspace{1em}
    \begin{subfigure}[t]{0.39\textwidth}
        \centering
        \begin{tikzpicture}[scale=0.9, baseline=0] 
            \useasboundingbox (-3.05,-3.3) rectangle (3.05,3.3);
            \def\R{2.8} \def\N{22} \def\startAng{90}

            \foreach \k in {-6,...,25} {
                \coordinate (V\k) at ({\R*cos(\startAng - \k*360/\N)}, {\R*sin(\startAng - \k*360/\N)});
            }
            \coordinate (U2) at ({\R*0.6*cos(\startAng - 0*360/\N)}, {\R*0.6*sin(\startAng - 0*360/\N)});
            \coordinate (U1) at ({\R*0.3*cos(\startAng - 11*360/\N)}, {\R*0.3*sin(\startAng - 11*360/\N)});

            \foreach \k/\num [count=\c] in {21/1, 19/2} {
                \pgfmathsetmacro{\nextK}{int(\k+2)}
                \ifodd\c \def\ang{-45} \else \def\ang{45} \fi
                \path[pattern={Lines[angle=\ang, distance=10pt]}, pattern color=red!60] 
                    (U2) -- (V\k) arc [start angle=\startAng-\k*360/\N, end angle=\startAng-\nextK*360/\N, radius=\R] -- cycle;
                \draw[thin, red!80!black] (U2) -- (V\k); \draw[thin, red!80!black] (U2) -- (V\nextK);
                \pgfmathsetmacro{\midAng}{\startAng - (\k+1)*360/\N}
                \node[lbl, text=red!80!black] at ($ (U2)!0.6!({\R*cos(\midAng)}, {\R*sin(\midAng)}) $) {\num};
            }

            \foreach \k/\num [count=\c] in {18/3, 16/4, 14/5, 12/6, 10/7, 8/8, 6/9, 4/10, 2/11} {
                \pgfmathsetmacro{\nextK}{int(\k+2)}
                \ifodd\c \def\ang{0} \else \def\ang{90} \fi
                \path[pattern={Lines[angle=\ang, distance=10pt]}, pattern color=blue!60] 
                    (U1) -- (V\k) arc [start angle=\startAng-\k*360/\N, end angle=\startAng-\nextK*360/\N, radius=\R] -- cycle;
                \draw[thin, blue!80!black] (U1) -- (V\k); \draw[thin, blue!80!black] (U1) -- (V\nextK);
                \pgfmathsetmacro{\midAng}{\startAng - (\k+1)*360/\N}
                \node[lbl, text=blue!80!black] at ($ (U1)!0.7!({\R*cos(\midAng)}, {\R*sin(\midAng)}) $) {\num};
            }

            \draw[thick] (0,0) circle (\R);
            \draw[dashed, very thick, black!80] (V17) -- (V0) -- (V7) -- (V12) -- cycle;

            \foreach \k in {0,...,21} \fill[black] (V\k) circle (1.2pt);
            \fill[black] (U1) circle (1.2pt) node[txt, label={[label distance=0.1pt]260:$u_1$}] {};
            \fill[black] (U2) circle (1.2pt) node[txt, label={[label distance=0.1pt]200:$u_2$}] {};

            \node[inner sep=0pt] at (V0)  [txt, label={[label distance=0.1pt]90:$v_i$}] {};
            \node[inner sep=0pt] at (V7)  [txt, label={[label distance=0.1pt]-10:$v_q$}] {};
            \node[inner sep=0pt] at (V12) [txt, label={[label distance=0.1pt]270:$v_p$}] {};
            \node[inner sep=0pt] at (V17) [txt, label={[label distance=0.1pt]180:$v_j$}] {};
            
            \node[inner sep=0pt] at (V20) [txt, label={[label distance=0.1pt]110:$v_{i-2}$}] {};
            \node[inner sep=0pt] at (V21) [txt, label={[label distance=0.05pt]90:$v_{i-1}$}] {};
            \node[inner sep=0pt] at (V1)  [txt, label={[label distance=0.03pt]90:$v_{i+1}$}] {};
            \node[inner sep=0pt] at (V2)  [txt, label={[label distance=0.1pt]80:$v_{i+2}$}] {};
            \node[inner sep=0pt] at (V19)  [txt, label={[label distance=0.1pt]130:$v_{i-3}$}] {};
        \end{tikzpicture}
        \caption{$|j-i|>1$}
        \label{fig:lem3b}
    \end{subfigure}
    \par\vspace{0em}
    \begin{subfigure}[t]{0.39\textwidth}
        \centering
        \begin{tikzpicture}[scale=0.46, baseline=0] 
            \useasboundingbox (-5.8,0) rectangle (5.8,10);
            \def\R{6} \def\AngStart{160} \def\AngEnd{20}
            
            \coordinate (Vi-3) at (\AngStart:\R);
            \coordinate (Vi+3) at (\AngEnd:\R);
            \coordinate (Vi-2) at (145:\R); 
            \coordinate (Vi-1) at (120:\R); 
            \coordinate (Vi)   at (85:\R);   
            \coordinate (Vi+1) at (60:\R);  
            \coordinate (Vi+2) at (35:\R);  

            \coordinate (MidChord) at ($(Vi-2)!0.5!(Vi)$);
            \coordinate (TopCenter) at ($(MidChord)!2.5cm!90:(Vi)$);
            \coordinate (ChordVec) at ($(Vi)-(Vi-2)$);
            \coordinate (Pp) at ($(TopCenter) - 0.2*(ChordVec)$); 
            \coordinate (P)  at ($(TopCenter) + 0.2*(ChordVec)$); 
            \coordinate (MidPp) at ($(Pp)!0.5!(P)$);
            \path let \p1 = ($(P)-(MidPp)$), \n1 = {veclen(\x1,\y1)} in 
                  coordinate (W) at ($(MidPp)!{\n1/tan(60)}!90:(P)$);

            \fill[black!15] (Vi-1) -- (Vi+2) arc (35:\AngEnd:\R) -- (Vi-3) arc (\AngStart:120:\R) -- cycle;
            \node[lbl, text=black] at ($ 0.25*(Vi-1) + 0.25*(Vi+2) + 0.25*(Vi+3) + 0.25*(Vi-3) $) {1};

            \fill[blue!15] (W) -- (Pp) -- (Vi-2) -- (P) -- cycle;
            \node[lbl, text=blue] at ($ 0.25*(W) + 0.25*(Pp) + 0.25*(Vi-2) + 0.25*(P) $) {3};

            \fill[red!15] (P) -- (Vi-2) arc (145:85:\R) -- cycle;
            \node[lbl, text=red] at ($ 0.45*(P) + 0.15*(Vi-2) + 0.4*(Vi) $) {2};

            \draw[thick] (Vi-3) arc (\AngStart:\AngEnd:\R); 
            \draw[thick] (Vi-3) -- (Vi+3);                  
            \draw[dashed, thick] (Vi-1) -- (Vi+2);
            \draw[thin] (W) -- (Pp); \draw[thin] (W) -- (P);
            \draw[thin] (Pp) -- (Vi-2); \draw[thin] (P) -- (Vi);
            \draw[thin] (P) -- (Vi-2);

            \foreach \pt/\lab/\pos in {Vi-2/v_{i-2}/160, Vi-1/v_{i-1}/100, Vi/v_{i}/90, Vi+1/v_{i+1}/80, Vi+2/v_{i+2}/30} {
                \fill[black] (\pt) circle (1.5pt);
                \node[inner sep=0pt] at (\pt) [label={[label distance=0.1pt, inner sep=1pt]\pos:$\lab$}] {};
            }
            \fill[black] (Pp) circle (1.5pt) node[left, xshift=-1pt] {$p'$};
            \fill[black] (P)  circle (1.5pt) node[right, xshift=1pt] {$p$};
            \fill[black] (W)  circle (1.5pt) node[above, yshift=1pt] {$w$};
        \end{tikzpicture}
        \caption{$\big|E(\{v_{i-2},v_i\},\{p,p'\})\big|\geq 3$}
        \label{fig:lem4a}
    \end{subfigure}
    \hspace{1em}
    \begin{subfigure}[t]{0.39\textwidth}
        \centering
        \begin{tikzpicture}[scale=0.46, baseline=0] 
            \useasboundingbox (-5.8,0) rectangle (5.8,10);
            
            \def\R{6} \def\AngStart{160} \def\AngEnd{20}
            
            \coordinate (Vi-3) at (\AngStart:\R);
            \coordinate (Vi+3) at (\AngEnd:\R);
            \coordinate (Vi-2) at (145:\R); 
            \coordinate (Vi-1) at (120:\R); 
            \coordinate (Vi)   at (85:\R);   
            \coordinate (Vi+1) at (60:\R);  
            \coordinate (Vi+2) at (35:\R);  

            \coordinate (MidChord) at ($(Vi)!0.5!(Vi+2)$);
            \coordinate (TopCenter) at ($(MidChord)!2.5cm!90:(Vi+2)$);
            \coordinate (ChordVec) at ($(Vi+2)-(Vi)$);
            \coordinate (Pp) at ($(TopCenter) - 0.2*(ChordVec)$); 
            \coordinate (P)  at ($(TopCenter) + 0.2*(ChordVec)$); 
            \coordinate (MidPp) at ($(Pp)!0.5!(P)$);
            \path let \p1 = ($(P)-(MidPp)$), \n1 = {veclen(\x1,\y1)} in 
                  coordinate (W) at ($(MidPp)!{\n1/tan(60)}!90:(P)$);

            \fill[black!15] (Vi-1) -- (Vi+2) arc (35:\AngEnd:\R) -- (Vi-3) arc (\AngStart:120:\R) -- cycle;
            \node[lbl, text=black] at ($ 0.25*(Vi-1) + 0.25*(Vi+2) + 0.25*(Vi+3) + 0.25*(Vi-3) $) {1};

            \fill[blue!15] (W) -- (Pp) -- (Vi) -- (P) -- cycle;
            \node[lbl, text=blue] at ($ 0.25*(W) + 0.25*(Pp) + 0.25*(Vi) + 0.25*(P) $) {3};

            \fill[red!15] (P) -- (Vi+2) -- (Vi-1) arc (120:85:\R) -- cycle;
            \node[lbl, text=red] at ($ 0.25*(P) + 0.25*(Vi+2) + 0.25*(Vi-1) + 0.25*(Vi) $) {2};

            \draw[thick] (Vi-3) arc (\AngStart:\AngEnd:\R); 
            \draw[thick] (Vi-3) -- (Vi+3);                  
            \draw[dashed, thick] (Vi-1) -- (Vi+2);
            \draw[thin] (W) -- (Pp); \draw[thin] (W) -- (P);
            \draw[thin] (Pp) -- (Vi); \draw[thin] (P) -- (Vi);
            \draw[thin] (P) -- (Vi+2);

            \foreach \pt/\lab/\pos in {Vi-2/v_{i-2}/160, Vi-1/v_{i-1}/100, Vi/v_{i}/90, Vi+1/v_{i+1}/80, Vi+2/v_{i+2}/30} {
                \fill[black] (\pt) circle (1.5pt);
                \node[inner sep=0pt] at (\pt) [label={[label distance=0.1pt, inner sep=1pt]\pos:$\lab$}] {};
            }
            \fill[black] (Pp) circle (1.5pt) node[left, xshift=-1pt] {$p'$};
            \fill[black] (P)  circle (1.5pt) node[right, xshift=1pt] {$p$};
            \fill[black] (W)  circle (1.5pt) node[above, yshift=1pt] {$w$};
        \end{tikzpicture}
        \caption{$\big|E(\{v_i,v_{i+2}\},\{p,p'\})\big|\geq 3$}
        \label{fig:lem4b}
    \end{subfigure}
    \caption{Forming larger tetragonal graphs in Lemma~\ref{lem:operation}}
    \label{fig:lem4}
\end{figure}
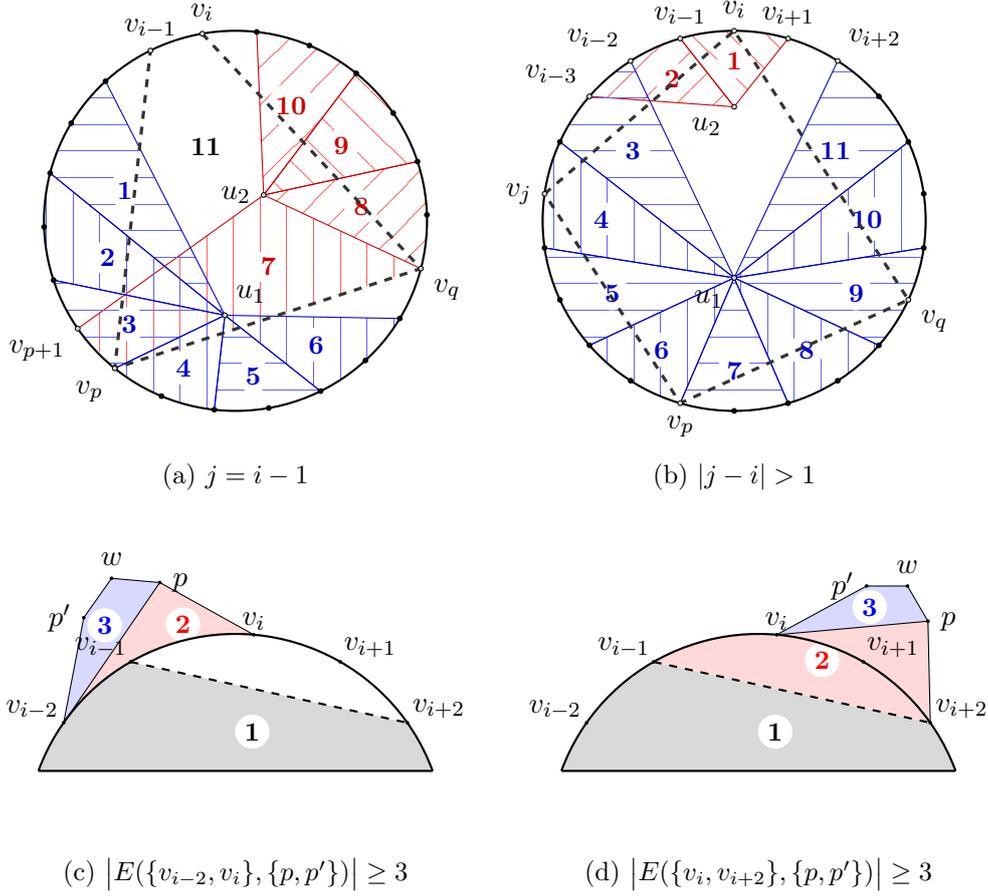

    For (4), select an arbitrary vertex $w \in V(G - T - R)$. If $\deg_R(w)=0$, then $\deg_T(w) \le m - 2$ by the definition of $R$.
    Now assume $\deg_R(w)>0$. First, we show $\deg_T(w) = 0$. Let $p \in N_R(w)$. If there exists $v \in N_T(w)$, since $\deg_T(p) \ge m - 1$, there exists $v'\in N_T(p)$ such that $vv'\in E(\partial T)$.
	Then $V(T) \cup \{p, w\}$ spans a larger tetragonal graph (with the boundary cycle obtained by replacing the edge $vv'$ on $\partial T$ with the path $vwpv'$).
	This contradiction implies that $\deg_T(w)=0$.

    It remains to show that $\deg_R(w) = 1$. Suppose for the sake of contradiction that there is a vertex $p' \in N_R(w)$ distinct from $p$.
    Assume that $\partial T=v_{0}\dots v_{2m-1}v_{0}$, with indices taken modulo $2m$.
    We say that an ordered pair of indices $(i,j)\in [0,2m-1]^2$ is \textbf{bad} if:
    \begin{itemize}
        \item[(\romannumeral1)] $j\in \{i-1,i+1\}$;
        \item[(\romannumeral2)] $T-\{v_i,v_j\}$ is a tetragonal graph;
        \item[(\romannumeral3)] $\max\big\{\big|E(\{v_{i-2},v_i\},\{p,p'\})\big|, \big|E(\{v_{i+2},v_i\},\{p,p'\})\big|\big\} \ge 3$.
    \end{itemize}
    We claim that if $(i,j)$ is bad, then $V(T)\setminus\{v_{j}\}\cup \{p,p',w\}$ spans a larger tetragonal graph.
    Assume without loss of generality that $j=i+1$; then the condition (\romannumeral2) that $T-\{v_i,v_j\}$ is a tetragonal graph is equivalent to $v_{i-1}v_{i+2}\in E(T)$.
    The claim is illustrated in Figures~\ref{fig:lem4a} and~\ref{fig:lem4b} for the cases $|E(\{v_{i-2},v_i\},\{p,p'\})|\geq 3$ and $|E(\{v_{i+2},v_i\},\{p,p'\})|\geq 3$.
    The gray region represents the tetragonal subgraph with vertex set $V(T) \setminus \{v_i, v_{i+1}\}$, whose boundary cycle is derived from $\partial T$ by replacing the path $v_{i-1}v_iv_{i+1}v_{i+2}$ with the edge $v_{i-1}v_{i+2}$. The `red' and `blue' regions correspond to the 4-cycles $pv_{i-2}v_{i-1}v_{i}$ and $wp'v_{i-2}p$ in Figure~\ref{fig:lem4a}, and to $pv_{i}v_{i-1}v_{i+2}$ and $wp'v_{i}p$ in Figure~\ref{fig:lem4b}, respectively.
    By adding the red 4-cycle and then the blue 4-cycle to the gray tetragonal graph, we obtain a tetragonal graph with vertex set $V(T)\setminus\{v_{j}\}\cup \{p,p',w\}$, proving the claim.
    We remark that these figures depict a representative edge configuration; for another scenario satisfying condition (\romannumeral3) (for example, when $p'$ is adjacent to $v_i$ rather than $v_{i-2}$), the construction is analogous.
    Thus, the maximality of $m$ implies that no bad pair $(i,j)$ exists.

    We now derive a contradiction by identifying such a bad pair.
    Note that $T$ is an outer-planar graph where every inner face is a 4-cycle. Since $|T| \ge 6$, $T$ must contain a chord of $\partial T$, implying that there are at least two faces that share three edges with $\partial T$.
    It follows that there are two distinct indices $i$ and $i'$ such that $v_{i-1}v_{i+2}, v_{i'-1}v_{i'+2}\in E(T)$.
    We will show that one of the pairs $(i,i+1), (i+1,i), (i',i'+1)$, or $(i'+1,i')$ is bad.
    Indeed, conditions (\romannumeral1) and (\romannumeral2) hold for all four pairs.
    Suppose neither $(i,i+1)$ nor $(i+1,i)$ is bad. By symmetry, we may assume that $p$ and $v_i$ belong to different partite sets. Then $|E(\{v_{i-2},v_i\},\{p,p'\})| \le 2$ and $|E(\{v_{i+2},v_i\},\{p,p'\})|\leq 2$.
    Recall that each of $p'$ and $p$ is non-adjacent to at most one vertex in $\{v_{i-2},v_i,v_{i+2}\}$. It follows that $v_i$ must be the common non-neighbor for both $p$ and $p'$.
    This implies that the vertex in $\{v_{i'}, v_{i'+1}\}$ belonging to the partite set distinct from that of $p$ cannot serve as a common non-neighbor for both $p$ and $p'$.
    Consequently, either $(i',i'+1)$ or $(i'+1,i')$ must be bad.
    Thus, a bad pair always exists.
    This contradiction proves that if $\deg_T(w)=0$, then $\deg_R(w)=1$, completing the proof of (4).
\end{proof}

\section{The $k$-weak graphs and proof of Theorem~\ref{main 1}}
\noindent In this section, we introduce a class of graphs called \emph{$k$-weak graphs}, which structurally approximate 3-connected graphs with minimum degree at least $k$, and state the main technical result on cycle lengths in this class (Theorem~\ref{thm:weak graph}).
The section is then divided into two parts: in Section~\ref{subsec:reduction}, we prove Theorem~\ref{main 1} via a reduction argument assuming Theorem~\ref{thm:weak graph}; in Section~\ref{subsec:structural lemmas}, we establish key structural lemmas for $k$-weak graphs, preparing for the proof of Theorem~\ref{thm:weak graph} in Sections~5 and~6.

We first provide the formal definition. 

\begin{dfn}\label{weak}
Let $k\geq 3$ be an integer. A graph $G$ is {\bf $k$-weak} if one of the following holds:
\begin{description}
\item[~~\textup{Type I.}] $G$ is $3$-connected with $\delta_{2}(G)\ge k$.
\item[~~\textup{Type II.}] There exists exactly one vertex $\theta\in V(G)$ with $\deg_G(\theta)=2$, whose neighbors are $\theta_1$ and $\theta_2$, such that the graph
\( G-\theta+\theta_1\theta_2 \)
is $3$-connected with minimum degree at least $k$.
\end{description}
In either case, we denote by $\theta$ a vertex of minimum degree in $G$. Note that $\theta$ is the unique vertex with $\deg_G(\theta) < k$ whenever $\delta(G) < k$.
\end{dfn}

It follows from the definition that a $k$-weak graph is always 2-connected with $\delta_2(G) \ge k$. While Type I covers the 3-connected case, Type II represents the minimal structural deviation from 3-connectivity: the graph is 2-connected, but the separating set isolates exactly one vertex.

We now state Theorem~\ref{thm:weak graph}, which constitutes the main technical result of this paper.

\begin{thm}\label{thm:weak graph}
Let $k\geq 6$ be an integer. If $G$ is a $k$-weak graph not isomorphic to any graph in $\{K_{k+1}, K_{k,k}\}\cup \mathcal{H}_{k}$, then $G$ contains a cycle of length $r\pmod k$ for every even integer $r$.
\end{thm}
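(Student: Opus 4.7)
\medskip

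\noindent\textbf{Proof plan.} The plan is to split along the dichotomy that naturally fits the two core-subgraph frameworks introduced in Section~3: whether $G$ is non-bipartite (Section~5) or bipartite (Section~6). In either case the strategy is identical in spirit: locate a core subgraph $H$ in $G$ that is rich in path lengths between pairs of its boundary vertices, then use the remainder $G-V(H)$ (together with suitable attaching vertices) to supply admissible paths via Lemma~\ref{lem:admis path}, and finally concatenate to produce cycles covering every even residue modulo $k$. By Observation~\ref{obs:addition}, it suffices to produce a collection of cycles whose length set contains a long admissible progression; specifically, to hit every even residue modulo $k$ one needs roughly $k$ (when the progression has common difference~$1$) or $k/2$ admissible cycles of the right parity, which forces us to extract about $k$ admissible paths between a prescribed pair of vertices. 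Before doing this, I would first verify that the assumption ``$G\not\in\{K_{k+1},K_{k,k}\}\cup \mathcal H_k$'' together with $k$-weakness rules out the degenerate configurations (e.g., $G$ is a single $3$-connected block of small order), since those are precisely the graphs for which the core-subgraph approach fails to gain enough flexibility; this will be addressed by structural lemmas (yet to be stated in Section~4.2) governing $k$-weak graphs.

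\medskip

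For the non-bipartite case I would take a maximum trigonal subgraph $T\subseteq G$ (which exists because any triangle of $G$ is trigonal, and $G$ contains a triangle if it is non-bipartite with $\delta_2\ge k\ge 6$, via a standard shortest odd-cycle argument or directly from high minimum degree). Let $t:=|T|$. The maximality of $T$ forces every $v\in V(G-T)$ to have $N_T(v)$ containing no two consecutive vertices of $\partial T$; otherwise we could extend $T$ by appending $v$ via Definition~\ref{trigonal}. Consequently $\deg_T(v)\le \lfloor t/2\rfloor$, so $G-V(T)$ inherits a substantial minimum degree. I would now choose two vertices $x,y\in V(G-T)$ (or vertices adjacent to $T$) and use Lemma~\ref{lem:short dist}(2) to select neighbors $x',y'\in V(T)$ at small controlled distance on $\partial T$. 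Proposition~\ref{prop: T-graph} then yields a long consecutive set of $(x',y')$-paths through $T$. Simultaneously, after contracting $V(T)$ to a single vertex (or routing through $x'$ and $y'$) and verifying that the resulting rooted graph is $2$-connected with $\delta_2\ge k+1$, Lemma~\ref{lem:admis path} furnishes $k$ admissible $(x',y')$-paths avoiding $T$'s interior. Concatenation with the consecutive family inside $T$ yields, via Observation~\ref{obs:addition}(2), an admissible set of cycle lengths that is long enough to cover every even residue modulo $k$.

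\medskip

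For the bipartite case I would follow the same template but with an \emph{optimal} tetragonal subgraph $T$ (Definition~\ref{dfn:best Q}), extended by the set $R$ of high-$\deg_T$ vertices, using the structural control established in Lemma~\ref{lem:operation}. The critical difference, as the authors emphasize, is that Proposition~\ref{prop:Q-graph} only produces a $2$-AP of path lengths in $T$, so $T$ alone is too weak; attaching $R$ (whose vertices form an independent set contained in one part, by Lemma~\ref{lem:operation}(2)(3)) allows short detours of length one extra, effectively upgrading the $2$-AP to a $1$-AP and restoring the flexibility needed. After this enrichment, the rest of the argument proceeds as in the non-bipartite case: pick $x,y$ outside the core, use Lemma~\ref{lem:short dist} to anchor them at nearby boundary vertices, apply Lemma~\ref{lem:admis path} on the exterior to get $k$ admissible paths, and combine with the enriched core path lengths via Observation~\ref{obs:addition}.

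\medskip

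The main obstacles will be: (i)~verifying the $2$-connectivity and degree hypotheses required by Lemma~\ref{lem:admis path} after contracting the core $H$, which is delicate precisely because a $k$-weak graph may be only $2$-connected (Type~II) with a single degree-$2$ vertex $\theta$ that needs careful placement relative to $H$; (ii)~in the bipartite case, handling the small-core regime where $|T|$ is not large enough for Proposition~\ref{prop:Q-graph} to deliver a useful $2$-AP — here the exceptional graphs $\{K_{k,k}\}\cup\mathcal H_k$ arise naturally as the obstruction, and ruling them out via the theorem's hypothesis must translate into the existence of a vertex outside $T\cup R$ with a useful neighborhood; (iii)~ensuring that the anchoring distance produced by Lemma~\ref{lem:short dist} is small enough (specifically, $\le c$ for a small constant) so that Proposition~\ref{prop: T-graph}/\ref{prop:Q-graph} applied with $d$ small yields an admissible set of size at least $t-2d$ that is close to $|T|$; this is where the sparsity of trigonal and tetragonal graphs pays off, as both constructions are stable under small boundary perturbations.
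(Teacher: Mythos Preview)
Your bipartite/non-bipartite split and the choice of core subgraphs match the paper, but the plan contains several genuine gaps that would cause the argument to fail.

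First, in the non-bipartite case your claim that ``$G$ contains a triangle if it is non-bipartite with $\delta_2\ge k\ge 6$'' is false: non-bipartite triangle-free graphs of arbitrarily large minimum degree exist, so you cannot always find a trigonal subgraph. The paper handles this separately (Lemma~\ref{K3}), taking a \emph{shortest induced odd cycle} as the core instead of a trigonal graph, and the analysis there is already delicate. The analogous issue arises in the bipartite case when $G$ is $C_4$-free (Lemma~\ref{lem: C4-free}).

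Second, your intended counting cannot work. After contracting $V(T)$ to a single vertex, a vertex $v\in V(G-T)$ has degree at most $\deg_G(v)-\deg_T(v)+1$, and since $\deg_T(v)$ can be as large as $\lfloor t/2\rfloor$, you will not get $\delta_2\ge k+1$ and hence cannot extract $k$ admissible paths from Lemma~\ref{lem:admis path} on the exterior. The paper's mechanism is the opposite of what you describe: it picks a component $M$ of $G-T$ and a specific vertex $u_M$ in an end-block (Definition~\ref{dfn: block,vertex}), uses Lemma~\ref{lem: B} to show the pair $(u_M,w)$ is $(k-\deg_T(u_M))$-valid, obtaining only $k-\deg_T(u_M)-1$ admissible paths in $M$, and then \emph{balances} this against roughly $\deg_T(u_M)$-many path lengths harvested inside $T$ via Propositions~\ref{prop: T-graph}/\ref{prop:Q-graph}. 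Getting this balance to total $k$ is the crux, and it forces separate arguments for small $|T|$ (Lemmas~\ref{T4}, \ref{T5}, \ref{lem: m=2 I}, \ref{lem: m=2 II}) that your plan does not address.

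Third, your description of $R$ in the bipartite case is conceptually wrong: since $G$ is bipartite, every $(x,y)$-path length has fixed parity, so detours through $R$ cannot ``upgrade the $2$-AP to a $1$-AP''. The actual role of $R$ (via Lemma~\ref{lem:operation}(4)) is to cap $\deg_{T^*}(v)$ at $m-2$ for $v\in V(G-T-R)$, which is exactly what makes the balance above close; the core $T^*=G[V(T)\cup R]$ still produces $2$-APs of path lengths (Lemma~\ref{lem:Q-admis}), and the exceptional families $K_{k,k}$, $\mathcal H_k$ emerge when $G-T^*$ has no large component (Lemmas~\ref{lem:Q not complete}, \ref{lem:non empty}).
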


\subsection{Proof reduction of Theorem~\ref{main 1}}\label{subsec:reduction}

\noindent In this subsection, we prove Theorem~\ref{main 1}, under the validity of Theorem~\ref{thm:weak graph}.

\begin{proof}[\bf{Proof of Theorem~\ref{main 1} (assuming Theorem~\ref{thm:weak graph}).}]
It suffices to establish the following stability result for 2-connected graphs: for $k\geq 6$, every 2-connected graph $G$ with $\delta_2(G) \ge k$ contains a cycle of length $r \pmod k$ for every even integer $r$, unless $G$ is isomorphic to a graph in $\{K_{k+1}, K_{k,k}\}\cup \mathcal{H}_{k}$.
Observe that every graph in $\{K_{k+1}, K_{k,k}\}\cup \mathcal{H}_{k}$ contains cycles of all even lengths modulo $k$ with the only exception of $2 \pmod k$. 
Thus, Theorem~\ref{main 1} follows directly by applying this result to each end-block of the graph (since every end-block $B$ of a graph with $\delta(G) \ge k$ satisfies $\delta_2(B) \ge k$).

We now proceed to prove this statement.
Suppose for the sake of contradiction that $G$ is a 2-connected graph with $\delta_2(G)\geq k$ that is not isomorphic to any graph in $\{K_{k+1},K_{k,k}\}\cup \mathcal{H}_k$, yet there is no cycle of length $r \pmod k$ in $G$ for some even integer $r$. 
Then $G$ cannot be 3-connected; otherwise, $G$ would be a $k$-weak graph of Type I, and Theorem~\ref{thm:weak graph} would imply the existence of such a cycle, a contradiction.

We first claim that there exists a 2-cut $S$ of $G$ such that at least two components of $G - S$ have order at least 2.
Suppose to the contrary that for every 2-cut $S = \{x, y\}$ of $G$, there is a component of $G-S$ consisting of a single vertex $z$, i.e., $N_G(z) = \{x, y\}$.
Then $(G - z - xy, x, y)$ is a 2-connected rooted graph with $\delta_2(G - z - xy, x, y)\geq \delta_2(G)\geq k$. 
By Lemma~\ref{lem:admis path}, there are $k - 1$ admissible $(x, y)$-paths in $G - z - xy$. 
If $xy \in E(G)$, combining these paths with the edge $xy$ or the path $xzy$ yields at least $(k - 1) + 2 - 1 = k$ consecutive cycles in $G$, a contradiction. 
Hence, we must have $xy \notin E(G)$. 
Since $\delta_2(G) \ge k$, $z$ is the unique vertex with degree less than $k$ in $G$.
Hence, $S$ is the unique $2$-cut of $G$, and $G-z+xy$ is 3-connected.
This implies that $G$ is a $k$-weak graph of Type II, so Theorem~\ref{thm:weak graph} guarantees the existence of the desired cycle, a contradiction.
This proves the claim.

Let $S = \{x, y\}$ be a 2-cut of $G$ satisfying the claim, and let $M$ and $N$ be the vertex sets of two components of $G - S$ with $|M|, |N| \ge 2$. 
Define $G_M := G[M \cup S]$ and $G_N := G[N \cup S]$.
Then $(G_M, x, y)$ and $(G_N, x, y)$ are 2-connected rooted graph on at least four vertices with $\delta_2(G_M, x, y) \ge k$ and $\delta_2(G_N, x, y) \ge k$. 
By Lemma~\ref{lem:admis path}, there are $k - 1$ admissible $(x, y)$-paths in both $G_M$ and $G_N$. 
Concatenating these paths produces at least $(k - 1) + (k - 1) - 1 = 2k - 3$ admissible cycles in $G$. 
Since none of these cycles has length $r \pmod k$, we deduce that $k$ must be even, all these cycles must have odd lengths, and the lengths of admissible paths in $G_M$ or $G_N$ form a $2$-AP. 

If one of $G_M$ and $G_N$ is non-bipartite, say $G_M$, then there exist two $(x, y)$-paths $L_1, L_2$ in $G_M$ whose lengths have different parities. 
Consequently, combining one of $\{L_1, L_2\}$ with the $k - 1$ admissible $(x, y)$-paths in $G_N$ would produce at least $k - 1 \ge k/2$ even cycles, whose lengths form a $2$-$\AP$. 
This collection must contain a cycle of length $r \pmod k$ (since $k$ is even, any set of $k/2$ even lengths forming a $2$-AP covers all even residues modulo $k$), a contradiction.

Therefore, we assume that both $G_M$ and $G_N$ are bipartite. 
We may assume without loss of generality that $\theta\notin M$, so that $\deg_{G_M}(v) \ge k$ for every vertex $v \in M$.
We claim that $G_M$ contains a block $B$ of order at least 4 such that $|V(B)\cap\big(\mathrm{Cut}(B) \cup S\big)|\leq 2$.
If such a block $B$ exists, then $B$ is a 2-connected bipartite graph where $\deg_B(v) \ge k$ holds for all but at most two vertices $v \in V(B)$.
According to Theorem~\ref{Thm:32}, $B$ contains $k - 1 \ge k/2$ even cycles with lengths forming a 2-AP, which guarantees a cycle of length $r \pmod k$, a contradiction.

It remains to verify the existence of such a block $B$. 
If $G_M$ is 2-connected, then $B := G_M$ suffices. 
Suppose $G_M$ is not 2-connected. 
If $G_M$ has an end-block $B$ disjoint from $S$, then any non-cut-vertex $v \in V(B)$ satisfies $\deg_B(v) = \deg_G(v) \ge k$, implying $|B| \ge k+1 > 4$, which suffices.
Thus, we may assume that $G_M$ has exactly two end-blocks, say $B_x$ and $B_y$, containing $x$ and $y$ respectively. 
If $|B_x| \ge 3$, let $B := B_x$. 
Then, similar to the previous case, any non-cut-vertex $v \in V(B) \setminus \{x\}$ satisfies $\deg_B(v) \ge k$, implying $|B| \ge k+1 > 4$.
Finally, assume $|B_x| = |B_y| = 2$, with $B_x = \{x, z\}$. 
Let $B$ be the unique block of $G_M$ distinct from $B_x$ that contains $z$.
Since $|M| \ge 2$, the graph $G_M$ is not merely the path $xzy$, which implies $y \notin V(B)$.
Moreover, since $z$ is adjacent only to $x$ outside of $B$, we have $|B|\geq \deg_B(z)+1= \deg_G(z)\geq k>4$. 
Thus, $B$ satisfies the required conditions.
This completes the proof.
\end{proof}

\subsection{Notation and lemmas for $k$-weak graphs}\label{subsec:structural lemmas}
\noindent We now establish the necessary definitions and structural lemmas for $k$-weak graphs.
Henceforth, we use $G^\star$ to represent a $k$-weak graph. 

For technical reasons, we define a specific subgraph $G \subseteq G^\star$ as follows:

\begin{dfn}\label{dfn:Ghat}
Let $G^\star$ be a $k$-weak graph and $\theta$ be the vertex defined in Definition~\ref{weak}. We define the subgraph $G\subseteq G^\star$ as follows:
 \begin{equation*}\label{equ:gcd=1}
G :=
\begin{cases}
G^\star, & \text{if } G^\star \text{ is of Type I,} \\
G^\star-\theta, & \text{if } G^\star \text{ is of Type II.}
\end{cases}
\end{equation*}   
\end{dfn}

Our general strategy is to find two specific path families in a core subgraph $H\subseteq G$ and in $G - H$, respectively, and then concatenate these paths yields the required cycles.
Let us summarize the degree and connectivity constraints for $G$ that will be used later.

\begin{prop}
The graph $G$ satisfies $\delta(G)\geq 3$, $\delta_2(G)\geq k-1$, and $\delta_3(G)\geq k$. In particular:
\begin{itemize}  
    \item If $G^\star$ is of Type I, then $G$ is 3-connected with $\delta(G)\ge 3$ and $\delta_2(G)\geq k$.
    \item If $G^\star$ is of Type II, then $G$ is 2-connected with $\delta(G)\geq k-1$ and $\delta_3(G)\geq k$. 
    Moreover, if $\theta_{1}\theta_{2}\in E(G)$ (i.e., $\theta_{1}\theta_{2}\in E(G^\star)$), then $\delta(G)\ge k$.
\end{itemize}
\end{prop}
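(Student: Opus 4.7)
The plan is to handle the two types of $k$-weak graph separately from Definition~\ref{weak}, tracking how the removal of $\theta$ affects degrees and connectivity.

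For Type I, $G = G^\star$ by construction, so the definition directly supplies that $G$ is $3$-connected and $\delta_2(G) \geq k$. Three-connectivity forces $\delta(G) \geq 3$, and $\delta_2(G) \geq k$ in turn yields $\delta_3(G) \geq k$, so all claimed bounds for Type I follow immediately.

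For Type II, I will set $G' := G^\star - \theta + \theta_1\theta_2$, which by definition is $3$-connected with $\delta(G') \geq k$. Since $\theta$ has only the two neighbors $\theta_1, \theta_2$ in $G^\star$, the graph $G = G^\star - \theta$ differs from $G'$ only in the possible presence of the single edge $\theta_1\theta_2$, so I split into two subcases. If $\theta_1\theta_2 \in E(G^\star)$, then $G = G'$, hence $G$ is $3$-connected with $\delta(G) \geq k$, which already gives the ``moreover'' clause. Otherwise $G = G' - \theta_1\theta_2$; deleting a single edge from a $3$-connected graph preserves $2$-connectivity, and only the two degrees at $\theta_1$ and $\theta_2$ decrease, each by exactly one. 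Hence $\deg_G(\theta_i) \geq k-1$ for $i \in \{1,2\}$ while every other vertex retains degree at least $k$, giving $\delta(G) \geq k-1$, $\delta_2(G) \geq k-1$, and $\delta_3(G) \geq k$.

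Combining both types, and using $k \geq 6$ so that $k-1 \geq 3$, the uniform bounds $\delta(G) \geq 3$, $\delta_2(G) \geq k-1$, and $\delta_3(G) \geq k$ all follow. The only place any care is needed is in Type II, where one must isolate the subcase $\theta_1\theta_2 \in E(G^\star)$ in order to recover the stronger bound $\delta(G) \geq k$ promised by the ``moreover'' clause; the rest is routine verification against Definition~\ref{weak}.
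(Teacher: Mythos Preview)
Your proof is correct and follows the natural route: the paper itself states this proposition without proof, treating it as an immediate summary of Definition~\ref{weak} and Definition~\ref{dfn:Ghat}, and your case analysis (Type~I trivially, Type~II via $G' := G^\star - \theta + \theta_1\theta_2$ and splitting on whether $\theta_1\theta_2 \in E(G^\star)$) is exactly the verification one would expect.
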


In the subsequent proofs, a recurring task is to find admissible paths within specific components. 
To apply Lemma~\ref{lem:admis path} effectively, we identify pairs of vertices within a 2-connected rooted subgraph satisfying the requisite degree conditions. 
This motivates the following definition.

\begin{dfn}\label{dfn:valid}
    Let $x, y$ be distinct vertices in a connected graph $M$, and let $t \ge 2$ be an integer. The ordered pair $(x, y)$ is \textbf{$t$-valid} if there exists an end-block $B$ of $M$ with $|B| \ge t$ such that one of the following holds:
    \begin{itemize}
        \item[(1)] $x, y \in V(B)$, and $\deg_B(v) \ge t$ for all but at most one vertex $v \in V(B) \setminus \{x, y\}$.
        \item[(2)] $x \in V(B) \setminus \{b\}$ and $y \notin V(B)$, where $\{b\} = \mathrm{Cut}(B)$, and $\deg_B(v) \ge t$ for all but at most one vertex $v \in V(B) \setminus \{x, b\}$.
    \end{itemize}
\end{dfn}

The following observation follows immediately from Lemma~\ref{lem:admis path} and Definition~\ref{dfn:valid}.

\begin{obs}\label{obs:valid}
	Let $x$ and $y$ be distinct vertices in a connected graph $M$, and let $t\geq 2$ be an integer.
	If $(x, y)$ is $t$-valid, then $\mathcal{P}^M_{x,y}$ contains $t-1$ admissible paths. 
\end{obs}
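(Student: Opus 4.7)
The plan is to apply Lemma~\ref{lem:admis path} inside the end-block $B$ promised by the $t$-validity of $(x,y)$, handling the two cases of Definition~\ref{dfn:valid} in parallel.

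In case~(1), where $x,y\in V(B)$, the hypothesis $|B|\ge t\ge 2$ combined with the fact that $B$ is a block forces $B$ to be $2$-connected whenever $|B|\ge 3$. Hence $(B,x,y)$ is a $2$-connected rooted graph, and the condition that $\deg_B(v)\ge t$ fails for at most one vertex of $V(B)\setminus\{x,y\}$ is exactly $\delta_2(B,x,y)\ge t$. For $|B|\ge 4$, Lemma~\ref{lem:admis path} applied with parameter $t-1$ then produces the desired $t-1$ admissible $(x,y)$-paths in $B$, and these paths lie in $\mathcal{P}^M_{x,y}$. The boundary cases $|B|\in\{2,3\}$, which force $t\le 3$, are checked directly: the edge $xy$ alone suffices when $|B|=2$ and $t=2$, while a triangle supplies $(x,y)$-paths of lengths $1$ and $2$ when $|B|=3$.

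In case~(2), every $(x,y)$-path in $M$ must pass through the cut-vertex $b$, since $b$ separates $V(B)\setminus\{b\}$ from $y$. Fix any $(b,y)$-path $Q$ in $M$ whose internal vertices lie in $V(M)\setminus V(B)$; such a $Q$ exists because $M$ is connected. The degree hypothesis in case~(2) is precisely what the case~(1) argument requires for the pair $(x,b)$ in $B$, and it produces $t-1$ admissible $(x,b)$-paths in $B$. Concatenating each with the fixed $Q$ shifts the length set by the constant $|E(Q)|$, so admissibility is preserved and we obtain $t-1$ admissible $(x,y)$-paths in $\mathcal{P}^M_{x,y}$.

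The only real work is a bookkeeping verification that the hypotheses of Lemma~\ref{lem:admis path} are met, so there is no substantive obstacle; the only mild subtlety is the small-block cases $|B|\in\{2,3\}$, which fall outside the hypothesis $|G|\ge 4$ of Lemma~\ref{lem:admis path} and must be dispatched by direct inspection.
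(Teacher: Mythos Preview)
Your proposal is correct and follows essentially the same approach as the paper: apply Lemma~\ref{lem:admis path} to $(B,x,y)$ in case~(1) or to $(B,x,b)$ in case~(2), and dispatch the small-block cases $|B|\le 3$ by direct inspection. The only difference is that you make explicit the concatenation with a $(b,y)$-path outside $B$ in case~(2), which the paper leaves implicit.
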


\begin{proof}
    Suppose $(x, y)$ is $t$-valid. 
    Let $B$ be the end-block containing $x$ as specified in Definition~\ref{dfn:valid}.
    If $|B| \leq 3$, then $B \simeq K_{|B|}$ (as $B$ is a block), and $t \leq |B| \leq 3$. The conclusion holds trivially.
    Now assume $|B| \geq 4$. Apply Lemma~\ref{lem:admis path} to the 2-connected rooted graph $(B, x, y)$ if $y \in V(B)$, or to $(B, x, b)$ if $y \notin V(B)$ (where $\{b\}=\mathrm{Cut}(B)$). The conclusion holds in either case.
\end{proof}

We introduce $B_M$ and $u_M$ to identify a specific configuration in $M$ suitable for establishing valid pairs.

\begin{dfn}[$B_M, u_M$]\label{dfn: block,vertex}
	Let $G^\star$ be a $k$-weak graph, and let $T$ be a subgraph of $G$. For any component $M$ of $G-T$ of order at least $3$, we select an end-block $B_{M}$ of $M$ as follows:
	
	\begin{itemize}
		\item[(1)] If $M$ is 2-connected, let $B_{M} := M$.
		
		\item[(2)] If $M$ is not 2-connected and $G^\star$ is of Type I, let $B_{M}$ be an arbitrary end-block such that $\theta \notin V(B_{M}) \setminus \mathrm{Cut}(B_{M})$.
        
        \item[(3)] If $M$ is not 2-connected and $G^\star$ is of Type II, let $B_{M}$ be an end-block of maximum order such that $V(B_{M}) \setminus \mathrm{Cut}(B_{M})$ contains at most one of $\theta_{1}$ and $\theta_{2}$.
	\end{itemize}
	
	Based on this selection, we choose a vertex $u_{M} \in V(B_M) \setminus \mathrm{Cut}(B_M)$ satisfying:
	\begin{itemize}
		\item[(1)] $\deg_{T}(u_{M}) = \max \{ \deg_{T}(v) : v \in V(B_{M}) \setminus \mathrm{Cut}(B_{M}) \}$.
		
		\item[(2)] Subject to (1), $\deg_{G}(u_{M})$ is minimized.
	\end{itemize}
\end{dfn}
We omit the subscript $T$ from $B_M$ and $u_M$, as $M$ being a component of $G - T$ implicitly fixes $T$.
The following lemma establishes that $u_{M}$ forms a valid pair with any other vertex in $M$.
We remark that the proof below does not rely on the maximality of the order of $B_M$ in cases (3); however, this property will be useful in Section 6.

\begin{lem}\label{lem: B}
    Let $k\geq 6$ be an integer and $G^\star$ be a $k$-weak graph.
    For every subgraph $T\subseteq G$ and every component $M$ of $G-T$ with $|M|\geq 3$, if $\deg_{T}(u_{M})\le k-2$, then for every vertex $w\in V(M)\setminus\{u_{M}\}$, the pair $(u_{M},w)$ is $(k-\deg_{T}(u_{M}))$-valid. 
\end{lem}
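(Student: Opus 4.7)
The plan is to show that the end-block $B_M$ itself witnesses the $t$-validity of $(u_M,w)$, where $d:=\deg_T(u_M)$ and $t:=k-d\geq 2$. Denote by $b$ the unique cut-vertex of $M$ contained in $B_M$, if it exists. The essential step is to verify that, apart from $u_M$ together with either $w$ or $b$, every vertex $v\in V(B_M)$ has $\deg_{B_M}(v)\geq t$. For any $v\in V(B_M)\setminus\mathrm{Cut}(B_M)$ the identity $\deg_{B_M}(v)=\deg_M(v)=\deg_G(v)-\deg_T(v)$ holds, and the maximality rule in Definition~\ref{dfn: block,vertex} gives $\deg_T(v)\leq d$, so any vertex with $\deg_G(v)\geq k$ automatically satisfies $\deg_{B_M}(v)\geq t$.

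I would then invoke the preceding proposition, which gives $\delta_2(G)\geq k-1$ and $\delta_3(G)\geq k$, to locate the at most two vertices of $G$ whose $G$-degree is below $k$: these lie in $\{\theta\}$ when $G^\star$ is of Type~I and in $\{\theta_1,\theta_2\}$ when $G^\star$ is of Type~II. The selection rule for $B_M$ is tailored so that whenever $M$ is not $2$-connected, $V(B_M)\setminus\mathrm{Cut}(B_M)$ meets this low-degree set in at most one element. In the remaining $2$-connected case (where $B_M=M$ and $\mathrm{Cut}(B_M)=\emptyset$), the min-$\deg_G$ tiebreak in the choice of $u_M$ can be used to force the same conclusion: if a low-degree vertex $\theta'$ lies in $V(B_M)$ and $\theta'\neq u_M$, then necessarily $\deg_T(\theta')<d$, for otherwise the tiebreak would have chosen $u_M=\theta'$; this upgrades the bound to $\deg_{B_M}(\theta')\geq(k-1)-(d-1)=t$, so $\theta'$ is not bad.

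Consequently, the only vertex of $V(B_M)\setminus\{u_M\}$ that can have $\deg_{B_M}<t$ is $b$, if it exists. A short case split then finishes the proof: if $w\in V(B_M)$, Definition~\ref{dfn:valid}(1) applies with $x=u_M$, $y=w$, and the only possible exception in $V(B_M)\setminus\{u_M,w\}$ is $b$; if $w\notin V(B_M)$, the cut-vertex $b$ necessarily exists (since $M$ is connected) and Definition~\ref{dfn:valid}(2) applies, leaving no exception in $V(B_M)\setminus\{u_M,b\}$. The size bound $|B_M|\geq t$ is handled by a parallel counting: either $\deg_{B_M}(u_M)\geq\deg_G(u_M)-d\geq(k-1)-d=t-1$ suffices, or, in the residual Type~I subcase $u_M=\theta$ with $M$ $2$-connected, one picks any $v\in V(M)\setminus\{u_M\}$ (which exists since $|M|\geq 3$ and satisfies $\deg_G(v)\geq k$) and reads $|M|\geq\deg_M(v)+1\geq t+1$.

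The main obstacle I anticipate is the Type~II situation in which $M$ is $2$-connected and both $\theta_1,\theta_2\in V(M)$: here the end-block $B_M=M$ has no cut-vertex slack to absorb a second low-degree vertex, so the argument relies entirely on the min-$\deg_G$ tiebreak to eliminate the $\theta_i$ distinct from $u_M$. This is precisely where the two-level selection rule for $u_M$ becomes indispensable, and I would give this sub-case its own careful treatment, confirming that if $u_M\notin\{\theta_1,\theta_2\}$ then both $\theta_i$ must satisfy $\deg_T(\theta_i)<d$, because otherwise the tiebreak (min $\deg_G$ subject to max $\deg_T$) would have forced $u_M\in\{\theta_1,\theta_2\}$ in view of $\delta_3(G)\geq k$.
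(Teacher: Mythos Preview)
Your approach is essentially the paper's, and the key ingredients---the maximality of $\deg_T(u_M)$ and the min-$\deg_G$ tiebreak---are correctly identified. Two points in the bookkeeping need adjustment, however.

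First, the tiebreak computation you give in the $2$-connected case, $\deg_{B_M}(\theta')\geq(k-1)-(d-1)=t$, presupposes $\deg_G(\theta')\geq k-1$. This holds in Type~II (where $\delta(G)\geq k-1$) but not in Type~I, where $\deg_G(\theta)$ can be as small as $3$. So in Type~I with $M$ $2$-connected and $\theta\neq u_M$, your claim ``$\theta'$ is not bad'' is unjustified; the correct move (which the paper makes) is simply to let $\theta$ be the single permitted exception in Definition~\ref{dfn:valid}(1). Your conclusion ``only $b$ can fail'' is therefore false as stated in this sub-case, though validity still follows.

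Second, the tiebreak is also needed in the \emph{non-$2$-connected} Type~II case. You only record that at most one $\theta_i$ lies in $V(B_M)\setminus\mathrm{Cut}(B_M)$, but together with $b$ that would give two potential failures in $V(B_M)\setminus\{u_M\}$. One must argue, exactly as you do for the $2$-connected case, that if this $\theta_i\neq u_M$ then $\deg_T(\theta_i)<d$ and hence $\deg_{B_M}(\theta_i)\geq t$; the paper does this explicitly. Both fixes are mechanical and leave the overall plan intact.
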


\begin{proof}
    Let $t := k - \deg_T(u_M)$. By definition, it suffices to show that $|B_M| \ge t$ and that $\deg_{B_M}(v) \ge t$ holds for all but at most one vertex $v \in V(B_M) \setminus \{u_M\}$.

    We first consider the case that $M$ is $2$-connected.   
    Then $B_M = M$ and $|B_M| \ge 3$. 
    Since $\delta_2(G) \ge k-1$, there exists a vertex $u \in V(M)$ with $\deg_{G}(u) \ge k-1$.
    By the maximality of $\deg_T(u_M)$, we have $\deg_T(u) \le \deg_T(u_M)$, and thus
    \[
    |M| \ge \deg_M(u) + 1 = \deg_{G}(u) - \deg_T(u) + 1 \ge (k-1) - \deg_T(u_M) + 1 = t.
    \]
    
    It remains to verify the degree condition.
    If $G^\star$ is of Type I, $\theta$ is the only vertex that may have degree less than $k$ in $G$.
    The maximality of $\deg_T(u_M)$ implies that for every $v \in V(M) \setminus \{u_M, \theta\}$, we have $\deg_M(v) = \deg_{G}(v) - \deg_T(v) \ge k - \deg_T(u_M) = t$. Thus, at most one vertex (namely, $\theta$) may fail the degree condition.

    If $G^\star$ is of Type II, all vertices other than $\theta_1$ and $\theta_2$ have degree at least $k$ in $G$.
    For every $v \in V(M) \setminus \{u_M, \theta_1, \theta_2\}$, we have 
    \begin{align}\label{eq:typeII}
        \deg_{M}(v) = \deg_G(v) - \deg_T(v) \ge k - \deg_T(u_M) = t.
    \end{align}
    We claim that if both $\theta_1$ and $\theta_2$ fail the degree condition (i.e., $\deg_M(\theta_i) < t$ for $i=1,2$), then $u_M \in \{\theta_1, \theta_2\}$.
    Indeed, by revisiting inequality \eqref{eq:typeII}, the condition $\deg_M(\theta_i) < t$ implies that $\deg_{G}(\theta_i) = k-1$.
    Suppose to the contrary that $u_M \notin \{\theta_1, \theta_2\}$. 
    By the selection criterion (2) for $u_M$, we must have $\deg_T(\theta_1) < \deg_T(u_M)$ (as otherwise $u_M$ would not minimize $\deg_G$).
    However, this yields $\deg_M(\theta_1) = \deg_{G}(\theta_1) - \deg_T(\theta_1) \ge (k-1) - (\deg_T(u_M) - 1) = t$, a contradiction.
    Thus, at most one vertex in $V(M) \setminus \{u_M\}$ has degree less than $t$, completing the verification.

    We then assume that $M$ is not 2-connected.
    Then $B_M$ is an end-block with $|B_M| \ge 2$. 
    For any $u \in V(B_M) \setminus \mathrm{Cut}(B_M)$, we have $\deg_{G}(u) \ge k-1$. (This is ensured by the criterion (2) for Type I since $\theta \notin V(B_M) \setminus \mathrm{Cut}(B_M)$, and by $\delta(G) \ge k-1$ for Type II).
    By the maximality of $\deg_T(u_M)$, it follows that 
    \[
    |B_M| \ge \deg_{B_M}(u) + 1 = \deg_{G}(u) - \deg_T(u) + 1 \ge (k-1) - \deg_T(u_M) + 1 = t.
    \]

    Now we verify the degree condition.
    If $|B_M| = 2$, the condition holds trivially.
    Assume $|B_M| \ge 3$, then $V(B_M) \setminus (\mathrm{Cut}(B_M) \cup \{u_M\})$ is non-empty.
    If $G^\star$ is of Type I, for every $v \in V(B_M) \setminus (\mathrm{Cut}(B_M) \cup \{u_M\})$, we have $v \neq \theta$, so $\deg_{B_{M}}(v) = \deg_{G}(v) - \deg_{T}(v) \ge k - \deg_T(u_M) = t$.
    If $G^\star$ is of Type II, the verification is analogous to the 2-connected case.
    Recall that $V(B_{M}) \setminus \mathrm{Cut}(B_{M})$ contains at most one of $\theta_{1}$ and $\theta_{2}$; we may assume without loss of generality that $\theta_2 \notin V(B_{M}) \setminus \mathrm{Cut}(B_{M})$.
    Consequently, for every $v \in V(B_M) \setminus (\mathrm{Cut}(B_M) \cup \{u_M, \theta_1\})$, we have $\deg_{B_{M}}(v) \ge t$.
    By repeating the argument based on the selection criterion (2), we deduce that if $\theta_1 \in V(B_M) \setminus \mathrm{Cut}(B_M)$ and $\deg_{B_{M}}(\theta_1) < t$, then necessarily $u_M = \theta_1$.
    Thus, only the vertex in $\mathrm{Cut}(B_M)$ may violate the degree condition.
    This completes the proof.
\end{proof}

Finally, we establish a strengthened form of Menger's Theorem for 2-connected graphs.
\begin{lem}\label{lem:menger extend}
    Let $G$ be a 2-connected graph, and let $X, Y$ be a partition of $V(G)$ such that $|X| \ge 2$ and $|Y| \ge 2$. Then for every $x \in N_X(Y)$, there exist two disjoint $(X, Y)$-edges, one of which is incident to $x$.
\end{lem}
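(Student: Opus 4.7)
The plan is to argue by contradiction, splitting the failure of the conclusion into two structural cases, each of which will produce a cut-vertex in $G$ and contradict its 2-connectedness. Fix a vertex $x \in N_X(Y)$ and a neighbor $y_0 \in N_Y(x)$. Suppose, toward a contradiction, that no $(X,Y)$-edge is disjoint from some $(X,Y)$-edge incident to $x$; equivalently, for every $(X,Y)$-edge $x'y'$ with $x' \neq x$ and every $y \in N_Y(x)$, we must have $y' = y$.

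The first case I would handle is when $x$ is the only vertex of $X$ with a neighbor in $Y$, i.e., $E(X \setminus \{x\}, Y) = \emptyset$. In this case, all edges joining $X$ and $Y$ pass through $x$, so in $G - x$ the nonempty sets $X \setminus \{x\}$ (nonempty because $|X| \ge 2$) and $Y$ lie in different components; this contradicts the 2-connectedness of $G$.

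The second case is when there exists an $(X,Y)$-edge $x_1 y_1$ with $x_1 \neq x$. Applying the assumption with $y_0$ (any element of $N_Y(x)$) forces $y_1 = y_0$, so in fact $N_Y(x) = \{y_0\}$. Then for any further $(X,Y)$-edge $x_2 y_2$ with $x_2 \neq x$, the same assumption applied to the edge $xy_0$ forces $y_2 = y_0$ as well. Hence every $(X,Y)$-edge has its $Y$-endpoint equal to $y_0$, so $y_0$ is the only vertex of $Y$ with a neighbor in $X$. But then $G - y_0$ separates $X$ from $Y \setminus \{y_0\}$, which is nonempty since $|Y| \ge 2$, again contradicting 2-connectedness.

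There is no serious technical obstacle here; the only point that requires a little care is ensuring that, when we deduce $y_1 = y_0$ in the second case, we are allowed to pick $y_0$ to be an arbitrary neighbor of $x$ in $Y$, so that $N_Y(x) \subseteq \{y_1\}$. This is exactly what the contrapositive of the assumption gives, since if $x$ had any neighbor $y_0 \neq y_1$ then $xy_0$ and $x_1y_1$ would already be two disjoint $(X,Y)$-edges with $xy_0$ incident to $x$. Once that small point is observed, the two cases together yield the lemma.
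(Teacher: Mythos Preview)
Your proof is correct and follows essentially the same approach as the paper's. The only cosmetic difference is that the paper argues directly rather than by contradiction: fixing $y\in N_Y(x)$, it notes that if $E(X-x,Y-y)=\emptyset$ then 2-connectedness forces both $E(X-x,y)$ and $E(x,Y-y)$ to be nonempty, and one edge from each set already gives the desired disjoint pair incident to~$x$.
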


\begin{proof}
    Let $y \in N_Y(x)$. If $E(X-x, Y-y) \neq \emptyset$, then any edge in this set, together with $xy$, forms the desired pair of disjoint edges.
    Suppose otherwise. Since $G$ is 2-connected, $E(X-x, Y) \neq \emptyset$. By our assumption, every edge in this set must be incident to $y$, implying $E(X-x, y) \neq \emptyset$. Similarly, $E(x, Y-y) \neq \emptyset$. Selecting an arbitrary edge from each of these two sets yields a pair of disjoint edges, which completes the proof.
\end{proof}

\section{Proof of Theorem~\ref{thm:weak graph}: the non-bipartite case}
\noindent Throughout the rest of the paper, let $G^\star$ be a given $k$-weak graph and let $G\subseteq G^\star$ be defined in Definition~\ref{dfn:Ghat}.
This section is devoted to the proof of Theorem~\ref{thm:weak graph} when $G$ is non-bipartite.

\begin{thm}\label{main}
    Let $k\ge 6$ be an integer, and let $G^\star$ be a $k$-weak graph not isomorphic to $K_{k+1}$. If $G$ is non-bipartite, then $G^\star$ contains a cycle of length $r\pmod k$ for every even integer $r$.
\end{thm}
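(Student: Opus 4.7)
The plan is to take as the \emph{core subgraph} a trigonal subgraph $T \subseteq G$ of maximum order $t$ with boundary $\partial T$, and to cover every even residue modulo $k$ by concatenating admissible $(x,y)$-paths through $T$ (supplied by Proposition~\ref{prop: T-graph}) with admissible $(x,y)$-paths through $G - (V(T) \setminus \{x,y\})$ for a judiciously chosen pair $x, y \in V(\partial T)$. Since $G$ is non-bipartite, a triangle—and hence a nontrivial trigonal subgraph—exists in most regimes; in the corner case that $G$ is triangle-free I would fall back to a short separate argument based on a shortest odd cycle.

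First I would exploit maximality of $T$: for every $v \in V(G) \setminus V(T)$, the neighbor set $N_{\partial T}(v)$ cannot contain two consecutive vertices of $\partial T$, because otherwise subdividing the corresponding boundary edge by $v$ would extend $T$. This is precisely the hypothesis of Lemma~\ref{lem:short dist}, which forces attachments to $T$ to spread out along $\partial T$. Combined with $\delta_{3}(G) \ge k$, this lets me locate a pair $x, y \in V(\partial T)$ at a small boundary distance $d$, together with a component $M$ of $G - V(T)$ and two vertices $x', y' \in V(M)$ joined to $x, y$ respectively by disjoint $(T, M)$-edges (which exist by Lemma~\ref{lem:menger extend} or directly from the $3$-connectivity of $G^\star$).

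Second, Proposition~\ref{prop: T-graph} yields the consecutive length-set $[d, t-d] \subseteq \Pset^T_{x,y}$, while Lemma~\ref{lem: B} together with Observation~\ref{obs:valid} applied to the block $B_M$ and vertex $u_M$ of Definition~\ref{dfn: block,vertex} supplies a family of admissible $(x', y')$-paths inside $M$. Observation~\ref{obs:addition} then guarantees that concatenating these two families produces an admissible cycle length-set of size at least $(t - 2d + 1) + s - 1$, where $s$ denotes the number of admissible inside paths. When this total exceeds $k$, all $k$ residues modulo $k$—and in particular every even one—are covered; when the total lies between $\lceil k/2 \rceil$ and $k$, the combined set must form a $2$-$\AP$ of length at least $k/2$, and a parity check ensures every even residue appears. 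Degenerate regimes where $t$ is very small, or where $G - V(T)$ has no component of order at least~$3$, are handled by structural inspection: using $\delta_{2}(G) \ge k$ and the exclusion $G^\star \not\simeq K_{k+1}$, one either extends $T$ (contradicting maximality) or directly locates the required cycle. Finally, for Type II weak graphs, the degree-$2$ vertex $\theta$ is reinstated at the end by substituting the path $\theta_{1}\theta\theta_{2}$ for the edge $\theta_{1}\theta_{2}$ (or vice versa) in a closing cycle, producing cycles of the same residue in $G^\star$.

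The main obstacle I foresee is the intermediate regime in which $t$ is moderate, the inside family contributes only a genuine $2$-$\AP$, and the consecutive contribution from $T$ is too short to break parity by itself. There, the resulting sum set is a $2$-$\AP$ of length roughly $k/2$, so one must invoke the maximality of $T$ a second time to produce either a path through $V(T)$ of opposite parity (using an odd ``chord'' inside $T$ furnished by a small sub-trigonal configuration as in Observation~\ref{obs: small T-graph}) or an enlarged family of admissible inside paths obtained by rerouting through an additional vertex of $G - V(T)$. Controlling the parities in each subcase—especially when $d \ge 2$ and the outside component $M$ is small—is what I expect to consume the bulk of the argument.
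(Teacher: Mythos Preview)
Your plan matches the paper's overall strategy: take a maximum trigonal subgraph $T$ as the core, combine the consecutive length-set $[d,t-d]\subseteq\Pset^T_{x,y}$ from Proposition~\ref{prop: T-graph} with admissible paths in a component $M$ of $G-T$ via Lemma~\ref{lem: B} and Observation~\ref{obs:valid}, and treat the triangle-free case separately using a shortest induced odd cycle. Your diagnosis of the main obstacle---the $2$-$\AP$ regime---is also accurate, and the paper resolves it exactly as you suggest, by exploiting the non-bipartiteness of $T$ (or of the odd cycle) to supply a path of the opposite parity.

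The genuine gap is in the small-$t$ cases $|T|\in\{3,4\}$, which you wave away as ``structural inspection''. Here your concatenation falls one short. With $|T|=3$, maximality of $T$ gives $\deg_T(v)\le 1$ for every $v\notin V(T)$, so Lemma~\ref{lem: B} yields only $k-2$ admissible inside paths, while the triangle contributes just two consecutive lengths---a total of $k-1$ consecutive cycles, which can miss an even residue. The paper's fix is \emph{not} to inspect structure or extend $T$, but to contract: it contracts an edge $bc$ of the triangle to a vertex $a'$, verifies (using that no outside vertex sees two triangle vertices) that the rooted graph $(H'-aa',a,a')$ is $2$-connected with $\delta_2\ge k$, applies Lemma~\ref{lem:admis path} to obtain $k-1$ admissible $(a,a')$-paths, and closes each through $ab$ or $acb$ to get $k$ consecutive cycles. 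The same contraction handles $|T|=4$ when $G[V(T)]\simeq K_4^-$. This contraction step is the idea missing from your outline.

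A minor point: your Type~II remark is off. Since $G\subseteq G^\star$, any cycle found in $G$ already lies in $G^\star$; nothing needs to be ``reinstated''. The actual subtlety is the reverse: in the contraction argument above, when $\theta_1\theta_2\notin E(G^\star)$ one must work in $G^\star$ rather than $G$ to guarantee $2$-connectivity after contraction.
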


Throughout this section, we assume the conditions of Theorem~\ref{main}: 
\[
\mbox{ $k\geq 6$,~ $G^\star$ is a $k$-weak graph, ~$G^\star \not\simeq K_{k+1}$, ~ and $G$ is non-bipartite.}
\]
The first lemma treats the $K_3$-free case, with a minimal induced odd cycle as the core graph.

\begin{lem}\label{K3}
    If $G$ is $K_{3}$-free, then $G$ contains a cycle of length $r\pmod k$ for any even $r$.
\end{lem}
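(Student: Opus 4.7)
My plan is to use a shortest odd cycle of $G$ as the core subgraph and to concatenate its two arcs with external $(v_a,v_b)$-paths found in the remainder of $G$, for a carefully chosen pair $v_a,v_b\in V(C)$. Since $G$ is $K_3$-free, any shortest odd cycle $C$ has length $s:=|C|\ge 5$ and must be induced. The key structural input I will draw from the minimality of $s$ is: every $v\in V(G)\setminus V(C)$ satisfies $\deg_C(v)\le 2$, and whenever $\deg_C(v)=2$ the two $C$-neighbors of $v$ lie at distance exactly $2$ along $C$. Indeed, two $C$-neighbors of $v$ at distance $d$ on $C$ close into two cycles of lengths $d+2$ and $s-d+2$, precisely one of which is odd; the minimality of $s$ forces that odd cycle to have length at least $s$, which together with $s\ge 5$ leaves only $d=2$.

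I then pick $v_a,v_b\in V(C)$ with $\mathrm{dist}_C(v_a,v_b)=(s-1)/2$, so the two arcs of $C$ joining them have lengths $(s-1)/2$ and $(s+1)/2$; thus $\Pset^C_{v_a,v_b}=\{(s-1)/2,(s+1)/2\}$ is consecutive of size~$2$. The main step is to produce $k-1$ admissible $(v_a,v_b)$-paths in $G$ whose internal vertices avoid $V(C)$; denote their length set by $L$. Concatenating each such external path with either arc of $C$ gives a cycle of length lying in $L+\{(s-1)/2,(s+1)/2\}$. By Observation~\ref{obs:addition}(2) (if $L$ is a $1$-$\AP$) or Observation~\ref{obs:addition}(3) (if $L$ is a $2$-$\AP$, using that $\{(s-1)/2,(s+1)/2\}$ is consecutive of size~$2$), this sum is a consecutive set of size at least~$k$. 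Hence it meets every residue class modulo~$k$, and a fortiori every even one, finishing the proof.

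For the external-path production, I would first try to apply Lemma~\ref{lem:admis path} to the rooted graph $(G^\circ,v_a,v_b)$ with $G^\circ:=G[(V(G)\setminus V(C))\cup\{v_a,v_b\}]$; this succeeds whenever $(G^\circ,v_a,v_b)$ is $2$-connected with $\delta_2(G^\circ,v_a,v_b)\ge k$. The main obstacle is that both conditions can fail: by the structural property of $C$, each $v\in V(G)\setminus V(C)$ can lose up to $2$ edges when passing from $G$ to $G^\circ$, so the best a priori bound on $\delta_2(G^\circ,v_a,v_b)$ is $\delta_3(G)-2\ge k-2$; and removing the $s-2\ge 3$ interior vertices of $C$ may disconnect $G^\circ$ even when $G$ is $3$-connected.

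I plan to overcome this in two layers. First, I exploit the freedom in the choice of antipodal pair: $C$ has $s$ pairs at distance $(s-1)/2$, and if the direct application of Lemma~\ref{lem:admis path} succeeds for even one of them we are done, so we may assume it fails for every pair, which pins the bad configurations to a small set of vertices in $V(G)\setminus V(C)$ with very specific $C$-attachments. Second, in those structured exceptional cases I will apply the block-based Lemma~\ref{lem: B} to the components of $G-V(C)$ to extract admissible paths inside a component, and use Lemma~\ref{lem:menger extend} to splice them into $(v_a,v_b)$-paths via disjoint end-edges from $v_a$ and $v_b$ into that component; the resulting $(v_a,v_b)$-paths inherit the admissibility of the internal paths. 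The hardest part of the proof will therefore be the case analysis organizing these exceptional configurations cleanly.
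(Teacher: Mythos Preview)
Your plan diverges from the paper's and, as written, does not close. The paper never fixes two vertices on $C$; instead it works entirely inside a component $M$ of $G-C$, picking $u=u_M$ (Definition~\ref{dfn: block,vertex}) and a second vertex $w\in V(M)$ chosen so that its $C$-neighbor sits at a specific distance from $N_C(u)$. The point of that choice is that the two $C$-routes between $N_C(u)$ and $N_C(w)$ differ by~$3$ (and when $\deg_C(u)=2$ one obtains four consecutive lengths), so that only $k-2$ (respectively $k-3$) admissible $(u,w)$-paths in $M$ are needed to reach $k$ consecutive cycle lengths. Your antipodal choice makes the two arcs differ by~$1$, which is strictly weaker: it forces you to produce $k-1$ admissible external paths.

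That target is the gap. Your direct application of Lemma~\ref{lem:admis path} to $(G^\circ,v_a,v_b)$ does not give $k-1$ paths: many vertices of $G\setminus V(C)$ can have $\deg_C=2$ with both $C$-neighbors outside $\{v_a,v_b\}$, so $\delta_2(G^\circ,v_a,v_b)$ is generically only $k-2$, yielding at most $k-3$ admissible paths. Your fallback via Lemma~\ref{lem: B} only produces $(u_M,w)$-paths for the specific vertex $u_M$, hence at most $k-\deg_C(u_M)-1\le k-2$ admissible $(v_a,v_b)$-paths after fixing $v_a\in N_C(u_M)$; combined with your two consecutive arc lengths this gives at most $k-1$ consecutive cycle lengths in the $1$-$\AP$ case, one short. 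The ``freedom of antipodal pair'' does not repair this, since the shortfall comes from the degree loss in $G^\circ$ (or from $\deg_C(u_M)$), not from the particular pair. Finally, the disconnected case of $G-C$ (the paper's Claim~1) is substantial---for $k=6$ it requires parity arguments with the odd cycle and a three-component configuration---and your outline does not address it. The fix is to abandon the antipodal endpoints on $C$ and instead let the $C$-side contribute a length gap of~$3$ (or more), which is exactly what the paper does.
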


\begin{proof}
    Suppose for a contradiction that for some even $r$, $G^\star$ does not contain any cycle of length $r\pmod k$. Let $C=v_{0}v_{1}\ldots v_{2s}v_{0}\subseteq G$ ($s\ge 2$) be an induced odd cycle of minimum order such that $\sum_{i=0}^{2s}\deg_{G}(v_{i})$ is minimized. 
    By the minimality of $|C|$, every vertex $v\in V(G-C)$ satisfies $\deg_{C}(v)\le 2$, with equality holding if and only if $N_{C}(v)=\{v_{i},v_{i+2}\}$ for some index $i$ (taken modulo $2s+1$). 
    Moreover, if $G^\star$ is of Type I, the vertex $\theta\notin V(C)$ and $\deg_{G}(\theta)<k$, then the minimality of the degree sum implies $\deg_{C}(\theta)\le 1$.
	Consequently, for any component $M$ of $G-C$, every $v\in V(M)$ satisfies $\deg_M(v) \ge \deg_G(v) - 2 \ge k-3$, except possibly when $G^\star$ is of Type I and $v=\theta$, in which case $\deg_M(\theta) \ge \deg_G(\theta) - 1 \ge 2$.
    In summary, it always holds that $\delta(M)\geq 2$ and $\delta_{2}(M)\ge k-3$, which implies that every end-block of $M$ has order at least $k-2\geq 4$.
	\medskip
	
	\noindent{\bf Claim 1.} $G-C$ is connected.
	
	\begin{proof}
    Suppose to the contrary that $G-C$ has components $M_{1}, \ldots, M_{t}$ with $t \ge 2$. For each $s\in [t]$, let $u_s := u_{M_s}$ and $B_s := B_{M_s}$ as defined in Definition~\ref{dfn: block,vertex}.

    First, consider the case where $\deg_{C}(u_i) = 1$ for some $i\in [t]$, or $k \ge 7$ (in which case let $i \in [t]$ be arbitrary).
    Pick any index $j\in[t]$ distinct from $i$.
    Since $G$ is 2-connected, Lemma~\ref{lem:menger extend} guarantees the existence of vertices $w_i \in V(M_i) \setminus \{u_i\}$ and $w_j \in V(M_j) \setminus \{u_j\}$, along with two disjoint paths connecting $\{u_i, w_i\}$ and $\{u_j, w_j\}$ whose internal vertices lie in $C$.
    By Observation~\ref{obs:valid} and Lemma~\ref{lem: B}, $\mathcal{P}_{u_i,w_i}^{M_{i}}$ contains $k-\deg_{C}(u_i)-1$ admissible paths, and $\mathcal{P}_{u_j,w_j}^{M_{j}}$ contains $k-\deg_{C}(u_j)-1$ admissible paths.
    Concatenating these path collections, together with the two disjoint connecting paths, yields at least $(k-\deg_{C}(u_{i})-1)+(k-\deg_{C}(u_{j})-1)-1 = 2k - 3 - \deg_C(u_i) - \deg_C(u_j) \ge k$ admissible cycles in $G$ (using the fact that $k \ge 7$ or $\deg_{C}(u_i) = 1$).
    Since $G$ contains no cycle of length $r \pmod k$, we deduce that $k$ must be even, and the lengths of admissible paths in $\mathcal{P}_{u_{i},w_{i}}^{M_{i}}$ must form a 2-AP.
    As $C$ is an odd cycle, $\mathcal{P}_{u_{i},w_i}^{C}$ contains a path $L$ whose length has the same parity as the admissible paths in $\mathcal{P}_{u_{i},w_{i}}^{M_{i}}$.
    Combining $\mathcal{P}_{u_{i},w_i}^{M_{i}}$ with $L$ produces at least $k-\deg_C(u_i)-1 \geq k-3 \ge k/2$ cycles of even lengths forming a 2-AP, one of which must have length $r \pmod k$, a contradiction.

    Thus, we may assume $k=6$ and $\deg_{C}(u_s) = 2$ for each $s \in [t]$. 
    We claim that for any $s \in [t]$, $N_{C}(M_s)$ is contained in a set of three consecutive vertices on $C$.
    Suppose the claim fails for $M_1$. 
    We may assume $N_{C}(u_1) = \{v_{0}, v_{2}\}$. 
    Since the claim fails, there exists $w \in V(M_1) - u_1$ adjacent to some $v_\ell \in V(C)$ with $\ell \notin \{0,1,2\}$.
    Then $\mathcal{P}_{u_{1},w}^{C}$ contains paths of lengths $\{\ell, \ell+2, 2s-\ell+3, 2s-\ell+5\}$ (two odd and two even integers, with differences of 2).
    It is straightforward to verify that combining these paths with $\mathcal{P}_{u_{1},w}^{M}$ (which contains $k-3=3$ admissible paths) generates cycles of all lengths modulo 6, a contradiction.

    We now assert that $N_{C}(u_i) \cap N_{C}(M_i \setminus \{u_i\}) \neq \emptyset$ for every $i \in [t]$.
    Suppose for the sake of contradiction that this intersection is empty for some index, say $i=1$. 
    We may assume $N_C(M_1) \subseteq \{v_0, v_1, v_2\}$ and $N_C(u_1) = \{v_0, v_2\}$, then $N_C(M_1 \setminus \{u_1\}) \subseteq \{v_1\}$.
    Consequently, $\{u_1, v_1\}$ is a 2-cut of $G$ separating $M_1 \setminus \{u_1\}$ (which is non-empty since $|M_1| \ge 3$) from the rest of the graph.
    This is impossible if $G^\star$ is of Type I (as $G$ is 3-connected). 
    Thus, $G^\star$ must be of Type II, and $M_1$ must contain exactly one of the vertices $\theta_1, \theta_2$ (as $G+\theta_1\theta_2$ is 3-connected).
    For every $v\in V(M_1) \setminus \{u_1\}$, since $N_C(v) \subseteq \{v_1\}$, we have $\deg_C(v) \le 1$.
    Consequently, $\deg_{M_1}(v) =\deg_G(v)-\deg_C(v)\ge k-1$ for all $v \in V(M_1)\setminus\{u_1\}$, with the possible exception for the single vertex in $V(M_1) \cap \{\theta_1, \theta_2\}$. 
    We then select $q$ as follows: if $M_1$ is 2-connected, let $q$ be any neighbor of $v_1$ in $M_1 \setminus \{u_1\}$.
    If $M_1$ is not 2-connected, let $q$ be any neighbor of $v_1$ in $M_1 - B_1$ (such a vertex exists because $\mathrm{Cut}(B_1)$ is not a cut-vertex in $G$).
    In either case, a routine check confirms that $(u_1, q)$ is $(k-1)$-valid, as the required degree condition holds for all vertices in $M_1 \setminus \{u_1\}$ except possibly for the single vertex in $V(M_1) \cap \{\theta_1, \theta_2\}$.
    By Observation~\ref{obs:valid}, $\mathcal{P}^{M_1}_{u_1,q}$ contains $k-2$ admissible paths. Recall that the pair $(u_2, w_2)$ in $M_2$ yields $k-3$ admissible paths in $\mathcal{P}_{u_2,w_2}^{M_2}$. Since $G$ is 2-connected, Lemma~\ref{lem:menger extend} provides two disjoint paths between $\{u_1, q\}$ and $\{u_2, w_2\}$ with all internal vertices in $C$. Concatenating the paths from $\mathcal{P}^{M_1}_{u_1,q}$ and $\mathcal{P}_{u_2,w_2}^{M_{2}}$ via these connecting paths produces $(k-2)+(k-3)-1=k$ admissible cycles (given $k=6$). Following the previous parity argument, if these cycles fail to cover some residue $r \pmod k$, the lengths of admissible paths in $\mathcal{P}^{M_1}_{u_1,q}$ must form a 2-AP. 
    Combining these with a path in $\mathcal{P}^{C}_{u_1,q}$ of the appropriate parity yields $k-2 > k/2$ even admissible cycles, a contradiction.
    This proves the assertion.
    
    According to the assertion, for each $i\in [t]$, there exists $p_i \in V(M_i) \setminus \{u_i\}$ such that $N_C(p_i) \cap N_C(u_i) \neq \emptyset$.
    Then $\Pset_{u_{i},p_i}^{C}\supseteq\{2, 4, 2s+1\}$.
    The proof of Claim 1 is then partitioned into the cases $s \in \{2, 3\}$ and $s \ge 4$.
    If $s \in \{2, 3\}$, the set $\mathcal{P}_{u_1,p_1}^{C}$ contains two paths whose lengths differ by 3. Concatenating these with the $k-3=3$ admissible paths in $\mathcal{P}_{u_1,p_1}^{M_1}$ yields cycles of all possible lengths modulo 6, a contradiction.
    If $s \ge 4$, then $|C| = 2s+1 \ge 9$.
    Recall that each $N_C(M_i)$ is some set of three consecutive vertices on $C$. Since $\delta(G) \ge 3$, every vertex in $C$ has a neighbor outside $C$, so these sets $N_C(M_i)$'s cover $V(C)$.
    A routine calculation shows that there must be three sets that are pairwise disjoint or intersect in at most one vertex.
    In other words, there exist distinct $i, j, \ell \in [t]$ and indices $a, b, c$ such that $N_C(u_i) = \{v_{a-1}, v_a, v_{a+1}\}$, $N_C(u_j) = \{v_{b-1}, v_b, v_{b+1}\}$, and $N_C(u_\ell) = \{v_{c-1}, v_c, v_{c+1}\}$, with the distance between any pair of $\{v_a, v_b, v_c\}$ being at least 2.
    Since each of $\mathcal{P}_{u_{i},p_i}^{M_{i}}$, $\mathcal{P}_{u_{j},p_j}^{M_{j}}$, and $\mathcal{P}_{u_{\ell},p_\ell}^{M_{\ell}}$ contains $k-3=3$ admissible paths, combining them via three disjoint subpaths of $C$ connecting these endpoints, yields at least $3+3+3-2 > 6$ admissible cycles.
    Again, if these fail to cover some residue $r \pmod k$, we derive a contradiction by combining $\mathcal{P}^{M_1}_{u_1,p_1}$ with a path in $\mathcal{P}^{C}_{u_1,p_1}$ with a suitable length parity.
    This completes the proof of Claim 1.
	\end{proof}

    By Claim 1, $M := G-C$ is connected. 
    Let $u := u_M$.
    Without loss of generality, assume $N_C(u)$ is either $\{v_{0}\}$ or $\{v_{0}, v_2\}$.
    Since $\delta(G) \ge 3$, every vertex on $C$ has a neighbor in $M$. In particular, some $w \in V(M)$ is adjacent to $v_{s+2}$. Note $u \neq w$ as $s \ge 2$.
    If $N_C(u)=\{v_0\}$, then by Observation~\ref{obs:valid} and Lemma~\ref{lem: B}, $\mathcal{P}^M_{u,w}$ contains $k-2$ admissible paths.
    Observe that $\Pset^C_{u,w}\supseteq \{s+1,s+4\}$.
    Since $k \ge 6$, combining paths in $\mathcal{P}^C_{u,w}$ and $\mathcal{P}^M_{u,w}$ yields $k$ consecutive cycles, a contradiction.
    If $N_C(u)=\{v_0, v_2\}$, then $\mathcal{P}^M_{u,w}$ contains $k-3$ admissible paths, and one can verify that $\Pset^C_{u,w}\supseteq \{s+1, s+2, s+3, s+4\}$.
    Combining the paths in $\mathcal{P}^M_{u,w}$ and $\mathcal{P}^C_{u,w}$ yields $(k-3)+4-1=k$ consecutive cycles, a contradiction.
    This completes the proof.
\end{proof}

In the remainder of this section, following Lemma~\ref{K3}, we may assume $K_3\subseteq G$ and 
\[
\mbox{let $T \subseteq G$ be a trigonal subgraph of maximum order.}
\]
In what follows, the core graph always refers to this maximum trigonal subgraph $T$, and
the proofs are divided into Lemmas~\ref{T4}, \ref{T5}, and \ref{T6}, according to the order of $T$.

Denote by $K_{n}^-$ the graph obtained from $K_n$ by removing an edge. 

\begin{lem}\label{T4}
If $|T|=3$, then $G^\star$ contains $k$ consecutive cycles.
\end{lem}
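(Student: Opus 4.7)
The plan is to exploit the maximality of $T$ as follows. Since $|T|=3$, $T$ is the triangle with $V(T)=\{t_1,t_2,t_3\}$, and the max-trigonal property forces $\deg_T(v)\le 1$ for every $v\in V(G)\setminus V(T)$; otherwise, $v$ and any two of its neighbors in $T$ (which are consecutive on $\partial T=T$ since $T=K_3$) would extend $T$ to a trigonal graph of order $4$. I would then partition $V(G)\setminus V(T)=W_1\sqcup W_2\sqcup W_3\sqcup Z$, where $W_i=\{v:N_T(v)=\{t_i\}\}$ and $Z=\{v:N_T(v)=\emptyset\}$; the hypothesis $\delta_2(G)\ge k-1$ forces each $|W_i|\ge k-3$, so $W$-vertices are plentiful.

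The central observation is that for any $u\in W_i$ and $w\in W_j$ with $i\ne j$ in a common component $M$ of $G-T$, we have $\Pset_{u,w}^T\supseteq\{3,4\}$, realized by the two $T$-paths $u t_i t_j w$ and $u t_i t_l t_j w$ (with $\{l\}=\{1,2,3\}\setminus\{i,j\}$). If $\mathcal{P}_{u,w}^M$ contains a 1-AP of size at least $k-1$, or a 2-AP of size at least $k-2$, then combining these lengths with the consecutive set $\{3,4\}$ via Observation~\ref{obs:addition}(2)--(3) yields $k$ consecutive cycle lengths in $G$ (indeed, in the 2-AP subcase we already get $2(k-2)\ge k$ consecutive lengths for $k\ge 6$). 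The plan therefore reduces to locating such a component $M$ and such a pair $(u,w)$.

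To this end, I would first use the 2- or 3-connectivity of $G$, together with Lemma~\ref{lem:menger extend}, to select a component $M$ whose end-block $B_M$ contains non-cut vertices from at least two distinct $W_i$-classes. Then I would apply Lemma~\ref{lem: B} to the distinguished vertex $u_M$ of Definition~\ref{dfn: block,vertex}: if $\deg_T(u_M)=0$ (which forces $M$ to be non-$2$-connected and $B_M$ to be "deep"), then $(u_M,w)$ is $k$-valid and Observation~\ref{obs:valid} yields $k-1$ admissible $M$-paths, but since $u_M$ has no $T$-edge, I would route the $T$-closure via a $W$-vertex in another block of $M$ reached through the cut-vertex structure. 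If $\deg_T(u_M)=1$, then $(u_M,w)$ is only $(k-1)$-valid, giving $k-2$ admissible $M$-paths, and I would dispose of the 2-AP subcase immediately via Observation~\ref{obs:addition}(3), while in the 1-AP subcase I would use the non-bipartiteness of $G$ together with the triangle $T$ to produce one additional admissible path.

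The main obstacle will be precisely this "+1" in the 1-AP subcase of $\deg_T(u_M)=1$: the naive combination yields only $k-1$ consecutive cycles, one short of the target. Bridging this gap likely requires a more careful choice of endpoints $(u,w)$ so that $u_M$ itself serves as an endpoint (thereby absorbing its low-degree exception into the valid-pair definition), or exploiting an odd cycle inside $M$ to switch path-parity and convert a 1-AP into a longer admissible family. The case in which $G-T$ is disconnected is handled in parallel but more easily, by combining admissible path families across distinct components via two disjoint $T$-paths provided by Lemma~\ref{lem:menger extend}. Note that the hypothesis $G^\star\not\simeq K_{k+1}$ is automatic here, since $K_{k+1}$ contains $K_4^-$ as a trigonal subgraph of order $4$, contradicting $|T|=3$.
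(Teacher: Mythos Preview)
Your approach differs substantially from the paper's and carries the gap you yourself flag: in the 1-AP subcase with $\deg_T(u_M)=1$, combining $k-2$ consecutive $M$-path lengths with $\Pset_{u,w}^T\supseteq\{3,4\}$ yields only $k-1$ consecutive cycle lengths, one short. Your proposed remedies are not workable as stated. The suggestion that a better endpoint choice lets ``$u_M$ itself serve as an endpoint, absorbing its low-degree exception'' is circular: $u_M$ \emph{is} already the endpoint in your invocation of Lemma~\ref{lem: B}, and the $(k-1)$-validity bound there comes precisely from $\deg_T(u_M)=1$; you cannot improve it to $k$-validity without losing the $T$-edge at $u_M$. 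The ``odd cycle in $M$'' idea does not obviously extend a 1-AP either---it changes the parity of one path, not the span of the progression. Your $\deg_T(u_M)=0$ case is also underspecified: routing the closure through ``a $W$-vertex in another block'' does not preserve the $(u_M,w)$-path family you built inside $B_M$.

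The paper bypasses all of this with a single contraction. Since $\deg_T(v)\le 1$ for every $v\notin T$, contracting one triangle edge $bc$ to a vertex $a'$ produces a graph $H'$ in which the rooted graph $(H'-aa',a,a')$ is 2-connected with $\delta_2\ge k$. One application of Lemma~\ref{lem:admis path} then gives $k-1$ admissible $(a,a')$-paths; back in $G^\star$ these are $(a,b)$- or $(a,c)$-paths internally disjoint from $T$, and closing each through the triangle (a segment of length $1$ or $2$) immediately yields $k$ consecutive cycles. The contraction is the missing idea: it globalizes the problem so that Lemma~\ref{lem:admis path} applies once with the full degree $k$ available, rather than forcing you to patch together per-component families where one degree is always sacrificed to the $T$-attachment.
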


\begin{proof}
    Define a subgraph $H \subseteq G^\star$ as follows: if $G^\star$ is of Type II and $\theta_{1}\theta_{2} \notin E(G^\star)$, let $H := G^\star$; otherwise, let $H := G$. By assumption, $H$ contains a triangle, say with vertex set $\{a, b, c\}$, but no $K_{4}^{-}$ subgraph. It follows that every $v \in V(H) - \{a, b, c\}$ has at most one neighbor in $\{a, b, c\}$.
    
    Let $H'$ be the graph obtained from $H$ by contracting the edge $bc$ into a new vertex $a'$. We claim that $H'$ is $2$-connected. If $G^\star$ is of Type II and $\theta_{1}\theta_{2} \notin E(G^\star)$, then $\{\theta_1, \theta_2\}$ is the unique $2$-cut of $H=G^\star$ (since $G+\theta_1\theta_2$ is $3$-connected), which implies that $\{b, c\}$ is not a $2$-cut in $H$ (as $bc\in E(G^\star))$; thus, $a'$ is not a cut-vertex in $H'$. Clearly, no other vertex can be a cut-vertex in $H'$. Otherwise, we have $G^\star$ is of Type II and $\theta_{1}\theta_{2} \in E(G^\star)$, or $G^\star$ is of Type I. 
    It follows from the definition that $H=G$ is $3$-connected, which immediately ensures that $H'$ is $2$-connected.
    
    In either case, $H'$ is $2$-connected, implying that the rooted graph $(H' - aa', a, a')$ is $2$-connected.
    Since $H$ contains no $K_4^-$, every $v\in V(H)\setminus\{a,b,c\}$ satisfies $\deg_{\{a,b,c\}}(v)\leq 1$.
    This implies that $\delta_{2}(H' - aa', a, a') \ge \delta_2(H)\geq k$. 
    Note that $|H'|\geq |H|-1\geq |G|-1\geq \delta_2(G)>4$.
    By Lemma~\ref{lem:admis path}, $\mathcal{P}_{a,a'}^{H' - aa'}$ contains $k-1$ admissible paths. Equivalently, $\mathcal{P}_{a,b}^{H-\{a,b,c\}} \cup \mathcal{P}_{a,c}^{H-\{a,b,c\}}$ contains $k-1$ admissible paths. By combining these with the edges $ab, ac$ or the paths $acb, abc$ (which have length 1 or 2), we obtain $(k-1)+2-1=k$ consecutive cycles in $H$, and thus in $G^\star$. This completes the proof of Lemma~\ref{T4}.
\end{proof}

\begin{lem}\label{T5}
    If $|T|=4$, then $G^\star$ contains $k$ consecutive cycles.
\end{lem}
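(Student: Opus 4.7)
My plan follows the framework of Lemmas~\ref{K3} and~\ref{T4}: use the core subgraph $T$ to supply $T$-path lengths between a pair of vertices, combine with admissible paths in $G-T$, and apply Observation~\ref{obs:addition} to build $k$ consecutive cycles. Since $|T|=4$, I have $T\simeq K_4^-$; label $V(T)=\{a,b,c,d\}$ so that $\partial T=abcda$ and $bd$ is the chord, making $\{a,c\}$ and $\{b,d\}$ the two independent pairs of $\partial T$. By Proposition~\ref{prop: T-graph} and a routine check, the path-length sets $\Pset^T_{x,y}$ are $\{1,2,3\}$ for every boundary edge, $\{2,3\}$ for $\{a,c\}$, and $\{1,2\}$ for $\{b,d\}$. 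The maximality of $|T|$ forces $N_T(w)\subseteq\{a,c\}$ or $N_T(w)\subseteq\{b,d\}$ for every $w\in V(G-T)$, since otherwise $w$ would be adjacent to both endpoints of some boundary edge and the construction in Definition~\ref{trigonal} would extend $T$ to a 5-vertex trigonal subgraph. Combined with $\delta_2(G)\ge k-1$, $k\ge 6$, and $|N_T(w)|\le 2$, a short degree count rules out components of $G-T$ of order at most $2$, so every component $M$ of $G-T$ satisfies $|M|\ge 3$; and a standard 2-connectivity argument (the cut-vertex of $B_M$ in $M$ would otherwise be a cut-vertex of $G$) guarantees $\deg_T(u_M)\ge 1$ for every $M$.

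For the main argument I fix any component $M$ of $G-T$, set $u:=u_M$, and split into subcases by $\deg_T(u)\in\{1,2\}$. When $\deg_T(u)=2$, WLOG $N_T(u)=\{a,c\}$, I first try to choose $w\in V(M)\setminus\{u\}$ with $N_T(w)\cap\{a,c\}\neq\emptyset$; Lemma~\ref{lem: B} and Observation~\ref{obs:valid} then give $k-3$ admissible paths in $\mathcal{P}^M_{u,w}$, while the $T$-path contribution supplies $(u,w)$-path lengths through $T$ of the form $Q\supseteq\{2,4,5\}$ (coming from $\Pset^T_{a,a}=\{0\}$ and $\Pset^T_{a,c}=\{2,3\}$, each shifted by $+2$ to account for the two edges $u$-$x$ and $y$-$w$). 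A direct application of Observation~\ref{obs:addition} then produces $k$ consecutive cycles, since the $1$-$\AP$ subcase yields exactly $[L+2,L+k+1]$ and the $2$-$\AP$ subcase contains a consecutive run of length at least $2k-6\ge k$. When $\deg_T(u)=1$, WLOG $N_T(u)=\{a\}$, I pick $w\in V(M)\setminus\{u\}$ adjacent to $b$ (or, by symmetry, $d$), obtaining $k-2$ admissible paths in $\mathcal{P}^M_{u,w}$ together with $T$-traversal lengths $Q\supseteq\{3,4,5\}$ from $\Pset^T_{a,b}=\{1,2,3\}$; Observation~\ref{obs:addition}(2)(3) then yields $k$ consecutive cycles in both admissibility subcases.

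The main obstacle I anticipate is treating the degenerate configurations where the desired $w$ does not exist inside $M$: namely (i) $\deg_T(u)=2$ with both $a$ and $c$ having $u$ as their only neighbor in $M$, so every $w\in V(M)\setminus\{u\}$ satisfies $N_T(w)\subseteq\{b,d\}$; or (ii) $\deg_T(u)=1$ with neither $b$ nor $d$ having a neighbor in $M$, so every $w\in V(M)\setminus\{u\}$ satisfies $N_T(w)\subseteq\{a,c\}$. A quick check shows that the $2$-$\AP$ flavour of each degenerate configuration still delivers $k$ consecutive cycles by the standard calculation (via Observation~\ref{obs:addition}(3)), so only the $1$-$\AP$ versions are truly obstructive, and in those cases the standard computation falls short by exactly one cycle. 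To supply the missing cycle, my plan is to leverage the high degrees $\deg_{G-T}(v)\ge k-3$ for every $v\in V(T)$, which force some other component $M'$ of $G-T$ to contain a vertex $w'$ adjacent to the ``missing'' pair among $\{a,c\}$ or $\{b,d\}$; I then construct an extra cycle threading through $T$, $M$, and $M'$ whose length extends the range by one. Carrying out this auxiliary construction and checking that its length lies just outside the already-established range is the most delicate part of the proof.
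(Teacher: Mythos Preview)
Your overall framework is reasonable, but there is a genuine gap in the step ``WLOG $N_T(u)=\{a,c\}$''. The two antipodal pairs of $\partial T$ are \emph{not} symmetric in $T$: you yourself record that $\Pset^T_{a,c}=\{2,3\}$ while $\Pset^T_{b,d}=\{1,2\}$, because $bd$ is the chord and $ac$ is the missing edge. Nothing prevents $N_T(u)=\{b,d\}$, and in that case your computation changes. If $w$ has a neighbour in $\{b,d\}$, the $T$-lengths are $\{0\}\cup\Pset^T_{b,d}$ shifted by $2$, i.e.\ $Q=\{2,3,4\}$; if $w$ has a neighbour only in $\{a,c\}$ (the ``degenerate'' picture), one gets $Q=\{3,4,5\}$. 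In both situations $\max Q-\min Q=2$, and combining with a $1$-$\AP$ of size $k-3$ yields only $k-1$ consecutive cycle lengths, not $k$. (There is no Hamiltonian $(b,d)$-path in $T$, so you cannot push $Q$ out to length $5$ in the first case.) Your fallback plan of borrowing a cycle from a second component $M'$ is not justified either: $G-T$ may well be connected, so no such $M'$ need exist.

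The paper avoids this case analysis entirely. It first splits according to whether $G[V(T)]\simeq K_4$ or $K_4^-$. In the $K_4$ case one has $\deg_T(v)\le 1$ for all $v\in V(G-T)$, which drops you into your easy $\deg_T(u)=1$ scenario with $Q=\{3,4,5\}$. In the $K_4^-$ case (say with $bd\notin E(G)$), the key observation is that no vertex of $G-T$ is adjacent to both $b$ and $c$ (a boundary edge), so contracting $bc$ to a single vertex $a'$ preserves all degrees outside $\{a,b,c\}$; the resulting rooted graph $(H'-aa',a,a')$ satisfies the hypotheses of Lemma~\ref{lem:admis path} and produces $k-1$ admissible $(a,\{b,c\})$-paths in $H-\{a,b,c\}$, which then close up against the triangle on $\{a,b,c\}$ (lengths $1$ and $2$) to give $k$ consecutive cycles. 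This single contraction step replaces your whole case split and sidesteps the $\{b,d\}$-chord obstruction.
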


\begin{proof}
    Let $T$ be a trigonal subgraph with $|T|=4$. Since $\delta(G)\geq 3$ and $\delta_2(G) \ge k-1 \ge 5$, $G-T \neq \emptyset$.
    By the maximality of $T$, every $v \in V(G-T)$ has at most two neighbors in $T$.
    Thus, for any component $M$ of $G-T$, $\delta(M) \ge 1$ and $\delta_2(M) \ge k-3$, implying that every end-block of $M$ is an edge or has order at least $k-2 \ge 4$.
    
    If $G[V(T)] \simeq K_{4}$, then the maximality of $T$ implies that every $v \in V(G-T)$ has $\deg_T(v) \le 1$.
	Let $M$ be a component of $G-T$ and let $u := u_M$. 
	By Lemma~\ref{lem:menger extend}, there exists $w \in V(M) \setminus \{u\}$ such that $u$ and $w$ are adjacent to distinct vertices in $T$. Thus $\mathcal{L}_{u,w}^{T}=\{3, 4, 5\}$.
    By Lemma~\ref{lem: B} and Observation~\ref{obs:valid}, $\mathcal{P}_{u,w}^{M}$ contains at least $k-\deg_T(u)-1\geq k-2$ admissible paths.
    The union of paths in $\mathcal{P}_{u,w}^{M}$ and $\mathcal{P}_{u,w}^{T}$ yields $(k-2)+3-1=k$ consecutive cycles.
	
	If $G[V(T)] \simeq K_{4}^{-}$, define $H$ as in Lemma~\ref{T4}: if $G^\star$ is of Type II and $\theta_{1}\theta_{2} \notin E(G^\star)$, let $H := G^\star$; otherwise, let $H := G$.
    Then $T$ remains a trigonal subgraph of maximum order in $H$.
	Let $V(T)=\{a,b,c,d\}$ with $bd \notin E(H)$.
    The maximality of $T$ implies that every $v \in V(H-T)$ satisfies $|N_H(v)\cap\{b,c\}|\leq 1$.
    Let $H'$ be obtained from $H$ by contracting $bc$ into a vertex $a'$. 
    Similar to Lemma~\ref{T4}, one can verify that $(H'-aa', a, a')$ is a 2-connected rooted graph with $\delta_2(H'-aa', a, a') \ge k$. 
    According to Lemma \ref{lem:admis path}, $\mathcal{P}_{a,a'}^{H'-aa'}$ contains $k-1$ admissible paths, and equivalently, $\mathcal{P}_{a,b}^{H-\{a,b,c\}}\cup \mathcal{P}_{a,c}^{H-\{a,b,c\}}$ contains $k-1$ admissible paths.
    Concatenating these with the subpaths in the triangle $H[\{a,b,c\}]$ produces $k$ consecutive cycles in $H$, and thus in $G^\star$.
	This completes the proof of Lemma~\ref{T5}.
\end{proof}

Finally, we consider the remaining case where $|T| \ge 5$. To proceed, we need the following classical pancyclicity criterion due to Bondy~\cite{bondy1971pancyclic}.

\begin{lem}[\cite{bondy1971pancyclic}]\label{lem:Ore}
    Let $G$ be a graph of order $n$. If $d_G(u) + d_G(v) \ge n$ for every pair of non-adjacent vertices $u, v \in V(G)$, then $G$ contains cycles of all lengths in $[3, n]$, unless $G\simeq K_{n/2,n/2}$.
\end{lem}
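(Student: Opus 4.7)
The plan is to reduce the statement to Bondy's original pancyclicity theorem, which asserts that every Hamiltonian graph on $n$ vertices with at least $n^{2}/4$ edges is pancyclic, unless it is isomorphic to $K_{n/2,n/2}$. The Ore-type hypothesis here delivers both Hamiltonicity (via Ore's theorem) and the edge lower bound needed to invoke this result.

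The first step is to verify $e(G)\geq n^{2}/4$. Working in the complement $\overline{G}$, the hypothesis translates to $d_{\overline{G}}(u)+d_{\overline{G}}(v)\leq n-2$ for every $uv\in E(\overline{G})$. Summing over all edges of $\overline{G}$ yields $\sum_{x}d_{\overline{G}}(x)^{2}\leq (n-2)\,e(\overline{G})$, and combining this with the Cauchy--Schwarz estimate $\sum_{x}d_{\overline{G}}(x)^{2}\geq (2e(\overline{G}))^{2}/n$ gives $e(\overline{G})\leq n(n-2)/4$, so that $e(G)\geq \binom{n}{2}-n(n-2)/4=n^{2}/4$.

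The second step is to apply Bondy's pancyclicity theorem to the Hamiltonian cycle $C=v_{0}v_{1}\cdots v_{n-1}v_{0}$ provided by Ore's theorem. Its proof is a chord-rerouting argument on $C$: assuming a length $\ell\in[3,n-1]$ is absent from the cycle spectrum, every chord $v_{i}v_{j}$ together with either arc of $C$ yields two cycles whose lengths sum to $n+2$, and the absence of length $\ell$ severely restricts where chords may lie. Iterating this restriction eventually forces $G$ to be bipartite with partition $\{v_{i}:i\text{ even}\}\cup\{v_{i}:i\text{ odd}\}$, and the edge bound $e(G)\geq n^{2}/4$ then forces complete balanced bipartiteness, i.e., $G\simeq K_{n/2,n/2}$.

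The main obstacle is the detailed chord-rerouting analysis underlying Bondy's theorem. The case distinctions (based on the parity and location of the missing length $\ell$) are delicate, and at each step one must exclude every way of replacing an arc of $C$ by a one- or two-chord detour that would produce a cycle of length $\ell$. The cleanest route is to proceed by induction on $n$: delete a carefully chosen vertex, verify that the Ore hypothesis is preserved on the smaller graph, and lift the result back by analysing how the deleted vertex reattaches to a Hamiltonian cycle of the smaller graph.
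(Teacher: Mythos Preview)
The paper does not prove this lemma at all; it is quoted as Bondy's 1971 result and used as a black box. So there is no ``paper's own proof'' to compare against.

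Your reduction is the standard and correct way to derive this corollary: Ore's theorem supplies a Hamiltonian cycle, your complement/Cauchy--Schwarz computation correctly gives $e(G)\ge n^{2}/4$, and Bondy's theorem (Hamiltonian with $e(G)\ge n^{2}/4$ implies pancyclic or $K_{n/2,n/2}$) finishes the job. The first two steps are clean and complete.

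The one genuine gap is in your final paragraph. The proposed induction --- delete a vertex and ``verify that the Ore hypothesis is preserved on the smaller graph'' --- does not work: deleting any vertex $v$ drops $n$ by one but can drop $d(u)+d(w)$ by two for a non-adjacent pair $u,w$ both adjacent to $v$, so you only get $d_{G-v}(u)+d_{G-v}(w)\ge n-2$, not the required $n-1$. Bondy's actual argument is not inductive in this sense; it is a direct chord analysis on the Hamiltonian cycle (closer to the sketch in your penultimate paragraph, though that sketch is too vague to stand as a proof). If you want a self-contained write-up, either cite Bondy's theorem as a black box --- which is exactly what the paper does --- or reproduce the chord-counting argument rather than attempting the induction.
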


\begin{lem}\label{T6}
    If $|T|\ge 5$, then $G^\star$ contains a cycle of length $r\pmod k$ for any even $r$.
\end{lem}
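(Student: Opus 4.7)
The plan is to use the maximum trigonal subgraph $T$ as the core: concatenate long consecutive families of $(u,w)$-paths through $T$ (Proposition~\ref{prop: T-graph}) with admissible $(u,w)$-paths in a component of $G-T$ (Lemma~\ref{lem: B} together with Observation~\ref{obs:valid}) via Observation~\ref{obs:addition}.

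If $t := |T|\geq k+2$, Proposition~\ref{prop: T-graph} applied to any edge on $\partial T$ already gives cycles of every length in $[3,t]$, a consecutive block of size $\geq k$, so every residue modulo $k$ is attained. Assume hereafter $5\leq t\leq k+1$. By maximality of $T$, every $v\in V(G-T)$ has $N_T(v)$ free of pairs consecutive on $\partial T$, so $\deg_T(v)\leq\lfloor t/2\rfloor$. A short degree count using $\delta_2(G)\geq k-1$, $G\not\simeq K_{k+1}$, and $G$ non-bipartite forces $V(G-T)\neq\emptyset$, and further shows $|M|\geq 3$ for every component $M$ of $G-T$ except possibly the singleton $M=\{\theta\}$ in Type~I; the latter is handled separately via cycles through $\theta$ produced by Proposition~\ref{prop: T-graph} applied to pairs of neighbors of $\theta$ on $\partial T$.

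For a generic component $M$, set $u := u_M$ from Definition~\ref{dfn: block,vertex}. Apply Lemma~\ref{lem:menger extend} to the bipartition $(V(M),V(G)\setminus V(M))$ of the 2-connected graph $G$ to obtain $w\in V(M)\setminus\{u\}$ and two disjoint edges $uv_1, wv_2$ to $V(T)$ with $v_1\neq v_2$. Write $d := \mathrm{dist}_{\partial T}(v_1,v_2)$. Lemma~\ref{lem:short dist} bounds $d$ in terms of $\deg_T(u)$ and $\deg_T(w)$, while Proposition~\ref{prop: T-graph} gives $\mathcal{P}^T_{u,w}\supseteq[d+2,\,t-d]$, a block of $t-2d-1$ consecutive lengths. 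Combined with the $k-\deg_T(u)-1$ admissible paths in $\mathcal{P}^M_{u,w}$ produced by Lemma~\ref{lem: B} and Observation~\ref{obs:valid}, Observation~\ref{obs:addition}(2) yields $(t-2d-1)+(k-\deg_T(u)-1)-1$ consecutive cycles, which is $\geq k$ whenever $t\geq 2d+\deg_T(u)+3$. A case analysis on $\deg_T(u)$, leveraging the two bounds of Lemma~\ref{lem:short dist} (the first when $N_T(u)\cap N_T(w)=\emptyset$, which typically gives $d\leq 1$, and the second otherwise), disposes of most parameter ranges.

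The main obstacle will be the boundary situations where $t\in\{5,6\}$ and $\deg_T(u)$ is close to $\lfloor t/2\rfloor$ with $N_T(u)\cap N_T(w)\neq\emptyset$, so the above inequality is tight by one. There I will invoke the sharper Observation~\ref{obs: small T-graph} to save one extra consecutive length, and, if this still does not suffice, exploit a second component $M'$ of $G-T$: Lemma~\ref{lem:menger extend} supplies two disjoint $(M,M')$-routes through $T$, whence two independent admissible families combine via Observation~\ref{obs:addition}(1) into $k$ admissible cycles, and a parity argument in the spirit of the proof of Lemma~\ref{K3} upgrades these to cover every even residue modulo $k$. Finally, the extremal sub-case $V(G)=V(T)$ is settled by a direct degree count: in Type~I it forces $G\simeq K_{k+1}$, excluded by hypothesis, while in Type~II it forces $G\in\{K_{k+1},\,K_{k+1}-\theta_1\theta_2\}$ with $\theta$ pendant to $\theta_1,\theta_2$, and combining the cycles of $G$ of lengths $[3,k+1]$ with the Hamilton-path cycles through $\theta$ of lengths $[4,k+2]$ yields $k$ consecutive cycles in $G^\star$.
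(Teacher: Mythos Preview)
Your broad approach matches the paper's: use the maximum trigonal subgraph $T$ as the core, combine a consecutive family in $\Pset^{T}_{u,w}$ with an admissible family in $\Pset^{M}_{u,w}$ via Observation~\ref{obs:addition}. However, the detailed plan has a genuine gap.

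You rely on Lemma~\ref{lem:short dist} to bound $d=\mathrm{dist}_{\partial T}(v_1,v_2)$, but that bound is only nontrivial when $\deg_T(u)+\deg_T(w)$ is large. The paper's Claim~1 shows that after ruling out the easy cases one always has $\deg_T(u)\le\max\{1,\lfloor t/2\rfloor-2\}$, so $\deg_T(u)$ small is the \emph{generic} situation, not a boundary one; with $\deg_T(u)=\deg_T(w)=1$ Lemma~\ref{lem:short dist} yields only $d\le\lfloor t/2\rfloor$, and your inequality $t\ge 2d+\deg_T(u)+3$ fails for every $t\le k+1$. (Incidentally, $2+\Pset^T_{v_1,v_2}\supseteq[d+2,\,t-d+2]$, not $[d+2,\,t-d]$, but fixing this arithmetic does not close the gap.) You misidentify the main obstacle: the cases $t\in\{5,6\}$ with $\deg_T(u)$ near $\lfloor t/2\rfloor$ are precisely what the paper's Claim~1 dispatches; the real difficulty is $\deg_T(u)$ small with $d$ potentially large.

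The paper closes this gap by an argument you do not have: Claim~3 shows that no two consecutive vertices on $\partial T$ can both have degree $\le k-1$ in $V(T)\cup V(M)$. Since the vertices strictly between $v_i$ and $v_j$ have no $M$-neighbors (by the minimality of $j-i$), Claim~3 forces $j-i\le 2$ whenever $6\le t\le k$, and for $t=k+1$ it produces a vertex of full $T$-degree between $v_i$ and $v_j$ through which one routes $t-3$ consecutive paths. This structural use of the degree hypothesis on vertices \emph{inside} $T$ is absent from your plan. Your fallback to a second component $M'$ is also unavailable: the paper's Claim~2 shows $M$ is the unique component of $G-T$ of order $\ge 3$, so no such $M'$ exists in the residual cases. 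Finally, the small-$|G|$ regime (not just $V(G)=V(T)$ but $|G|\le k+3$) is handled in the paper by Bondy's pancyclicity criterion (Lemma~\ref{lem:Ore}); your sketch for $G-T=\{\theta\}$ via cycles through $\theta$ does not give $k$ consecutive lengths without further work.
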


\begin{proof}
    Suppose $G^\star$ is a counterexample. 
    Let $|T|=t\geq 5$. 
    Recall Proposition~\ref{prop: T-graph} implies that every trigonal graph on $k+2$ vertices contains cycles of all lengths in $[3,k+2]$, so we must have $5\le t\le k+1$.
    Let $\partial T=v_{0}v_{1}\dots v_{t-1}v_{0}$.
    By the maximality of $T$, no vertex $v\in V(G-T)$ can be adjacent to two consecutive vertices on $\partial T$; consequently, $\deg_{T}(v)\le \lfloor t/2\rfloor$.

    We first consider the case where $|V(G)| \le k+3$. For any pair of non-adjacent vertices in $G$, the sum of their degrees is at least $\delta(G)+\delta_2(G) \geq \min\{3+k, 2(k-1)\} = k+3 \ge |G|$. 
    Since $K_3\subseteq T\subseteq G$, $G$ cannot be bipartite.
    It then follows from Lemma~\ref{lem:Ore} that $G$ contains cycles of all lengths in $[3, |V(G)|]$.
    Since $G^\star$ is a counterexample and thus misses a cycle of length in $[3,k+2]$, we must have $|G| \le k+1$.
    On the other hand, $|G| \ge \delta_3(G)+1 \ge k+1$.
    This forces $|G| = k+1$, and that all but at most two vertices in $G$ have degree exactly $k$.
    Consequently, $G$ is isomorphic to $K_{k+1}$ or $K_{k+1}^-$.
    Since $G^\star$ is not isomorphic to $K_{k+1}$ or $K_{k+1}^-$, $G^\star$ must be obtained from $G$ by adding the vertex $\theta$ adjacent to exactly two vertices in $G$. It is clear that any such $G^\star$ contains cycles of all lengths in $[3, k+2]$, a contradiction.
    
    Henceforth, we assume $|G|\geq k+4$.
    Thus $|G-T|\geq (k+4)-t\geq 3$.
    Since $\delta_{2}(G-T)\ge (k-1)-\lfloor t/2\rfloor\ge 2$, there exists a component $M$ of $G-T$ with order at least 3.
	Let $u = u_{M}$.
    By Lemma~\ref{lem: B}, for every $v\in V(M)\setminus \{u\}$, the pair $(u,v)$ is $(k-\deg_{T}(u))$-valid.
	
	\medskip
	
	\noindent{\bf Claim 1.} $\deg_{T}(u)\le \max\{1,\lfloor t/2 \rfloor-2\}$.
	
	\begin{proof}
        Suppose to the contrary that $\deg_{T}(u)=\lfloor t/2 \rfloor-r$ where $r\in \{0,1\}$ (with $r=0$ if $t=5$). 
        Since $G$ is 2-connected, there exists a vertex $w\in V(M)\setminus \{u\}$ with $N_{T}(w)\neq \emptyset$.

        We first consider the case where $N_{T}(u)$ and $N_{T}(w)$ are disjoint.
        By Lemma~\ref{lem:short dist}, there exist $v_{i}\in N_{T}(u)$ and $v_{j}\in N_{T}(w)$ such that $\mathrm{dist}_{\partial T}(v_{i},v_{j})\le \max\{1,\lfloor t/2 \rfloor+2-(\lfloor t/2 \rfloor-r)-1\}=r+1$.
        It follows from Proposition~\ref{prop: T-graph} that $\Pset_{u,w}^{T} \supseteq 2+\Pset_{v_i,v_j}^T\supseteq[r+3, t-r+1]$.
        In addition, Observation~\ref{obs:valid} ensures that $\mathcal{P}_{u,w}^{M}$ contains $k-\deg_{T}(u)-1= k-\lfloor t/2 \rfloor+r-1$ admissible paths. 
        If $t=5$ or $t\geq 7$, concatenating the paths in $\mathcal{P}_{u,w}^{T}$ and $\mathcal{P}_{u,w}^{M}$ produces at least $(t-2r-1)+(k-\lfloor t/2 \rfloor+r-1)-1 =k+(t-\lfloor t/2 \rfloor-3)-r\ge k$ consecutive cycles, a contradiction.
        If $t=6$, we observe that $\deg_T(u) = 3-r \ge 2$. 
        By choosing a neighbor in $N_T(u)$ that avoids Case III in Observation 3.3 (possible since $\deg_T(u) \ge 2$), we deduce that $\mathcal{P}^T_{u,w}$ contains at least $t-2\mathrm{dist}_{\partial T}(v_{i},v_{j})+2\geq 6-2(r+1)+2=6-2r$ consecutive paths.
        Combining this with $\mathcal{P}_{u,w}^{M}$ yields at least $(6-2r)+(k+r-4)-1 = k+1-r \ge k$ consecutive cycles, a contradiction.

        It remains to consider the case where $N_{T}(u)$ and $N_{T}(w)$ intersect.
        Let $v_i \in N_{T}(u)\cap N_{T}(w)$.
        Since no two vertices in $N_T(u)$ are consecutive on $\partial T$, it is straightforward to find a vertex $v_{j}\in N_T(u)\setminus\{v_i\}$ such that $\mathrm{dist}_{\partial T}(v_{i},v_{j}) \le r+2$.
		By Proposition~\ref{prop: T-graph}, $\Pset_{u,w}^{T}\supseteq \{2\}\cup (2+\Pset_{v_i,v_j}^{T})\supseteq\{2\} \cup [4+r, t-r]$. 
		Moreover, $\Pset_{u,w}^{M}$ contains an admissible subset of size $k-\lfloor t/2 \rfloor+r-1$; we denote this subset by $\mathcal{L}$.
        Combining these two path collections, it follows that $G$ contains cycles of all lengths in $\mathcal{L}+2$ and $\mathcal{L}+[4+r,t-r]$.

        If the set $\mathcal{L}$ is consecutive, a routine calculation shows that the union of consecutive sets $\mathcal{L} + 2$ and $\mathcal{L}+[4+r, t-r]$ is a larger consecutive set (this is equivalent to $k\geq \lfloor t/2 \rfloor+3$).
        This union is exactly the set sum $\mathcal{L} + [2, t-r]$, which has size at least $(k-\lfloor t/2 \rfloor+r-1) + (t-r-1) - 1 = k+t-\lfloor t/2 \rfloor-3 \ge k.$
        This implies that $G$ contains $k$ consecutive cycles, a contradiction.

        Thus, we may assume that $\mathcal{L}$ is a 2-AP. 
        If $k\geq 7$ or $t$ is odd, it follows from Observation~\ref{obs:addition}~(3) that the sum $\mathcal{L}+[4+r, t-r]$ yields a consecutive set of size at least $2\left(k-\lfloor t/2 \rfloor+r-1\right) + (t-2r-3) - 2 = (2k-7) + \left(t-2\lfloor t/2 \rfloor\right) \ge k$, a contradiction.
        Otherwise, we must have $k=6$ and $t \in [5, k+1]$ is even, which implies $t=k=6$.
        Since $\deg_T(u)=3-r \ge 2$ and Observation~\ref{obs: small T-graph} implies at most one neighbor in $N_T(u)$ leads to the configuration in Case III described there, we can select a neighbor in $N_T(u)$ to avoid Case III.
        Consequently, $\mathcal{P}_{u,w}^{T}$ contains $t-2\mathrm{dist}_{\partial T}(v_{i},v_{j})+2 \ge 6 - 2(r+2) + 2 = 4-2r$ consecutive paths.
        Hence, by Observation~\ref{obs:addition}~(3) again, the sum of $\mathcal{L}$ and the consecutive subset in $\Pset_{u,w}^{T}$ yields a consecutive set of size at least $2\left(k-\lfloor t/2 \rfloor+r-1\right) + (4-2r) - 2 = k$, implying that $G$ contains $k$ consecutive cycles, a contradiction.
		This completes the proof of Claim 1.
	\end{proof}
	
	\medskip
	
	\noindent{\bf Claim 2.} $M$ is the unique component of $G-T$ containing at least $3$ vertices.
	
	\begin{proof}
    Suppose to the contrary that $G-T$ contains another component $M'$ of order at least $3$. 
    Let $u' = u_{M'}$. By Lemma~\ref{lem: B}, for any vertex $v' \in V(M') \setminus \{u'\}$, the pair $(u', v')$ is $(k-\deg_{T}(u'))$-valid. 
    By Claim 1, both $\deg_{T}(u)$ and $\deg_{T}(u')$ are at most $\max\{1, \lfloor t/2 \rfloor - 2\}$.
    Since $G$ is 2-connected, Lemma~\ref{lem:menger extend} guarantees the existence of vertices $w \in V(M) \setminus \{u\}$ and $w' \in V(M') \setminus \{u'\}$, along with two disjoint paths connecting $\{u, w\}$ and $\{u', w'\}$ whose internal vertices lie in $T$.
    By Observation~\ref{obs:valid}, both $\mathcal{P}_{u,w}^{M}$ and $\mathcal{P}_{u',w'}^{M'}$ contain at least $k - \max\{1, \lfloor t/2 \rfloor - 2\} - 1 \geq \lceil (k+1)/2 \rceil$ admissible paths.
    The union of these two path collections, together with the two connecting paths, yields at least $\lceil (k+1)/2 \rceil + \lceil (k+1)/2 \rceil - 1 \ge k$ admissible cycles.
    If these cycles do not cover $r \pmod k$, the path lengths in $\mathcal{P}_{u,w}^{M}$ must form a 2-AP. 
    Since $T$ is non-bipartite, we can combine $\mathcal{P}_{u,w}^{M}$ with a path in $\mathcal{P}_{u,w}^{T}$ of a suitable parity to obtain $\lceil (k+1)/2 \rceil$ admissible even cycles, which must contain one of length $r \pmod k$, a contradiction.
    This proves Claim~2.
	\end{proof}
    
    \medskip
	
	\noindent{\bf Claim 3.} No two consecutive vertices on $\partial T$ have degrees that are both at most $k-1$ in $V(T)\cup V(M)$.

    \begin{proof}
    By Claim 2, each component of $G - T - M$ has order at most two.
    Hence, every $v\in V(G-T-M)$ satisfies $\deg_{G}(v) \leq 1+\deg_T(v) \leq 1+\lfloor t/2 \rfloor < k-1$.
    Thus, $|G-T-M| \le 1$, with equality only if $G^\star$ is of Type I and $V(G-T-M)=\{\theta\}$.

    Recall $\partial T=v_{0}v_{1}\dots v_{t-1}v_{0}$.
    Suppose for the sake of contradiction that there exist consecutive vertices, say $v_0$ and $v_1$, such that their degrees in $V(T)\cup V(M)$ are at most $k-1$.
    
    If $V(G-T-M) \neq \emptyset$, then as noted above, $V(G-T-M)=\{\theta\}$. 
    Since $\deg_{G}(v) \ge k$ for all $v \neq \theta$, the vertices $v_0$ and $v_1$ must be adjacent to $\theta$.
    It follows that $\theta$ is adjacent to consecutive vertices on $\partial T$, contradicting the maximality of $T$.
    Otherwise, $V(G-T-M) = \emptyset$, so $\deg_{G}(v_0) \le k-1$ and $\deg_{G}(v_1) \le k-1$.
    This implies $G^\star$ is of Type II and $\{v_0, v_1\} = \{\theta_1, \theta_2\}$.
    Consequently, $\theta_1\theta_2 \in E(G)$, which implies $\delta(G) \ge k$, a contradiction.
    This proves Claim~3.
    \end{proof}

    Returning to the main proof, let $u = u_M$.
    Choose distinct vertices $v_{i} \in N_{T}(u)$ and $v_{j} \in N_{T}(M-u)$ to minimize the distance $\mathrm{dist}_{\partial T}(v_i, v_j)$. 
    Without loss of generality, we assume the indices satisfy $0 \le i < j$ and the distance along the boundary is $j-i \le t/2$.
    By the minimality of $j-i$, we have $N_M(v_\ell) = \emptyset$ for all $i < \ell < j$.
    Since $v_j \in N_T(M-u)$, there exists a vertex $w \in V(M) \setminus \{u\}$ adjacent to $v_j$.
    By Claim~1 and Observation~\ref{obs:valid}, $\mathcal{P}_{u,w}^{M}$ contains $k-\max\{1,\lfloor t/2 \rfloor-2\}-1=\min\{k-2, k-\lfloor t/2 \rfloor + 1\}$ admissible paths.
    We proceed by distinguishing cases based on the order of $T$.

    If $t=5$, Observation~\ref{obs: small T-graph} implies that $\mathcal{P}_{u,w}^{T}$ always contains 3 consecutive paths. 
    Consequently, the union of paths in $\mathcal{P}_{u,w}^{M}$ and $\mathcal{P}_{u,w}^{T}$ yields at least $(k-2)+3-1=k$ consecutive cycles, a contradiction. 

    If $6 \le t \le k$, we must have $j-i \in \{1, 2\}$. 
    Indeed, if $j-i \ge 3$, then $N_M(v_{i+1}) = N_M(v_{i+2}) = \emptyset$. This implies that both $v_{i+1}$ and $v_{i+2}$ have degree at most $t-1 \le k-1$ in $V(T) \cup V(M)$, which contradicts Claim~3.
    By Proposition~\ref{prop: T-graph}, $\mathcal{P}_{u,w}^{T}$ contains $t-3$ consecutive paths. 
    The union of paths in $\mathcal{P}_{u,w}^{M}$ and $\mathcal{P}_{u,w}^{T}$ thus produces at least $(k-\lfloor t/2 \rfloor + 1) + (t-3) - 1 = k + (t - \lfloor t/2 \rfloor - 3) \ge k$ consecutive cycles, a contradiction.

    Finally, suppose $t=k+1$. The case $j-i \in \{1, 2\}$ follows the same reasoning as above.
    If $j-i \ge 3$, it follows from Claim~3 that at least one of $v_{i+1}, v_{i+2}$ must have degree $k=t-1$ in $T$. 
    If $\deg_{T}(v_{i+1})=k$, consider paths of the form $v_i v_{i-1} \dots v_{\alpha} v_{i+1} v_{\beta} v_{\beta+1} \dots v_j$, where $\alpha \in [j+1, i]$ and $\beta \in [i+2, j]$ are indices such that $\alpha, \beta$ have the same parity as $i$. This construction yields $\Pset_{v_{i},v_{j}}^{T} \supseteq [2, t-1]$. 
    If $\deg_{T}(v_{i+2})=k$, the paths of the form $v_i v_{i-1} \dots v_{\alpha} v_{i+2} v_{\beta} v_{\beta+1} \dots v_j$ (which avoids $v_{i+1}$) establish $\Pset_{v_{i},v_{j}}^{T} \supseteq [2, t-2]$. 
    In either case, $\mathcal{P}_{u,w}^{T}$ contains at least $t-3$ consecutive paths.
    The union of paths in $\mathcal{P}_{u,w}^{M}$ and $\mathcal{P}_{u,w}^{T}$ thus produces at least $(k-\lfloor t/2 \rfloor + 1) + (t-3) - 1 \ge k$ consecutive cycles, a contradiction.
    This finishes the proof of Lemma~\ref{T6}.
\end{proof}

Now we are ready to complete the proof of Theorem~\ref{main}.

\begin{proof}[\bf Proof of Theorem~\ref{main}.]
Theorem~\ref{main} follows from Lemma~\ref{K3}, which handles the triangle-free case, and Lemmas~\ref{T4},~\ref{T5},~\ref{T6}, which collectively cover all possible cases of the maximum trigonal subgraph $T$. This concludes the proof of Theorem~\ref{thm:weak graph} for the non-bipartite case.
\end{proof}

We remark that for $k \ge 7$, our proof in fact yields the existence of $k$ admissible cycles; see the following corollary. 
This case can be verified directly from the arguments, and is distinct from the case $k=6$, mainly due to differences in the proofs of Claim 1 in Lemmas~\ref{K3}~and~\ref{T6}.

\begin{corollary}\label{cor: non-bipartite admis}
    Let $k\ge 7$ be an integer, and let $G^\star$ be a $k$-weak graph not isomorphic to $K_{k+1}$. If $G$ is non-bipartite, then $G^\star$ contains $k$ admissible cycles.
\end{corollary}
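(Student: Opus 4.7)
The plan is to revisit the proof of Theorem~\ref{main} (the non-bipartite case) line by line and verify that, under the stronger assumption $k\ge 7$, every argument in fact produces at least $k$ admissible cycles rather than only a cycle of each even residue modulo $k$. The only places where a parity-based shortcut is invoked --- to convert a collection of fewer than $k$ admissible cycles into a cycle of every even residue --- are Claim~1 of Lemma~\ref{K3} and Claim~1 of Lemma~\ref{T6}, and in both cases this shortcut is triggered exclusively by the boundary value $k=6$. For $k\ge 7$ the direct counts already reach or exceed $k$, so the stronger conclusion follows at once.

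Concretely, in Claim~1 of Lemma~\ref{K3} I would observe that concatenating $\mathcal{P}^{M_i}_{u_i,w_i}$ and $\mathcal{P}^{M_j}_{u_j,w_j}$ across two components of $G-C$ yields $2k-3-\deg_C(u_i)-\deg_C(u_j)$ admissible cycles. Since $C$ is induced and $K_3$-free, $\deg_C(u_M)\le 2$ for every component $M$, so this count is at least $2k-7$, which is $\ge k$ when $k\ge 7$; hence the parity workaround in the original proof becomes unnecessary. The remaining case of Lemma~\ref{K3} (a single component in $G-C$) already outputs $k$ consecutive cycles, and Lemmas~\ref{T4} and~\ref{T5} do likewise. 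Thus for $k\ge 7$ these three lemmas give $k$ admissible cycles without modification.

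For Lemma~\ref{T6} I would re-examine Claim~1 carefully. In the subcase $N_T(u)\cap N_T(w)=\emptyset$ the original count already gives $\ge k$ consecutive cycles for all $t\in[5,k+1]$. In the subcase $N_T(u)\cap N_T(w)\ne\emptyset$ with $\mathcal{P}^M_{u,w}$ only forming a $2$-$\AP$, Observation~\ref{obs:addition}(3) delivers a consecutive set of size at least $2(k-\lfloor t/2\rfloor+r-1)+(t-2r-3)-2=(2k-7)+(t-2\lfloor t/2\rfloor)\ge k$ for $k\ge 7$, independently of the parity of $t$. Claim~2 of the same lemma yields $2\lceil (k+1)/2\rceil-1\ge k$ admissible cycles by the same concatenation; Claim~3 and the closing case analysis on $t$ are already phrased in terms of $\ge k$ consecutive cycles, so they require no change.

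The main (very minor) obstacle is bookkeeping: in each subcase one must track the exact slack between the bound produced by Observations~\ref{obs:addition} and~\ref{obs: small T-graph} and the target value $k$, and verify that the $-7$ slack (arising either from the two $\deg_C(u_M)\le 2$ contributions or from the conversion of a $2$-$\AP$ into a consecutive set via Observation~\ref{obs:addition}(3)) is absorbed by $k\ge 7$. Once this is checked, the corollary is obtained from the proof of Theorem~\ref{main} by reading every occurrence of ``we obtain a cycle of length $r\pmod k$'' as ``we obtain $k$ admissible cycles''.
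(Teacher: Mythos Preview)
Your plan is exactly what the paper intends: the remark preceding the corollary says the result follows by re-reading the proof of Theorem~\ref{main} and observing that the parity-based shortcuts in Claim~1 of Lemma~\ref{K3} and Claim~1 of Lemma~\ref{T6} are only triggered at $k=6$, while every other subcase already outputs $\ge k$ admissible (in fact usually consecutive) cycles.

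One small caveat in your bookkeeping for Claim~1 of Lemma~\ref{T6}: in the subcase $N_T(u)\cap N_T(w)\ne\emptyset$ with $\mathcal{L}$ a $2$-$\AP$, your appeal to Observation~\ref{obs:addition}(3) on $\mathcal{L}+[4+r,t-r]$ requires $|[4+r,t-r]|=t-2r-3\ge 2$, which fails precisely when $(t,r)=(6,1)$. For that single configuration you should instead fall back on Observation~\ref{obs: small T-graph} (exactly as the paper's ``Otherwise'' branch does for $k=6$): since the Case~III exception there concerns a pair at boundary-distance~$2$, any pair $\{v_i,v_j\}$ at distance~$3$ yields two consecutive path lengths in $\Pset^T_{v_i,v_j}$, and then Observation~\ref{obs:addition}(3) gives $2(k-3)+2-2=2k-6\ge k$ consecutive cycles. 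With this one patch your argument is complete and identical in spirit to the paper's.
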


\section{Proof of Theorem~\ref{thm:weak graph}: the bipartite case}
\noindent In this section, we establish Theorem~\ref{thm:weak graph} for the case where $G$ is bipartite via the following.

\begin{thm}\label{thm:main bipartite}
    Let $k\ge 6$ be an integer, and let $G^\star$ be a $k$-weak graph not isomorphic to $K_{k,k}$ or $H_{k,n;t}$ for any $2\le t\le k< n$. If $G$ is bipartite, then $G^\star$ contains a cycle of length $r \pmod k$ for every even integer $r$.
\end{thm}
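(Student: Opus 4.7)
The plan is to adapt the strategy of Section~5 to the bipartite setting, now using an optimal tetragonal subgraph $T$ (Definition~\ref{dfn:best Q}) augmented by the auxiliary set $R$ of high-degree external vertices (Lemma~\ref{lem:operation}) as the core $H := G[V(T)\cup R]$. Because $G$ is bipartite, all cycles are even, so covering every even residue modulo $k$ amounts to producing a $2$-$\AP$ of even cycle lengths of size $k/2$ (when $k$ is even) or $k$ (when $k$ is odd).

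First I will dispose of the degenerate regime in which $G$ is $C_4$-free, in direct analogy with Lemma~\ref{K3}: picking a shortest induced even cycle of $G$ as core and arguing via Lemmas~\ref{lem:menger extend}, \ref{lem: B}, and Observation~\ref{obs:valid} exactly as in Lemma~\ref{K3}, I expect either to produce the required admissible cycles or to force $G$ into a small excluded configuration. Assuming henceforth $C_4 \subseteq G$, fix an optimal tetragonal subgraph $T$ with $|T|=2m$. Proposition~\ref{prop:Q-graph} yields same-parity $(u,v)$-path families in $T$ of lengths $\{d,d+2,\dots,2m-d\}$ where $d = \mathrm{dist}_{\partial T}(u,v)$, and Lemma~\ref{lem:operation}(3) places $R$ in a single partite class. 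An $R$-detour $u\to r\to v$ extends a $T$-path by $2$ while preserving parity, whereas ending (or beginning) a path at an $R$-vertex flips parity. Composing these gadgets, I plan to build an admissible family $\Pset^H_{x,y}$ of both parities between many well-chosen $x,y\in V(H)$ whose size approaches $k$.

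Next I will split into two regimes depending on whether $V(G-T-R)$ is empty. In the core-dominated case, where $V(G) = V(T)\cup R$, a direct structural analysis using Lemma~\ref{lem:operation}(1)--(3) together with the maximality of $e(G[V(T)])$ should identify $G$ with $K_{k,k}$ or with some member of $\mathcal{H}_k$ in the borderline cases, and otherwise exhibit cycles of every even residue. In the complementary regime, pick a component $M$ of $G-V(H)$ with $|M|\ge 3$, set $u:=u_M$ and consider the end-block $B_M$ (Definition~\ref{dfn: block,vertex}); apply Lemma~\ref{lem: B} and Observation~\ref{obs:valid} (with $H$ replacing $T$) to produce admissible $(u,w)$-paths in $M$ for suitable $w\in V(M)\setminus\{u\}$. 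Lemma~\ref{lem:menger extend} then supplies disjoint $(H,M)$-edges along which to concatenate these $M$-paths with the admissible $H$-paths of the previous step, and Observation~\ref{obs:addition} guarantees that the resulting cycle family is a $2$-$\AP$ large enough to cover all even residues. Lemma~\ref{lem:operation}(4), restricting how external vertices attach to $H$, is crucial for the bookkeeping, and Claim~2 of Lemma~\ref{T6} provides the template for handling the multi-component case. Finally, when $G^\star$ is of Type II, cycles through $\theta$ supply an extra length-$2$ shift in edge cases.

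The main obstacle, as anticipated in the discussion preceding Definition~\ref{dfn:best Q}, is the fundamental asymmetry between tetragonal and trigonal graphs: the former supplies only $m-d+1$ admissible paths against $2m-2d+1$ consecutive paths of the latter, forcing us to rely heavily on $R$ both to multiply the path count and to supply parity shifts. Identifying precisely when this scheme breaks down -- namely when $G$ degenerates into $K_{k,k}$ or into some $H_{k,n;t}$ -- is the subtle core of the argument, and will likely require a careful case analysis built around which of the alternatives (4.1) and (4.2) of Lemma~\ref{lem:operation} governs the structure of $G-V(H)$.
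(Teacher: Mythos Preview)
Your proposal outlines essentially the strategy the paper uses for Theorem~\ref{thm:main bipartite admis} (the admissible-cycle version, stated for $k\ge 7$), not for Theorem~\ref{thm:main bipartite} itself. The paper's own proof of Theorem~\ref{thm:main bipartite} is a two-line deduction: for $k\ge 7$ it follows immediately from Theorem~\ref{thm:main bipartite admis}, since $k$ admissible cycles in a bipartite graph cover every even residue; for $k=6$ the paper bypasses the tetragonal machinery entirely and simply invokes Theorem~\ref{Thm:32} (Chiba--Yamashita), which already yields $\delta_3(G)-1\ge 5$ admissible even cycles in $G$, more than enough to hit the three even residues modulo~$6$.

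Viewed as a route to Theorem~\ref{thm:main bipartite admis}, the broad shape of your outline is correct, but there is a substantive gap. You omit the case $|T|=4$, which the paper treats separately (Lemmas~\ref{lem: m=2 I}--\ref{lem: m=2 II}) via a contraction argument in the spirit of Lemmas~\ref{T4}--\ref{T5}, because Lemma~\ref{lem:operation} requires $|T|\ge 6$ and so the $T\cup R$ framework is unavailable there. More importantly, several concatenation counts in the $|T|\ge 6$ analysis (for instance Lemma~\ref{lem: G-Q connected 1} and the final step of Lemma~\ref{Q1:m>2}) bottom out at $2k-7\ge k$, which fails for $k=6$; this is precisely why the paper restricts the tetragonal argument to $k\ge 7$ and takes the Chiba--Yamashita shortcut at $k=6$. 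Your observation that only $k/2$ admissible even cycles suffice when $k$ is even could in principle rescue the $k=6$ case inside the tetragonal framework, but you would then have to redo all the numerics, and the paper's shortcut is far cleaner.
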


Note that in bipartite graphs, a collection of $k$ admissible cycles contains one of length $r \pmod k$ for every even $r$.
For $k\geq 7$, we obtain a stronger result for admissible cycles as follows.

\begin{thm}\label{thm:main bipartite admis}
    Let $k\ge 7$ be an integer, and let $G^\star$ be a $k$-weak graph not isomorphic to $K_{k,k}$ or $H_{k,n;t}$ for any $2\le t\le k< n$. If $G$ is bipartite, then $G^\star$ contains $k$ admissible cycles.
\end{thm}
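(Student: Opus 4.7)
The plan is to carry out the bipartite analogue of the argument used to prove Theorem~\ref{main} and Corollary~\ref{cor: non-bipartite admis}, with an \emph{optimal tetragonal subgraph} (Definition~\ref{dfn:best Q}) playing the role of the maximum trigonal subgraph, and with Lemma~\ref{lem:operation} replacing Proposition~\ref{prop: T-graph} as the main structural workhorse. The overall strategy is: in every branch where the reductions fail to produce $k$ admissible cycles, we show that $G^\star$ must be isomorphic to $K_{k,k}$ or to some $H_{k,n;t}\in\mathcal{H}_k$, contradicting the hypothesis. The case split is first on whether $G$ contains a $4$-cycle, and then on the order of a maximum (optimal) tetragonal subgraph $T$.

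If $G$ is $C_4$-free, let $C$ be a shortest cycle (an even cycle of length $\ge 6$). Being $C_4$-free forces $\deg_C(v)\le 1$ for every $v\in V(G-C)$, so each component $M$ of $G-C$ has $\delta_2(M)\ge k-2$. Choosing $u:=u_M$ (Definition~\ref{dfn: block,vertex}) and $w\in V(M)\setminus\{u\}$ adjacent to a vertex of $C$ far from $N_C(u)$, we combine the $k-2$ admissible paths in $\mathcal{P}^M_{u,w}$ produced by Lemma~\ref{lem: B} and Observation~\ref{obs:valid} with the two $(u,w)$-paths running through $C$ (whose lengths have different parities since $|C|$ is even but the arcs between $N_C(u)$ and $N_C(w)$ can be tuned), to obtain $k$ admissible cycles, just as in Lemma~\ref{K3}. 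The case where $G-C$ is disconnected is handled by the Menger-plus-Lemma~\ref{lem: B} technique used in Claim~1 of Lemma~\ref{K3}, invoking Lemma~\ref{lem:menger extend} to connect two suitable components through $C$.

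If $G$ contains a $C_4$, let $T$ be an optimal tetragonal subgraph of order $2m\ge 4$. When $m\le 2$ (so $T\simeq C_4$) we mimic Lemmas~\ref{T4}--\ref{T5}: contract a boundary edge of $T$, check that the resulting rooted graph is $2$-connected with second-minimum degree at least $k+1$, apply Lemma~\ref{lem:admis path} to obtain $k-1$ admissible paths between two vertices of $T$, and extend through $T$ itself to get $k$ admissible cycles. When $m\ge 3$, the \textbf{core subgraph} is $H:=G[V(T)\cup R]$, where $R:=\{v\in V(G-T):\deg_T(v)\ge m-1\}$. By Lemma~\ref{lem:operation}, $R$ is independent and (for $m\ge 4$) lies in one partite class, while every $v\in V(G)\setminus V(H)$ satisfies $\deg_T(v)+\deg_R(v)\le m-1$. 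Consequently every component $M$ of $G-V(H)$ has $\delta_2(M)\ge k-m+1$, and for a suitable $u:=u_M$ and $w\in V(M)\setminus\{u\}$ (connected to $V(H)$ via two disjoint paths supplied by Lemma~\ref{lem:menger extend}), Lemma~\ref{lem: B} yields roughly $k-m$ admissible paths in $\mathcal{P}^M_{u,w}$. Proposition~\ref{prop:Q-graph} gives $\{d,d+2,\dots,2m-d\}\subseteq\mathcal{L}^T_{x,y}$ between boundary-distance-$d$ vertices $x,y\in V(T)$, and using vertices of $R$ as ``shortcuts'' through $H$ augments this set with paths of shifted parities. Concatenating these path families as in the bookkeeping of Lemma~\ref{T6} produces $k$ admissible cycles.

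The main obstacle is the last step. Unlike in the trigonal case, the native path set from $T$ is only a $2$-$\AP$, so Observation~\ref{obs:addition}(2) alone does not give a consecutive family after concatenation. The role of the high-degree set $R$ therefore becomes essential: a vertex of $R$ with two neighbors on $\partial T$ at odd boundary distance yields an odd-length detour, so that the set sum with the admissible family in $\mathcal{P}^M_{u,w}$ becomes admissible of size at least~$k$ by Observation~\ref{obs:addition}(3). A secondary difficulty appears at the extreme configurations, when $G-V(H)=\emptyset$ or when $|T|=2m$ is very close to $k$: one must argue, using the tight degree bounds of Lemma~\ref{lem:operation}, that the only graphs escaping the reduction are $K_{k,k}$ and members of $\mathcal{H}_k$. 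The hypothesis $k\ge 7$ (strictly stronger than the $k\ge 6$ of Theorem~\ref{thm:main bipartite}) provides the arithmetic slack to upgrade the conclusion from ``a cycle of length $r\pmod k$ for every even $r$'' to full admissibility of the cycle family, exactly mirroring the improvement from Theorem~\ref{main} to Corollary~\ref{cor: non-bipartite admis} in the non-bipartite case.
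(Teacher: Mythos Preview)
Your high-level case split ($C_4$-free; $|T|=4$; $|T|\ge 6$ with core $G[V(T)\cup R]$) matches the paper's structure exactly, but two of the mechanisms you rely on rest on a parity error that would make the argument fail as written.

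In the $C_4$-free case you assert that the two $(u,w)$-arcs through the even cycle $C$ ``have different parities since $|C|$ is even.'' They do not: if $|C|=2s$ then the arcs from $v_0$ to $v_j$ have lengths $j$ and $2s-j$, of the \emph{same} parity. The paper (Lemma~\ref{lem: C4-free}) instead chooses $w$ adjacent to $v_{s-2}$ so that $\Pset^C_{u,w}=\{s,s+4\}$; these two lengths, of equal parity but differing by~$4$, combine with the $k-2$ admissible paths in $\mathcal{P}^M_{u,w}$ to give a $2$-AP of size $k$. No parity change is involved or needed. The same misconception recurs in your $m\ge 3$ discussion: you propose that ``a vertex of $R$ with two neighbors on $\partial T$ at odd boundary distance yields an odd-length detour.'' But $G$ is bipartite and every $p\in R$ has all of its $T$-neighbours in a single partite class of $T$, hence at \emph{even} boundary distance; no odd detour exists. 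In the paper, $R$ is never used for parity switching. Its role is purely structural: Lemma~\ref{lem:operation}(4) caps $\deg_{T^*}(v)$ for $v\notin V(T)\cup R$, and the chain of Lemmas~\ref{lem:Q-admis}--\ref{lem:RN extra} shows that either the $2$-AP bookkeeping already produces $k$ admissible (necessarily even) cycles, or $G^\star$ is forced into $\{K_{k,k}\}\cup\mathcal{H}_k$.

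A smaller divergence: for $|T|=4$ the paper does not contract a boundary edge of $T$. It takes a \emph{maximum} $K_{2,t}\subseteq G$ as core and contracts its two parts separately (Lemmas~\ref{lem: m=2 I}--\ref{lem: m=2 II}); this is what gives the clean degree bound $\deg_K(v)\le 1$ for $v\notin V(K)$ needed to apply Lemma~\ref{lem:admis path}. Your single-edge contraction does preserve degrees (the endpoints lie in different partite classes), but you have not specified the second root, nor verified the required $2$-connectivity and $\delta_2\ge k$ (you wrote $k+1$) in the Type~II case.
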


\noindent Throughout this section, we assume the conditions of Theorem~\ref{thm:main bipartite admis}: 
\[
\mbox{ $k\geq 7$, ~$G^\star$ is not isomorphic to $K_{k,k}$ or $H_{k,n;t}$ for any $2\le t\le k< n$, ~ and $G$ is bipartite.}
\]
The main bulk of this section is used to prove Theorem~\ref{thm:main bipartite admis}.
We conclude this section by establishing Theorem~\ref{thm:main bipartite} via Theorem~\ref{thm:main bipartite admis} and outlining the proof of Theorem~\ref{thm:exist admis}.

\subsection{No tetragonal subgraph on six vertices} 
\noindent Our first lemma treats the $C_{4}$-free case, using a minimal induced cycle as the core subgraph. 

\begin{lem}\label{lem: C4-free}
    If $G$ is $C_{4}$-free, then $G^\star$ contains $k$ admissible cycles. 
\end{lem}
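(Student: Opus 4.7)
The plan is to use a shortest cycle $C$ in $G$ as the core subgraph. Since $G$ is bipartite and $C_4$-free, $|C|$ is even with $|C| \ge 6$, and $C$ is induced. By the minimality of $|C|$, every $v \in V(G-C)$ satisfies $\deg_C(v) \le 1$, because two neighbors of $v$ on $C$ would yield a cycle of length at most $|C|/2 + 2 < |C|$. Consequently every component $M$ of $G-C$ has $\delta_2(M) \ge k - 2$, and the $2$-connectivity of $G$ further forces $\deg_C(u_M) = 1$ for every such $M$ (otherwise $\mathrm{Cut}(B_M)$ would be a $1$-cut of $G$). Thus Lemma~\ref{lem: B} applies with $T = C$: for every $w \in V(M) \setminus \{u_M\}$, the set $\mathcal{P}^M_{u_M, w}$ contains $k - 2$ admissible paths whose lengths (being all of the same parity) form a $2$-$\AP$.

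I would then split into two cases. If $G - C$ has two distinct components $M_1, M_2$, set $u_i := u_{M_i}$ and let $v^*_i$ denote the unique $C$-neighbor of $u_i$. Contracting each component to a single vertex $m_s$ yields a $2$-connected graph, so by the standard fact that any $2$-connected graph has a cycle through any two prescribed edges, there is a cycle through $m_1 v^*_1$ and $m_2 v^*_2$. Splitting this cycle at $m_1, m_2$ and lifting back to $G$ (using $\deg_C(v) \le 1$ to ensure distinct $M_i$-endpoints) yields two internally disjoint paths $R_1, R_2$ in $G$ with internal vertices in $V(C) \cup \bigcup_{s \ge 3} V(M_s)$, with $u_1$ as an $M_1$-endpoint and $u_2$ as an $M_2$-endpoint (in one of two configurations depending on whether the two prescribed edges lie on a common arc of the cycle). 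Concatenating $R_1, R_2$ with the admissible $(u_i, w'_i)$-paths supplied by Lemma~\ref{lem: B} (where $w'_i$ denotes the other $M_i$-endpoint) and applying Observation~\ref{obs:addition}~(1) produces a $2$-$\AP$ of cycles of size at least $(k-2)+(k-2)-1 = 2k-5 \ge k$.

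Otherwise $M := G - C$ is connected. Let $u := u_M$ with $N_C(u) = \{v_0\}$ (after relabeling). Since $C$ is induced, $\deg_M(v_i) \ge k - 3 \ge 4$ for every $v_i \in V(C)$. I pick an integer $i$ in the interval $\bigl[\max\{1,\, (|C|-2k+4)/2\},\, (|C|-4)/2\bigr]$, which is non-empty for $k \ge 4$ and $|C| \ge 6$, and choose $w \in N_M(v_i)$; this $w$ differs from $u$ because $N_C(u) = \{v_0\}$ and $i \neq 0$. Then $\Pset^C_{u,w} = \{i+2,\, |C|-i+2\}$, two lengths of equal parity differing by $|C| - 2i \in [4,\, 2k-4]$. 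Combining with the $2$-$\AP$ of $k-2$ admissible paths in $\mathcal{P}^M_{u,w}$, the resulting cycle lengths form a single $2$-$\AP$ of size $(k-2)+(|C|-2i)/2 \ge k$, yielding the required $k$ admissible cycles. The main obstacle lies in the first case: one must arrange the Menger-type paths $R_1, R_2$ so that both $u_1$ and $u_2$ appear as endpoints, which is essential for invoking Lemma~\ref{lem: B}. The bipartite parity constraint is a related subtlety, since any admissible family of $(u,w)$-paths must lie in a single parity class and hence forms a $2$-$\AP$ rather than a $1$-$\AP$; the quantitative estimates $2k - 5 \ge k$ and $(k-2) + (|C|-2i)/2 \ge k$ are both calibrated to survive this loss.
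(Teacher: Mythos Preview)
Your proof is correct and follows the same overall strategy as the paper: take a shortest (hence induced) cycle $C$ of $G$ as the core, observe $\deg_C(v)\le 1$ for every $v\in V(G-C)$, and split according to whether $G-C$ is connected. In the connected case your choice of $v_i$ with $i\in[\max\{1,(|C|-2k+4)/2\},(|C|-4)/2]$ is a mild generalization of the paper's specific choice $i=s-2$ (where $|C|=2s$), which gives the gap exactly $4$; both lead to the same $2$-$\AP$ count. One small overstatement: the bound $\deg_M(v_i)\ge k-3$ can fail for the single low-degree vertex of $G$, but you only need $\deg_M(v_i)\ge 1$, which follows from $\delta(G)\ge 3$ and $C$ being induced.

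The one genuine methodological difference is in the disconnected case, precisely at the obstacle you flagged. The paper avoids the contraction and cycle-through-two-edges argument entirely by invoking Lemma~\ref{lem:menger extend} twice: applied with $X=V(M_i)$ and $x=u_i$ it produces two disjoint $(M_i,C)$-edges, one of them incident to $u_i$ and the other to some $w_i\in V(M_i)\setminus\{u_i\}$; the four resulting $C$-endpoints are then linked by two disjoint subpaths of the cycle $C$. This is shorter and sidesteps the lifting through the other components $M_s$ ($s\ge 3$). Your contraction route is valid (the contracted graph is $2$-connected because each $m_s$ has at least two $C$-neighbours and removing any vertex of $C$ leaves a connected quotient), but it is heavier machinery for what Lemma~\ref{lem:menger extend} handles directly.
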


\begin{proof}
    Let $C=v_0v_1\cdots v_{2s-1}v_0$ ($s\geq 3$) be an induced cycle in $G$ with $|C|=2s$ minimized.

    We first show that for any vertex $v\in V(G-C)$, $\deg_{C}(v)\le 1$.
    Suppose to the contrary that $\deg_C(v) \ge 2$. Without loss of generality, assume that $v_0, v_i \in N_C(v)$ for some even index $i \in [s]$, and $v_1, \dots, v_{i-1} \notin N_C(v)$.
    Then $v v_0 v_1 \dots v_i v$ is an induced cycle of length $i+2 \le s+2 < 2s$, which contradicts the minimality of $|C|$.
    Since $\delta(G)\geq 3$, every vertex in $C$ must have a neighbor in $G-C$, which implies $G-C\neq \emptyset$.
    Moreover, for any component $M$ of $G-C$, we have $\delta(M)\ge \delta(G)-1\ge 2$, and thus $|M|\geq 3$.

    Suppose first that $G-C$ is not connected. 
    Let $M_{1}$ and $M_{2}$ be two distinct components of $G-C$.
    For $i \in \{1, 2\}$, let $u_i = u_{M_i}$ (see Definition~\ref{dfn: block,vertex}). 
    By Lemma~\ref{lem: B}, for any vertex $v \in V(M_i) \setminus \{u_i\}$, the pair $(u_i, v)$ is $(k-1)$-valid.
    By Lemma~\ref{lem:menger extend}, there exist vertices $w_i \in V(M_i) \setminus \{u_i\}$ and two disjoint paths between $\{u_1, w_1\}$ and $\{u_2, w_2\}$ in $G$ whose internal vertices lie in $C$.
    By Observation~\ref{obs:valid}, $\mathcal{P}_{u_{i},w_{i}}^{M_{i}}$ contains $k-2$ admissible paths.
    The union of paths in $\mathcal{P}_{u_{1},w_{1}}^{M_{1}}$ and $\mathcal{P}_{u_{2},w_{2}}^{M_{2}}$, together with the two connecting paths, produces at least $(k-2)+(k-2)-1 > k$ admissible cycles.

    Now assume that $M := G-C$ is connected, so $|M| \ge 3$. 
    Let $u = u_M$. 
    Without loss of generality, we assume $N_{C}(u)=\{v_{0}\}$.
    Since every vertex in $C$ has a neighbor in $M$, there exists $w \in V(M)$ adjacent to $v_{s-2}$. Since $s \ge 3$, we have $v_{s-2} \neq v_0$, which implies $w \neq u$.
    By Observation~\ref{obs:valid} and Lemma~\ref{lem: B}, $\mathcal{P}_{u,w}^{M}$ contains $k-2$ admissible paths.
    One can verify that $\Pset_{u,w}^{C}=\{s,s+4\}$.
    Since $k \ge 6$, the union of paths in $\mathcal{P}_{u,w}^{M}$ and $\mathcal{P}_{u,w}^{C}$ produces $k$ admissible cycles.
    This completes the proof.
\end{proof}

Throughout the rest of this section, we assume that $C_4\subseteq G$, and 
\[
\mbox{let $T$ be an optimal tetragonal subgraph of $G$ (recall Definition~\ref{dfn:best Q}).}
\]
The remainder of the proof is divided according to the order of \(T\): Lemmas~\ref{lem: m=2 I} and~\ref{lem: m=2 II} consider the case \(|T|=4\), while Lemma~\ref{Q1:m>2} handles the case \(|T|>4\).

In the following two lemmas, we utilize a subgraph $K_{2,t}$ with maximum $t$ as the core subgraph.

\begin{lem}\label{lem: m=2 I}
If $G^\star$ is of Type I and $|T|=4$, then $G$ contains $k$ admissible cycles.
\end{lem}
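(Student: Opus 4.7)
The plan is to take a maximum complete bipartite subgraph $L \simeq K_{2,t}$ of $G$, with $A=\{a_1,a_2\}$ on the side of size $2$ and $B=\{b_1,\dots,b_t\}$ on the other side; since $C_4\subseteq G$ we have $t\ge 2$. The subgraph $L$ plays the role of the core and furnishes two types of short $(a_1,a_2)$-closing paths that are key for building an admissible family of cycles: a length-$2$ closing $a_1 b_i a_2$ for any $b_i\in B$, and a length-$4$ closing $a_1 b_i a_3 b_j a_2$ whenever two vertices $b_i\ne b_j\in B$ admit a common neighbor $a_3\in N(b_i)\cap N(b_j)\setminus A$. Both closings have even length and differ by exactly $2$, which is the step needed to extend a $2$-AP of cycle lengths in the bipartite setting.

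First, I would apply Lemma~\ref{lem:admis path} to the rooted graph $(G,a_1,a_2)$. Since $G$ is $3$-connected with $\delta_2(G,a_1,a_2)\ge\delta_2(G)\ge k$, this yields $k-1$ admissible $(a_1,a_2)$-paths in $G$, all of even length. For each such path $P$, a pigeonhole choice (over $B$ against the interior of $P$, with rerouting via $3$-connectivity when $t$ is small) supplies a $b_i\in B$ disjoint from $V(P)\setminus\{a_1,a_2\}$; closing each $P$ with $a_1 b_i a_2$ produces $k-1$ admissible cycles whose lengths form a $2$-AP. Applying the length-$4$ closing to the longest admissible path then contributes one additional cycle whose length is $2$ beyond the previous maximum, completing the $2$-AP to length $k$.

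If no such length-$4$ closing exists---that is, if every pair $b_i,b_j\in B$ satisfies $N(b_i)\cap N(b_j)=A$---then the sets $N(b_i)\setminus A$ for $i\in[t]$ are pairwise disjoint. I would combine this disjointness with the hypothesis $|T|=4$, which forbids two $4$-cycles in $G$ from sharing exactly one edge, and with the degree bound $\delta_2(G)\ge k$, to perform a structural analysis constraining the neighborhoods of vertices in $A\cup B$ and of the remaining vertices of $G$. The target conclusion in this branch is an isomorphism forcing: $G$ must be isomorphic to $K_{k,k}$ or to some $H_{k,n;t}\in\mathcal{H}_k$, contradicting the hypothesis of the lemma.

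The main obstacle will be this structural alternative: when the length-$4$ closing is unavailable, one has to show that the combination of maximal $K_{2,t}$ and the non-extendability condition $|T|=4$ rigidly collapses $G$ onto one of the forbidden shapes, leaving no spare edges. A secondary technical point is the pigeonhole step in the main argument when $t=2$: here $B$ can be entirely contained in the interior of a long admissible path, and one must either switch to $(b_1,b_2)$-paths (closing analogously within $L$) or invoke $3$-connectivity to locally bypass $B$.
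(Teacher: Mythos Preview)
Your plan has two genuine gaps.

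\textbf{Disjointness of closings.} Applying Lemma~\ref{lem:admis path} to $(G,a_1,a_2)$ gives $k-1$ admissible $(a_1,a_2)$-paths \emph{in $G$}, with no control over their interiors. Any such path may pass through every vertex of $B$ (and through any candidate $a_3$), so neither the length-$2$ closing $a_1b_ia_2$ nor the length-$4$ closing $a_1b_ia_3b_ja_2$ is guaranteed to be internally disjoint from it. The ``pigeonhole over $B$'' and ``rerouting via $3$-connectivity'' remarks do not address this: $t$ may be small while the paths are long, and there is no mechanism that produces a \emph{new} admissible family after rerouting. The paper sidesteps this entirely by contracting: it either contracts $X$ to $x$ and $Y$ to $y$ and applies Lemma~\ref{lem:admis path} to $(G',x,y)$, or (when some component of $G-Y$ misses $X$) contracts $Y\setminus\{u\}$ to a single vertex $w$ and applies the lemma to $(H,u,w)$. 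In both cases the $k-1$ admissible paths have all internal vertices outside $K$, so the closing paths of lengths $\{1,3\}$ (resp.\ $\{2,4\}$) inside $K$ are automatically disjoint from them.

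\textbf{The structural alternative is a dead end.} If no length-$4$ closing exists you propose to force $G\simeq K_{k,k}$ or $G\in\mathcal{H}_k$ and contradict the standing hypothesis. But those graphs all contain a tetragonal subgraph on at least six vertices (any copy of $K_{3,3}$ already does), so they are \emph{incompatible with} $|T|=4$. Thus under the lemma's hypothesis this branch can never terminate in such an isomorphism; you would still need to produce $k$ admissible cycles directly, and your outline gives no way to do so. The contraction argument above handles this branch uniformly without any structural dichotomy.
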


\begin{proof}
    Recall that $G=G^\star$ since $G^\star$ is of Type I.
    Let $K \simeq K_{2,t}$ (with $t \ge 2$) be a subgraph of $G$ that maximizes $t$.
    Let $(X, Y)$ denote the partite sets of $K$ such that $|X|=2$ and $|Y|=t$.
    The maximality of $K$ and $T$ implies that $\deg_K(v) \le 1$ for every vertex $v \in V(G-K)$.

    Suppose first that there exists a component $M$ of $G-Y$ such that $V(M) \cap X = \emptyset$. Then $Y$ separates $X$ from $M$ in $G$.
    Since $G$ is 3-connected, we must have $|N_Y(M)| \ge 3$.
    This implies $|Y| \ge 3$. Moreover, since $\deg_Y(v) = \deg_K(v) \le 1$ for every $v \in V(M)$, we also have $|M| \ge |N_Y(M)| \ge 3$.
    Fix a vertex $u \in N_Y(M)$ and let $W := Y \setminus \{u\}$.
    Let $H$ be the graph obtained from $G[V(M) \cup Y]$ by contracting $W$ into a single vertex $w$.
    Since both $u$ and $W$ have neighbors in $M$, $(H, u, w)$ is a 2-connected rooted graph with $\delta_2(H, u, w) \ge \delta_2(G) \ge k$ and $|H|\geq |M|+2>4$.
    By Lemma~\ref{lem:admis path}, there exist $k-1$ admissible $(u, w)$-paths in $H$.
    Lifting this back to $G$, this implies that the path collection $\bigcup_{v \in W} \mathcal{P}_{u,v}^M$ contains $k-1$ admissible paths.
    Since $|Y| \ge 3$, for any $v \in W$, we have $\Pset_{u,v}^K = \{2, 4\}$.
    Consequently, combining the paths in $\bigcup_{v \in W} \mathcal{P}_{u,v}^M$ with those in $\mathcal{P}_{u,v}^K$ yields $(k-1) + 2 - 1 = k$ admissible cycles, which suffices.

    Consequently, we may assume that every component of $G-Y$ intersects $X$. 
    Let $G'$ be the graph obtained from $G$ by contracting $X$ into a vertex $x$ and $Y$ into a vertex $y$.
    Since $\delta_2(G) \ge k$, $G-K$ is non-empty.
    Let $L$ be an arbitrary component of $G-K$.
    Similar to the previous paragraph, we have $|L|\geq |N_K(L)|\geq 3$.
    By our assumption, $N_X(L) \neq \emptyset$.
    Since $|X|=2$ and $G$ is 3-connected, $X$ cannot be a 2-cut; thus $N_Y(L) \neq \emptyset$.
    It follows that $(G', x, y)$ is a 2-connected rooted graph with $|G'| \ge |L| + 2 > 4$ and $\delta_2(G', x, y) \ge k$.
    By Lemma~\ref{lem:admis path}, there are $k-1$ admissible $(x, y)$-paths in $G'$.
    Equivalently, in terms of $G$, the union $\bigcup_{u \in X, v \in Y} \mathcal{P}_{u,v}^{G-K}$ contains $k-1$ admissible paths.
    Observe that for any $u \in X$ and $v \in Y$, we have $\Pset_{u,v}^K = \{1, 3\}$.
    Therefore, combining the paths in $\mathcal{P}_{u,v}^{G-K}$ (where $u \in X, v \in Y$) with the paths in $\mathcal{P}_{u,v}^K$ produces $(k-1) + 2 - 1 = k$ admissible cycles.
    This proves Lemma~\ref{lem: m=2 I}.
\end{proof}

\begin{lem}\label{lem: m=2 II}
    If $G^\star$ is of Type II and $|T|=4$, then $G^\star$ contains $k$ admissible cycles.
\end{lem}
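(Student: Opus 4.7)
The plan is to follow the template of Lemma~\ref{lem: m=2 I}, taking a maximum $K_{2,t}$ subgraph $K\subseteq G$ with partite sets $X=\{x_{1},x_{2}\}$ and $Y$ of size $t\geq 2$, and splitting according to whether some component $M$ of $G-Y$ avoids $X$ (Case~1) or every component of $G-Y$ meets $X$ (Case~2). In each case, after contracting suitable vertex sets we obtain a rooted $2$-connected graph $(H,u,w)$ or $(G',x,y)$, to which Lemma~\ref{lem:admis path} is applied to produce $k-1$ admissible paths. These are then concatenated with the two $(u,w)$-paths of lengths $\{2,4\}$ from $K$ in Case~1, or with the two $(x,y)$-paths of lengths $\{1,3\}$ from $K$ in Case~2, to yield the required $k$ admissible cycles in $G^{\star}$.

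The new difficulties relative to the Type~I analysis are the reduced connectivity of $G$ (only $2$-connected) and the weaker bound $\delta_{2}(G)\geq k-1$ (instead of $k$). Both are compensated for by the auxiliary vertex $\theta$ via the following structural fact: since $G+\theta_{1}\theta_{2}$ is $3$-connected, every $2$-cut $S$ of $G$ must satisfy $S\cap\{\theta_{1},\theta_{2}\}=\emptyset$ and separate $\theta_{1}$ from $\theta_{2}$ in $G$. Consequently the length-$2$ path $\theta_{1}\theta\theta_{2}$ in $G^{\star}$ always crosses such an $S$, providing a third independent bridge and effectively restoring $3$-connected behavior whenever the argument is carried out inside $G^{\star}$ rather than $G$.

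In Case~1, if $|N_{Y}(M)|\geq 3$ the argument of Lemma~\ref{lem: m=2 I} carries over, after choosing the root $u$ so that the degree condition $\delta_{2}(H,u,w)\geq k$ can be read off from $\delta_{3}(G)\geq k$ (placing $\theta_{1}$ or $\theta_{2}$ into $\{u,w\}$ whenever both would otherwise lie in $V(H)\setminus\{u,w\}$ with degree $k-1$). If instead $|N_{Y}(M)|=2$, the structural fact places exactly one of $\theta_{1},\theta_{2}$ inside $V(M)$ and the other in the complementary side, so that the path $\theta_{1}\theta\theta_{2}$ in $G^{\star}$ supplies a third $M$-to-$X$ bridge; the entire contraction argument is then recast on the subgraph of $G^{\star}$ obtained by adjoining $\theta$. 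Case~2 is analogous: the rooted graph $(G',x,y)$ is $2$-connected unless $X$ itself is a $2$-cut, in which case the same $\theta$-device supplies the missing connectivity, while the degree hypothesis needed for Lemma~\ref{lem:admis path} again follows from $\delta_{3}(G)\geq k$ together with a judicious choice of roots.

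The chief obstacle will be the truly extremal configurations, arising when $|Y|=2$ and the positions of $\theta_{1},\theta_{2}$ render all of the contractions above simultaneously degenerate (either too few vertices to apply Lemma~\ref{lem:admis path}, or insufficiently many interior vertices of degree $\geq k$). A careful enumeration in this regime, exploiting the maximality of $t$ together with the degree-sum constraints from $\delta_{2}(G)\geq k-1$ and $\delta_{3}(G)\geq k$, forces $G^{\star}$ to coincide with some graph $H_{k,n;2}$. At that point the exclusion hypothesis $G^{\star}\not\simeq H_{k,n;t}$ is invoked exactly to rule this out, which is consistent with the observation that $H_{k,n;2}$ admits cycle-length set $\{4,6,\ldots,2k\}$ and therefore cannot furnish $k$ admissible cycles.
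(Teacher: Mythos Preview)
Your high-level plan matches the paper's, but there are genuine gaps in Case~1. First, your ``root choice'' device for the degree condition cannot work when both $\theta_{1},\theta_{2}\in V(M)$: the roots $u,w$ live in $Y$ (one is a vertex of $N_{Y}(M)$, the other the contraction of $Y\setminus\{u\}$), so neither $\theta_{i}$ can be absorbed into $\{u,w\}$. This scenario is compatible with $|N_{Y}(M)|\geq 3$, since then $N_{Y}(M)$ is not a $2$-cut and your structural fact forces no separation. The paper's fix is to adjoin $\theta$ to $M$ itself (forming $H'$ on $V(M)\cup Y\cup\{\theta\}$), so that $\theta_{1},\theta_{2}$ each gain $\theta$ as a neighbour and have degree $\geq k$ in $H'$; then $\delta_{2}(H',u,w)\geq k$ with $\theta$ as the single low-degree exception. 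Thus the $\theta$-device is needed for the \emph{degree} condition, not for connectivity; indeed $(H,u,w)$ is already $2$-connected from the $2$-connectivity of $G$ alone, irrespective of $|N_{Y}(M)|$. The same issue recurs in Case~2 when $\theta_{1},\theta_{2}\in V(G-K)$: the paper contracts inside $G^{\star}$ rather than $G$ precisely so that $\theta_{1},\theta_{2}$ pick up $\theta$ as an extra neighbour.

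Second, your treatment of the extremal sub-case misidentifies the obstacle. When $t=2$ the problem is neither connectivity nor degree but that $K\simeq C_{4}$ supplies only $\Pset_{u,v}^{K}=\{2\}$, not $\{2,4\}$, so concatenation yields only $k-1$ admissible cycles; no $\theta$-bridge manufactures the missing length-$4$ path through $K$. The paper abandons the $H$-contraction here: since $Y=\{y_{1},y_{2}\}$ is a $2$-cut, $G-Y$ has exactly two components, each containing one of $\theta_{1},\theta_{2}$, and applying Lemma~\ref{lem:admis path} to $(G[V(M)\cup Y],y_{1},y_{2})$ and to the other side gives $(k-1)+(k-1)-1>k$ admissible cycles directly. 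Your final paragraph invoking the exclusion of $H_{k,n;2}$ is also off-track: under the standing hypothesis $|T|=4$ that graph cannot arise (for $k\geq 7$ it contains a tetragonal subgraph on six vertices inside any copy of $K_{3,3}$), and the paper's proof of this lemma never uses the exclusion hypothesis at all.
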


\begin{proof}
    Let $K \simeq K_{2,t}$ be a subgraph of $G$ that maximizes $t$, and let $(X, Y)$ be its partite sets with $|X|=2$ and $|Y|=t$.
    The maximality of $K$ and $T$ implies that $\deg_K(v) \leq 1$ for any $v \in V(G-K)$.

    Suppose first that there exists a component $M$ of $G-Y$ such that $V(M) \cap X = \emptyset$. Then $Y$ separates $X$ from $M$ in $G$.
    For any $v \in V(M)$, we have $\deg_M(v) = \deg_{G}(v) - \deg_K(v) \geq (k-1) - 1 = k-2$, which implies $|M| \geq k-1 \geq 6$.
    Fix a vertex $u \in N_Y(M)$ and let $W := Y \setminus \{u\}$.
    Let $H$ be the graph obtained from $G[V(M) \cup Y]$ by contracting $W$ into a vertex $w$.
    Since $u$ is not a cut-vertex in $G$ (as $G$ is 2-connected), both $u$ and $W$ have neighbors in $M$, so $(H, u, w)$ is a 2-connected rooted graph.

    We distinguish two cases based on the location of $\theta_1$ and $\theta_2$.
    If $|V(M) \cap \{\theta_1, \theta_2\}| \le 1$, then clearly $\delta_2(H, u, w) \ge k$. 
    If $\{\theta_1, \theta_2\} \subseteq V(M)$, let $H'$ be the graph obtained from $G^\star[V(M) \cup Y \cup \{\theta\}]$ by contracting $W$ into a vertex $w$.
    Observe that $H'$ is obtained from $H$ by adding the vertex $\theta$ and the edges $\theta\theta_1, \theta\theta_2$.
    Since $(H, u, w)$ is a 2-connected rooted graph, it follows that $(H', u, w)$ is also 2-connected, and it is clear that $\delta_2(H', u, w) \ge k$.
    Applying Lemma~\ref{lem:admis path} to $H$ (in the first case) or $H'$ (in the second case), we always find $k-1$ admissible $(u, w)$-paths in $H'$.
    In either case, the path collection $\bigcup_{v \in W} \mathcal{P}_{u,v}^{G^\star[V(M) \cup \{\theta\}]}$ contains $k-1$ admissible paths.

    Now consider paths whose internal vertices lie in $K$, based on the size of $Y$.
    If $t \geq 3$, then for any $v \in W$, we have $\Pset_{u,v}^K = \{2, 4\}$.
    Combining these with the $k-1$ admissible paths in $\bigcup_{v \in W} \mathcal{P}_{u,v}^{G^\star[V(M) \cup \{\theta\}]}$ yields $(k-1) + 2 - 1 = k$ admissible cycles in $G^\star$.
    If $t=2$, since $G + \theta_1\theta_2$ is 3-connected, the graph $(G + \theta_1\theta_2) - Y$ must be connected.
    Consequently, $G-Y$ contains exactly one component $N$ distinct from $M$, and each component contains exactly one of $\theta_1, \theta_2$. $N$ (distinct from $M$) such that one of $\theta_1, \theta_2$ lies in $M$ and the other in $N$.
    Let $Y=\{y_1, y_2\}$. The 2-connectivity of $G$ implies that both $(G[V(M) \cup Y], y_1, y_2)$ and $(G[V(N) \cup Y], y_1, y_2)$ are 2-connected rooted graphs, and one can readily verify that their second minimum degrees are at least $k$.
    By Lemma~\ref{lem:admis path}, both $\mathcal{P}^M_{y_1,y_2}$ and $\mathcal{P}^N_{y_1,y_2}$ contain $k-1$ admissible paths.
    The union of these paths yields $(k-1) + (k-1) - 1 > k$ admissible cycles in $G$ (and thus in $G^\star$).
    This settles the case where a component $M$ of $G-Y$ is disjoint from $X$.

    It remains to assume that every component of $G-Y$ intersects $X$. 
    Let $G'$ be the graph obtained from $G^\star$ by contracting $X$ into $x$ and $Y$ into $y$.
    Recall that $\{\theta_1, \theta_2\}$ is the unique 2-cut in $G^\star$.
    We proceed by discussing the position of $\{\theta_1, \theta_2\}$.
    If $\{\theta_1, \theta_2\} \neq X$, then $X$ is not a 2-cut in $G^\star$. Thus, every component of $G^\star - K$ has a neighbor in $Y$ (in the host graph $G^\star$); by assumption, this component also has a neighbor in $X$.
    It follows that $(G', x, y)$ is a 2-connected rooted graph with $\delta_2(G', x, y) \ge k$.
    If $\{\theta_1, \theta_2\} = X$, then $X$ is not a cut in $G$ (as $G + \theta_1\theta_2$ is 3-connected).
    Thus, every component of $G-K$ has neighbors both in $X$ and $Y$.
    Note that $G = G^\star - \theta$. We deduce that $(G' - \theta, x, y)$ is a 2-connected rooted graph with $\delta_2(G' - \theta, x, y) \ge k$.
    Applying Lemma~\ref{lem:admis path} to $(G', x, y)$ or $(G' - \theta, x, y)$, we obtain $k-1$ admissible $(x, y)$-paths in $G'$.
    This implies that $\bigcup_{u \in X, v \in Y} \mathcal{P}_{u,v}^{G^\star-K}$ contains $k-1$ admissible paths.
    Since $\Pset_{u,v}^{K} = \{1, 3\}$ for any $u \in X, v \in Y$, combining the corresponding path families yields $(k-1) + 2 - 1 = k$ admissible cycles in $G^\star$.
    This completes the proof of Lemma~\ref{lem: m=2 II}.
\end{proof}

\subsection{Tetragonal subgraphs of order at least six}
\noindent This subsection is devoted to the case $|T| \ge 6$, constituting the bulk of the proof of Theorem~\ref{thm:main bipartite admis}.

\begin{lem}\label{Q1:m>2}
	If $|T|\ge 6$, then $G^\star$ contains $k$ admissible cycles.
\end{lem}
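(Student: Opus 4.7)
The plan is to employ the induced subgraph $H := G[V(T) \cup R]$ as the core, where $R$ is the set of external high-degree vertices supplied by Lemma~\ref{lem:operation}. The tetragonal part $T$ already delivers, between any two boundary vertices at distance $d$ on $\partial T$, a $2$-$\AP$ of path lengths of size $m-d+1$ (Proposition~\ref{prop:Q-graph}), and each $r \in R$, having $\deg_T(r) \ge m-1$, supplies short two-step detours $v_i \to r \to v_j$ that enrich this spectrum. Meanwhile, each component $M$ of $G - V(H)$ of sufficient size will yield admissible $M$-paths via Lemma~\ref{lem: B} and Observation~\ref{obs:valid}; concatenating the two types of paths through disjoint $(M,H)$-connecting edges will give $k$ admissible cycles.

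First I would record the degree bookkeeping outside the core. By Lemma~\ref{lem:operation}(4), every $v \in V(G) \setminus V(H)$ satisfies either $\deg_H(v) = \deg_T(v) \le m-2$ (with $\deg_R(v) = 0$), or $\deg_H(v) = \deg_R(v) = 1$ (with $\deg_T(v) = 0$). Consequently, for every component $M$ of $G - V(H)$ one has $\delta(M) \ge k - m + 1$ in the first regime and $\delta(M) \ge k - 2$ in the second (with the usual correction near $\theta$ in Type~II). Selecting $u_M$ as in Definition~\ref{dfn: block,vertex} (with $T$ replaced by $H$), Lemma~\ref{lem: B} certifies that $(u_M, w)$ is $(k - \deg_H(u_M))$-valid for every $w \in V(M) \setminus \{u_M\}$, so Observation~\ref{obs:valid} delivers $k - \deg_H(u_M) - 1$ admissible $(u_M, w)$-paths in $M$. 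On the $H$-side, I would pick $w$ and the attachment vertices $x, y \in V(H)$ so that they can be joined by a $2$-$\AP$ of $H$-paths of size at least $\deg_H(u_M) + 2$: Proposition~\ref{prop:Q-graph} already supplies this when the boundary distance is small, and when $d$ is large the detours through $R$ fill in the missing terms to restore the full spectrum $\{d, d+2, \ldots, 2m-d\}$. Observation~\ref{obs:addition}(3) then yields at least $2(k - \deg_H(u_M) - 1) + (\deg_H(u_M) + 2) - 2 \ge k$ admissible cycles.

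The principal obstacle is the degenerate regime in which no component of $G - V(H)$ has order at least three, so the outer machinery contributes little and the $k$ cycles must be produced inside $H$ itself. In this case $V(H)$ exhausts almost all of $V(G)$, and the structural constraints of Lemma~\ref{lem:operation}(1)--(3), together with $\delta_2(G) \ge k$, force $G$ to be essentially $K_{m,m}$ plus a large set of near-universal vertices on one side, which coincides precisely with the families $K_{k,k}$ and $H_{k,n;t}$ excluded by hypothesis---yielding a contradiction. A parallel tightness analysis is needed when $\deg_H(u_M)$ attains one of its extreme values ($1$ or $m-2$): in the first case a single extra shortcut through an $R$-vertex rescues the counting, while in the second case the equality branch of Lemma~\ref{lem:operation}(1) together with $G[V(T)] \simeq K_{m,m}$ provides the needed extra path in $H$. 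A secondary but manageable subtlety is the Type~II situation where $\theta_1,\theta_2$ lie in the same bipartition class of $G$: then $G^\star$ contains odd cycles through $\theta$, and admissibility must be verified across both parities by routing one designated path through $\theta$ when required.
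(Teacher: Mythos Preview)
Your overall architecture matches the paper: take $T^* = G[V(T)\cup R]$ as the core, harvest admissible $(u_M,w)$-paths in a large component $M$ via Lemma~\ref{lem: B}, and glue them to admissible paths inside $T^*$. But the central counting step has two genuine gaps.

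First, your invocation of Observation~\ref{obs:addition}(3) is illegitimate here. Since $G$ is bipartite, every $(u,w)$-path in $M$ has the same parity, and every $(x,y)$-path in $T^*$ has the same parity; both families are $2$-$\AP$'s, never consecutive. With Observation~\ref{obs:addition}(1) you only get $(k-\deg_H(u_M)-1)+s-1$, so you need $s\ge \deg_H(u_M)+2$ on the $T^*$ side with no slack. Second, that target $s\ge \deg_H(u_M)+2$ is not something Proposition~\ref{prop:Q-graph} or ``detours through $R$'' hand you for free. The number of $T^*$-paths between the two attachment points depends on \emph{both} endpoints' degrees (see the paper's Lemma~\ref{lem:Q-admis}); if $\deg_{T^*}(w)$ is small you may fall short, and when $N_T(u)\cap N_T(w)\neq\emptyset$ the length set has the form $\{2\}\cup\{2+d,4+d,\dots\}$ with a gap, so it need not even be a single $2$-$\AP$. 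The paper spends Lemmas~\ref{lem:Q-admis} and~\ref{lem:one comp deg} precisely on this: it picks $v$ to maximise $\deg_{T^*}$ over $M\setminus\{u\}$, shows $(u,v)$ is $(k-d_{\min})$-valid (not merely $(k-\deg_{T^*}(u))$-valid), and then verifies case by case that the possible gap in $\Pset^{T^*}_{u,v}$ is bridged by the $M$-side $2$-$\AP$.

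Your treatment of the degenerate regime is also too optimistic. When every vertex of $M$ has $\deg_{T^*}\le 2$ (the complement of Lemma~\ref{lem:one comp deg}), the $T^*$-side gives very few paths, and the paper needs the whole chain of Lemmas~\ref{lem: G-Q connected 1}--\ref{lem:RN extra} plus a final case split on $m$ (with an explicit construction when some boundary vertex has full degree in $T$) to push the count to $k$. Similarly, when no component has order $\ge 3$, the conclusion ``$G$ is essentially $K_{m,m}$ plus near-universal vertices'' is not automatic: one must first rule out $G[V(T)]\simeq K_{k,k}$ separately (Lemma~\ref{lem:Q not complete}) and then argue carefully about where $\theta$ or $\theta_1,\theta_2$ sit (Lemma~\ref{lem:non empty}). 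Your sketch names these obstacles but does not supply the mechanisms that actually overcome them.
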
 

By Proposition~\ref{prop:Q-graph}, a tetragonal graph on at least $2k+2$ vertices contains cycles of all lengths in $\{4,6,\dots,2k+2\}$.
Thus, for the proof of Lemma~\ref{Q1:m>2}, we may assume that $|T|=2m$ with $m\in [3,k]$. 
Let $R:=\{v\in V(G-T):\deg_{T}(v)\ge m-1\}$.
In the following proof of this subsection,
\[
\mbox{the subgraph $T^*:=G[V(T)\cup R]$ will be used as the core subgraph of $G$.}
\]
We refer to Lemma~\ref{lem:operation} for properties of $T$ and $R$.
Our subsequent analysis mainly relies on examining the structural properties of the components of $G - T^*$. 
Specifically, Lemmas~\ref{lem: N} and~\ref{lem:RN extra} investigate components of order at most two, whereas Lemmas~\ref{lem:size of BM} through~\ref{lem: edges between M and R} (except for Lemma~\ref{lem:Q not complete} dealing with an extreme case) focus on components of order at least three.

The following lemma characterizes the set of path lengths $\Pset_{u,v}^{T^*}$.
\begin{lem}\label{lem:Q-admis}
    Let $u_1, u_2$ be two distinct vertices in $V(G-T^*)$ with positive degrees $d_1:=\deg_{T^*}(u_1)$ and $d_2:=\deg_{T^*}(u_2)$. Then one of the following statements holds:
    \begin{itemize}
        \item[(1)] $\mathcal{P}_{u_1,u_2}^{T^*}$ contains at least $\min\{d_1+d_2-1,m\}$ admissible paths.
        \item[(2)] $\Pset_{u_1,u_2}^{T^*} \supseteq \{2\} \cup \{2+d, 4+d, \dots, 2m+2-d\}$ for some even $d \leq \max\{2, 2m/\max\{d_1,d_2\}, \\m-d_1-d_2+3\}$.
    \end{itemize}
\end{lem}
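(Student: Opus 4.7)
The plan is to build $(u_1, u_2)$-paths in $T^*$ as concatenations of a short prefix $u_i \to v_i$ with $v_i \in V(T)$ and a $(v_1,v_2)$-path internal to $T$, whose length lies in the 2-AP $\{d, d+2, \dots, 2m-d\}$ supplied by Proposition~\ref{prop:Q-graph}, where $d := \mathrm{dist}_{\partial T}(v_1, v_2)$. Making $d$ small translates directly into a long 2-AP of $(u_1,u_2)$-path lengths; the length-$2$ singleton $\{2\}$ appearing in conclusion~(2) will correspond to the degenerate case $v_1=v_2$, which is available precisely when $u_1,u_2$ share a common neighbor in $T^*$.

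By Lemma~\ref{lem:operation}(4), each $u_i \in V(G-T^*)$ is of Type~A ($\deg_R(u_i)=0$, so $d_i=\deg_T(u_i)\le m-2$) or Type~B (unique neighbor $r_i \in R$, $\deg_T(u_i)=0$, $d_i=1$). I define the \emph{effective $T$-neighborhood} $W_i$ and prefix length $p_i$ by $(W_i,p_i)=(N_T(u_i),1)$ in Type~A and $(W_i,p_i)=(N_T(r_i),2)$ in Type~B. Note $|W_i|\ge m-1$ in Type~B by the definition of $R$, and in either case $W_i$ contains no two $\partial T$-consecutive vertices (by the maximality of $T$ in Type~A, automatically by bipartiteness in Type~B). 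For distinct $v_1\in W_1, v_2\in W_2$, Proposition~\ref{prop:Q-graph} then provides $(u_1,u_2)$-paths of lengths in $p_1+p_2+\{d,d+2,\dots,2m-d\}$.

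The heart of the argument is case A+A, in which $p_1=p_2=1$. If $N_T(u_1)\cap N_T(u_2)=\emptyset$, I invoke Lemma~\ref{lem:short dist}(1) with $s=2m$ to find $v_1,v_2$ with $d\le\max\{1,\,m+2-d_1-d_2\}$; the resulting 2-AP of path lengths $\{2+d,4+d,\dots,2m+2-d\}$ has size $m-d+1\ge\min\{d_1+d_2-1,\,m\}$, giving conclusion~(1). If $N_T(u_1)\cap N_T(u_2)\ne\emptyset$, a common neighbor gives the length-$2$ path; the corner case $d_1=d_2=1$ leaves only that single path and satisfies conclusion~(1) trivially, so one may assume $\max(d_1,d_2)\ge 2$ and apply Lemma~\ref{lem:short dist}(2), obtaining $d\le\max\{3,\,2m/\max(d_1,d_2),\,m+3-d_1-d_2\}$. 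Since a common neighbor forces $u_1,u_2$ into the same partite class in the bipartite host and hence $d$ is even, the bound effectively reduces to $\max\{2,\,2m/\max(d_1,d_2),\,m-d_1-d_2+3\}$, which is exactly the bound in conclusion~(2). For the mixed Type~A$+$B and Type~B$+$B cases, the condition $|W_i|\ge m-1$ in Type~B forces either $|W_1\cap W_2|\ge 1$ or a pair $(v_1,v_2)\in W_1\times W_2$ with $\mathrm{dist}_{\partial T}(v_1,v_2)$ as small as $2$ (by a direct pigeonhole along $\partial T$, together with Lemma~\ref{lem:short dist}(2) and parity); the resulting 2-AP of path lengths then has size at least $m$, comfortably dominating $\min\{d_1+d_2-1,m\}$ and yielding conclusion~(1).

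The main obstacle is the parity-based rounding in the A+A common-neighbor subcase, which bridges the one-unit gap between Lemma~\ref{lem:short dist}(2)'s bound $\max\{3,\dots\}$ and conclusion~(2)'s bound $\max\{2,\dots\}$. A secondary bookkeeping subtlety is that the $\{2\}$ in conclusion~(2) requires $u_1,u_2$ to share a neighbor in $T^*$, so the Type~A$+$B case (shortest path length $p_1+p_2=3$) and the Type~B$+$B subcase with $r_1\ne r_2$ (shortest length $4$) never trigger conclusion~(2); they are routed into conclusion~(1), which is easily satisfied because any Type~B factor contributes only $d_i=1$, keeping the target $\min\{d_1+d_2-1,m\}$ small.
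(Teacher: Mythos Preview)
Your proposal is correct and follows essentially the same route as the paper's proof: both split according to Lemma~\ref{lem:operation}(4) into the three patterns (A+A, A+B, B+B), handle A+A via Lemma~\ref{lem:short dist}(1) or (2) together with Proposition~\ref{prop:Q-graph}, and dispatch the mixed cases into conclusion~(1) by exploiting that a Type~B factor forces one $d_i=1$ so the target $\min\{d_1+d_2-1,m\}$ is small. Your explicit parity argument (common neighbor $\Rightarrow$ $u_1,u_2$ in the same class $\Rightarrow$ $v_1,v_2$ in the same class $\Rightarrow$ $d$ even) is exactly the step the paper uses tacitly to pass from the bound $\max\{3,\dots\}$ of Lemma~\ref{lem:short dist}(2) to the $\max\{2,\dots\}$ in conclusion~(2); one minor slip is that in the A+B case your claimed ``size at least $m$'' can drop to $m-1$ (e.g.\ when $W_1,W_2$ lie in the same class and $W_1\cap W_2=\emptyset$, giving boundary distance $2$), but since the target there is at most $m-2$ this does not affect the conclusion.
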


\begin{proof}
    By Lemma~\ref{lem:operation}~(4), for each $i \in \{1, 2\}$, we have either $\deg_{T}(u_i)=0$ or $\deg_R(u_i)=0$.
    We proceed by discussing the degrees of $u_1$ and $u_2$ in $R$.
    
    First, suppose that both $\deg_R(u_1)$ and $\deg_R(u_2)$ are positive.
    It follows from Lemma~\ref{lem:operation}~(4) that $d_1=d_2=1$. 
    Thus, item (1) holds, as $\min\{d_1+d_2-1,m\}=1$.
    
    Next, consider the case where exactly one of $\deg_R(u_1)$ and $\deg_R(u_2)$ is positive.
    Without loss of generality, assume $\deg_R(u_1)>0$. Then Lemma~\ref{lem:operation}~(4) implies $d_1=1$ and $d_2\leq m-2$.
    Let $N_R(u_1)=\{p\}$. Recall that $\deg_T(p) \ge m-1$.
    By Lemma~\ref{lem:short dist}~(2), there exist distinct vertices $v_1\in N_T(p)$ and $v_2\in N_T(u_2)$ such that $\mathrm{dist}_{\partial T}(v_1,v_2)\leq 3$.
    By Proposition~\ref{prop:Q-graph}, $\mathcal{P}_{p,u_2}^{T}$ contains at least $m-\mathrm{dist}_{\partial T}(v_1,v_2)+1 \geq m-2 \geq d_1+d_2-1$ admissible paths.
    Consequently, $\mathcal{P}_{u_1,u_2}^{T^*}$ also contains $d_1+d_2-1$ admissible paths (extended by the edge $u_1p$), so item (1) follows.

    Finally, assume that $\deg_R(u_1)=\deg_R(u_2)=0$.
    If $d_1=d_2=1$, then $d_1+d_2-1=1$, and item~(1) holds trivially.
    Suppose now that $\max\{d_1,d_2\}\geq 2$. 
    If $N_T(u_1)\cap N_T(u_2)=\emptyset$, then Lemma~\ref{lem:short dist}~(1) implies that $u_1$ and $u_2$ have neighbors in $T$ with distance along $\partial T$ at most $\max\{1, m+2-d_1-d_2\}$.
    It follows from Proposition~\ref{prop:Q-graph} that $\mathcal{P}^T_{u_1,u_2}$ contains $\min\{d_1+d_2-1, m\}$ admissible paths, satisfying~(1).
    Otherwise, if $N_T(u_1)\cap N_T(u_2)\neq\emptyset$, Lemma~\ref{lem:short dist}~(2) ensures that they have distinct neighbors in $T$ whose distance along $\partial T$ is bounded by $\max\{3, 2m/\max\{d_1,d_2\}, m-d_1-d_2+3\}$. 
    In this scenario, Proposition~\ref{prop:Q-graph} implies that $\Pset^T_{u_1,u_2}$ (and thus $\Pset^{T^*}_{u_1,u_2}$) includes $\{2\} \cup \{2+d, 4+d, \dots, 2m+2-d\}$ for some even integer $d \leq \max\{2, 2m/\max\{d_1,d_2\}, m-d_1-d_2+3\}$, satisfying~(2).
\end{proof}

Towards the proof of Lemma~\ref{Q1:m>2}, we then present several lemmas that analyze the structures of $G-T^*$.
We first describe the components of $G-T^*$ of order at most two.
Let $N$ be the graph obtained from $G - T^*$ by deleting all components of order at least three.

\begin{lem}\label{lem: N}
The graph $N$ satisfies the following properties:
\begin{itemize}
    \item[(1)] $|V(N)| \leq 1$, and $|V(N)| = 1$ implies that $G^\star$ is of Type I and $V(N) = \{\theta\}$.
    \item[(2)] $R \cup V(N)$ is an independent set.
\end{itemize}
\end{lem}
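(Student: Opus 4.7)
The plan is to apply Lemma~\ref{lem:operation}~(4) to each $v \in V(N)$ and exploit the strong degree conditions enjoyed by the rooted graph $G$. The crucial preliminary observation is that any $v \in V(N)$ lies in a component of $G - T^*$ of order at most two, so $\deg_{G-T^*}(v) \le 1$ and thus $\deg_G(v) \le \deg_{T^*}(v) + 1$.

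The first step will be to rule out case~(4.2) of Lemma~\ref{lem:operation}~(4) for vertices of $V(N)$: it would give $\deg_{T^*}(v) = 1$, hence $\deg_G(v) \le 2$, which is impossible since $\delta(G) \ge 3$ in Type~I and $\delta(G) \ge k-1 \ge 5$ in Type~II. Consequently, every $v \in V(N)$ falls under case~(4.1), yielding $\deg_R(v) = 0$, $\deg_T(v) \le m-2$, and therefore $\deg_G(v) \le m - 1 \le k - 1$. This alone makes part~(2) easy once part~(1) is in hand: Lemma~\ref{lem:operation}~(2) already gives that $R$ is independent, case~(4.1) ensures no edge joins $R$ to $V(N)$, and $|V(N)| \le 1$ from part~(1) rules out internal edges in $V(N)$.

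For part~(1), I will split into the two types of $G^\star$. In Type~I, $\theta$ is the unique vertex with $\deg_G(\theta) < k$, so the bound $\deg_G(v) \le k-1$ forces $V(N) \subseteq \{\theta\}$, as required. In Type~II, vertices with $\deg_G < k$ are confined to $\{\theta_1, \theta_2\}$, and only when $\theta_1\theta_2 \notin E(G)$ (otherwise $\delta(G) \ge k$). Supposing $\theta_1 \in V(N)$ then forces $m = k$, $\deg_{T^*}(\theta_1) = k-2$, and $\deg_{G-T^*}(\theta_1) = 1$, so $\theta_1$ lies in a 2-vertex component $\{\theta_1, z\}$ with $z \in V(N)$. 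Rerunning the same argument on $z$ forces $z = \theta_2$, producing the edge $\theta_1\theta_2 \in E(G)$, a contradiction. Hence $V(N) = \emptyset$ in Type~II.

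The only delicate step is the Type~II analysis above, where one must argue that a low-degree vertex in $V(N)$ compels its partner in its two-vertex component to also violate the degree bound, thereby producing the forbidden edge $\theta_1\theta_2$. Everything else reduces to substituting the conclusion of Lemma~\ref{lem:operation}~(4) into the degree constraints built into the definition of a $k$-weak graph.
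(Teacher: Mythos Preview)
Your proposal is correct and follows essentially the same approach as the paper: both arguments bound $\deg_G(v)\le (m-2)+1\le k-1$ for every $v\in V(N)$ via Lemma~\ref{lem:operation}~(4), then use the degree constraints of a $k$-weak graph to force $V(N)\subseteq\{\theta\}$ in Type~I and $V(N)=\emptyset$ in Type~II (the latter by showing a nonempty $V(N)$ would produce the edge $\theta_1\theta_2$, contradicting $\delta(G)\ge k$). Your explicit elimination of case~(4.2) up front is a slight organizational difference that conveniently gives $\deg_R(v)=0$ for part~(2) without re-deriving it, but the substance is the same.
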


\begin{proof}
    Since every component in $N$ has order at most two, it follows that for every $v\in V(N)$, $\deg_{G}(v)=\deg_{T^*}(v)+\deg_N(v)\leq (m-2)+1\leq k-1$, which implies $N$ has order at most two.

    If $|V(N)|=2$, then the inequality above must be an equality. This forces $G^\star$ to be of Type II, $N$ to be the edge $\theta_1\theta_2$, and $\deg_{G}(\theta_1)=\deg_{G}(\theta_2)=k-1$.
    However, the existence of the edge $\theta_1\theta_2$ in $G$ implies $\delta(G) \geq k$, a contradiction.
    
    Thus, we must have $|V(N)|\leq 1$. When $V(N)=\{v\}$, we have $\deg_{G}(v) = \deg_{T^*}(v) \leq m-2\leq k-2$, which implies that $G^\star$ is of Type I and $v = \theta$. This completes the proof of (1).

    For (2), suppose to the contrary that $R \cup V(N)$ is not independent. 
    Recall Lemma~\ref{lem:operation}~(2) that $R$ is independent, so $V(N) \neq \emptyset$.
    By (1), $G^\star$ is of Type I and $V(N) = \{\theta\}$.
    Recall that $\deg_{G}(\theta) \geq \delta(G) \geq 3$. 
    By Lemma~\ref{lem:operation}~(4), we have $\deg_R(\theta) = 0$, implying that $R \cup V(N)$ remains an independent set, a contradiction. This proves (2).
\end{proof}

The following lemma shows that for every component $M$ of $G-T^*$ with at least three vertices, it suffices to consider that the corresponding end-block $B_M$ (See Definition~\ref{dfn: block,vertex}) contains at least four vertices.
\begin{lem}\label{lem:size of BM}
	Let $M$ be a component of $G-T^*$ of order at least $3$. If $|B_{M}| \leq 3$, then $G$ contains $k$ admissible cycles.
\end{lem}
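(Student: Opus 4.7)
The plan is as follows. First I show $|B_M|=2$: a block of a bipartite graph has either $1$, $2$, or at least $4$ vertices, so the hypothesis $|B_M|\le 3$ forces $|B_M|\in\{1,2\}$. Since $M$ is connected with $|M|\ge 3$, the end-block $B_M$ cannot be a single isolated vertex; hence $B_M=\{b,u_M\}$ is a bridge, $M$ is not $2$-connected, $\deg_M(u_M)=1$, and $\deg_{T^*}(u_M)=\deg_G(u_M)-1\ge k-2$.

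Next, a degree analysis pins down the global structure. Applying Lemma~\ref{lem:operation}~(4) to $u_M$, the possibility $\deg_R(u_M)=1$ and $\deg_T(u_M)=0$ is excluded since $k\ge 7$, so $\deg_T(u_M)=\deg_{T^*}(u_M)\le m-2$. Combined with $\deg_{T^*}(u_M)\ge k-2$ and $m\le k$, this forces $m=k$, $\deg_T(u_M)=k-2$, and $\deg_G(u_M)=k-1$. In Type~I the selection rule excludes $u_M=\theta$, which would give $\deg_G(u_M)\ge k$, a contradiction; so $G^\star$ is of Type~II and $u_M\in\{\theta_1,\theta_2\}$, say $u_M=\theta_1$. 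The same argument applied to the leaf $u'$ of any other end-block $B'$ of $M$ forces $u'\in\{\theta_1,\theta_2\}\setminus\{\theta_1\}=\{\theta_2\}$, since $\theta_1$ is already a non-cut-vertex of $B_M$ and cannot play that role for another block. Consequently, $M$ has exactly two end-blocks, both bridges, with leaves $\theta_1$ and $\theta_2$, each satisfying $\deg_T=k-2$.

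Finally, I apply Lemma~\ref{lem:Q-admis} to the pair $(u_1,u_2)=(\theta_1,\theta_2)$ with $d_1=d_2=k-2$ and $m=k$. Case~(1) directly yields $\min\{2k-5,k\}=k$ admissible $(\theta_1,\theta_2)$-paths in $T^*$. Case~(2) yields $\Pset^{T^*}_{\theta_1,\theta_2}\supseteq\{2,4,6,\dots,2k\}$, again $k$ paths forming a $2$-$\AP$, since for $k\ge 7$ the even parameter $d$ is bounded by $\max\{2,\tfrac{2k}{k-2},-k+7\}=2$. In either case we obtain $k$ admissible $(\theta_1,\theta_2)$-paths through $T^*$; concatenating each with a fixed $(\theta_1,\theta_2)$-path in $M$ (which exists since $M$ is connected) produces $k$ admissible cycles in $G$, as required. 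The main technical hurdle is the rigid degree-counting that forces $m=k$ and pins the leaves of $M$ to $\{\theta_1,\theta_2\}$; once this structure is in hand, Lemma~\ref{lem:Q-admis} quickly delivers the required admissible cycles.
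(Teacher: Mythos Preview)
Your approach is essentially identical to the paper's, and the overall line of argument is correct. There is, however, one genuine gap: when you write ``the same argument applied to the leaf $u'$ of any other end-block $B'$ of $M$,'' you are tacitly assuming that $B'$ is also a bridge. Nothing you have proved so far forces $|B'|\le 2$; a priori $M$ could have another end-block of order $\ge 4$, and then there is no ``leaf'' to which your degree count applies. The paper closes exactly this gap by invoking the maximality clause in Definition~\ref{dfn: block,vertex}(3): in Type~II, $B_M$ is chosen as an end-block of \emph{maximum order} among those whose non-cut-vertices contain at most one of $\theta_1,\theta_2$. Once $u_M=\theta_1$ has been identified as the leaf of $B_M$, the vertex $\theta_1$ (having degree~$1$ in $M$) lies in no other block, so every other end-block $B'$ automatically satisfies the selection constraint; maximality then forces $|B'|\le|B_M|=2$. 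The paper even remarks, immediately after Definition~\ref{dfn: block,vertex}, that this maximality is not used in Lemma~\ref{lem: B} but ``will be useful in Section~6'' --- and this lemma is precisely the place.

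One minor arithmetic slip: in case~(2) of Lemma~\ref{lem:Q-admis} you assert $\max\{2,\tfrac{2k}{k-2},-k+7\}=2$, but in fact $\tfrac{2k}{k-2}=2+\tfrac{4}{k-2}\in(2,3)$ for $k\ge 7$. Your conclusion $d\le 2$ is still correct, since $d$ is even and the bound is strictly below $4$; only the wording needs adjusting.
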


\begin{proof}
    We note that $|B_M| \neq 3$, as otherwise $B_M \simeq K_3$, contradicting the fact that $G$ is bipartite.
    Suppose $|B_M| = 2$. Then $u_M$ is the unique vertex in $V(B_M) \setminus \mathrm{Cut}(B_M)$. 
    By Lemma~\ref{lem:operation}~(4), $\deg_{T^*}(u_M)\leq m-2$, and thus $\deg_{G}(u_M)=1+\deg_{T^*}(u_M)\leq 1+(m-2)=m-1\leq k-1$.
    Hence, $G^\star$ must be of Type II, for otherwise $G^\star$ is of Type I and $u_M$ must be $\theta$, which contradicts the condition that $\theta \notin V(B_M) \setminus \mathrm{Cut}(B_M)$.
    It follows that the inequality for $\deg_{G}(u_M)$ must hold with equality.
    Specifically, we derive that $u_M \in \{\theta_1, \theta_2\}$, $m=k \geq 6$, and $\deg_{T^*}(u_M) = k-2$.

    Recall from Definition~\ref{dfn: block,vertex} that $B_M$ is chosen to be an end-block of maximum order satisfying the stated conditions.
    Thus, any other end-block $B$ of $M$ (which already satisfies that $V(B) \setminus \mathrm{Cut}(B)$ contains at most one of $\theta_1$ and $\theta_2$) must also satisfy $|B| \leq 2$. This further implies that the unique vertex in $V(B) \setminus \mathrm{Cut}(B)$ must be in $\{\theta_1, \theta_2\}$.
    It follows that $M$ has exactly two end-blocks, say $B_M$ and $B$, with $V(B_M) \setminus \mathrm{Cut}(B_M) = \{\theta_1\}$, $V(B) \setminus \mathrm{Cut}(B) = \{\theta_2\}$, and $\deg_{T^*}(\theta_1) = \deg_{T^*}(\theta_2) = k-2$.
    A routine calculation verifies that each case of Lemma~\ref{lem:Q-admis} yields $k$ admissible paths in $\mathcal{P}_{\theta_{1},\theta_{2}}^{T^*}$.
    Therefore, the union of an arbitrary path in $\mathcal{P}_{\theta_{1},\theta_{2}}^{M}$ and paths in $\mathcal{P}_{\theta_{1},\theta_{2}}^{T^*}$ produces $k$ admissible cycles in $G$.
	This completes the proof.
\end{proof}

Now we consider the special case $G[V(T)]\simeq K_{k,k}$.
\begin{lem}\label{lem:Q not complete}
	If $m=k$ and $G[V(T)]\simeq K_{k,k}$, then $G^\star$ contains $k$ admissible cycles.
\end{lem}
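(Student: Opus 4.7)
My plan leverages the fact that $G[V(T)] \simeq K_{k,k}$ already contains cycles of every even length in $\{4,6,\ldots,2k\}$, furnishing $k-1$ admissible cycles in a $2$-AP. It therefore suffices either to produce one additional cycle of length $2k+2$ (in the bipartite case) or a complete odd-cycle family (in the non-bipartite case). I split the proof into two main cases depending on whether $G^\star$ is bipartite.

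If $G^\star$ is non-bipartite, this can only occur in Type~II with $\theta_1\theta_2 \in E(G)$, which places $\theta_1$ and $\theta_2$ on opposite parts of $G$'s bipartition. For each $\ell \in \{0,1,\ldots,k-1\}$, $K_{k,k}$ contains a $(\theta_1,\theta_2)$-path of length $2\ell+1$ (from the direct edge through increasing path lengths up to a Hamilton path); closing each with $\theta_1\theta\theta_2$ yields a cycle of length $2\ell+3$. If some $\theta_i$ lies outside $V(T)$, I extend the path through a $V(T)$-neighbor of $\theta_i$, which exists since $\deg_G(\theta_i) \geq k$ under $\theta_1\theta_2 \in E(G)$. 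This produces $k$ admissible cycles of odd lengths $\{3,5,\ldots,2k+1\}$.

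For the bipartite case, write $A',B'$ for $G^\star$'s bipartition extending $A_0, B_0$, and set $U := V(G^\star) \setminus V(T)$. I first rule out the possibility that $U$ lies in a single part: if $U \subseteq B'$, then $|A'|=k$; since every $b \in B_0$ has degree $k$ and every $a \in A_0$ has degree at least $k$, the condition $\delta_2(G^\star) \geq k$ forces at most one $u^* \in U$ to have degree below $k$, which pins $G^\star$ down to either $K_{k,|B'|}$ or $H_{k,|B'|;t}$ for some $t \geq 2$, both excluded by hypothesis. Hence $U$ meets both parts, and I fix $u_1 \in U \cap A'$ and $u_2 \in U \cap B'$. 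The main construction produces a cycle of length $2k+2$ using $u_1$, $u_2$, and either a Hamilton path of $K_{k,k}$ (when $u_1u_2 \in E(G^\star)$, via $u_1\,u_2\,a\,P\,b\,u_1$ with $a \in N_{A_0}(u_2)$, $b \in N_{B_0}(u_1)$), or two carefully chosen sub-paths (when $u_1u_2 \notin E(G^\star)$ but each $u_i$ has two distinct neighbors in the opposite part of $V(T)$, via $u_1\,b\,a\,u_2\,a'\,P\,b'\,u_1$ with $P$ a Hamilton path of $K_{k,k}-\{a,b\}$ from $a'$ to $b'$).

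The principal obstacle is the degenerate sub-case in which Lemma~\ref{lem:operation}(4) forces $\deg_T(u_i)=0$ for the chosen $u_i$. In that event $u_i$ has a unique $R$-neighbor $p$ with $\deg_T(p) \geq k-1$, and $p$ itself (lying in $U$ on the opposite part) can take the role of $u_i$ in the above construction, supplying abundant $V(T)$-neighbors. If the degeneracy stems from $u_i = \theta$ in Type~II with $\theta_1,\theta_2 \in U$, I instead build the cycle through all three of $\theta,\theta_1,\theta_2$ and route via their $V(T)$-neighbors, secured by $\deg_G(\theta_j) \geq k-1$. In each sub-case one verifies that the resulting cycle has length $2k+2$, completing the proof.
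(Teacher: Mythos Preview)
Your strategy---extend the $2$-AP $\{4,6,\ldots,2k\}$ of cycle lengths already present in $K_{k,k}$ by exhibiting a single $(2k+2)$-cycle---is more direct than the paper's component analysis, and your reduction showing that $U=V(G^\star)\setminus V(T)$ meets both parts is correct. The gap lies in the construction of that extra cycle. You require some $u_1\in U\cap A'$ with a neighbour in $B_0$ (and symmetrically for $u_2$), but this can fail: every vertex of $U\cap A'$ may have $\deg_T=0$. Your degenerate-case fix invokes Lemma~\ref{lem:operation}(4) to produce a unique $R$-neighbour $p$, yet that lemma equally permits $\deg_R(u_i)=0$ together with $\deg_T(u_i)=0$ (its case~(4.1)), so no such $p$ need exist; and even when $p$ does exist it lies on the \emph{opposite} side of the bipartition, so ``$p$ takes the role of $u_i$'' at best furnishes a candidate for $u_{3-i}$, leaving the original side unresolved. (Such configurations do arise: one can have $R=\emptyset$ with $U\cap A'$ sitting entirely inside a large component of $G-T^*$ having no edges to $B_0$; a $(2k+2)$-cycle still exists there, but it must thread through \emph{two} vertices of $U\cap B'$, which your construction does not provide.) A secondary issue: in the non-bipartite case, $G^\star$ non-bipartite only forces $\theta_1,\theta_2$ into different parts, not $\theta_1\theta_2\in E(G)$; without that edge you lose $\deg_G(\theta_i)\ge k$, and with it the claimed $V(T)$-neighbour of an off-$T$ $\theta_i$.

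The paper sidesteps all of this by analysing the components of $G-T^*$ where $T^*=G[V(T)\cup R]$: if some component $M$ has $|M|\ge 3$, admissible path families in $M$ (via Lemma~\ref{lem: B} and Observation~\ref{obs:valid}) are concatenated with the rich path structure of $K_{k,k}$ to yield $k$ admissible cycles directly; if no such component exists, the residual structure pins $G^\star$ down to $\{K_{k,k}\}\cup\mathcal{H}_k$.
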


\begin{proof}
    Suppose for the sake of contradiction that $G[V(T)]\simeq K_{k,k}$ but $G^\star$ does not contain $k$ admissible cycles. We proceed by analyzing the components of $G-T^*$.
	
	First, assume that $G-T^*$ has a component $M$ of order at least $3$.
	Let $u = u_{M}$, and let $v \in V(M)\setminus\{u\}$ be a vertex maximizing $\deg_{T^*}(v)$. Additionally, if $\deg_{T^*}(u)=1$, we may assume $N_{T^*}(v) \neq N_{T^*}(u)$, since the unique neighbor of $u$ in $T^*$ cannot be a cut-vertex in $G$.
	By Lemma~\ref{lem:operation}~(4), we have $1\le \deg_{T^*}(u), \deg_{T^*}(v)\le k-2$.
	Consider the sum $\deg_{T^*}(u)+\deg_{T^*}(v)$. 

    If $\deg_{T^*}(u)+\deg_{T^*}(v)\ge k+1$, then $\deg_{T^*}(u) ,\deg_{T^*}(v) \ge 3$.
    It follows from Lemma~\ref{lem:operation}~(4) that $\deg_R(u)=\deg_R(v)=0$.
    Since $G[V(T)] \simeq K_{k,k}$, the set $\Pset_{u,v}^{T}$ contains $k$ admissible lengths. Specifically, $\Pset_{u,v}^{T}=\{3,5,\dots,2k+1\}$ if $u$ and $v$ belong to different partite sets, and $\Pset_{u,v}^{T}=\{2,4,\dots,2k\}$ otherwise (as $u$ and $v$ share a common neighbor in $T$).
    Hence, the union of an arbitrary path in $\mathcal{P}_{u,v}^M$ with paths in $\mathcal{P}_{u,v}^{T}$ yields $k$ admissible cycles, a contradiction.

    Now assume that $\deg_{T^*}(u)+\deg_{T^*}(v)\le k$.
    By Lemma~\ref{lem:size of BM}, we have $|B_M|\geq 4$.
    By the choice of $u$ and $v$, every vertex $w\in V(B_{M})\setminus\{u,v\}$ satisfies $\deg_{T^*}(w)\leq \min\{\deg_{T^*}(u),\deg_{T^*}(v)\}\leq \lfloor k/2 \rfloor$.
    Observe that $\deg_G(w) \ge k-1$ holds for all $w \in V(B_M)$—except possibly for $\theta$ if $M$ is 2-connected, or the vertex in $\mathrm{Cut}(B_M)$ otherwise.
    It follows that for all but at most one vertex $w \in V(B_{M})\setminus\{u,v\}$,
    $\deg_{B_{M}}(w) = \deg_G(w) - \deg_{T^*}(w) \ge k - 1 - \lfloor k/2 \rfloor \ge 3.$
    Consequently, the pair $(u,v)$ is $3$-valid, and $\mathcal{P}_{u,v}^{M}$ contains 2 admissible paths.
    Since $G[V(T)]\simeq K_{k,k}$, and $u,v$ are adjacent to distinct vertices in $T^*$ (due to the selection of $v$), it follows that $\mathcal{P}_{u,v}^{T^*}$ contains $k-1$ admissible paths.
    The union of paths in $\mathcal{P}_{u,v}^{M}$ and $\mathcal{P}_{u,v}^{T^*}$ thus yields $2+(k-1)-1=k$ admissible cycles.

    It remains to consider the case where every component of $G-T^*$ has order at most 2, i.e., $G-T^* = N$.
    Let $(A,B)$ be the partite sets of $G$. 
    We claim that $V(G-T) = R \cup V(N)$ is contained entirely in $A$ or $B$.
    If $V(N) = \emptyset$, then the claim follows from Lemma~\ref{lem:operation}~(3).
    Assume $V(N) \neq \emptyset$. 
    It follows from Lemma~\ref{lem: N} that $G^\star$ is of Type I, $V(N)=\{\theta\}$, and $R\cup \{\theta\}$ is an independent set.
    Suppose the claim is false; then there exist $a \in A \cap (R \cup \{\theta\})$ and $b \in B \cap (R \cup \{\theta\})$.
    Every vertex in $R$ has degree at least $k-1 \ge 6$ in $T$, and $\deg_T(\theta) \ge \delta(G) \ge 3$; thus $\deg_{T}(a), \deg_{T}(b) \ge 3$.
    Since $G[V(T)] \simeq K_{k,k}$, and $a, b$ have at least three neighbors in distinct partite sets of $T$, a routine verification confirms that $G[V(T) \cup \{a,b\}]$ contains cycles of all lengths in $\{4, 6, \dots, 2k+2\}$, a contradiction. This proves the claim; hence, we may assume $R \cup V(N) \subseteq A$.

    We conclude by discussing the type of $G^\star$.
    If $G^\star$ is of Type I, then every vertex in $R \cup V(N)$ (with the exception of $\theta$) has degree $k$ in $T$, hence must be fully connected to $V(T) \cap B$.
    Consequently, $G^\star$ is obtained from $G[V(T)] \simeq K_{k,k}$ by adding vertices to $A$ that are adjacent to all vertices in $B$, with the possible exception of one vertex (namely $\theta$) that is adjacent to at least three vertices in $B$. 
    Thus, $G^\star$ is isomorphic to $K_{k,k}$ or $H_{k,n;t}$ for some $3 \le t \le k < n$, a contradiction.
    If $G^\star$ is of Type II, then Lemma~\ref{lem: N} implies $V(N)=\emptyset$. 
    Moreover, every $p\in R$ has at least $k-1$ neighbors in $V(T)\cap B$.
    One can verify that for any distinct $a, a' \in A$ and $b \in B$, we have $\Pset_{a,a'}^{G} \supseteq \Pset_{a,a'}^{V(T)\cup \{p\}}\supseteq \{2,4,\dots,2k\}$ and $\Pset_{a,b}^{G} \supseteq \Pset_{a,b}^{T} \supseteq\{1,3,\dots,2k-1\}$.
    If $\theta_1, \theta_2 \in A$ or if they belong to different partite sets, then $\mathcal{P}_{\theta_1, \theta_2}^{G}$ contains $k$ admissible paths. Combined with the path $\theta_1\theta\theta_2$, we obtain $k$ admissible cycles in $G^\star$.
    If $\theta_1, \theta_2 \in B$, then $R \cap \{\theta_1, \theta_2\} = \emptyset$ (as $R \subseteq A$).
    Thus, every $p \in R$ has $\deg_G(p) \ge k$, forcing $\deg_T(p)=k$ (i.e., $p$ is adjacent to all vertices in $B$).
    This implies that $G \simeq K_{k,n-1}$ for some $n>k$, and thus $G^\star \simeq H_{k,n;2}$, a contradiction.
    This completes the proof.
\end{proof}

Recall that $K_{n,n}^{-}$ is the graph obtained from $K_{n,n}$ by removing an edge.
The following lemma considers the case that $G-T^*$ contains no component of order at least three. 

\begin{lem}\label{lem:non empty}
	If $G-T^*$ does not contain any component of order at least three, then $G^\star$ contains $k$ admissible cycles.
\end{lem}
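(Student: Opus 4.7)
The plan is a case analysis on $R$ and $V(N)$, exploiting Lemmas~\ref{lem: N} and~\ref{lem:operation}. Since $R\cup V(N)$ is independent in $G$ by Lemma~\ref{lem: N}(2), every $p\in R$ satisfies $\deg_G(p)=\deg_T(p)\in[m-1,m]$. Combining this with the $k$-weak degree bounds $\delta_2(G)\geq k-1$ and $\delta_3(G)\geq k$, I would first show that whenever $R\neq\emptyset$ or $V(N)=\{\theta\}$, the graph $G[V(T)]\simeq K_{k,k}$ is forced, reducing the statement to Lemma~\ref{lem:Q not complete}. The key observation is that any vertex $v\in V(T)$ on the partite set opposite to $R\cup V(N)$ has $\deg_G(v)\leq|B_T|=m$, so the bound $\deg_G(v)\geq k$ (which holds for all $v\notin\{\theta,\theta_1,\theta_2\}$) forces $m=k$ and $v$ adjacent to every vertex in $B_T$; iterating over $V(T)$ yields $G[V(T)]\simeq K_{k,k}$.

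This reduction leaves only the degenerate case $R=V(N)=\emptyset$, where $V(G)=V(T)$ and $|G|=2m\leq 2k$. A direct degree count using the $k$-weak bounds and bipartiteness rules out Type~I (which would compel $G\simeq K_{k,k}$, excluded by hypothesis), while Type~II is pinned down to $m=k$ and $G\simeq K_{k,k}-\theta_1\theta_2$, with $\theta_1\in A$ and $\theta_2\in B$ being the two degree-$(k-1)$ vertices.

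In this final configuration I would build $k$ admissible cycles in $G^\star$ by combining two sources. The even cycles of $G$ supply the lengths $\{4,6,\dots,2k\}$, since $K_{k,k}-\theta_1\theta_2$ contains cycles of every even length up to $2k$. The odd cycles of $G^\star$ through the Type~II vertex $\theta$, of the form $\theta\theta_1\cdots\theta_2\theta$, realize every odd length in $\{5,7,\dots,2k+1\}$, because $K_{k,k}-\theta_1\theta_2$ admits $(\theta_1,\theta_2)$-paths of every odd length in $\{3,5,\dots,2k-1\}$. The union $\{4,5,\dots,2k+1\}$ is a consecutive set of $2k-2\geq k$ integers and thus provides the required $k$ admissible cycles.

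The main obstacle I anticipate is carrying out the intermediate degree count uniformly across the many subcases (Type~I versus Type~II, various positions of $\theta,\theta_1,\theta_2$ inside or outside $V(T)$, and whether $\theta\in R$ or not), since each combination requires a slightly different application of the $k$-weak conditions to conclude $G[V(T)]\simeq K_{k,k}$ or identify $G$ as an excluded exception. A secondary concern is verifying that $(\theta_1,\theta_2)$-paths of every odd length from $3$ to $2k-1$ really exist in $K_{k,k}-\theta_1\theta_2$, which reduces to a standard pancyclicity check in a near-complete bipartite graph.
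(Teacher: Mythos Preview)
Your overall strategy is close to the paper's and handles most subcases correctly, but the central reduction ``$R\neq\emptyset$ or $V(N)=\{\theta\}$ forces $G[V(T)]\simeq K_{k,k}$'' is false in one configuration you did not isolate. Take $G^\star$ of Type~II with $m=k$, $R=\{\theta_1\}\subseteq A$, and $\theta_2\in V(T)\cap A$. Your ``key observation'' correctly shows that every $v\in V(T)\cap A$ with $v\notin\{\theta_1,\theta_2\}$ satisfies $\deg_T(v)=k$, but it says nothing about $\theta_2$ itself: since $\deg_G(\theta_2)\geq k-1$ suffices, one can have $\deg_T(\theta_2)=k-1$, and then $G[V(T)]\simeq K_{k,k}^-$, not $K_{k,k}$. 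So the reduction to Lemma~\ref{lem:Q not complete} breaks down here, and this case does not fall into your ``degenerate'' Case~B either, since $R\neq\emptyset$.

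The paper handles exactly this subcase (its ``case~(ii)'') by a different mechanism: it picks a neighbour $c\in N_T(\theta_1)$ that is adjacent to $\theta_2$ on $\partial T$ and observes that replacing the boundary edge $\theta_2 c$ by the path $\theta_2\theta\theta_1 c$ yields a tetragonal graph on $2k+2$ vertices inside $G^\star$, whence Proposition~\ref{prop:Q-graph} gives cycles of all even lengths in $\{4,\dots,2k+2\}$. Your Case~B construction (odd cycles through $\theta$) does not carry over directly, because here $\theta_1,\theta_2$ lie in the \emph{same} partite set, so paths $\theta_1\cdots\theta_2$ in $G$ have even length and the cycles through $\theta$ are again even; to finish along your lines you would instead need a cycle of length $2k+2$ in $G^\star$, i.e., a Hamiltonian $(\theta_1,\theta_2)$-path in $G$, which requires a separate argument. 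The remaining parts of your proposal---the Type~I analysis, the exclusion of $m<k$ when $R\neq\emptyset$ in Type~II, and your Case~B handling of $G\simeq K_{k,k}^-$ with $\theta_1,\theta_2$ in opposite parts---are correct and essentially match the paper.
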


\begin{proof}
    The assumption implies $G-T^* = N$.
    By Lemma~\ref{lem:Q not complete}, we may assume $G[V(T)] \not\simeq K_{k,k}$.
    Then Lemma~\ref{lem:operation}~(1) implies that every $p \in R$ satisfies $\deg_T(p) \le k-1$.
    Let $(A,B)$ be the partite sets of $G$, and select vertices $a \in A \cap V(T)$, $b \in B \cap V(T)$ satisfying that $\deg_T(a), \deg_T(b) \leq k-1$.

    Suppose first that $V(N) \neq \emptyset$. 
    By Lemma~\ref{lem: N}, $G^\star$ is of Type I, $V(N)=\{\theta\}$, and $R \cup V(N)$ is independent.
    Thus, every $p \in R$ has $\deg_{G}(p)\le \deg_T(p)\leq k-1$, implying that $R=\emptyset$.
    However, one of $a, b$ is not adjacent to $\theta$. This vertex would then have degree less than $k$ in $G$, a contradiction.
    
    Thus, we assume $V(N)= \emptyset$.
    We claim that the set $\{a\} \cup N_R(a)$ contains a vertex with degree at most $k-1$ in $G$.
    Indeed, if $N_R(a) = \emptyset$, then $\deg_G(a) = \deg_T(a) \le k-1$. If $N_R(a) \neq \emptyset$, then every $p \in N_R(a)$ has $\deg_G(p) = \deg_T(p) \le k-1$, proving the claim.
    The same holds for $\{b\} \cup N_R(b)$.
    Hence, $G^\star$ must be of Type II, and each of $\{a\} \cup N_R(a)$ and $\{b\} \cup N_R(b)$ contains exactly one of $\theta_1, \theta_2$. Also, we have $\theta_1\theta_2 \notin E(G)$ (otherwise $\delta(G) \ge k$).

    We distinguish three cases regarding the locations of $\theta_1$ and $\theta_2$.
    First, assume that $\{\theta_1, \theta_2\} = \{a, b\}$. In this case, we must have $R = \emptyset$, and every vertex in $V(T) \setminus \{a, b\}$ has degree $k$ in $T$. Consequently, $G[V(T)] \simeq K_{k,k}^-$. It follows that $G^\star = G + \{\theta\theta_1, \theta\theta_2\}$ contains cycles of all lengths in $[4,2k+1]$, as desired.
    
    Next, suppose that $\theta_1 = a$ and $\theta_2 \in N_R(b)$ (the symmetric case is analogous). Then $R = \{\theta_2\}$. Since $\theta_2$ is adjacent to all but at most one vertex in $V(T) \cap B$, there exists a neighbor $c \in N_T(\theta_2) \subseteq B$ such that $c$ is adjacent to $\theta_1$ on $\partial T$.
    Thus, $G^\star = G + \{\theta\theta_1, \theta\theta_2\}$ contains a tetragonal graph on $2k+2$ vertices, whose boundary cycle is obtained from $\partial T$ by replacing the edge $\theta_1 c$ with the path $\theta_1 \theta \theta_2 c$. By Proposition~\ref{prop:Q-graph}, $G^\star$ contains cycles of all lengths in $\{4,6,\dots,2k+2\}$, which suffices.

    Finally, consider the case where $\theta_1 \in N_R(a)$ and $\theta_2 \in N_R(b)$. Then $R = \{\theta_1, \theta_2\}$. 
    Observe that $k \le \deg_G(a) \le 1 + m$, which implies $m \ge 5$.
    By Lemma~\ref{lem:operation}~(3), $R$ is entirely contained in $A$ or $B$. 
    However, since $a \in A$ and $b \in B$, we necessarily have $\theta_1 \in B$ and $\theta_2 \in A$.
    Thus, $R$ intersects both partite sets, a contradiction.
    This completes the proof.
\end{proof}

According to Lemma~\ref{lem:non empty}, we may assume that $G-T^*$ has a component of order at least three.
Let $M$ be such a component.
The following lemma reduces the proof to the scenario where the degrees of vertices in $M$ are highly constrained.
\begin{lem}\label{lem:one comp deg}
    If there exists $w\in V(M)$ with $\deg_{T^*}(w)\geq 3$, then $G$ has $k$ admissible cycles.
\end{lem}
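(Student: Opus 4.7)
The overall strategy is to find vertices $u, v \in V(M)$ for which both $\mathcal{P}^M_{u,v}$ and $\mathcal{P}^{T^*}_{u,v}$ carry large admissible families; concatenating them via Observation~\ref{obs:addition} then produces at least $k$ admissible cycles in $G$. The admissible paths inside $M$ will be delivered by Lemma~\ref{lem: B} (applied with $T^*$ in place of $T$) together with Observation~\ref{obs:valid}, while those between $u$ and $v$ inside $T^*$ come from Lemma~\ref{lem:Q-admis}.

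By Lemma~\ref{lem:size of BM} I may assume $|B_M| \ge 4$. I would first treat the case $w \in V(B_M) \setminus \mathrm{Cut}(B_M)$, setting $u := u_M$, $v := w$, and writing $d_1 := \deg_{T^*}(u)$ and $d_2 := \deg_{T^*}(v)$. The maximality rule in Definition~\ref{dfn: block,vertex} then forces $d_1 \ge d_2 \ge 3$, and Lemma~\ref{lem:operation}(4) gives $d_1 \le m-2$, so Lemma~\ref{lem: B} produces $s := k - d_1 - 1 \ge 1$ admissible paths in $\mathcal{P}^M_{u,v}$. Applying Lemma~\ref{lem:Q-admis}: outcome~(1) contributes $\min\{d_1 + d_2 - 1,\, m\}$ admissible paths in $\mathcal{P}^{T^*}_{u,v}$, and Observation~\ref{obs:addition}(1) yields at least $s + \min\{d_1+d_2-1, m\} - 1 \ge k$ admissible cycles in view of $d_2 \ge 3$ and $d_1 \le m-2$. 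Outcome~(2) with $d = 2$ gives the 2-AP $\{2,4,\ldots,2m\}$ of size $m$ in $\Pset^{T^*}_{u,v}$, and the same principle suffices.

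The heart of the argument is outcome~(2) with $d \ge 4$, where $\Pset^{T^*}_{u,v} \supseteq \{2\} \cup \{2+d, 4+d, \ldots, 2m+2-d\}$. Letting $X$ denote the admissible family in $\mathcal{P}^M_{u,v}$, a direct calculation shows that $(X + 2) \cup \bigl(X + \{2+d, 4+d, \ldots, 2m+2-d\}\bigr)$ fuses into a single 2-AP of length $m + s - d/2$, which is at least $k$ precisely when $d \le 2(m - d_1 - 1)$. Verifying this inequality amounts to inspecting in turn each of the three terms $2$, $2m/\max\{d_1, d_2\}$, and $m - d_1 - d_2 + 3$ in the bound on $d$ from Lemma~\ref{lem:Q-admis}(2), exploiting $d_2 \ge 3$ together with $d_1 \le m-2$; the critical subcase is when $2m/\max\{d_1, d_2\}$ is the active term, handled via $\max\{d_1, d_2\} \ge 3$. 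An entirely parallel argument covers the variant in which $X$ is a 1-AP rather than a 2-AP.

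When $w \notin V(B_M) \setminus \mathrm{Cut}(B_M)$, the component $M$ is not 2-connected and $w$ sits in $\mathrm{Cut}(B_M)$ or in some different end-block $B'$ of $M$. I would then attempt to redefine $B_M := B'$ (and $u_M$ accordingly), reducing to the previous case provided $B'$ still meets the constraint on $\theta$ (or on $\theta_1, \theta_2$) in Definition~\ref{dfn: block,vertex}; when that constraint forces a rigid configuration, a short auxiliary argument via Lemma~\ref{lem:menger extend} inside the 2-connected host $G$ finishes. The main technical obstacle will be the verification of the bound $d \le 2(m - d_1 - 1)$ across the various ranges of $d_1, d_2, m$ in outcome~(2) with $d \ge 4$, since the three terms in the upper bound on $d$ compete with one another and the margin is tight.
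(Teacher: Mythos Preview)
Your Case~1 computations are sound under the assumption $d_1 \ge d_2 \ge 3$, but that assumption is not justified by the hypothesis. The lemma only says that \emph{some} $w \in V(M)$ has $\deg_{T^*}(w) \ge 3$; this $w$ may be $u_M$ itself, in which case setting $v := w$ collapses to $u = v$. More to the point, if $u_M$ is the \emph{only} vertex of $M$ with $\deg_{T^*} \ge 3$, then any legitimate $v \in V(M) \setminus \{u\}$ has $d_2 \le 2$, and your path count $s = k - d_1 - 1$ from Lemma~\ref{lem: B} is too small: in outcome~(1) of Lemma~\ref{lem:Q-admis} you obtain at most $(k - d_1 - 1) + (d_1 + d_2 - 1) - 1 = k + d_2 - 3 < k$ admissible cycles. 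Your Case~2 does not rescue this scenario, since it addresses only $w \notin V(B_M) \setminus \mathrm{Cut}(B_M)$; it is also incomplete as written, because $w$ need not lie in $\mathrm{Cut}(B_M)$ or in another end-block (it can sit in a non-end block of $M$), and the proposed fix via ``redefine $B_M$'' plus an unspecified auxiliary argument is not a proof.

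The paper's remedy is a single clean idea that dissolves your case split. Choose $v \in V(M) \setminus \{u\}$ to maximize $\deg_{T^*}(v)$ over all of $V(M) \setminus \{u\}$ (with a degree tiebreak), so that $d_{\max} := \max\{\deg_{T^*}(u), \deg_{T^*}(v)\} \ge 3$ follows automatically from the hypothesis. The crucial observation is that every vertex of $V(B_M) \setminus (\{u\} \cup \mathrm{Cut}(B_M))$ then has $\deg_{T^*}$ at most $d_{\min} := \min\{\deg_{T^*}(u), \deg_{T^*}(v)\}$, by the maximality of \emph{both} $u$ and $v$. Re-running the proof of Lemma~\ref{lem: B} with this sharper bound upgrades the conclusion to $(k - d_{\min})$-validity, hence $k - d_{\min} - 1$ admissible paths in $\mathcal{P}^M_{u,v}$. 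With $s = k - d_{\min} - 1$ in place of $k - d_1 - 1$, your entire computation (including the delicate outcome~(2) with $d \ge 4$, where the needed inequality becomes $d \le 2(m - d_{\min} - 1)$) goes through using only $d_{\max} \ge 3$ and $1 \le d_{\min} \le m-2$; no case analysis on the location of $w$ is required.
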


\begin{proof}
    Let $u$ denote $u_{M}$. 
	By Lemma~\ref{lem:size of BM}, we may assume that $|B_{M}|\geq 4$.
	Select a vertex $v\in V(M)\setminus\{u\}$ as follows.
	\begin{itemize}
		\item[(1)] $\deg_{T^*}(v)=\max\{\deg_{T^*}(w):w\in V(M)\setminus \{u\}\}$. 
		
		\item[(2)] Subject to (1), $\deg_{G}(v)$ is minimum.
	\end{itemize}

    Let $d_{\min} = \min\{\deg_{T^*}(u),\deg_{T^*}(v)\}$ and $d_{\max} = \max\{\deg_{T^*}(u),\deg_{T^*}(v)\}$.
    Then $d_{\max} \ge3$ by the hypothesis, and Lemma~\ref{lem:operation}~(4) implies that both $d_{\min}$ and $d_{\max}$ are at most $m-2$.
    Moreover, we have $d_{\min} \ge 1$, since $\deg_{T^*}(u) \ge 1$ holds by definition, while $\deg_{T^*}(v) \ge 1$ follows from the fact that $u$ is not a cut-vertex of $G$.
    Combining these bounds yields $1\le d_{\min}\le m-2$ and $3\le d_{\max}\le m-2$, which forces $m\ge 5$.

    Using an argument analogous to Lemma~\ref{lem: B}, we deduce that $(u,v)$ is $(k-d_{\min})$-valid.
    In fact, the proof of Lemma~\ref{lem: B} relied on the fact that $\deg_{T^*}(w) \le \deg_{T^*}(u)$ for every $w\in V(B_M)\setminus\big(\{u\}\cup \mathrm{Cut}(B_M)\big)$. 
    Here, by the maximality of $\deg_{T^*}(v)$, every $w\in V(B_M)\setminus\big(\{u\}\cup \mathrm{Cut}(B_M)\big)$ satisfies $\deg_{T^*}(w) \le \deg_{T^*}(v)$, and thus $\deg_{T^*}(w) \le d_{\min}$.
    Substituting this stronger inequality into the proof of Lemma~\ref{lem: B} confirms that $(u, v)$ is $(k-d_{\min})$-valid.
    By Observation~\ref{obs:valid}, $\mathcal{P}^M_{u,v}$ contains $k-d_{\min}-1$ admissible paths.
    Let $\Pset_1$ denote the set of lengths of these admissible
    paths.
    According to Lemma~\ref{lem:Q-admis}, $\Pset^{T^*}_{u,v}$ contains a subset $\Pset_2$ with one of the following forms:
\begin{itemize}
    \item An admissible set of size at least $\min\{d_{\min}+d_{\max}-1,m\}$;
    \item $\{2\}\cup\{2+d,4+d,\cdots,2m+2-d\}$ for some even $d\leq\max\{2,2m/d_{\max},m-d_{\min}-d_{\max}+3\}$.
\end{itemize}
We verify that $\Pset_1+\Pset_2$ is an admissible set of size at least $k$.
If $\Pset_2$ is of the former case, since $3\le d_{\max}\le m-2$, the sum $\Pset_1+\Pset_2$ is an admissible set of length at least $(k-d_{\min}-1)+\min\{d_{\min}+d_{\max}-1,m\}-1\geq k$.
In the latter case, the condition for $\Pset_1+\Pset_2$ to form an admissible set is equivalent to:
\begin{align*}
    d\leq \max\{\Pset_{1}\}-\min\{\Pset_{1}\}+2\nonumber
    \iff d\leq 2(k-d_{\min}-1)\nonumber
\end{align*}
This is guaranteed by the following strict inequality, since $d$ is even. 
\begin{align*}
   &d\leq \max\{2,2m/d_{\max},m-d_{\min}-d_{\max}+3\}
   \overset{\text{($\ast$)}}{<}2(m-d_{\min})\le 2(k-d_{\min}),
\end{align*}
where $(\ast)$ is implied by the following three inequalities (as $m\geq 5$):
\begin{align*}
    2(m-d_{\min})&\geq 4>2,\\ 
    2(m-d_{\min})&\geq 2(m-d_{\max})>2m/d_{\max}, ~~~\mbox{and}\\
    2(m-d_{\min})&>m-2d_{\min}+3\geq m-d_{\min}-d_{\max}+3.
\end{align*}
Therefore, $\Pset_1+\Pset_2$ is an admissible set with minimum element $\min\{\Pset_1\}+2$ and maximum element $\max\{\Pset_1\}+\max\{\Pset_2\}$, thus has size $$\frac{\max\{\Pset_1\}-\min\{\Pset_1\}+\max\{\Pset_2\}}{2}=k+m-d_{\min}-d/2-1.$$
Note that $k+m-d_{\min}-d/2-1\geq k$ is equivalent to $d\leq 2(m-d_{\min}-1)$, which we have already established via inequality $(\ast)$.
Hence, $\Pset_1+\Pset_2$ is an admissible set of size at least $k$, and the union of paths in $\mathcal{P}^M_{u,v}$ and $\mathcal{P}^{T^*}_{u,v}$ produces $k$ admissible cycles in $G$.
This completes the proof of Lemma~\ref{lem:one comp deg}.
\end{proof}

The following lemma addresses the case where $G-T^*$ has two components of order at least~3.

\begin{lem}\label{lem: G-Q connected 1}
	If $G-T^*$ has two components of order $\geq 3$, then $G$ has $k$ admissible cycles.
\end{lem}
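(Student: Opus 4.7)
The plan is to adapt the connector strategy used in Lemma~\ref{K3} (Claim~1) to the new core subgraph $T^*$, using the admissible path families supplied by Lemma~\ref{lem: B} inside each $M_i$ and linking them through $T^*$ via the Hamiltonian boundary cycle $\partial T$. I would first apply Lemma~\ref{lem:one comp deg} to assume that $\deg_{T^*}(v) \le 2$ for every $v \in V(M_1) \cup V(M_2)$, and set $u_i := u_{M_i}$ for $i \in \{1,2\}$. Lemma~\ref{lem: B} together with Observation~\ref{obs:valid} then immediately yields at least $k - \deg_{T^*}(u_i) - 1 \ge k - 3$ admissible paths in $\mathcal{P}_{u_i, w_i}^{M_i}$ for any $w_i \in V(M_i) \setminus \{u_i\}$.

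Assuming the generic case $\deg_{T^*}(u_i) \ge 1$ for both $i$, I would next apply Lemma~\ref{lem:menger extend} to the partition $(V(M_i),\, V(G)\setminus V(M_i))$ with $x = u_i$ to produce two disjoint edges $u_iy_i$ and $w_iy_i'$ incident to $M_i$. Since $M_i$ is a component of $G - T^*$, there are no edges from $V(M_i)$ to any other component of $G - T^*$, and hence both endpoints $y_i, y_i'$ must lie in $V(T^*)$, with $y_i \ne y_i'$ and $w_i \in V(M_i) \setminus \{u_i\}$. The remaining task is to construct two internally vertex-disjoint connectors from $\{y_1,y_1'\}$ to $\{y_2,y_2'\}$ through $T^*$. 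I would do this using $\partial T$: any port lying in $R$ is extended by one edge to a neighbor on $\partial T$ (using $\deg_T(p) \ge m-1$ from Lemma~\ref{lem:operation}), with the extensions chosen so that the four resulting access points on $\partial T$ are pairwise distinct; then one of the two matchings of these four cyclically arranged points yields two internally vertex-disjoint arcs linking $\{y_1,y_1'\}$ with $\{y_2,y_2'\}$. Coincidences among the four ports (for instance $y_1 = y_2$) will be handled separately by letting a shared port act as a hub of the cycle.

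Concatenating a path from $\mathcal{P}_{u_1, w_1}^{M_1}$, the edge into $T^*$, the first $\partial T$-arc, the edge back into $M_2$, a path from $\mathcal{P}_{u_2, w_2}^{M_2}$, the edge into $T^*$, the second $\partial T$-arc, and the final edge yields cycles of length equal to (varying $M_1$-contribution) $+$ (varying $M_2$-contribution) $+$ const. By Observation~\ref{obs:addition}\,(1), the resulting set of cycle lengths is admissible and has size at least $(k-3) + (k-3) - 1 = 2k - 7 \ge k$, since $k \ge 7$, which completes the main case.

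The main obstacle is the linking step inside $T^*$: one must guarantee that the two connectors through $\partial T$ are internally vertex-disjoint while accommodating ports in $R$, possible coincidences among the four ports, and the limited number of extensions available when $m$ is close to its minimum; a careful case analysis driven by Lemma~\ref{lem:operation} is needed here. A secondary technicality is the boundary case $\deg_{T^*}(u_i) = 0$, in which $u_i$ itself cannot serve as a port; however, the definition of $u_{M_i}$ then forces every vertex of $B_{M_i} \setminus \mathrm{Cut}(B_{M_i})$ to have zero degree in $T^*$ and therefore large degree inside $M_i$, providing enough flexibility to replace the role of $u_i$ by an auxiliary vertex of $M_i$ adjacent to $T^*$ and still recover a $(k-3)$-sized admissible path family in $M_i$ via Lemma~\ref{lem: B}.
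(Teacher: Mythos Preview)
Your approach is correct and coincides with the paper's: reduce to $\deg_{T^*}(v)\le 2$ via Lemma~\ref{lem:one comp deg}, extract $k-3$ admissible paths in each $M_i$ via Lemma~\ref{lem: B}, link through $T^*$, and count $(k-3)+(k-3)-1=2k-7\ge k$. The two ``obstacles'' you flag dissolve, however.

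For the linking step, no arc construction on $\partial T$ or case analysis for ports in $R$ is needed: the graph $G[V(T^*)]$ is itself $2$-connected, since it contains the Hamiltonian cycle $\partial T$ on $V(T)$ and every $r\in R$ has $\deg_T(r)\ge m-1\ge 2$. Hence Menger's theorem directly yields two disjoint $(\{y_1,y_1'\},\{y_2,y_2'\})$-paths inside $G[V(T^*)]$, regardless of where the ports lie or whether some of them coincide. The paper compresses this whole manoeuvre into one sentence citing Lemma~\ref{lem:menger extend}.

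The boundary case $\deg_{T^*}(u_i)=0$ cannot occur. Since $M_i$ is a component of $G-T^*$ and $G$ is $2$-connected, some vertex of $V(B_{M_i})\setminus\mathrm{Cut}(B_{M_i})$ must have a neighbour in $T^*$ (otherwise the unique vertex of $\mathrm{Cut}(B_{M_i})$ would be a cut-vertex of $G$, or, when $B_{M_i}=M_i$, the component $M_i$ would be isolated in $G$). By the maximality clause in Definition~\ref{dfn: block,vertex}, it follows that $\deg_{T^*}(u_{M_i})\ge 1$ always.
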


\begin{proof}
    Suppose for a contradiction that $G-T^*$ contains two distinct components $M_{1}$ and $M_{2}$, both of order at least $3$, yet $G$ does not contain $k$ admissible cycles.
	By Lemma~\ref{lem:one comp deg}, every vertex $v \in V(M_1) \cup V(M_2)$ satisfies $\deg_{T^*}(v) \le 2$.

    For $i \in \{1, 2\}$, let $u_i = u_{M_i}$.
    Since $G$ is 2-connected, Lemma~\ref{lem:menger extend} guarantees the existence of vertices $w_i \in V(M_i) \setminus \{u_i\}$ for each $i$, along with two disjoint $(\{u_1,w_1\},\{u_2,w_2\})$-paths whose internal vertices lie in $T^*$.
    By Observation~\ref{obs:valid} and Lemma~\ref{lem: B}, the set $\mathcal{P}_{u_i, w_i}^{M_i}$ contains at least $k - \deg_{T^*}(u_i) - 1 \geq k-3$ admissible paths.
	Consequently, the union of the paths in $\mathcal{P}_{u_1, w_1}^{M_1}$ and $\mathcal{P}_{u_2, w_2}^{M_2}$, together with the two connecting paths, yields at least $(k-3) + (k-3) - 1 = 2k - 7 \ge k$ admissible cycles in $G$, a contradiction. 
	This completes the proof.
\end{proof}

By Lemma~\ref{lem: G-Q connected 1}, we may henceforth assume that $M$ is the only component of $G-T^*$ of order at least three. 
The following lemma further restricts the neighborhood of vertices in $M$.

\begin{lem}\label{lem: edges between M and R}
	If $E(M,R)\neq \emptyset$, then $G$ contains $k$ admissible cycles.
\end{lem}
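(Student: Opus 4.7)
The strategy is to exploit the vertex $w \in V(M)$ with $R$-neighbor $p$ (guaranteed by the hypothesis) as a high-degree gateway into $T^*$, since $\deg_T(p) \ge m-1$ by definition of $R$. I will combine admissible $(u_M, w)$-paths in $M$ with admissible $(u_M, w)$-paths through $T^*$ to produce $k$ admissible cycles. By Lemma~\ref{lem:operation}~(4), the chosen $w$ automatically satisfies $\deg_{T^*}(w) = 1$ and $\deg_M(w) \ge k-2$, so in particular $|M| \ge k-1$.

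First I will apply Lemma~\ref{lem: B} and Observation~\ref{obs:valid} to the pair $(u_M, w)$, yielding at least $a := k - \deg_{T^*}(u_M) - 1$ admissible paths in $\mathcal{P}^M_{u_M, w}$. By Lemma~\ref{lem:operation}~(4) combined with Lemma~\ref{lem:one comp deg}, $\deg_{T^*}(u_M)$ is bounded by $\min\{2, m-2\}$ (with value $1$ when $u_M$ connects via $R$), so $a \ge k - 3$ for $m \ge 4$ and $a \ge k - 2$ for $m = 3$.

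The core construction builds $b := m$ admissible paths in $\mathcal{P}^{T^*}_{u_M, w}$ via the template $w - p - v - \pi - v' - u_M$ (or $w - p - v - \pi - v' - q - u_M$ when $u_M$ connects to $T^*$ through an $R$-vertex $q$). Here $v' \in N_{T^*}(u_M) \cap V(T)$ and $v \in N_T(p)$ is chosen at $\partial T$-distance at most $2$ from $v'$, with the parity dictated by the partite classes of $u_M, w$. Because $p$ is adjacent to all but at most one vertex of one partite class of $T$, such a $v$ always exists; Proposition~\ref{prop:Q-graph} then supplies $m$ admissible $(v, v')$-path lengths in $T$, which lift to $m$ admissible $(u_M, w)$-paths in $T^*$. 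Combining via Observation~\ref{obs:addition}~(1) yields at least $a + b - 1 \ge k$ admissible cycles in all regimes where $\deg_{T^*}(u_M) \ge 1$.

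The main obstacle will be the degenerate case $\deg_{T^*}(u_M) = 0$, in which $\mathcal{P}^{T^*}_{u_M, w}$ is empty and the template collapses. This case forces $M$ to be non-$2$-connected (else the maximality defining $u_M$ would imply $\deg_{T^*}(w) = 0$, contradicting $\deg_R(w) = 1$). I will resolve it by choosing a secondary pivot $u' \in \mathrm{Cut}(B_M) \cup (V(M) \setminus V(B_M))$ with $\deg_{T^*}(u') \ge 1$ (which exists by Lemma~\ref{lem:menger extend}), routing the $M$-paths through the cut-vertex $b \in \mathrm{Cut}(B_M)$ via Lemma~\ref{lem:admis path} applied to the $2$-connected block $B_M$, and then concatenating with the $T^*$-side construction now anchored at $u'$. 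A secondary subtlety arises when $|R| = 1$ (so $u_M$'s and $w$'s only potential $R$-gateway coincide) or when $u_M$'s unique $T$-neighbor is precisely the unique non-neighbor of $p$; in these tight cases the count $b$ may drop by one, and I will compensate by harvesting extra cycles directly from $G[V(T) \cup \{p\}]$ via Proposition~\ref{prop:Q-graph}, or by swapping $u_M$ for another vertex of $M$ whose $T$-neighbor lies in $N_T(p)$.
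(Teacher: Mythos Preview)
Your plan matches the paper's argument for $m \ge 5$: with $a \ge k-3$ admissible paths in $M$ and $b \ge m-1$ through $T^*$ (distance at most $2$ on $\partial T$ via the $R$-gateway), one gets $(k-3)+(m-1)-1 \ge k$ admissible cycles. A minor remark: the ``degenerate case'' $\deg_{T^*}(u_M)=0$ is vacuous, since if every vertex of $V(B_M)\setminus\mathrm{Cut}(B_M)$ had no $T^*$-neighbor, the cut-vertex of $B_M$ would separate $B_M$ from the rest of $G$, contradicting $2$-connectivity of $G$.

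The genuine gap is at small $m$. Your decision to always take $w$ as the $R$-adjacent vertex fails when $u_M$ itself is that vertex (or when $u_M$ and $w$ share the same $R$-neighbor $p$): then the only $(u_M,w)$-path through $T^*$ is $u_M p w$, so $b=1$. Even when $u_M$ has a genuine $T$-neighbor $v'$, if $v'$ happens to be $p$'s unique non-neighbor in $V(T)$ you obtain only $b=m-1$; for $m=3$ this yields $(k-2)+2-1=k-1$ cycles. Your proposed patches do not close this: Proposition~\ref{prop:Q-graph} applied to $G[V(T)\cup\{p\}]$ produces no cycle lengths beyond $\{4,6,\ldots,2m\}$, and replacing $u_M$ forfeits the valid-pair guarantee of Lemma~\ref{lem: B}. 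For $m=3$ there is the further unaddressed issue that Lemma~\ref{lem:operation}(3) does not apply, so $R$ may meet both partite classes. The paper handles $m\le 4$ by a different mechanism: it defines a \emph{forbidden triple} $(v,x,y)$ with $xy\in E(\partial T)$ together with two disjoint $(\{u_M,v\},\{x,y\})$-paths, whose existence forces distance $1$ on $\partial T$ and hence $b=m$ exactly. To exhibit such a triple it exploits $\delta_2(G)\ge k-1>m+1$, which guarantees that all but one vertex of $V(T)\cap A$ have $M$-neighbors; combined with the $R$-gateway this lets one always anchor at $\partial T$-adjacent vertices of $T$. For $m=3$ the paper also proves separately that $R$ lies in a single partite class (otherwise $V(T)$ together with one $R$-vertex from each side would span a larger tetragonal graph).
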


\begin{proof}
    Suppose for the sake of contradiction that $E(M,R)\neq \emptyset$, yet $G^\star$ does not contain $k$ admissible cycles.
    Let $(A,B)$ be the partite sets of $G$, and let $u$ denote $u_{M}$.
    By Lemmas~\ref{lem:operation}~(4) and~\ref{lem:one comp deg}, we have $\deg_{T^*}(u) \le \min\{2, m-2\}$.

    We first rule out a specific configuration by showing that its existence yields $k$ admissible cycles. 
    We say that a triple $(v, x, y)$ is a \textbf{forbidden triple} if $v \in V(M) \setminus \{u\}$, $xy \in E(\partial T)$, and there exist two disjoint $(M,T)$-paths connecting $\{u, v\}$ to $\{x, y\}$.
    If such a triple exists, by Observation~\ref{obs:valid} and Lemma~\ref{lem: B}, $\mathcal{P}_{u,v}^{M}$ contains $k - \deg_{T^*}(u) - 1 \ge k-m+1$ admissible paths.
    By Proposition~\ref{prop:Q-graph}, $\mathcal{P}_{x,y}^{T}$ contains $m$ admissible paths.
    Consequently, the union of paths in $\mathcal{P}_{u,v}^{M}$ and $\mathcal{P}_{x,y}^{T}$, together with the two connecting paths, yields at least $(k-m+1) + m - 1 = k$ admissible cycles.
    Consequently, the existence of a forbidden triple yields a contradiction.

    We then claim that $R$ is contained in either $A$ or $B$.
    Recall from Lemma~\ref{lem:operation}~(3) that this property already holds when $m \ge 4$; thus, we may assume $m=3$ (i.e., $T$ consists of two 4-cycles sharing one edge).
    In this case, $\deg_{T^*}(u)=1$.
    Suppose the claim is false. Let $a \in R \cap A$ and $b \in R \cap B$ be arbitrary vertices.
    A routine verification confirms that if $\deg_{T}(a)=3$ (or $\deg_{T}(b)=3$), then $V(T) \cup \{a,b\}$ would span a larger tetragonal subgraph, a contradiction.
	Hence, we must have $\deg_{T}(a) = \deg_{T}(b) = 2$. 
    Recall from Lemma~\ref{lem: N} that $a,b$ have no neighbor in $N$. 
    Given that $\delta(G) \ge 3$, both $a$ and $b$ must have at least one neighbor in $M$.
    We now identify a forbidden triple as follows.
    Fix an arbitrary $(M,T)$-path $L$ (which must have length $\leq 2$) that connects $u$ to some $x\in V(T)$.
    Without loss of generality, assume $x \in A$. 
    Then there exists $y \in N_T(a)$ such that $xy\in E(\partial T)$.
    Let $v$ be an arbitrary vertex in the non-empty set $N_M(a)$.
    Note that $v \neq u$; otherwise, $u$ would be adjacent to both $x \in V(T)$ and $a \in R$, contradicting Lemma~\ref{lem:operation}~(4).
    Thus, we obtain the disjoint paths $L$ and $vay$, confirming that $(v, x, y)$ is a forbidden triple, a contradiction.
	This proves the claim.

    In view of the claim, we may assume without loss of generality that $R \subseteq A$.
	We proceed by discussing the order of $T$.
    
    For the case $m \le 4$, since $\delta_{2}(G) > k-1 \ge 5 \ge m+1$, it follows that at most one vertex in $V(T)\cap A$ has no neighbor in $M$.
    Recall from Lemma~\ref{lem:operation}~(4) that $N_{T^*}(u)$ is contained in either $V(T)$ or $R$. We consider the specific location of $N_{T^*}(u)$.
    
    If $N_{T^*}(u)$ is contained in $V(T) \cap B$ or $R$, then there exists an $(M,T)$-path $L$ connecting $u$ to some vertex $x \in V(T) \cap B$.
    Let $y \in V(T) \cap A$ be adjacent to $x$ on $\partial T$ such that $N_{M}(y) \neq \emptyset$, and select an arbitrary neighbor $v \in N_{M}(y)$.
    As established before, we must have $v \neq u$.
    Thus, the disjoint paths $L$ and $vy$ force $(v, x, y)$ to be a forbidden triple, a contradiction.

    Suppose instead that $N_{T^*}(u) \subseteq V(T) \cap A$. 
    Take any $x\in N_{T^*}(u)$.
    Since $E(M,R) \neq \emptyset$, there exist vertices $p \in R\subseteq A$ and $v \in N_M(p)\subseteq B$.
    Since $\deg_T(p) \ge m-1$, $p$ must be adjacent to some $y \in V(T) \cap B$ such that $xy\in E(\partial T)$.
    As established before, we have $v \neq u$, which derives disjoint $(M,T)$-paths $ux$ and $vpy$.
    Hence, $(v, x, y)$ is a forbidden triple, a contradiction.

    Finally, consider the case where $m \ge 5$.
    Since $E(M,R) \neq \emptyset$, we may select $v \in V(M) \setminus \{u\}$ such that either $u$ or $v$ is adjacent to some $r \in R$.
    By Observation~\ref{obs:valid} and Lemma~\ref{lem: B}, $\mathcal{P}^M_{u,v}$ contains $k - \deg_{T^*}(u) - 1 \ge k-3$ admissible paths.
    Since $\deg_T(r)\geq m-1$, there are disjoint $(M,T)$-paths that connects $\{u,v\}$ to vertices with distance on $\partial T$ at most two.
    By Lemma~\ref{lem:short dist}, $\mathcal{P}_{u,v}^{T}$ contains $m-1$ admissible paths. 
    The union of these paths produces $(k-3) + (m-1) - 1 \ge k$ admissible cycles, a contradiction.
	This completes the proof.
\end{proof}
Based on the previous analysis in the proof of Lemma~\ref{Q1:m>2}, we may assume that $G-T^*$ contains exactly one component, denoted by $M$, of order at least $3$. Furthermore, every vertex in $M$ has degree at most $\min\{2,m-2\}$ in $T^*$, and $E(M,R)=\emptyset$.
These facts allow us to impose further constraints on $N=G-T^*-M$, as stated in the following lemma.

\begin{lem}\label{lem:RN extra}
    If $G^\star$ does not contain $k$ admissible cycles, then one of the following holds:
    \begin{itemize}
        \item $G^\star$ is of Type I, and $R\cup V(N)\subseteq\{\theta\}$.
        \item $G^\star$ is of Type II, $m=k-1$, and $V(N)=R=\emptyset$.
        \item $G^\star$ is of Type II, $m=k$, $V(N)=\emptyset$, and $R\subseteq\{\theta_1,\theta_2\}$.
    \end{itemize}
    In particular, $R\cup V(N)$ is contained in one of the partite sets of $G$.
\end{lem}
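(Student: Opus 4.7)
The plan proceeds by contrapositive: I assume $G^\star$ has no $k$ admissible cycles and deduce the three listed possibilities. The first two steps extract constraints on $V(N)$ and $R$. Lemma~\ref{lem: N} immediately gives $|V(N)|\le 1$ with $V(N)=\{\theta\}$ only in Type I, so in Type II we have $V(N)=\emptyset$. Next, for any $p\in R$, the combined independence of $R$ (Lemma~\ref{lem:operation}(2)), of $R\cup V(N)$ (Lemma~\ref{lem: N}(2)), and $E(M,R)=\emptyset$ (Lemma~\ref{lem: edges between M and R}) forces $N_G(p)\subseteq V(T)$, so $\deg_G(p)=\deg_T(p)\le m$ by Lemma~\ref{lem:operation}(1). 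In Type I, a vertex $p\in R\setminus\{\theta\}$ would require $m=k$ and $\deg_T(p)=m$, whose equality case in Lemma~\ref{lem:operation}(1) yields $G[V(T)]\simeq K_{k,k}$, contradicting Lemma~\ref{lem:Q not complete}. Hence $R\subseteq\{\theta\}$; together with the bound on $V(N)$ this gives Case~1. In Type II, the same argument with $\{\theta_1,\theta_2\}$ in place of $\{\theta\}$ yields $R\subseteq\{\theta_1,\theta_2\}$.

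It remains to pin down $m$ in Type II. When $R\ne\emptyset$, Lemma~\ref{lem:operation}(3) places $R$ in one partite set, and any $p\in R$ satisfies $\deg_G(p)\ge k-1$, forcing $m\ge k-1$. The case $m=k$ is exactly Case~3, so the two configurations to rule out are \emph{(a)} $R\ne\emptyset,\ m=k-1$ and \emph{(b)} $R=\emptyset,\ m\le k-2$. For \emph{(a)}, picking $p\in R\cap\{\theta_1,\theta_2\}$ gives $\deg_T(p)=m$, so Lemma~\ref{lem:operation}(1) forces $G[V(T)]\simeq K_{k-1,k-1}$ and hence $G[V(T)\cup\{p\}]\supseteq K_{k,k-1}$. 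Taking $u=u_M$ and invoking Lemma~\ref{lem:menger extend} on the partition $(V(M),V(G)\setminus V(M))$ produces a second vertex $w\in V(M)$ together with two disjoint $(M,V(T))$-edges at $u$ and $w$; Lemma~\ref{lem: B} and Observation~\ref{obs:valid} then supply $\ge k-3$ admissible paths in $\mathcal{P}^M_{u,w}$, and the $K_{k,k-1}$ between the two $V(T)$-attachments supplies $\ge k-2$ admissible paths of the matching parity. Observation~\ref{obs:addition} combines these into at least $(k-3)+(k-2)-1=2k-6\ge k$ admissible cycles (using $k\ge 7$), the desired contradiction.

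For \emph{(b)}, every $v\in V(T)\setminus\{\theta_1,\theta_2\}$ satisfies $\deg_M(v)\ge k-m\ge 2$, so edges between $V(T)$ and $V(M)$ are plentiful. I pick a boundary edge $v_1v_2\in E(\partial T)$ together with neighbors $u_1\in N_M(v_1),\ u_2\in N_M(v_2)\setminus\{u_1\}$; Proposition~\ref{prop:Q-graph} supplies $m$ admissible paths in $\mathcal{P}^T_{v_1,v_2}$, Lemma~\ref{lem: B} and Observation~\ref{obs:valid} give $\ge k-3$ admissible paths in $\mathcal{P}^M_{u_1,u_2}$, and Observation~\ref{obs:addition} bundles these into $\ge m+(k-3)-1=m+k-4$ admissible cycles, which exceeds $k$ once $m\ge 4$. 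The boundary case $m=3$ is treated separately via a finer analysis of the 6-vertex tetragonal graph and the multiple $T$--$M$ attachments guaranteed by the edge count.

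The main obstacle lies in sub-case~(a): the count $2k-6$ barely exceeds $k$, so I must carefully track the bipartite-set locations of $u,w$ and their attachments $v_1,v_2$ to ensure the $M$- and $K_{k,k-1}$-path families combine into even admissible cycles covering the needed residues. Sub-case~(b) with $m=3$ is a secondary difficulty: the small tetragonal subgraph contributes only a few admissible paths, so one must exploit the abundance of $T$--$M$ attachments in a more intricate way than a single pair $(u_1,u_2)$.
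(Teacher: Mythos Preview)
Your handling of Type~I and of sub-case~(a) in Type~II follows the paper's proof and is correct: the argument that $\deg_T(p)=m$ forces $G[V(T)]\simeq K_{k-1,k-1}$ via Lemma~\ref{lem:operation}(1), followed by combining $\ge k-3$ admissible paths in $\mathcal{P}^M_{u,w}$ with $\ge k-2$ through $T$ to obtain $2k-6\ge k$ admissible cycles, is exactly what the paper does. Your worry about ``matching parity'' in~(a) is superfluous: $G$ is bipartite, so every cycle formed by concatenating a path in $M$ with one through $T$ is automatically even.

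The real issue is sub-case~(b). You are attempting to rule out the configuration ``Type~II, $R=\emptyset$, $m\le k-2$'', but the paper does not do this here, and indeed need not. The paper's proof establishes only ``if Type~II and $R\ne\emptyset$ then $m\in\{k-1,k\}$, and $m=k-1$ forces $R=\emptyset$'', which is equivalent to $R=V(N)=\emptyset$ whenever Type~II and $m\le k-1$. The second bullet should thus be read with $m\le k-1$ rather than $m=k-1$; the way Lemma~\ref{lem:RN extra} is invoked in the proof of Lemma~\ref{Q1:m>2} confirms this reading (for $5\le m\le k-1$ only $R=V(N)=\emptyset$ is used, and for $m\le 4$ only $|R\cup V(N)|\le 1$). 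Configuration~(b) is a genuine case that is dispatched in the final proof of Lemma~\ref{Q1:m>2}, not here.

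Even setting this structural point aside, your sketch for~(b) has a concrete gap: you invoke Lemma~\ref{lem: B} on an arbitrary pair $(u_1,u_2)$ in $M$, but that lemma only guarantees validity for pairs of the form $(u_M,w)$. To salvage the argument you would need to arrange $u_1=u_M$, i.e.\ to find $v_1v_2\in E(\partial T)$ with $v_1\in N_T(u_M)$ and $N_M(v_2)\setminus\{u_M\}\ne\emptyset$; this requires care when both $\partial T$-neighbours of the unique $v_1\in N_T(u_M)$ happen to lie in $\{\theta_1,\theta_2\}$, and your proposal does not address it. The deferred $m=3$ case would need a genuinely separate argument as well.
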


\begin{proof}
    We first show that $R\cup V(N)$ is a subset of $\{\theta\}$ (for Type I) or $\{\theta_1,\theta_2\}$ (for Type II).
    It suffices to show that every $p\in R$ satisfies $\deg_{G}(p)\leq k-1$.
    By Lemma~\ref{lem: N}~(2) and Lemma~\ref{lem: edges between M and R}, for every $p\in R$, we have $N_{G}(p)\subseteq V(T)$, and thus $\deg_{G}(p)=\deg_T(p)\in \{m-1,m\}$.
    Consequently, the conclusion follows immediately when $m<k$.
    Now assume that $m=k$.
    By Lemma~\ref{lem:Q not complete}, $G[V(T)]$ is not isomorphic to $K_{k,k}$.
    It then follows from Lemma~\ref{lem:operation}~(1) that every $p\in R$ has $\deg_T(p)= k-1$.
    This confirms that $R\cup V(N)$ is included in $\{\theta\}$ or $\{\theta_1,\theta_2\}$.

    Next, consider the case where $G^\star$ is of Type II.
    By Lemma~\ref{lem: N}~(1), $N$ must be empty.
    Observe that every vertex $p\in R$ (if any) satisfies $k-1\leq \deg_{G}(p)=\deg_T(p)\leq m$.
    Consequently, if $R \neq \emptyset$, we must have $m\in \{k-1,k\}$.
    It remains to show that if $m=k-1$, then $R=\emptyset$.
    Suppose for a contradiction that $R\neq\emptyset$, then the inequality above becomes an equality, implying that every $p\in R$ has $\deg_T(p)=m=k-1$.
    By Lemma~\ref{lem:operation}~(1), we have $T\simeq K_{k-1,k-1}$.
    Let $u = u_M$. According to Observation~\ref{obs:valid} and Lemma~\ref{lem: B}, for every $v\in N_M(T)\setminus\{u\}$, $\mathcal{P}_{u,v}^M$ contains $k-\deg_{T^*}(u)-1\geq k-3$ admissible paths.
    Since $T\simeq K_{k-1,k-1}$, $\mathcal{P}_{u,v}^T$ contains $k-2$ admissible paths.
    The union of paths in $\mathcal{P}_{u,v}^M$ and $\mathcal{P}_{u,v}^T$ thus yields at least $(k-3)+(k-2)-1 = 2k-6 > k$ admissible cycles, a contradiction.

    The final claim, that $R\cup V(N)$ lies in a single partite set, is trivial when $G^\star$ is of Type I.
    When $G^\star$ is of Type II, we have $m\geq k-1 > 4$, and the claim follows directly from Lemma~\ref{lem:operation}~(3).
\end{proof}

Now we are ready to prove Lemma~\ref{Q1:m>2}.

\begin{proof}[\bf{Proof of Lemma~\ref{Q1:m>2}}]
    Suppose $G^\star$ is a counterexample.
    Let $u = u_{M}$, and let $\partial T=v_{0}v_{1}\ldots v_{2k-1}v_{0}$.
	By Observation~\ref{obs:valid} and Lemma~\ref{lem: B}, for any $v\in V(M)\setminus\{u\}$, the set $\mathcal{P}_{u,v}^{M}$ contains at least $k-\deg_{T^*}(u)-1\geq \max\{k-m+1,k-3\}$ admissible paths.
    Let $A$ and $B$ be the partite sets of $G$.
    By Lemma~\ref{lem:RN extra}, we may assume that $R\cup V(N)\subseteq A$.
    We distinguish cases based on the order of $T$.

    We begin with the case $m \le 4$. Since $k \ge 7$, we have $m \le k-3$. It follows from Lemma~\ref{lem:RN extra} that $|R \cup V(N)| \le 1$.
    Since $\delta_{2}(G) \ge k-1 \ge m+2$, it follows that all but at most one vertex in $T$ has a neighbor in $M$.
    Consequently, there exists a vertex $v \in V(M) \setminus \{u\}$ such that $u$ and $v$ are adjacent to consecutive vertices on $\partial T$.
    In this scenario, $\mathcal{P}_{u,v}^{M}$ contains $\max\{k-m+1, k-3\} = k-m+1$ admissible paths. 
    By Proposition~\ref{prop:Q-graph}, $\mathcal{P}_{u,v}^{T}$ contains $m$ admissible paths.
    Combining these path families yields $(k-m+1) + m - 1 = k$ admissible cycles, a contradiction.

    Next, assume that $5\leq m\leq k-1$.
    Fix an arbitrary neighbor $v_i \in N_T(u)$.
    We claim that there exists a vertex $v \in V(M) \setminus \{u\}$ adjacent to some $v_j \in V(T)$ such that $\mathrm{dist}_{\partial T}(v_i, v_j) \le 2$.
    
    To see this, first assume $G^\star$ is of Type I. Since $\delta_{2}(G) \ge k \ge m+1$ and $R\cup V(N)\subseteq A$, all vertices in $V(T) \cap A$, except possibly $\theta$, have neighbors in $M$.
    If $u \in A$, then $\{v_{i-1}, v_{i+1}\} \subseteq A$. Thus, at least one of them must have a neighbor $v \in V(M) \setminus \{u\}$, proving the claim.
    If $u \in B$, consider $\{v_{i-2}, v_{i+2}\} \subseteq A$. If $\theta \notin \{v_{i-2}, v_{i+2}\}$, then both vertices have neighbors in $M$. Since $\deg_T(u) \le 2$, at least one of them connects to a vertex $v \in V(M) \setminus \{u\}$, which suffices.
    Otherwise, assume without loss of generality that $v_{i-2} = \theta$. Then $v_{i+2}$ must have a neighbor in $M$.
    If this neighbor is distinct from $u$, then the claim follows. 
    Thus, we may assume that $v_{i+2}$ is adjacent to $u$. Repeating the argument for $v_{i+2}$, we observe that $v_{i+4} \neq \theta$ (since $m \ge 5$). Consequently, $v_{i+4}$ must have a neighbor $v \in V(M) \setminus \{u\}$, as desired.
    Now assume $G^\star$ is of Type II. By Lemma~\ref{lem:RN extra}~(2), $R = V(N) = \emptyset$.
    Since $\delta_{3}(G) = k \ge m+1$, at most two vertices in $T$ have no neighbors in $M$.
    If either $v_{i-1}$ or $v_{i+1}$ has a neighbor in $M$, the claim follows immediately.
    Otherwise, $v_{i-1}$ and $v_{i+1}$ are the only two vertices in $T$ with no neighbors in $M$.
    Consequently, both $v_{i-2}$ and $v_{i+2}$ must have neighbors in $M$.
    Since $\deg_T(u) \le 2$, $u$ cannot be adjacent to both $v_{i-2}$ and $v_{i+2}$. 
    Thus, one of them has a neighbor in $V(M) \setminus \{u\}$, proving the claim.

    With such a choice of $v$, $\mathcal{P}_{u,v}^{M}$ contains $k-3$ admissible paths. By Proposition~\ref{prop:Q-graph}, $\mathcal{P}_{u,v}^{T}$ contains $m-1$ admissible paths.
    Their union yields $(k-3) + (m-1) - 1 \ge k$ admissible cycles, a contradiction.

    Finally, assume that $m=k\geq 7$.
    Let $w$ be a vertex in $V(M)\setminus\{u\}$ such that $u,w$ connect to distinct vertices $v_i, v_j \in V(T)$, minimizing the distance $\mathrm{dist}_{\partial T}(v_i,v_j)$.
    Without loss of generality, assume $0\leq i < j \leq m$, so that $\mathrm{dist}_{\partial T}(v_i,v_j)=j-i$.
    By Observation~\ref{obs:valid} and Lemma~\ref{lem: B}, $\mathcal{P}_{u,w}^{M}$ contains at least $\max\{k-m+1, k-3\} = k-3$ admissible paths.
    If $j-i\leq 4$, Proposition~\ref{prop:Q-graph} implies that $\mathcal{P}_{u,w}^{T}$ contains $k-\mathrm{dist}_{\partial T}(v_i,v_j)+1\geq k-3$ admissible paths.
    The union of paths in $\mathcal{P}_{u,w}^{M}$ and $\mathcal{P}_{u,w}^{T}$ would then yield $(k-3)+(k-3)-1 = 2k-7 \geq k$ admissible cycles, a contradiction.

    Thus, we may assume $j-i \in [5, k]$. 
    By the minimality of $j-i$, each of $v_{i+1},v_{i+2},v_{i+3},v_{i+4}$ has no neighbor in $M$.
    We claim that at least one of these vertices has degree $k$ in $T$.
    Recall that $R \cup V(N) \subseteq A$, which implies that any $v \in \{v_{i+1},v_{i+2},v_{i+3},v_{i+4}\} \cap A$ satisfies $\deg_T(v) = \deg_G(v)$.
    Suppose the claim is false. Then both vertices in $\{v_{i+1}, \dots, v_{i+4}\} \cap A$ must have degree less than $k$ in $G$, which forces $G^\star$ to be of Type II, and $\{v_{i+1}, \dots, v_{i+4}\} \cap A = \{\theta_1, \theta_2\}$.
    It follows from Lemma~\ref{lem:RN extra} that $R=V(N)=\emptyset$, so the vertices in $\{v_{i+1}, \dots, v_{i+4}\} \cap B$ have degree $k$ in $T$.
    This contradiction proves the claim.

    According to the claim, assume that $\deg_{T}(v_{i+\ell})=k$ for some $\ell\in [4]$.
    Consider $(v_i,v_j)$-paths of the form $v_i v_{i-1} \dots v_{\alpha} v_{i+\ell} v_{\beta} v_{\beta+1} \dots v_{j}$, where indices satisfy $\alpha \in [j+1, i]$ and $\beta \in [i+\ell+1, j]$, with $\alpha$ and $\beta$ having different parity from $i+\ell$.
    A routine verification confirms that the lengths of these paths cover all integers of the appropriate parity in $[3, 2k-4]$.
    In particular, $\mathcal{P}_{u,w}^{T}$ contains $k-3$ admissible paths.
    Combining these with $\mathcal{P}_{u,w}^{M}$, we obtain $(k-3)+(k-3)-1 \geq k$ admissible cycles in $G$.
    This contradiction completes the proof.
\end{proof}

\subsection{The completion} 
\noindent We now complete the proofs of Theorems~\ref{thm:main bipartite} and~\ref{thm:main bipartite admis}; combined with Theorem~\ref{main}, the latter yields Theorem~\ref{thm:weak graph}.

\begin{proof}[\bf Proof of Theorem~\ref{thm:main bipartite admis}]
    The theorem follows directly from Lemma~\ref{lem: C4-free}, which handles the $C_4$-free case, and Lemmas~\ref{lem: m=2 I},~\ref{lem: m=2 II}, and~\ref{Q1:m>2}, which exhaust all possibilities for the optimal tetragonal subgraph $T$. This concludes the proof of Theorem~\ref{thm:weak graph} for the bipartite case.
\end{proof}

\begin{proof}[\bf Proof of Theorem~\ref{thm:main bipartite}]
    For $k\geq 7$, the conclusion follows immediately from Theorem~\ref{thm:main bipartite admis}, as a collection of $k$ admissible cycles in a bipartite graph covers all even residues modulo $k$.
    Now assume that $k=6$.
    By Theorem~\ref{Thm:32}, $G$ contains $\delta_3(G)-1\geq k-1=5$ admissible cycles, which cover all even residues modulo $6$.
    This completes the proof.
\end{proof}

Finally, we present the proof of Theorem~\ref{thm:exist admis}, which parallels the proof of Theorem~\ref{main 1}.

\begin{proof}[\bf Proof of Theorem~\ref{thm:exist admis}]
    Let $k \ge 7$. We claim that every 2-connected graph $G$ with $\delta_2(G) \ge k$ contains $k$ admissible cycles, unless $G$ is isomorphic to a graph in $\{K_{k+1}, K_{k,k}\} \cup \mathcal{H}_{k}$. By applying this claim to each end-block of the graph, we obtain Theorem~\ref{thm:exist admis}.

    To prove the claim, we first note that Corollary~\ref{cor: non-bipartite admis} and Theorem~\ref{thm:main bipartite admis} guarantee the existence of $k$ admissible cycles in every $k$-weak graph not isomorphic to a graph in $\{K_{k+1}, K_{k,k}\} \cup \mathcal{H}_{k}$. We then proceed via a reduction argument analogous to Section~4.1. Suppose $G$ does not contain $k$ admissible cycles. Then $G$ cannot be $k$-weak (of Type I) and thus admits a 2-cut $S=\{x,y\}$. 
    
    If $G-S$ contains two components of order at least two, then Lemma~\ref{lem:admis path} yields $(k-1) + (k-1) - 1 > k$ admissible cycles, which suffices. 
    Otherwise, a component of $G-S$ consists of a single vertex $z$. Hence, we may assume that $xy \notin E(G)$, as otherwise Lemma~\ref{lem:admis path} yields $(k-1) + 2 - 1 = k$ admissible cycles. 
    Since $\delta_2(G)\geq k$, $z$ is the unique vertex with degree less than $k$ in $G$.
    Hence, $S$ is the unique $2$-cut of $G$, and $G$ is a $k$-weak graph (of Type II), implying the existence of admissible cycles.
    This proves the claim.
\end{proof}

\section*{Acknowledgments}
\noindent This work is supported by National Key Research and Development Program of China 2023YFA1010201, National Natural Science Foundation of China grant 12125106, and Innovation Program for Quantum Science and Technology 2021ZD0302902.
\bibliographystyle{plain}
\bibliography{ref}

\vspace{1.0cm}

\indent{\it Email address}: \texttt{lyf619311271@mail.ustc.edu.cn}
\vspace{0.3cm}

\indent{\it Email address}: \texttt{jiema@ustc.edu.cn}
\vspace{0.3cm}

\indent{\it Email address}: \texttt{zyzhao2024@mail.ustc.edu.cn}

\end{document}